\theoremstyle{theorem}
\newtheorem{theorem}{Theorem}[section]
\newtheorem{proposition}[theorem]{Proposition}
\newtheorem{lemma}[theorem]{Lemma}
\newtheorem{condition}[theorem]{Condition}
\newtheorem{remark}[theorem]{Remark}
\newtheorem{corollary}[theorem]{Corollary}
\numberwithin{equation}{section}
\begin{document}
\title[Homogenization of hyperbolic systems]{On homogenization of the first initial-boundary value
problem for periodic hyperbolic systems}
\author{Yu. M. Meshkova}
\thanks{Research is supported by the Russian Science Foundation grant no.~14-21-00035.}
\keywords{Periodic differential operators, hyperbolic systems, homogenization, operator error estimates.}
\address{Chebyshev Laboratory, St. Petersburg State University, 14th Line V.O., 29B, St.~Petersburg, 199178, Russia}
\email{y.meshkova@spbu.ru,\quad juliavmeshke@yandex.ru}
\subjclass[2010]{Primary 35B27. Secondary 	35L53}

\begin{abstract}
Let $\mathcal{O}\subset\mathbb{R}^d$ be a bounded domain of class  $C^{3,1}$. 
In $L_2(\mathcal{O};\mathbb{C}^n)$, we consider a~self-adjoint matrix strongly elliptic second order 
differential operator  $B_{D,\varepsilon}$, $0<\varepsilon \leqslant 1$, with the Dirichlet boundary condition. 
The coefficients of the operator $B_{D,\varepsilon}$ are periodic and depend on $\mathbf{x}/\varepsilon$. 
We are interested in the behavior of the operators $\cos(tB_{D,\varepsilon}^{1/2})$ and 
$B_{D,\varepsilon} ^{-1/2}\sin (t B_{D,\varepsilon} ^{1/2})$, $t\in\mathbb{R}$, in the small period limit. 
For these operators, approximations in the norm of operators acting from a certain subspace $\mathcal{H}$ 
of the Sobolev space $H^4(\mathcal{O};\mathbb{C}^n)$ to $L_2(\mathcal{O};\mathbb{C}^n)$ are found. 
Moreover, for $B_{D,\varepsilon} ^{-1/2}\sin (t B_{D,\varepsilon} ^{1/2})$, the approximation 
with the corrector in the norm of operators acting from $\mathcal{H}\subset H^4(\mathcal{O};\mathbb{C}^n)$ 
to $H^1(\mathcal{O};\mathbb{C}^n)$ is obtained.  The results are applied to homogenization 
for the solution of the first initial-boundary value problem for the hyperbolic equation 
$\partial ^2_t \mathbf{u}_\varepsilon =-B_{D,\varepsilon} \mathbf{u}_\varepsilon $.
\end{abstract}

\maketitle

\tableofcontents

\section*{Introduction}

The paper concerns homogenization theory of periodic differential operators (DO's). A broad literature is devoted to homogenization theory, see, e.~g., the books \cite{BaPa,BeLP,ZhKO,Sa}.

\subsection{Problem setting} 

Let $\Gamma\subset\mathbb{R}^d$ be a lattice and let $\Omega$ be the cell of the lattice $\Gamma$. For $\Gamma$-periodic functions in $\mathbb{R}^d$, we use the following notation $f^\varepsilon (\mathbf{x}):=f(\varepsilon ^{-1}\mathbf{x})$, $\varepsilon >0$.  
Let $\mathcal{O}\subset\mathbb{R}^d$ be a bounded domain of class $C^{1,1}$. In $L_2(\mathcal{O};\mathbb{C}^n)$, we consider a self-adjoint matrix strongly elliptic second order differential operator ${B}_{D,\varepsilon}$, $0<\varepsilon\leqslant 1$,  with the Dirichlet boundary condition. The principal part of the operator ${B}_{D,\varepsilon}$ is given in a factorized form \break
$A_{\varepsilon}=b(\mathbf{D})^*g^\varepsilon (\mathbf{x})b(\mathbf{D})$, 
where $b(\mathbf{D})=\sum _{l=1}^d b_l D_l$ is a matrix first order DO and $g(\mathbf{x})$ is a $\Gamma$-periodic matrix-valued function in $\mathbb{R}^d$ such that $g,g^{-1}\in L_\infty$ and $g(\mathbf{x})>0$. (The precise assumptions on $b(\mathbf{D})$ and $g$ are given below in Subsec.~\ref{Subsection operatoer A_D,eps}.) The operator ${B}_{D,\varepsilon}$ is given by the differential expression
\begin{equation}
\label{B_D,eps in introduction}
{B}_{\varepsilon}=b(\mathbf{D})^* g^\varepsilon (\mathbf{x}) b(\mathbf{D})
+\sum_{j=1}^d\bigl(a_j^\varepsilon (\mathbf{x})D_j+D_ja_j^\varepsilon(\mathbf{x})^*\bigr)
+Q^\varepsilon (\mathbf{x}) +\lambda I
\end{equation}
with the Dirichlet condition on $\partial\mathcal{O}$. 
Here $\Gamma$-periodic matrix-valued functions $a_j$, \break$j=1,\dots,d$, and $Q$ 
belong to suitable $L_p(\Omega)$-spaces and the matrix $Q(\mathbf{x})$ is assumed to be Hermitian. The constant $\lambda$ is chosen so that the operator $B_{D,\varepsilon}$ is positive definite. (The explicit assump\-ti\-ons on the coefficients are given below in Subsec.~\ref{Subsection lower order terms}.) The precise definition of the operator ${B}_{D,\varepsilon}$ is given via the corresponding quadratic form on the Sobolev class  $H^1_0(\mathcal{O};\mathbb{C}^n)$.

The coefficients of the operator $B_{D,\varepsilon}$ oscillate rapidly for small $\varepsilon$. We are interested in the behavior of the solution of the following problem for small $\varepsilon$:	 
\begin{equation}
\label{introduction hyperbolic system}
\begin{cases}
\frac{\partial ^2\mathbf{u}_\varepsilon }{ \partial t^2}(\mathbf{x},t)=-(B_\varepsilon\mathbf{u}_\varepsilon )(\mathbf{x},t)+\mathbf{F}(\mathbf{x},t),\quad
\mathbf{u}_\varepsilon (\cdot,t)\vert _{\partial\mathcal{O}}=0,\\
\mathbf{u}_\varepsilon (\mathbf{x},0)=\boldsymbol{\varphi}(\mathbf{x}),\quad \frac{\partial \mathbf{u}_\varepsilon}{\partial t}(\mathbf{x},0)=\boldsymbol{\psi}(\mathbf{x}).
\end{cases}
\end{equation} 
For $\boldsymbol{\varphi}\in H^1_0(\mathcal{O};\mathbb{C}^n)$, $\boldsymbol{\psi}\in L_2(\mathcal{O};\mathbb{C}^n)$, and $\mathbf{F}\in L_{1,\mathrm{loc}}(\mathbb{R};L_2(\mathcal{O};\mathbb{C}^n))$, 
we have
\begin{equation*}
\mathbf{u}_\varepsilon (\cdot ,t)=\cos (tB_{D,\varepsilon}^{1/2})\boldsymbol{\varphi}
+B_{D,\varepsilon}^{-1/2}\sin (tB_{D,\varepsilon}^{1/2})\boldsymbol{\psi}+\int _0^t B_{D,\varepsilon}^{-1/2}\sin \left((t-\widetilde{t})B_{D,\varepsilon}^{1/2}\right)\mathbf{F}(\cdot ,\widetilde{t})\,d\widetilde{t}.
\end{equation*}
Thus, the question about the behavior of the solution of problem \eqref{introduction hyperbolic system} 
is reduced to suitable approximations of the operators 
$\cos (tB_{D,\varepsilon}^{1/2})$ and $B_{D,\varepsilon}^{-1/2}\sin (tB_{D,\varepsilon}^{1/2})$ 
for small $\varepsilon$.

\subsection{Main results} 

\textit{Our first main results} are:
\begin{align}
\label{intr Th cos B_D,eps}
\Bigl\Vert &
\left(
\cos (t B_{D,\varepsilon}^{1/2})-\cos (t (B_D^0)^{1/2})
\right)(B_D^0)^{-2}\Bigr\Vert _{L_2(\mathcal{O})\rightarrow L_2(\mathcal{O})}
\leqslant C \varepsilon \left(1+\vert t\vert ^5\right) ,
\\
\label{intr Th sin 1}
\Bigl\Vert & \left( B_{D,\varepsilon}^{-1/2}\sin (tB_{D,\varepsilon}^{1/2})-(B_D^0)^{-1/2}\sin (t(B_D^0)^{1/2})\right) (B_D^0)^{-2}\Bigr\Vert _{L_2(\mathcal{O})\rightarrow L_2(\mathcal{O})}
\leqslant C\varepsilon \vert t\vert (1+\vert t\vert ^5) .
\end{align}
These estimates are valid for $t\in\mathbb{R}$ and sufficiently small $\varepsilon$. Here $B_D^0$ is the \textit{effective operator }with constant coefficients. \textit{The second result} is the following approximation
\begin{equation}
\label{intr Th sin 2}
\begin{split}
\Bigl\Vert &
\bigl( B_{D,\varepsilon}^{-1/2}\sin (tB_{D,\varepsilon}^{1/2})-(B_D^0)^{-1/2}\sin (t(B_D^0)^{1/2})
-\varepsilon \mathcal{K}_D(\varepsilon ;t)\bigr)(B_D^0)^{-2}\Bigr\Vert _{L_2(\mathcal{O})\rightarrow H^1(\mathcal{O})}
\\
&\leqslant C\varepsilon ^{1/2} (1+t ^6).
\end{split}
\end{equation}
Here $\mathcal{K}_D(\varepsilon ;t)$ is the \textit{corrector.} 
It contains rapidly oscillating factors and so depends on~$\varepsilon$. 
In the general case, the corrector contains a smoothing operator. For $d\leqslant 8$, if the boundary of the domain is sufficiently smooth, one can remove the smoothing operator from the corrector. 
The constants in estimates \eqref{intr Th cos B_D,eps}--\eqref{intr Th sin 2} 
can be controlled explicitly in terms of the problem data. The results of such type are called 
\textit{operator error estimates in homogenization theory.} 
For $t$ fixed, estimates \eqref{intr Th cos B_D,eps} and \eqref{intr Th sin 1} are of the sharp order  $O(\varepsilon)$. The order $O(\varepsilon ^{1/2})$ of estimate \eqref{intr Th sin 2} is worse because of the boundary influence. 
It is impossible to obtain an analogue of estimate \eqref{intr Th sin 2} for the operator  
$\cos (t B_{D,\varepsilon}^{1/2})$. But the ,,smoothed'' cosine operator can be approximated:
\begin{equation}
\label{cos corrector introduction}
\begin{split}
\Bigl\Vert &\Bigl(\cos (tB_{D,\varepsilon}^{1/2})B_{D,\varepsilon}^{-1}-\cos (t(B_D^0)^{1/2})(B_D^0)^{-1}-\varepsilon\mathscr{K}_D(\varepsilon;t)\Bigr)(B_D^0)^{-1}\Bigr\Vert _{L_2(\mathcal{O})\rightarrow H^1(\mathcal{O})}
\\
&\leqslant
C\varepsilon ^{1/2}(1+\vert t\vert ^5).
\end{split}
\end{equation}
It is in accordance with the results of \cite{BrOFMu}, see discussion in Subsec.~\ref{Subsection Survey} below.

\subsection{Survey} 

\label{Subsection Survey}

At present, operator error estimates attract a great deal of attention. Interest to this topic was caused by the paper \cite{BSu} of M.~Sh.~Birman and T.~A.~Suslina. In \cite{BSu}, the operator $A_\varepsilon=b(\mathbf{D})^*g^\varepsilon (\mathbf{x})b(\mathbf{D})$ acting in $L_2(\mathbb{R}^d;\mathbb{C}^n)$ was considered. By means of the \textit{spectral approach} it was obtained that
\begin{equation}
\label{A_eps res L2 intr}
\Vert (A_\varepsilon +I)^{-1}-(A^0+I)^{-1}\Vert _{L_2(\mathbb{R}^d)\rightarrow L_2(\mathbb{R}^d)}
\leqslant C\varepsilon .
\end{equation}
Here $A^0=b(\mathbf{D})^* g ^0 b(\mathbf{D})$ is the effective operator and $g^0$ is the constant effective matrix. In \cite{BSu06}, the operator $(A_\varepsilon +I)^{-1}$ was approximated in the $(L_2\rightarrow H^1)$-operator norm:
\begin{equation}
\label{A_eps res H1 intr}
\Vert (A_\varepsilon +I)^{-1}-(A^0+I)^{-1}-\varepsilon K(\varepsilon)\Vert _{L_2(\mathbb{R}^d)\rightarrow H^1(\mathbb{R}^d)}
\leqslant C\varepsilon.
\end{equation}
The estimates \eqref{A_eps res L2 intr} and \eqref{A_eps res H1 intr} were later generalized to the operator $B_\varepsilon$ of the form \eqref{B_D,eps in introduction} by T.~A.~Suslina \cite{SuAA}. 

\textit{Another approach} to operator error estimates in homogenization theory was suggested by V.~V.~Zhikov  \cite{Zh1}. In \cite{Zh1,ZhPas2}, estimates of the form \eqref{A_eps res L2 intr}, \eqref{A_eps res H1 intr} were obtained for the acoustics operator and for the elasticity operator. The \textit{\glqq modified method of the first order approximation\grqq\,}
or the \textit{\glqq shift method\grqq\,}, in the terminology of the authors, was based on the analysis of the first order approximation to the solution and introducing the additional parameter.  Besides the problems in $\mathbb{R}^d$, in \cite{Zh1,ZhPas2}, the homogenization problems in a bounded domain $\mathcal{O}\subset \mathbb{R}^d$ with the Dirichlet or Neumann boundary condition were studied. Further results of V.~V.~Zhikov and S.~E.~Pastukhova can be found in the survey \cite{ZhPasUMN}.

The operator error estimates for homogenization of the Dirichlet and Neumann problems for second order elliptic equation in a bounded domain were studied by many authors, see \cite{ZhPas2,Gr1,Gr2,KeLiS,PSu,Su13,Su_SIAM}.  The detailed survey can be found in the introduction to the paper \cite{MSuPOMI}. In \cite{MSuPOMI}, approximations for the resolvent of the operator \eqref{B_D,eps in introduction} were obtained:
\begin{align}
\label{intr ell main result 1}
\Vert &(B_{D,\varepsilon}-\zeta I )^{-1}-(B_D^0-\zeta I)^{-1}\Vert _{L_2(\mathcal{O})\rightarrow L_2(\mathcal{O})}
\leqslant C(\phi)\varepsilon\vert\zeta\vert ^{-1/2},
\\
\label{intr ell main result 2}
\begin{split}
\Vert &(B_{D,\varepsilon}-\zeta I )^{-1}-(B_D^0-\zeta I)^{-1}-\varepsilon K_D(\varepsilon;\zeta)\Vert _{L_2(\mathcal{O})\rightarrow H^1(\mathcal{O})}
\leqslant C(\phi)\bigl(\varepsilon ^{1/2}\vert \zeta\vert ^{-1/4}+\varepsilon\bigr), 
\end{split}
\end{align}
$\zeta\in\mathbb{C}\setminus\mathbb{R}_+$, 
$\vert \zeta\vert \geqslant 1$. The values $C(\phi)$ are controlled explicitly in terms of the problem data and the angle  $\phi=\mathrm{arg}\,\zeta$. 
(For $\zeta$ fixed, close results were obtained by Q.~Xu \cite{Xu3}.) 

To parabolic problems in the whole space $\mathbb{R}^d$, spectral method was applied by T.~A.~Suslina \cite{Su04,Su_MMNP}. It was obtained that
\begin{align}
\label{est parabol new 1}
&\Vert e^{-tA_\varepsilon }-e^{-t A^0}\Vert _{L_2(\mathbb{R}^d)\rightarrow L_2(\mathbb{R}^d)}\leqslant C\varepsilon (t+\varepsilon ^2)^{-1/2},\quad t\geqslant 0,
\\
\label{est parabol new 2}
&\Vert e^{-tA_\varepsilon }-e^{-t A^0}-\varepsilon\mathfrak{K}(\varepsilon ;t)\Vert _{L_2(\mathbb{R}^d)\rightarrow H^1(\mathbb{R}^d)}
\leqslant C\varepsilon (t^{-1/2}+t^{-1}),\quad t\geqslant\varepsilon ^2.
\end{align}
By the shift method, these estimates were proven by V.~V.~Zhikov and S.~E.~Pastukhova \cite{ZhPAs_parabol}. Later, results \eqref{est parabol new 1} and \eqref{est parabol new 2} were transferred to the operator $B_\varepsilon$ by the author \cite{M13}.

Homogenization of the first initial-boundary value problem for a parabolic equation involving the operator $b(\mathbf{D})^*g^\varepsilon (\mathbf{x})b(\mathbf{D})$ or the operator  
 \eqref{B_D,eps in introduction} was studied by Yu.~M.~Meshkova and T.~A.~Sus\-li\-na in \cite{MSu15-2} and \cite{MSuAA17},  respectively. The method was based on the identity $$e^{-tB_{D,\varepsilon}}=-\frac{1}{2\pi i}\int _\gamma e^{-\zeta t}(B_{D,\varepsilon}-\zeta I)^{-1}\,d\zeta,$$
and estimates \eqref{intr ell main result 1} and \eqref{intr ell main result 2}. Here $\gamma\subset\mathbb{C}$ is a contour enclosing the spectrum of the operator $B_{D,\varepsilon}$ in the positive direction. Recall that, according to the classical Trotter-Kato theorem (see, e.~g.,  \cite[Chapter~X, Theorem 1.1]{Sa}), the strong convergence of semigroups follows from the strong convergence of the corresponding resolvents, while in \cite{MSuAA17} app\-ro\-xi\-ma\-tions in the uniform operator topology with explicit error estimates were obtained. Let us mention the recent work \cite{ChEl}, where the Trotter-Kato theorem was transferred to  weak and uniform operator topologies and the results were applied to homogenization of the parabolic equations (without operator error estimates). 

In  \cite{BSu08,M,DSu}, the spectral approach was applied to the hyperbolic systems. 
In \cite{BSu08}, for $t\in\mathbb{R}$ it was obtained that
\begin{align}
\label{Th BSu cos introduction}
&\Vert \cos (t {A}_\varepsilon ^{1/2})-\cos (t( {A}^0)^{1/2})\Vert _{H^2(\mathbb{R}^d)\rightarrow L_2(\mathbb{R}^d)}\leqslant C\varepsilon (1+\vert t\vert) ,
\\
\label{Th BSu}
\begin{split}
\Vert   {A}_\varepsilon^{-1/2}\sin( t  {A}^{1/2}_\varepsilon)-( {A}^0)^{-1/2}\sin ( t ( {A}^0)^{1/2})\Vert_{H^2(\mathbb{R}^d)\rightarrow L_2(\mathbb{R}^d)} \leqslant C\varepsilon (1+\vert t\vert )^2.
\end{split}
\end{align}
In \cite{M}, estimate \eqref{Th BSu} was refined with respect to the type of the operator norm:
\begin{equation}
\begin{split}
\label{Th sin M principal term introduction}
\Vert {A}_\varepsilon ^{-1/2}\sin( t  {A}^{1/2}_\varepsilon)-( {A}^0)^{-1/2}\sin ( t ( {A}^0)^{1/2})\Vert_{H^1(\mathbb{R}^d)\rightarrow L_2(\mathbb{R}^d)} \leqslant C\varepsilon (1+\vert t\vert ), 
\end{split}
\end{equation}
$t\in\mathbb{R}$, 
and approximation for the operator ${A}_\varepsilon^{-1/2}\sin( t  {A}^{1/2}_\varepsilon)$ in the $(H^2\rightarrow H^1)$-operator norm was obtained
\begin{align}
\label{main result 2}
\begin{split}
\Bigl\Vert & {A}_\varepsilon ^{-1/2}\sin ( t  {A}_\varepsilon ^{1/2})-( {A}^0)^{-1/2}\sin ( t ( {A}^0)^{1/2})-\varepsilon\mathrm{K}(\varepsilon ; t)\Bigr\Vert _{H^2(\mathbb{R}^d)\rightarrow H^1(\mathbb{R}^d)}
\\
&\leqslant C \varepsilon(1+\vert t\vert),\quad t\in\mathbb{R}.
\end{split}
\end{align}
Here $\mathrm{K}(\varepsilon ; t)$ is the corrector. 
In \cite{DSu}, the sharpness of estimates \eqref{Th BSu cos introduction} and \eqref{Th sin M principal term introduction} with respect to the type of the norm was proven in the general case. 

The correctors in estimates \eqref{A_eps res H1 intr}, \eqref{est parabol new 2}, and \eqref{main result 2} have a similar structure. 
So, it seems natural to expect that the cosine operator also can be approximated in the energy norm with a similar corrector. However, in \cite{BrOFMu} it was observed that even the fact of the norm convergence is true only for the very special choice of the initial data. The argument used in \cite{BrOFMu} is the following: the convergence of the energy does not occur in the general situation. But the solution can be splitted into two parts: the first one is designed so that the corresponding energy converges to the energy for the effective equation and the second part tends to zero $*$-weakly in $L_\infty ((0,T);H_0^1(\mathcal{O}))\cap W^{1,\infty}((0,T);L_2(\mathcal{O}))$. In our considerations, we deal only with the first part. 
This case corresponds to estimate \eqref{cos corrector introduction}. 
In the general case, some approximations with the corrector were obtained in \cite{BraLe,CaDCoCaMaMarG1}. 
Their results can not be written in the uniform operator topology. The corresponding corrector is non-local because of the dispersion of waves in the~inhomogeneous media. Dispersion effects for homogenization of the wave equation were discussed in
 \cite{ABriV,ConOrV,ConSaMaBalV}, but the operator error estimates were not studied.

Let us also mention a recent preprint \cite{CooSav}, where (independently of the present work)  the homogenization of the attractors of the quasi-linear damped wave equation was derived from the estimate of the form \eqref{intr ell main result 1} for the operator $B_{D,\varepsilon}=-\mathrm{div}\, g^\varepsilon (\mathbf{x})\nabla$ (and $\zeta =0$). But the results of \cite{CooSav} can not be written in the uniform operator topology. 
Thus, operator error estimates for homogenization of hyperbolic systems in a bounded domain were not previously known.

\subsection{Method}

The present work develops the method of \cite{MSu15-2,MSuAA17}. We deduce operator error estimates for non-stationary problem from the elliptic results via the inverse Laplace transform.  (Surely, the Laplace transform had been applied for homogenization of hyperbolic problems previously, see \cite[Chapter 2, Subsec. 3.9]{BeLP}, \cite[Chapter V, Sec.~6]{Sa}, and \cite{Pas,ZhPas1}. We also note that  the non-stationary Maxwell system was studied by using the Laplace transform in \cite[Chapter IV]{ZhKO}. But the operator error estimates were not discussed in the books and papers listed above.)

The method is based on the identity
\begin{equation}
\label{int cos tozd}
\cos \bigl(tB_{D,\varepsilon}^{1/2}\bigr)B_{D,\varepsilon}^{-2}
=-\frac{t^2}{2}B_{D,\varepsilon}^{-1}+B_{D,\varepsilon}^{-2}
+\frac{1}{2\pi i}\int _{\mathrm{Re}\,\lambda =\sqrt{c} }\lambda ^{-3}(B_{D,\varepsilon}+\lambda ^2)^{-1}e^{\lambda t}\,d\lambda,
\end{equation}
$c>0$, 
and on using the approximations of the operator
 $(B_{D,\varepsilon}-\zeta I)^{-1}$, $\zeta\in\mathbb{C}\setminus\mathbb{R}_+$, with the error estimates that are two-parametric with respect to $\varepsilon$ and $\zeta$. The required approximations were obtained in   \cite{MSuPOMI}. Combining \eqref{int cos tozd}, the similar identity for the effective operator, and estimate \eqref{intr ell main result 1}, we obtain inequality \eqref{intr Th cos B_D,eps}. 
To derive estimate \eqref{intr Th sin 1} from \eqref{intr Th cos B_D,eps}, we use the representations
\begin{align}
\label{int sin tozd}
&B_{D,\varepsilon}^{-1/2}\sin (tB_{D,\varepsilon }^{1/2})
=\int _0^t \cos (\tau B_{D,\varepsilon}^{1/2})\,d\tau,
\\
\label{int eff sin tozd}
&(B_D^0)^{-1/2}\sin \left(t(B_D^0)^{1/2}\right)
=\int _0^t \cos \left(\tau (B_D^0)^{1/2}\right)\,d\tau .
\end{align} 

The approximation with the corrector for the operator $B_{D,\varepsilon}^{-1}\cos(tB_{D,\varepsilon}^{1/2})$ follows 
from \eqref{int cos tozd}, the similar identity for the effective operator, and estimate \eqref{intr ell main result 2}. Using this appro\-xi\-ma\-tion, identities \eqref{int sin tozd}, \eqref{int eff sin tozd}, and homogenization results for the resolvent, we obtain inequality \eqref{intr Th sin 2}.

The presence of the operator $(B_D^0)^{-2}$ in estimates \eqref{intr Th cos B_D,eps}--\eqref{intr Th sin 2} is caused by the method of investigation. Because of this factor, the initial data and the right-hand side in \eqref{introduction hyperbolic system} are subject to the following restrictions:
\begin{equation}
\label{intr inital data and F}
\boldsymbol{\varphi},\boldsymbol{\psi}\in\mathrm{Dom}\,(B_D^0)^2, \quad \mathbf{F}\in L_{1,\mathrm{loc}}(\mathbb{R};\mathrm{Dom}\,(B_D^0)^{2}).
\end{equation}
If $\partial\mathcal{O}\in C^{3,1}$, then 
$\mathrm{Dom}\,(B_D^0)^{2}$ can be considered as the subspace of $H^4(\mathcal{O};\mathbb{C}^n)$. 
Thus, the initial data and the right-hand side of the equation \eqref{introduction hyperbolic system} are required to be more smooth compared with the data for the problems in the whole space. Apparently, the results of the present paper are not sharp with respect to the classes of smoothness for the initial data and the right-hand side of the equation. 
However, it seems that the applied technique does not allow to improve the results.

\subsection{Plan of the paper} The paper consists of three sections and introduction. In Section~\ref{Section 1}, the class of the operators $B_{D,\varepsilon}$ is described, the effective operator $B_D^0$ is defined and the approximations for the resolvent $(B_{D,\varepsilon}-\zeta I)^{-1}$ are formulated. Section~\ref{Section 2} contains the main results of the paper. Their proofs can be found in Section~\ref{Section 3}. 

\subsection{Notation} Let $\mathfrak{H}$ and $\mathfrak{H}_*$ be complex separable Hilbert spaces. The symbols $(\cdot ,\cdot)_\mathfrak{H}$ and $\Vert \cdot\Vert _\mathfrak{H}$ denote the inner product and the norm in   $\mathfrak{H}$, respectively; the symbol $\Vert \cdot\Vert _{\mathfrak{H}\rightarrow\mathfrak{H}_*}$ means the norm of the linear continuous operator from $\mathfrak{H}$ to $\mathfrak{H}_*$.

We use the notation $\mathbb{Z}_+$ for the set of non-negative integers and $\mathbb{R}_+$ for the positive half-line $[0,\infty)$.

The symbols $\langle \cdot ,\cdot\rangle$ and $\vert \cdot\vert$ stand for the inner product and the norm in   $\mathbb{C}^n$, respectively; $\mathbf{1}_n$ is the identity $(n\times n)$-matrix. If $a$ is $(m\times n)$-matrix, then the symbol $\vert a\vert$ denotes the norm of the matrix $a$ viewed as the operator from  $\mathbb{C}^n$ to $\mathbb{C}^m$. 
For $z\in\mathbb{C}$, by $z^*$ we denote the complex conjugate number. (We use such non-standard notation, because the upper line denotes the mean value of a periodic function over the cell of periodicity.) 
We use the notation $\mathbf{x}=(x_1,\dots , x_d)\in\mathbb{R}^d$, $iD_j=\partial _j =\partial /\partial x_j$, $j=1,\dots,d$, $\mathbf{D}=-i\nabla=(D_1,\dots ,D_d)$. The classes $L_p$ of $\mathbb{C}^n$-valued functions in a domain $\mathcal{O}\subset\mathbb{R}^d$ are denoted by $L_p(\mathcal{O};\mathbb{C}^n)$, $1\leqslant p\leqslant \infty$. The Sobolev spaces of $\mathbb{C}^n$-valued functions in a domain $\mathcal{O}\subset\mathbb{R}^d$ are denoted by $H^s(\mathcal{O};\mathbb{C}^n)$. 
By $H^1_0(\mathcal{O};\mathbb{C}^n)$ we denote the closure of the class  $C_0^\infty (\mathcal{O};\mathbb{C}^n)$ in the space $H^1(\mathcal{O};\mathbb{C}^n)$. For $n=1$, we simply write  $L_p(\mathcal{O})$, $H^s(\mathcal{O})$ and so on, but, sometimes, if this does not lead to confusion, we use such simple notation for the spaces of vector-valued or matrix-valued functions. The symbol $L_p((0,T);\mathfrak{H})$, $1\leqslant p\leqslant\infty$, means the $L_p$-space of $\mathfrak{H}$-valued functions on the interval $(0,T)$.

Various constants in estimates are denoted by $c$, $\mathfrak{c}$, $C$, $\mathcal{C}$, $\mathscr{C}$, $\mathfrak{C}$  
(possibly, with indices and marks).

\subsection*{Acknowledgement}

The author is deeply grateful to T.~A.~Suslina for her attention to this work.

\section{Homogenization results for the elliptic Dirichlet problem}
\label{Section 1}

\subsection{Lattices in $\mathbb{R}^d$} 
\label{Subsection lattices}
Let $\Gamma \subset \mathbb{R}^d$ be the lattice generated by the basis $\mathbf{a}_1,\dots ,\mathbf{a}_d \in \mathbb{R}^d$:
$$\Gamma =\Bigl\lbrace
\mathbf{a}\in \mathbb{R}^d : \mathbf{a}=\sum _{j=1}^d \nu _j \mathbf{a}_j, \nu _j\in \mathbb{Z}
\Bigr\rbrace ,$$
and let $\Omega$ be the elementary cell of the lattice~$\Gamma$:
$$\Omega =
\Bigl \lbrace
\mathbf{x}\in \mathbb{R}^d :\mathbf{x}=\sum _{j=1}^d \tau _j \mathbf{a}_j , -\frac{1}{2}<\tau _j<\frac{1}{2}
\Bigr\rbrace .$$
By $\vert \Omega \vert $ we denote the Lebesgue measure of the cell $\Omega$: $\vert \Omega \vert =\mathrm{meas}\,\Omega$. 
Set  
$2r_1:=\mathrm{diam}\,\Omega$.

The basis $\mathbf{b}_1,\dots,\mathbf{b}_d\in\mathbb{R}^d$, dual to $\mathbf{a}_1,\dots,\mathbf{a}_d$, is defined by the relations $\langle\mathbf{b}_j,\mathbf{a}_i\rangle=2\pi\delta _{ji}$. This basis generates the  
lattice $\widetilde{\Gamma}$ dual to $\Gamma$. Denote $2r_0:=\min _{0\neq \mathbf{b}\in\widetilde{\Gamma}}\vert \mathbf{b}\vert $.

Let $\widetilde{H}^1(\Omega)$ be the subspace of functions from  $H^1(\Omega)$ whose $\Gamma$-periodic extension to $\mathbb{R}^d$ belongs to $H^1_{\mathrm{loc}}(\mathbb{R}^d)$. If $\Phi (\mathbf{x})$~is a~$\Gamma$-periodic matrix-valued function in $\mathbb{R}^d$, we put
$\Phi ^\varepsilon (\mathbf{x}):=\Phi (\mathbf{x}/\varepsilon)$, $\varepsilon >0$; $
\overline{\Phi}:=\vert \Omega\vert ^{-1}\int _\Omega \Phi(\mathbf{x})\,d\mathbf{x}$, 
 $\underline{\Phi}:=\left(\vert \Omega\vert ^{-1}\int _\Omega \Phi(\mathbf{x})^{-1}\,d\mathbf{x}\right)^{-1}$.  
Here in the definition of 
$\overline{\Phi}$ it is assumed that $\Phi\in L_{1,\mathrm{loc}}(\mathbb{R}^d)$; in the definition of $\underline{\Phi}$ it is assumed that the matrix-valued function $\Phi$ is square
and non-degenerate, and $\Phi^{-1}\in L_{1,\mathrm{loc}}(\mathbb{R}^d)$. By $[\Phi^\varepsilon ]$ we denote the operator of multiplication by the matrix-valued function $\Phi^\varepsilon (\mathbf{x})$.

\subsection{The Steklov smoothing} The Steklov smoothing operator $S_\varepsilon^{(k)}$ acts in the space $L_2(\mathbb{R}^d;\mathbb{C}^k)$ (where $k\in\mathbb{N}$) and is defined by 
\begin{equation}
\label{S_eps}
\begin{split}
(S_\varepsilon^{(k)} \mathbf{u})(\mathbf{x})=\vert \Omega \vert ^{-1}\int _\Omega \mathbf{u}(\mathbf{x}-\varepsilon \mathbf{z})\,d\mathbf{z},\quad \mathbf{u}\in L_2(\mathbb{R}^d;\mathbb{C}^k)
.
\end{split}
\end{equation}
We will omit the index $k$ in the notation and write simply $S_\varepsilon$. 
Obviously, $S_\varepsilon \mathbf{D}^\alpha \mathbf{u}=\mathbf{D}^\alpha S_\varepsilon \mathbf{u}$ for $\mathbf{u}\in H^\sigma(\mathbb{R}^d;\mathbb{C}^k)$ 
and any multiindex $\alpha$ such that $\vert \alpha\vert \leqslant \sigma$. 
We need the following properties of the operator $S_\varepsilon$
(see \cite[Lemmas 1.1 and 1.2]{ZhPas2} or \cite[Propositions 3.1 and 3.2]{PSu}).

\begin{proposition}
\label{Proposition S__eps - I}
For any function $\mathbf{u}\in H^1(\mathbb{R}^d;\mathbb{C}^k)$ 
we have
\begin{equation*}
\Vert S_\varepsilon \mathbf{u}-\mathbf{u}\Vert _{L_2(\mathbb{R}^d)}\leqslant \varepsilon r_1\Vert \mathbf{D}\mathbf{u}\Vert _{L_2(\mathbb{R}^d)},
\end{equation*}
where $2r_1=\mathrm{diam}\,\Omega$.
\end{proposition}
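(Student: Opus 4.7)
The plan is to prove the bound by passing to the Fourier side and reducing to a pointwise estimate on the multiplier symbol, which is the cleanest approach given that $S_\varepsilon$ is a convolution operator with a compactly supported kernel.

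First I would observe that, after the substitution $\mathbf{y}=\varepsilon\mathbf{z}$, the operator $S_\varepsilon$ is the convolution
$S_\varepsilon\mathbf{u}=\phi_\varepsilon *\mathbf{u}$, where $\phi_\varepsilon(\mathbf{y})=\vert\varepsilon\Omega\vert^{-1}\chi_{\varepsilon\Omega}(\mathbf{y})$.
Taking the Fourier transform, $\widehat{S_\varepsilon\mathbf{u}}(\boldsymbol{\xi})=m_\varepsilon(\boldsymbol{\xi})\widehat{\mathbf{u}}(\boldsymbol{\xi})$ with the symbol
\[
m_\varepsilon(\boldsymbol{\xi})=\vert\Omega\vert^{-1}\int_\Omega e^{-i\varepsilon\langle\boldsymbol{\xi},\mathbf{z}\rangle}\,d\mathbf{z}.
\]
By Plancherel's identity, it suffices to verify the pointwise bound $\vert m_\varepsilon(\boldsymbol{\xi})-1\vert\leqslant \varepsilon r_1 \vert\boldsymbol{\xi}\vert$ for all $\boldsymbol{\xi}\in\mathbb{R}^d$.

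The second step is the pointwise estimate. Using $\vert e^{it}-1\vert\leqslant \vert t\vert$ and the Cauchy--Schwarz inequality,
\[
\vert m_\varepsilon(\boldsymbol{\xi})-1\vert
\leqslant \vert\Omega\vert^{-1}\int_\Omega \vert\varepsilon\langle\boldsymbol{\xi},\mathbf{z}\rangle\vert\,d\mathbf{z}
\leqslant \varepsilon\vert\boldsymbol{\xi}\vert\,\vert\Omega\vert^{-1}\int_\Omega\vert\mathbf{z}\vert\,d\mathbf{z}
\leqslant \varepsilon r_1\vert\boldsymbol{\xi}\vert.
\]
Here I use the fact that $\vert\mathbf{z}\vert\leqslant r_1$ for every $\mathbf{z}\in\Omega$: since the cell $\Omega$ defined in Subsection~\ref{Subsection lattices} is symmetric with respect to the origin, both $\mathbf{z}$ and $-\mathbf{z}$ lie in $\Omega$, hence $2\vert\mathbf{z}\vert=\vert\mathbf{z}-(-\mathbf{z})\vert\leqslant \mathrm{diam}\,\Omega=2r_1$. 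Combined with Plancherel and the identity $\vert\boldsymbol{\xi}\vert\vert\widehat{\mathbf{u}}(\boldsymbol{\xi})\vert=\vert\widehat{\mathbf{D}\mathbf{u}}(\boldsymbol{\xi})\vert$, this gives
\[
\Vert S_\varepsilon\mathbf{u}-\mathbf{u}\Vert_{L_2(\mathbb{R}^d)}^2
=\int_{\mathbb{R}^d}\vert m_\varepsilon(\boldsymbol{\xi})-1\vert^2\vert\widehat{\mathbf{u}}(\boldsymbol{\xi})\vert^2\,d\boldsymbol{\xi}
\leqslant \varepsilon^2 r_1^2\Vert\mathbf{D}\mathbf{u}\Vert_{L_2(\mathbb{R}^d)}^2.
\]

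As an alternative, purely real-variable derivation (useful if one wants to avoid Fourier methods), I would approximate $\mathbf{u}$ by smooth compactly supported functions, apply the fundamental theorem of calculus to write $\mathbf{u}(\mathbf{x}-\varepsilon\mathbf{z})-\mathbf{u}(\mathbf{x})=-\varepsilon\int_0^1\langle\mathbf{z},(\nabla\mathbf{u})(\mathbf{x}-\tau\varepsilon\mathbf{z})\rangle\,d\tau$, and bound the $L_2$-norm of the double average by Minkowski's integral inequality together with translation invariance of the Lebesgue measure; the same factor $\vert\Omega\vert^{-1}\int_\Omega\vert\mathbf{z}\vert\,d\mathbf{z}\leqslant r_1$ appears.

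There is no serious obstacle here; the only point requiring attention is the diameter bound $\vert\mathbf{z}\vert\leqslant r_1$ on $\Omega$, which relies on the symmetry of the fundamental domain as chosen in Subsection~\ref{Subsection lattices}. Everything else is a one-line Plancherel argument.
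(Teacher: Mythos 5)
Your proof is correct, and it is essentially the same Fourier-multiplier argument that underlies the cited sources (the paper itself does not reprove this proposition but points to Zhikov--Pastukhova and Pakhnin--Suslina). The reduction of $S_\varepsilon$ to a convolution, the computation of the symbol $m_\varepsilon(\boldsymbol{\xi})=\vert\Omega\vert^{-1}\int_\Omega e^{-i\varepsilon\langle\boldsymbol{\xi},\mathbf{z}\rangle}\,d\mathbf{z}$, the pointwise bound $\vert m_\varepsilon(\boldsymbol{\xi})-1\vert\leqslant\varepsilon r_1\vert\boldsymbol{\xi}\vert$ via $\vert e^{it}-1\vert\leqslant\vert t\vert$ and Cauchy--Schwarz, and the observation that the centrally symmetric cell $\Omega$ satisfies $\vert\mathbf{z}\vert\leqslant r_1$ for $\mathbf{z}\in\Omega$ (so that $\vert\mathbf{z}\vert\leqslant\tfrac12\mathrm{diam}\,\Omega$), are all accurate; Plancherel then closes the argument. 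The real-variable alternative you sketch in the last paragraph is also standard and needs no further elaboration here.
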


\begin{proposition}
\label{Proposition f^eps S_eps}
Let $\Phi$ be a $\Gamma$-periodic function in $\mathbb{R}^d$ such that $\Phi\in L_2(\Omega)$. 
Then the operator $[\Phi ^\varepsilon ]S_\varepsilon $ is continuous in $L_2(\mathbb{R}^d)$ and
\begin{equation*}
\Vert [\Phi^\varepsilon]S_\varepsilon \Vert _{L_2(\mathbb{R}^d)\rightarrow L_2(\mathbb{R}^d)}\leqslant \vert \Omega \vert ^{-1/2}\Vert \Phi\Vert _{L_2(\Omega)}.
\end{equation*}
\end{proposition}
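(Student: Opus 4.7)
The plan is to estimate $\|[\Phi^\varepsilon]S_\varepsilon \mathbf{u}\|_{L_2(\mathbb{R}^d)}^2$ directly, combining a pointwise Cauchy--Schwarz bound on the Steklov average with Fubini and the $\Gamma$-periodicity of $\Phi$. This is a standard averaging estimate, so I expect no real obstacles, just bookkeeping.

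First I would apply Cauchy--Schwarz inside the definition \eqref{S_eps} to obtain the pointwise bound
\begin{equation*}
|(S_\varepsilon \mathbf{u})(\mathbf{x})|^2 \leqslant |\Omega|^{-1}\int_\Omega |\mathbf{u}(\mathbf{x}-\varepsilon\mathbf{z})|^2\,d\mathbf{z}.
\end{equation*}
Multiplying by $|\Phi(\mathbf{x}/\varepsilon)|^2$ and integrating in $\mathbf{x}$ gives, after interchanging the order of integration by Fubini,
\begin{equation*}
\|[\Phi^\varepsilon]S_\varepsilon \mathbf{u}\|_{L_2(\mathbb{R}^d)}^2 \leqslant |\Omega|^{-1}\int_\Omega \Bigl(\int_{\mathbb{R}^d} |\Phi(\mathbf{x}/\varepsilon)|^2\,|\mathbf{u}(\mathbf{x}-\varepsilon\mathbf{z})|^2\,d\mathbf{x}\Bigr)\,d\mathbf{z}.
\end{equation*}

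Next I would change variables $\mathbf{y}=\mathbf{x}-\varepsilon\mathbf{z}$ in the inner integral and swap integrals again, which yields
\begin{equation*}
\|[\Phi^\varepsilon]S_\varepsilon \mathbf{u}\|_{L_2(\mathbb{R}^d)}^2 \leqslant |\Omega|^{-1}\int_{\mathbb{R}^d} |\mathbf{u}(\mathbf{y})|^2 \Bigl(\int_\Omega |\Phi(\mathbf{y}/\varepsilon+\mathbf{z})|^2\,d\mathbf{z}\Bigr) d\mathbf{y}.
\end{equation*}
The key observation is that the inner integral equals $\|\Phi\|_{L_2(\Omega)}^2$ for \emph{every} fixed $\mathbf{y}$: the function $\mathbf{z}\mapsto |\Phi(\mathbf{y}/\varepsilon+\mathbf{z})|^2$ is $\Gamma$-periodic, and the integral of a $\Gamma$-periodic function over any translate of the fundamental cell $\Omega$ equals its integral over $\Omega$ itself. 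Substituting back gives
\begin{equation*}
\|[\Phi^\varepsilon]S_\varepsilon \mathbf{u}\|_{L_2(\mathbb{R}^d)}^2 \leqslant |\Omega|^{-1}\|\Phi\|_{L_2(\Omega)}^2\,\|\mathbf{u}\|_{L_2(\mathbb{R}^d)}^2,
\end{equation*}
which is the asserted bound.

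The only subtlety is the periodicity reduction in the last step; it is easiest to justify by decomposing $\Omega+\mathbf{y}/\varepsilon$ into $\Gamma$-translates of pieces of $\Omega$ (or, equivalently, by writing $\mathbf{y}/\varepsilon$ as $\mathbf{n}+\boldsymbol{\eta}$ with $\mathbf{n}\in\Gamma$ and $\boldsymbol{\eta}$ in a suitable fundamental domain, and then shifting by $\mathbf{n}$). This reduction, together with measurability of the integrand needed for Fubini (which follows from $\Phi\in L_2(\Omega)$ and $\mathbf{u}\in L_2(\mathbb{R}^d)$), completes the argument.
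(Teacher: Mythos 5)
Your proof is correct and is exactly the standard argument used in the references the paper cites for this fact (\cite{ZhPas2,PSu}): Cauchy--Schwarz inside the Steklov average, Tonelli, the change of variables $\mathbf{y}=\mathbf{x}-\varepsilon\mathbf{z}$, and the observation that the integral of the $\Gamma$-periodic function $|\Phi|^2$ over any translate of $\Omega$ equals $\|\Phi\|_{L_2(\Omega)}^2$. The paper does not present its own proof but defers to those sources, and your argument reproduces them faithfully, including the correct constant $|\Omega|^{-1/2}$.
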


\subsection{The operator $A_{D,\varepsilon}$} 
\label{Subsection operatoer A_D,eps}
Let $\mathcal{O}\subset \mathbb{R}^d$ be a bounded domain of class $C^{1,1}$. 
In $L_2(\mathcal{O};\mathbb{C}^n)$, we consider the operator $A_{D,\varepsilon}$ formally given by the differential expression $A_\varepsilon = b(\mathbf{D})^*g^\varepsilon (\mathbf{x})b(\mathbf{D})$ with the Dirichlet condition on $\partial\mathcal{O}$. 
(We agree to mark  a differential operator with the Dirichlet condition and its quadratic form (but not a formal differential expression corresponding to the operator) by the lower index \glqq$D$\grqq.)   
Here $g(\mathbf{x})$ is a $\Gamma$-periodic Hermitian $(m\times m)$-matrix-valued function (in general, with complex entries). Assume that $g(\mathbf{x})>0$ and $g,g^{-1}\in L_\infty (\mathbb{R}^d)$. The differential operator $b(\mathbf{D})$ has the form $b(\mathbf{D})=\sum _{j=1}^d b_jD_j$, where $b_j$, $j=1,\dots ,d$, are constant matrices of the size $m\times n$ (in general, with complex entries). Assume that $m\geqslant n$ and that the symbol $b(\boldsymbol{\xi})=\sum _{j=1}^d b_j\xi_j$ of the operator $b(\mathbf{D})$ has maximal rank: 
$\mathrm{rank}\,b(\boldsymbol{\xi})=n$ for $0\neq \boldsymbol{\xi}\in\mathbb{R}^d$. 
This is equivalent to the existence of constants $\alpha _0$ and $\alpha _1$ such that
\begin{equation}
\label{<b^*b<}
\alpha _0\mathbf{1}_n \leqslant b(\boldsymbol{\theta})^*b(\boldsymbol{\theta}) 
\leqslant \alpha _1\mathbf{1}_n,\quad 
\boldsymbol{\theta}\in \mathbb{S}^{d-1};\quad 
0<\alpha _0\leqslant \alpha _1<\infty.
\end{equation}
By \eqref{<b^*b<},
\begin{equation}
\label{b_l <=}
\vert b_j\vert \leqslant \alpha _1^{1/2},\quad j=1,\dots ,d.
\end{equation}

The precise definition of the operator $A_{D,\varepsilon}$ is given via the quadratic form
\begin{equation}
\label{a_D,eps}
\mathfrak{a}_{D,\varepsilon} [\mathbf{u},\mathbf{u}]=\int _{\mathcal{O}}\langle g^\varepsilon (\mathbf{x})b(\mathbf{D})\mathbf{u},b(\mathbf{D})\mathbf{u}\rangle \,d\mathbf{x},\quad \mathbf{u}\in H^1_0(\mathcal{O};\mathbb{C}^n).
\end{equation}
Extending the function $\mathbf{u}\in H^1_0(\mathcal{O};\mathbb{C}^n)$ by zero onto $\mathbb{R}^d\setminus\mathcal{O}$ and taking \eqref{<b^*b<} into account, we obtain
\begin{equation}
\label{a_D,eps estimates}
\alpha _0\Vert g^{-1}\Vert ^{-1}_{L_\infty}\Vert\mathbf{D}\mathbf{u}\Vert ^2 _{L_2(\mathcal{O})}
\leqslant 
\mathfrak{a}_{D,\varepsilon}[\mathbf{u},\mathbf{u}]
\leqslant \alpha _1\Vert g\Vert _{L_\infty}\Vert \mathbf{D}\mathbf{u}\Vert ^2 _{L_2(\mathcal{O})},
\quad
\mathbf{u}\in H^1_0(\mathcal{O};\mathbb{C}^n).
\end{equation}

\subsection{Lower order terms. The operator $B_{D,\varepsilon}$} 
\label{Subsection lower order terms}
We study the self-adjoint operator $B_{D,\varepsilon}$ with the principal part $A_{\varepsilon}$. To define the lower order terms, let us introduce $\Gamma$-periodic $(n\times n)$-matrix-valued functions (in general, with complex entries) $a_j$, $j=1,\dots ,d$, such that
\begin{equation*}
a_j \in L_\rho (\Omega ), \quad\rho =2 \;\mbox{for}\;d=1,\quad\rho >d\;\mbox{for}\;d\geqslant 2,\quad j=1,\dots ,d.
\end{equation*}
Next, let $Q$  be the $\Gamma$-periodic Hermitian $(n\times n)$-matrix-valued function (with complex entries) such that
\begin{equation}
\label{Q condition}
Q\in L_s(\Omega ),\quad s=1 \;\mbox{for}\;d=1,\quad s >d/2\;\mbox{for}\;d\geqslant 2.
\end{equation}
By the Sobolev embedding theorem, conditions imposed on 
$\rho$ and $s$ guarantee that 
the lower terms of the operator $B_{D,\varepsilon}$ are strongly subordinate to its principal part $A_\varepsilon$.

For convenience of further references, the following set of variables is called the ,,problem data'':
\begin{equation}
\label{problem data}
\begin{split}
&d,\,m,\,n,\,\rho ,\,s ;\,\alpha _0,\, \alpha _1 ,\,\Vert g\Vert _{L_\infty},\, \Vert g^{-1}\Vert _{L_\infty},\, \Vert a_j\Vert _{L_\rho (\Omega)},\, j=1,\dots ,d;\,
\Vert Q\Vert _{L_s(\Omega)};\\
&\text{the parameters of the lattice }\Gamma ;\;\text{the domain }\mathcal{O}.
\end{split}
\end{equation}

In $L_2(\mathcal{O};\mathbb{C}^n)$, we consider the operator $B_{D,\varepsilon}$, $0<\varepsilon\leqslant 1$, formally given by the differential expression
\begin{equation}
\label{B_D,eps}
B_{\varepsilon}=b(\mathbf{D})^* g^\varepsilon (\mathbf{x})b(\mathbf{D})+\sum _{j=1}^d \left(
a_j^\varepsilon (\mathbf{x})D_j +D_j a_j^\varepsilon (\mathbf{x})^*
\right)
+Q^\varepsilon (\mathbf{x})+\lambda I
\end{equation}
with the Dirichet boundary condition. Here the constant $\lambda$ is chosen so that (see \eqref{lambda =} below) the operator $B_{D,\varepsilon}$ is positive definite. The precise definition of the operator $B_{D,\varepsilon}$ is given via the quadratic form
\begin{equation}
\label{b_D,eps}
\begin{split}
\mathfrak{b}_{D,\varepsilon }[\mathbf{u},\mathbf{u}]&=(g^\varepsilon b(\mathbf{D})\mathbf{u},b(\mathbf{D})\mathbf{u})_{L_2(\mathcal{O})}+2\mathrm{Re}\,\sum _{j=1}^d (a_j^\varepsilon D_j \mathbf{u},\mathbf{u})_{L_2(\mathcal{O})}\\
&+(Q^\varepsilon \mathbf{u},\mathbf{u})_{L_2(\mathcal{O})}+\lambda ( \mathbf{u},\mathbf{u})_{L_2(\mathcal{O})},\quad \mathbf{u}\in H^1_0(\mathcal{O};\mathbb{C}^n).
\end{split}
\end{equation}
Let us check that the form $\mathfrak{b}_{D,\varepsilon}$ is closed. By the H\"older inequality and the Sobolev embedding theorem, it can be shown (see  \cite[(5.11)--(5.14)]{SuAA}) that for any $\nu>0$ there exist constants $C_j(\nu)>0$ such that
\begin{equation*}
\begin{split}
\Vert a_j^*\mathbf{u}\Vert ^2 _{L_2(\mathbb{R}^d)}\leqslant \nu \Vert \mathbf{D}\mathbf{u}\Vert ^2_{L_2(\mathbb{R}^d)}+C_j(\nu )\Vert \mathbf{u}\Vert ^2 _{L_2(\mathbb{R}^d)},\quad \mathbf{u}\in H^1(\mathbb{R}^d;\mathbb{C}^n),\quad j=1,\dots ,d.
\end{split}
\end{equation*} 
By the change of variables $\mathbf{y}:=\varepsilon ^{-1}\mathbf{x}$ and $\mathbf{u}(\mathbf{x})=:\mathbf{v}(\mathbf{y})$, we deduce
\begin{equation*}
\begin{split}
\Vert   (a_j^\varepsilon )^*\mathbf{u}\Vert ^2_{L_2(\mathbb{R}^d)}
&=\int _{\mathbb{R}^d}\vert a_j(\varepsilon ^{-1}\mathbf{x})^*\mathbf{u}(\mathbf{x})\vert ^2\,d\mathbf{x}
=\varepsilon ^d\int _{\mathbb{R}^d}\vert a_j(\mathbf{y})^*\mathbf{v}(\mathbf{y})\vert ^2\,d\mathbf{y}\\
&\leqslant \varepsilon ^d\nu \int _{\mathbb{R}^d}\vert \mathbf{D}_{\mathbf{y}}\mathbf{v}(\mathbf{y})\vert ^2\,d\mathbf{y}
+\varepsilon ^d C_j(\nu)\int _{\mathbb{R}^d}\vert \mathbf{v}(\mathbf{y})\vert ^2\,d\mathbf{y}\\
&\leqslant \nu \Vert \mathbf{D}\mathbf{u}\Vert ^2_{L_2(\mathbb{R}^d)}+C_j(\nu)\Vert \mathbf{u}\Vert ^2_{L_2(\mathbb{R}^d)},\quad \mathbf{u}\in H^1(\mathbb{R}^d;\mathbb{C}^n),\quad 0<\varepsilon\leqslant 1.
\end{split}
\end{equation*}
Then, by \eqref{<b^*b<}, for any $\nu >0$ there exists a constant $C(\nu)>0$ such that
\begin{equation}
\label{sum a-j u}
\begin{split}
\sum _{j=1}^d \Vert (a_j^\varepsilon)^*\mathbf{u}\Vert ^2 _{L_2(\mathbb{R}^d)}
\leqslant \nu
\Vert (g^\varepsilon)^{1/2}b(\mathbf{D})\mathbf{u}\Vert ^2_{L_2(\mathbb{R}^d)}
+C(\nu)\Vert \mathbf{u}\Vert ^2_{L_2(\mathbb{R}^d)},\\\mathbf{u}\in H^1(\mathbb{R}^d;\mathbb{C}^n),\quad 0<\varepsilon\leqslant 1.
\end{split}
\end{equation}
For $\nu$ fixed, $C(\nu)$ depends only on $d$, $\rho$, $\alpha _0$, on  the norms $\Vert g^{-1}\Vert _{L_\infty}$, $\Vert a_j\Vert _{L_\rho (\Omega)}$, $j=1,\dots ,d$, and on the parameters of the lattice $\Gamma$.

By \eqref{<b^*b<}, for $\mathbf{u}\in H^1(\mathbb{R}^d;\mathbb{C}^n)$ we have
\begin{equation}
\label{Du <= c_1^2a}
\Vert \mathbf{D}\mathbf{u}\Vert ^2_{L_2(\mathbb{R}^d)}
\leqslant c_1^2\Vert (g^\varepsilon )^{1/2}b(\mathbf{D})\mathbf{u}\Vert ^2_{L_2(\mathbb{R}^d)},
\end{equation}
where $c_1:=\alpha _0 ^{-1/2}\Vert g^{-1}\Vert^{1/2}_{L_\infty}$.
Combining this with \eqref{sum a-j u}, we obtain
\begin{equation}
\label{2Re sum j <=}
\begin{split}
2\left\vert \mathrm{Re}\,\sum _{j=1}^d (D_j\mathbf{u},(a_j^\varepsilon)^*\mathbf{u})_{L_2(\mathbb{R}^d)}\right\vert 
\leqslant\frac{1}{4}\Vert (g^\varepsilon)^{1/2}b(\mathbf{D})\mathbf{u}\Vert ^2_{L_2(\mathbb{R}^d)} +c_2\Vert \mathbf{u}\Vert ^2_{L_2(\mathbb{R}^d)},\\
\mathbf{u}\in H^1(\mathbb{R}^d;\mathbb{C}^n),\quad 0<\varepsilon\leqslant 1,
\end{split}
\end{equation}
where $c_2:=8c_1^2C(\nu _0)$ for $\nu _0:=2^{-6}\alpha _0\Vert g^{-1}\Vert ^{-1}_{L_\infty}$.

Next, by condition \eqref{Q condition} for $Q$, for any $\nu >0$ there exists a constant $C_Q(\nu)>0$ such that
\begin{equation}
\label{(Qu,u)<=}
\begin{split}
\vert (Q^\varepsilon \mathbf{u},\mathbf{u})_{L_2(\mathbb{R}^d)}\vert \leqslant\nu \Vert \mathbf{D}\mathbf{u}\Vert ^2_{L_2(\mathbb{R}^d)}+C_Q(\nu)\Vert \mathbf{u}\Vert ^2_{L_2(\mathbb{R}^d)},\\
\mathbf{u}\in H^1(\mathbb{R}^d;\mathbb{C}^n),\quad 0<\varepsilon\leqslant 1 .
\end{split}
\end{equation}
For $\nu$ fixed, the constant $C_Q(\nu)$ is controlled in terms of $d$, $s$, $\Vert Q\Vert _{L_s(\Omega)}$, and the parameters of the lattice $\Gamma$.

We fix a constant $\lambda$ in \eqref{B_D,eps} as follows:
\begin{equation}
\label{lambda =}
\lambda := C_Q(\nu _*)+c_2\quad\text{for}\;\nu _* :=2^{-1}\alpha _0\Vert g^{-1}\Vert ^{-1}_{L_\infty}.
\end{equation}

Now, we return to the form \eqref{b_D,eps}. The function $\mathbf{u}\in H^1_0(\mathcal{O};\mathbb{C}^n)$ is extended by zero to $\mathbb{R}^d\setminus\mathcal{O}$. 
From \eqref{a_D,eps}, \eqref{Du <= c_1^2a}, \eqref{2Re sum j <=}, \eqref{(Qu,u)<=} with $\nu=\nu_*$, and \eqref{lambda =} we derive the lower estimate for the form \eqref{b_D,eps}:
\begin{equation}
\label{b_eps >=}
\mathfrak{b}_{D,\varepsilon}[\mathbf{u},\mathbf{u}]\geqslant \frac{1}{4}\mathfrak{a}_{D,\varepsilon} [\mathbf{u},\mathbf{u}]\geqslant c_*\Vert \mathbf{D}\mathbf{u}\Vert ^2_{L_2(\mathcal{O})},\quad\mathbf{u}\in H^1_0(\mathcal{O};\mathbb{C}^n);\quad c_*:=\frac{1}{4}\alpha _0\Vert g^{-1}\Vert ^{-1}_{L_\infty}.
\end{equation}
Next, by \eqref{a_D,eps estimates}, \eqref{2Re sum j <=}, and \eqref{(Qu,u)<=} with $\nu=1$,
\begin{equation}
\label{1.15a new}
\begin{split}
&\mathfrak{b}_{D,\varepsilon} [\mathbf{u},\mathbf{u}]\leqslant C_*\Vert \mathbf{u}\Vert ^2_{H^1(\mathcal{O})},\quad 
\mathbf{u}\in H^1_0(\mathcal{O};\mathbb{C}^n),
\end{split}
\end{equation}
where $C_*:=\max\lbrace\frac{5}{4}\alpha _1\Vert g\Vert _{L_\infty}+1;C_Q(1)+\lambda +c_2\rbrace$. Thus, the form $\mathfrak{b}_{D,\varepsilon}$ is closed. The corresponding self-adjoint operator in $L_2(\mathcal{O};\mathbb{C}^n)$ is denoted by $B_{D,\varepsilon}$.

By the Friedrichs inequality, from \eqref{b_eps >=} we deduce that
\begin{equation}
\label{b D,eps >= H1-norm}
\mathfrak{b}_{D,\varepsilon}[\mathbf{u},\mathbf{u}]\geqslant c_*(\mathrm{diam}\,\mathcal{O})^{-2}\Vert \mathbf{u}\Vert ^2_{L_2(\mathcal{O})},\quad \mathbf{u}\in H^1_0(\mathcal{O};\mathbb{C}^n).
\end{equation}
So, the operator $B_{D,\varepsilon}$ is positive definite. By \eqref{b_eps >=} and \eqref{b D,eps >= H1-norm},
\begin{align}
\label{H^1-norm <= BDeps^1/2}
\Vert \mathbf{u}\Vert _{H^1(\mathcal{O})}\leqslant c_3\Vert B_{D,\varepsilon}^{1/2}\mathbf{u}\Vert _{L_2(\mathcal{O})},\quad \mathbf{u}\in H^1_0(\mathcal{O};\mathbb{C}^n);\quad c_3:=c_*^{-1/2}\left(1+(\mathrm{diam}\,\mathcal{O})^{2}\right)^{1/2}.
\end{align}

We will need the following inequalities deduced from \eqref{b D,eps >= H1-norm} and \eqref{H^1-norm <= BDeps^1/2}:
\begin{align}
\label{B_D,eps L2 ->L2}
&\Vert B_{D,\varepsilon}^{-1}\Vert _{L_2(\mathcal{O})\rightarrow L_2(\mathcal{O})}
\leqslant c_*^{-1}(\mathrm{diam}\,\mathcal{O})^{2}=:\mathcal{C}_1,
\\
\label{B_D,eps L2 ->H^1}
&\Vert B_{D,\varepsilon}^{-1}\Vert _{L_2(\mathcal{O})\rightarrow H^1(\mathcal{O})}
\leqslant c_3 \Vert B_{D,\varepsilon}^{-1/2}\Vert _{L_2(\mathcal{O})\rightarrow L_2(\mathcal{O})}
\leqslant c_3 c_*^{-1/2}\mathrm{diam}\,\mathcal{O}=:\mathcal{C}_2.
\end{align}

\subsection{The effective matrix and its properties}  The effective operator for $A_{D,\varepsilon} $ is given by the differential expression $A^0=b(\mathbf{D})^*g^0b(\mathbf{D})$ with the Dirichlet condition on $\partial\mathcal{O}$. Here $g^0$ is the constant \textit{effective} matrix of the size $m\times m$. The matrix $g^0$ is defined in terms of the auxiliary problem on the cell. Let $\Gamma$-periodic $(n\times m)$-matrix-valued function $\Lambda (\mathbf{x})$ be the weak solution of the problem
\begin{equation}
\label{Lambda problem}
b(\mathbf{D})^*g(\mathbf{x})(b(\mathbf{D})\Lambda (\mathbf{x})+\mathbf{1}_m)=0,\quad \int _{\Omega }\Lambda (\mathbf{x})\,d\mathbf{x}=0.
\end{equation}
Denote
\begin{equation}
\label{tilde g}
\widetilde{g}(\mathbf{x}):=g(\mathbf{x})(b(\mathbf{D})\Lambda (\mathbf{x})+\mathbf{1}_m).
\end{equation}
Then the effective matrix is given by the expression
\begin{equation}
\label{g^0}
g^0:=\vert \Omega \vert ^{-1}\int _{\Omega} \widetilde{g}(\mathbf{x})\,d\mathbf{x}.
\end{equation}
It can be checked that the matrix $g^0$ is positive definite.

From \eqref{Lambda problem} it follows that
\begin{equation}
\label{b(D)Lambda <=}
\Vert b(\mathbf{D})\Lambda \Vert _{L_2(\Omega)}\leqslant \vert \Omega\vert ^{1/2}m^{1/2}\Vert g\Vert ^{1/2}_{L_\infty}\Vert g^{-1}\Vert _{L_\infty}^{1/2}.
\end{equation}

We also need the following estimates for the solution of problem \eqref{Lambda problem} proven in \cite[(6.28) and Subsec.~7.3]{BSu05}:
\begin{align}
\label{Lambda <=}
&\Vert \Lambda \Vert _{L_2(\Omega)}\leqslant \vert \Omega \vert ^{1/2}M_1,\quad M_1:=m^{1/2}(2r_0)^{-1}\alpha _0^{-1/2}\Vert g\Vert ^{1/2}_{L_\infty}\Vert g^{-1}\Vert ^{1/2}_{L_\infty},\\
\label{DLambda<=}
&\Vert \mathbf{D}\Lambda \Vert _{L_2(\Omega)}\leqslant \vert \Omega \vert ^{1/2}M_2,\quad M_2:=m^{1/2}\alpha _0^{-1/2}\Vert g\Vert ^{1/2}_{L_\infty}\Vert g^{-1}\Vert ^{1/2}_{L_\infty}.
\end{align}

The effective matrix satisfies the estimates known as the Voigt–Reuss bracketing
(see, e.~g., \cite[Chapter 3, Theorem 1.5]{BSu}).

\begin{proposition}
Let $g^0$ be the effective matrix \eqref{g^0}. Then
\begin{equation}
\label{Foigt-Reiss}
\underline{g}\leqslant g^0\leqslant \overline{g}.
\end{equation}
If $m=n$, then $g^0=\underline{g}$.
\end{proposition}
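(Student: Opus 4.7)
The plan is to derive both inequalities in \eqref{Foigt-Reiss} from a variational characterization of $g^0$ (for the upper bound) together with its dual form (for the lower bound), and to treat the equality case $m=n$ by exhibiting an explicit formula for $\Lambda$.

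First I would establish the primal minimum principle
\begin{equation*}
\langle g^0 \boldsymbol{\xi},\boldsymbol{\xi}\rangle
=\vert\Omega\vert^{-1}\min_{\mathbf{w}\in\widetilde{H}^1(\Omega;\mathbb{C}^n)}
\int_\Omega \langle g(\mathbf{x})(b(\mathbf{D})\mathbf{w}+\boldsymbol{\xi}),b(\mathbf{D})\mathbf{w}+\boldsymbol{\xi}\rangle\,d\mathbf{x},\quad\boldsymbol{\xi}\in\mathbb{C}^m,
\end{equation*}
with the minimizer $\mathbf{w}=\Lambda\boldsymbol{\xi}$. The key algebraic step is to test the cell equation \eqref{Lambda problem} against $\Lambda\boldsymbol{\xi}$: one gets $\int_\Omega\langle g(b(\mathbf{D})\Lambda\boldsymbol{\xi}+\boldsymbol{\xi}),b(\mathbf{D})\Lambda\boldsymbol{\xi}\rangle\,d\mathbf{x}=0$, so from \eqref{tilde g}, \eqref{g^0} the value at $\mathbf{w}=\Lambda\boldsymbol{\xi}$ equals $\vert\Omega\vert\langle g^0\boldsymbol{\xi},\boldsymbol{\xi}\rangle$. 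Convexity of $\mathbf{w}\mapsto\int_\Omega\langle g(b(\mathbf{D})\mathbf{w}+\boldsymbol{\xi}),b(\mathbf{D})\mathbf{w}+\boldsymbol{\xi}\rangle\,d\mathbf{x}$ together with \eqref{Lambda problem} being the associated Euler--Lagrange equation identifies $\Lambda\boldsymbol{\xi}$ as the minimizer. The upper bound $g^0\leqslant\overline{g}$ follows by inserting the admissible test function $\mathbf{w}=0$.

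For the lower bound $g^0\geqslant\underline{g}$, I would pass to the dual principle
\begin{equation*}
\langle (g^0)^{-1}\boldsymbol{\eta},\boldsymbol{\eta}\rangle
=\vert\Omega\vert^{-1}\min_{\boldsymbol{\phi}\in\mathcal{V}_{\boldsymbol{\eta}}}
\int_\Omega\langle g(\mathbf{x})^{-1}\boldsymbol{\phi},\boldsymbol{\phi}\rangle\,d\mathbf{x},\quad\boldsymbol{\eta}\in\mathbb{C}^m,
\end{equation*}
where $\mathcal{V}_{\boldsymbol{\eta}}:=\{\boldsymbol{\phi}\in L_2(\Omega;\mathbb{C}^m):\ \overline{\boldsymbol{\phi}}=\boldsymbol{\eta},\ b(\mathbf{D})^*\boldsymbol{\phi}=0\text{ in the periodic sense}\}$. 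To obtain this, I would verify that $\widetilde{g}(\mathbf{x})\boldsymbol{\eta}$, if $\boldsymbol{\eta}$ is replaced by the appropriate $\boldsymbol{\xi}$ obtained from $\boldsymbol{\eta}=g^0\boldsymbol{\xi}$, lies in $\mathcal{V}_{\boldsymbol{\eta}}$ (its divergence in the $b(\mathbf{D})^*$-sense vanishes by \eqref{Lambda problem}, and its mean is $g^0\boldsymbol{\xi}=\boldsymbol{\eta}$ by \eqref{g^0}), and that it is the minimizer. The minimality uses the Helmholtz-type $L_2$-orthogonality between $b(\mathbf{D})\mathbf{w}$ (for periodic $\mathbf{w}$) and elements of $\mathcal{V}_{\mathbf{0}}$, which follows from integration by parts. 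Once the dual principle is in place, inserting the admissible constant test field $\boldsymbol{\phi}\equiv\boldsymbol{\eta}$ gives $\langle(g^0)^{-1}\boldsymbol{\eta},\boldsymbol{\eta}\rangle\leqslant\langle\overline{g^{-1}}\,\boldsymbol{\eta},\boldsymbol{\eta}\rangle=\langle\underline{g}^{-1}\boldsymbol{\eta},\boldsymbol{\eta}\rangle$, i.e.\ $g^0\geqslant\underline{g}$.

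For the final claim, when $m=n$ the symbol $b(\boldsymbol{\theta})$ is invertible by \eqref{<b^*b<}, and hence one can uniquely solve the periodic equation $b(\mathbf{D})\Lambda=g^{-1}\underline{g}-\mathbf{1}_n$ with $\int_\Omega\Lambda\,d\mathbf{x}=0$ (the right-hand side has zero mean by definition of $\underline{g}$, so Fourier series give a $\Gamma$-periodic solution). Then $\widetilde{g}=g(b(\mathbf{D})\Lambda+\mathbf{1}_n)=\underline{g}$ is constant, so $b(\mathbf{D})^*\widetilde{g}=0$ and \eqref{Lambda problem} holds; by uniqueness (up to constants) this is the $\Lambda$ defining $g^0$, and averaging $\widetilde{g}$ yields $g^0=\underline{g}$.

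The main obstacle is the clean setup and proof of the dual variational principle in paragraph three, specifically the orthogonal decomposition of $L_2(\Omega;\mathbb{C}^m)$ into the closure of $b(\mathbf{D})\widetilde{H}^1(\Omega;\mathbb{C}^n)$, its orthogonal complement (periodic $b(\mathbf{D})^*$-closed fields of zero mean), and the constants. Everything else is either an algebraic identity for $\Lambda$ or the insertion of an explicit admissible trial field.
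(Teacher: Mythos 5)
The paper cites this result from [BSu, Chapter~3, Theorem~1.5] without giving a proof, so there is no in-text argument to compare against; your proposal reproduces the standard variational proof used there, and it is correct. The primal minimum principle (with $\mathbf{w}=0$) gives the Voigt bound $g^0\leqslant\overline{g}$, the dual minimum principle over mean-$\boldsymbol{\eta}$, $b(\mathbf{D})^*$-free fields (with the constant trial field) gives the Reuss bound $\underline{g}\leqslant g^0$, and for $m=n$ the pointwise invertibility of $b(\boldsymbol{\theta})$ lets you integrate the cell problem explicitly so that $\widetilde{g}\equiv\underline{g}$.

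Two small points of precision. In the dual step, the minimality of $\boldsymbol{\phi}=\widetilde{g}\boldsymbol{\xi}$ requires orthogonality in $L_2(\Omega;\mathbb{C}^m)$ of $g^{-1}\widetilde{g}\boldsymbol{\xi}=b(\mathbf{D})\Lambda\boldsymbol{\xi}+\boldsymbol{\xi}$ to all of $\mathcal{V}_{\mathbf{0}}$; the term $b(\mathbf{D})\Lambda\boldsymbol{\xi}$ is handled by the Helmholtz orthogonality you invoke, but you should also say explicitly that the constant term $\boldsymbol{\xi}$ is orthogonal to $\mathcal{V}_{\mathbf{0}}$ because those fields have zero mean --- both facts are needed for the three-way split $\mathbb{C}^m\oplus\overline{b(\mathbf{D})\widetilde{H}^1(\Omega;\mathbb{C}^n)}\oplus\mathcal{V}_{\mathbf{0}}$ you allude to at the end. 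In the $m=n$ case, ``uniqueness up to constants'' is slightly misleading: with the normalization $\int_\Omega\Lambda\,d\mathbf{x}=0$ the solution of \eqref{Lambda problem} is unique outright (if $b(\mathbf{D})\delta=0$ for periodic $\delta$, the rank condition on $b(\boldsymbol{\theta})$ forces $\delta$ to be constant, hence $\delta=0$ after normalization), so the $\Lambda$ you construct is exactly the one appearing in \eqref{g^0}.
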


Inequalities \eqref{Foigt-Reiss} imply that 
\begin{equation}
\label{g0<=}
\vert g^0\vert \leqslant \Vert g\Vert _{L_\infty},\quad \vert (g^0)^{-1}\vert \leqslant \Vert g^{-1}\Vert _{L_\infty}.
\end{equation}

Now we distinguish the cases where one of the inequalities in \eqref{Foigt-Reiss}  becomes an identity, see  \cite[Chapter 3, Propositions 1.6 and 1.7]{BSu}. 

\begin{proposition}
The identity $g^0=\overline{g}$ is equivalent to the relations
\begin{equation}
\label{overline-g}
b(\mathbf{D})^* {\mathbf g}_k(\mathbf{x}) =0,\ \ k=1,\dots,m,
\end{equation}
where ${\mathbf g}_k(\mathbf{x})$, $k=1,\dots,m,$ are the columns of the matrix $g(\mathbf{x})$.
\end{proposition}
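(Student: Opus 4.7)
The plan is to show that both conditions in the proposition are equivalent to the vanishing of the corrector, $\Lambda \equiv 0$. Once this is established, the equivalence of (\ref{overline-g}) with $g^0 = \overline g$ follows at once from the definitions (\ref{tilde g})--(\ref{g^0}) of $\widetilde g$ and $g^0$.

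\emph{Easy direction.} Assume $b(\mathbf{D})^* \mathbf{g}_k(\mathbf{x}) = 0$ for $k = 1, \dots, m$, i.e., $b(\mathbf{D})^* g(\mathbf{x})\mathbf{1}_m = 0$. Then $\Lambda \equiv 0$ trivially satisfies (\ref{Lambda problem}) together with the zero-mean normalization. By uniqueness of the periodic zero-mean solution of the cell problem --- which follows from the Lax--Milgram lemma applied on the subspace of zero-mean functions in $\widetilde H^1(\Omega;\mathbb{C}^n)$, using the coercivity of $\mathbf{u} \mapsto (g\, b(\mathbf{D})\mathbf{u}, b(\mathbf{D})\mathbf{u})_{L_2(\Omega)}$ that comes from (\ref{<b^*b<}), the lower bound on $g$, and the Poincar\'e inequality --- we obtain $\Lambda \equiv 0$, so $\widetilde g = g$ and $g^0 = \overline g$.

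\emph{Converse direction.} I use the variational characterization of $g^0$. Testing the equation (\ref{Lambda problem}) columnwise against $\Lambda$ and integrating by parts over $\Omega$ yields $\int_\Omega \langle g(\mathbf{x})(b(\mathbf{D})\Lambda + \mathbf{1}_m),\, b(\mathbf{D})\Lambda\rangle\, d\mathbf{x} = 0$. Combined with (\ref{g^0}), this gives, for each $\xi \in \mathbb{C}^m$, the identity
\begin{equation*}
|\Omega|\,\xi^* g^0 \xi = \int_\Omega \bigl\langle g(\mathbf{x})\bigl(b(\mathbf{D})(\Lambda\xi)+\xi\bigr),\, b(\mathbf{D})(\Lambda\xi)+\xi\bigr\rangle\, d\mathbf{x}.
\end{equation*}
The function $\Lambda \xi$ is precisely the Euler--Lagrange minimizer of the functional $\mathbf{u} \mapsto \int_\Omega |g^{1/2}(b(\mathbf{D})\mathbf{u}+\xi)|^2\, d\mathbf{x}$ over $\mathbf{u} \in \widetilde H^1(\Omega;\mathbb{C}^n)$ of zero mean, and the trial function $\mathbf{u}=0$ evaluates the integral at $|\Omega|\,\xi^*\overline g\,\xi$. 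Hence the hypothesis $g^0 = \overline g$ forces both $\Lambda \xi$ and $0$ to be minimizers for every $\xi$. By strict convexity of the form (same coercivity argument as in the first direction), this yields $\Lambda\xi \equiv 0$ for every $\xi$, hence $\Lambda \equiv 0$. Substituting back into (\ref{Lambda problem}) gives $b(\mathbf{D})^* g(\mathbf{x})\mathbf{1}_m = 0$, which is exactly (\ref{overline-g}).

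The one technical point underlying both directions is the coercivity of $\mathbf{u} \mapsto (g\, b(\mathbf{D})\mathbf{u}, b(\mathbf{D})\mathbf{u})_{L_2(\Omega)}$ on zero-mean periodic $\widetilde H^1$-functions: this is a standard Fourier/G\aa rding-type estimate that combines the rank condition (\ref{<b^*b<}) with the positivity of $g$, and is what makes the cell problem uniquely solvable and forces the variational minimizer to be unique.
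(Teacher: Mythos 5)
Your proof is correct; the paper does not give its own proof of this proposition but refers to \cite[Chapter~3, Propositions~1.6 and~1.7]{BSu}, and your argument is the standard variational one used there. The central identity $|\Omega|\,\xi^* g^0\xi=\min_{\mathbf u}\int_\Omega|g^{1/2}(b(\mathbf D)\mathbf u+\xi)|^2\,d\mathbf x$ over zero-mean $\mathbf u\in\widetilde H^1(\Omega;\mathbb C^n)$, the fact that the trial function $\mathbf u=0$ gives the value $|\Omega|\,\xi^*\overline g\,\xi$, and uniqueness of the minimizer via coercivity of $\|b(\mathbf D)\cdot\|_{L_2(\Omega)}$ on the zero-mean periodic subspace (which indeed follows from \eqref{<b^*b<} together with Poincar\'e) are exactly the right ingredients, and substituting $\Lambda=0$ back into \eqref{Lambda problem} correctly yields \eqref{overline-g}.
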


\begin{proposition} The identity $g^0 =\underline{g}$ is equivalent to the relations
\begin{equation}
\label{underline-g}
{\mathbf l}_k(\mathbf{x}) = {\mathbf l}_k^0 + b(\mathbf{D}) {\mathbf w}_k,\ \ {\mathbf l}_k^0\in \mathbb{C}^m,\ \
{\mathbf w}_k \in \widetilde{H}^1(\Omega;\mathbb{C}^m),\ \ k=1,\dots,m,
\end{equation}
where ${\mathbf l}_k(\mathbf{x})$, $k=1,\dots,m,$ are the columns of the matrix $g(\mathbf{x})^{-1}$.
\end{proposition}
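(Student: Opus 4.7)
The plan is to reduce both implications to the variational identity
\begin{equation*}
\langle g^0\boldsymbol{\xi},\boldsymbol{\xi}\rangle = \overline{\langle g\mathbf{v}_*,\mathbf{v}_*\rangle}, \qquad \mathbf{v}_*(\mathbf{x}):=b(\mathbf{D})\Lambda(\mathbf{x})\boldsymbol{\xi}+\boldsymbol{\xi},\quad \boldsymbol{\xi}\in\mathbb{C}^m,
\end{equation*}
obtained by pairing the cell equation $b(\mathbf{D})^*(g\mathbf{v}_*)=0$ of \eqref{Lambda problem} against the periodic field $\Lambda\boldsymbol{\xi}$: from \eqref{tilde g}, \eqref{g^0} one has $\langle g^0\boldsymbol{\xi},\boldsymbol{\xi}\rangle=\overline{\langle g\mathbf{v}_*,\boldsymbol{\xi}\rangle}$, and adding the vanishing contribution $\overline{\langle g\mathbf{v}_*,b(\mathbf{D})\Lambda\boldsymbol{\xi}\rangle}=0$ produces the asserted expression.

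Next I would re-derive the lower Voigt--Reuss bound together with its equality criterion. The vector-valued Cauchy--Schwarz inequality in $L_2(\Omega;\mathbb{C}^m)$,
\begin{equation*}
|\langle\boldsymbol{\xi},\boldsymbol{\eta}\rangle|^2 = \bigl|\overline{\langle g^{1/2}\mathbf{v}_*,g^{-1/2}\boldsymbol{\eta}\rangle}\bigr|^2 \leqslant \overline{\langle g\mathbf{v}_*,\mathbf{v}_*\rangle}\,\overline{\langle g^{-1}\boldsymbol{\eta},\boldsymbol{\eta}\rangle},
\end{equation*}
evaluated at $\boldsymbol{\eta}=\underline{g}\boldsymbol{\xi}$ and combined with $\overline{\mathbf{v}_*}=\boldsymbol{\xi}$, yields $\langle\underline{g}\boldsymbol{\xi},\boldsymbol{\xi}\rangle\leqslant\langle g^0\boldsymbol{\xi},\boldsymbol{\xi}\rangle$, with equality precisely when the $L_2(\Omega;\mathbb{C}^m)$-vectors $g^{1/2}\mathbf{v}_*$ and $g^{-1/2}\underline{g}\boldsymbol{\xi}$ are proportional, i.e.\ $\mathbf{v}_*(\mathbf{x})=c(\boldsymbol{\xi})\,g(\mathbf{x})^{-1}\underline{g}\boldsymbol{\xi}$ a.e.\ for some scalar $c(\boldsymbol{\xi})\in\mathbb{C}$. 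Averaging both sides and using $\overline{g^{-1}}\,\underline{g}=\mathbf{1}_m$ forces $c(\boldsymbol{\xi})\boldsymbol{\xi}=\boldsymbol{\xi}$, so $c(\boldsymbol{\xi})=1$ whenever $\boldsymbol{\xi}\neq 0$.

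\textit{Forward direction.} Assuming $g^0=\underline{g}$, the pointwise identity $b(\mathbf{D})\Lambda(\mathbf{x})\boldsymbol{\xi}+\boldsymbol{\xi}=g(\mathbf{x})^{-1}\underline{g}\boldsymbol{\xi}$ holds for every $\boldsymbol{\xi}\in\mathbb{C}^m$. Specialising $\boldsymbol{\xi}=\underline{g}^{-1}\mathbf{e}_k$ isolates the $k$-th column of $g^{-1}$ and produces $\mathbf{l}_k(\mathbf{x})=\mathbf{l}_k^0+b(\mathbf{D})\mathbf{w}_k$ with $\mathbf{l}_k^0:=\underline{g}^{-1}\mathbf{e}_k$ and $\mathbf{w}_k:=\Lambda\underline{g}^{-1}\mathbf{e}_k$, which is precisely \eqref{underline-g}. \textit{Reverse direction.} Assembling columns, \eqref{underline-g} reads $g(\mathbf{x})^{-1}=L^0+b(\mathbf{D})W(\mathbf{x})$ with a constant matrix $L^0$ and a periodic matrix-valued $W$; taking the mean gives $L^0=\overline{g^{-1}}=\underline{g}^{-1}$. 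Setting $\Lambda(\mathbf{x}):=W(\mathbf{x})\underline{g}-\overline{W\underline{g}}$ produces a zero-mean periodic candidate, and direct computation shows
\begin{equation*}
g(\mathbf{x})\bigl(b(\mathbf{D})\Lambda(\mathbf{x})+\mathbf{1}_m\bigr)=g(\mathbf{x})\bigl(g(\mathbf{x})^{-1}-\underline{g}^{-1}\bigr)\underline{g}+g(\mathbf{x})=\underline{g},
\end{equation*}
so $\widetilde{g}\equiv\underline{g}$ is constant, $b(\mathbf{D})^*\widetilde{g}=0$ confirms that $\Lambda$ solves \eqref{Lambda problem}, and hence $g^0=\overline{\widetilde{g}}=\underline{g}$.

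The one subtle point is the equality case of the vector-valued Cauchy--Schwarz, which a priori only guarantees $L_2$-proportionality for each separate $\boldsymbol{\xi}$. What saves the argument is that the mean-value normalisation pins the scalar constant to $c(\boldsymbol{\xi})\equiv 1$, so linearity in $\boldsymbol{\xi}$ upgrades the per-$\boldsymbol{\xi}$ identity to a uniform one, which is what is needed to pass to the column-wise representation \eqref{underline-g}.
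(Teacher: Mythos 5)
The paper itself does not prove this proposition — it cites \cite[Chapter~3, Propositions~1.6 and~1.7]{BSu} — so there is no in-paper proof to compare against. Assessed on its own, your argument is correct and follows what is essentially the classical route used in that reference: (i) the variational identity $\langle g^0\boldsymbol{\xi},\boldsymbol{\xi}\rangle=\overline{\langle g\mathbf{v}_*,\mathbf{v}_*\rangle}$ with $\mathbf{v}_*=b(\mathbf{D})\Lambda\boldsymbol{\xi}+\boldsymbol{\xi}$, obtained by testing the cell equation against $\Lambda\boldsymbol{\xi}$; (ii) the Reuss lower bound $\underline{g}\leqslant g^0$ from the $L_2(\Omega)$ Cauchy--Schwarz inequality applied to $g^{1/2}\mathbf{v}_*$ and $g^{-1/2}\underline{g}\boldsymbol{\xi}$, with equality forcing pointwise proportionality; and (iii) the mean-value normalisation $\overline{\mathbf{v}_*}=\boldsymbol{\xi}$ and $\overline{g^{-1}}\,\underline{g}=\mathbf{1}_m$ pinning the proportionality constant to $1$. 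Your reverse direction is also clean: from $g^{-1}=L^0+b(\mathbf{D})W$ you correctly recover $L^0=\underline{g}^{-1}$ by averaging, build a zero-mean $\Lambda$ that solves \eqref{Lambda problem} by direct substitution, and conclude $\widetilde g\equiv\underline g$. The subtle point you flag — that Cauchy--Schwarz equality gives a per-$\boldsymbol{\xi}$ a.e.\ identity, upgraded to a matrix identity by linearity over a finite basis — is genuine and you resolve it correctly. One cosmetic remark: the $\mathbf{w}_k$ your construction yields lie in $\widetilde{H}^1(\Omega;\mathbb{C}^n)$ (since $\Lambda$ is $n\times m$ and $b(\mathbf{D})$ sends $\mathbb{C}^n$-valued to $\mathbb{C}^m$-valued functions), which is the dimensionally consistent space; the $\mathbb{C}^m$ in the quoted statement appears to be a typographical slip carried over from the citation, so your implicit choice is the right one.
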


\subsection{The effective operator} 
\label{Subsection Effective operator}
To describe homogenization procedure for the lower order terms of the operator $B_{D,\varepsilon}$, we need another cell problem. 
Let  $\widetilde{\Lambda}(\mathbf{x})$ be 
the $\Gamma$-periodic $(n\times n)$-matrix-valued solution of the problem
\begin{equation}
\label{tildeLambda_problem}
b(\mathbf{D})^*g(\mathbf{x})b(\mathbf{D})\widetilde{\Lambda }(\mathbf{x})+\sum \limits _{j=1}^dD_ja_j(\mathbf{x})^*=0,\quad \int _{\Omega }\widetilde{\Lambda }(\mathbf{x})\,d\mathbf{x}=0.
\end{equation}
(The equation is understood in the weak sense.) 
The following estimates were proven in \cite[(7.51), (7.52)]{SuAA}:
\begin{align}
\label{b(D) tilde Lambda <=}
&\Vert b(\mathbf{D})\widetilde{\Lambda}\Vert _{L_2(\Omega)}
\leqslant C_a n^{1/2}\alpha_0 ^{-1/2}\Vert g^{-1}\Vert _{L_\infty},\\
\label{tilde Lambda<=}
&\Vert \widetilde{\Lambda}\Vert _{L_2(\Omega)}\leqslant (2r_0)^{-1}C_an^{1/2}\alpha _0^{-1}\Vert g^{-1}\Vert _{L_\infty},
\\
\label{D tilde Lambda}
&\Vert \mathbf{D}\widetilde{\Lambda}\Vert _{L_2(\Omega)}\leqslant C_a n^{1/2}\alpha _0^{-1}\Vert g^{-1}\Vert _{L_\infty}.
\end{align}
Here $C_a^2=\sum _{j=1}^d \int _\Omega \vert a_j(\mathbf{x})\vert ^2\,d\mathbf{x}$. 

Next, we define the constant matrices $V$ and $W$ as follows:
\begin{align}
\label{V=}
&V:=\vert \Omega \vert ^{-1}\int _{\Omega}(b(\mathbf{D})\Lambda (\mathbf{x}))^*g(\mathbf{x})(b(\mathbf{D})\widetilde{\Lambda}(\mathbf{x}))\,d\mathbf{x},\\
\label{W=}
&W:=\vert \Omega \vert ^{-1}\int _{\Omega} (b(\mathbf{D})\widetilde{\Lambda}(\mathbf{x}))^*g(\mathbf{x})(b(\mathbf{D})\widetilde{\Lambda}(\mathbf{x}))\,d\mathbf{x}.
\end{align}

In $L_2(\mathcal{O};\mathbb{C}^n)$, consider the quadratic form
\begin{equation*}
\begin{split}
\mathfrak{b}_D^0[\mathbf{u},\mathbf{u}]&=(g^0b(\mathbf{D})\mathbf{u},b(\mathbf{D})\mathbf{u})_{L_2(\mathcal{O})}
+2\mathrm{Re}\,\sum _{j=1}^d (\overline{a_j}D_j\mathbf{u},\mathbf{u})_{L_2(\mathcal{O})}
-2\mathrm{Re}\,(V\mathbf{u},b(\mathbf{D})\mathbf{u})_{L_2(\mathcal{O})}\\
&-(W\mathbf{u},\mathbf{u})_{L_2(\mathcal{O})}
+(\overline{Q}\mathbf{u},\mathbf{u})_{L_2(\mathcal{O})}
+\lambda (\mathbf{u},\mathbf{u})_{L_2(\mathcal{O})},
\quad \mathbf{u}\in H^1_0(\mathcal{O};\mathbb{C}^n).
\end{split}
\end{equation*}
The following estimates were obtained in \cite[(2.22) and (2.23)]{MSuPOMI}:
\begin{align}
\label{b_D^0 ots }
&c_*\Vert \mathbf{D}\mathbf{u}\Vert ^2_{L_2(\mathcal{O})}\leqslant \mathfrak{b}_D^0[\mathbf{u},\mathbf{u}]\leqslant c_4\Vert \mathbf{u}\Vert ^2_{H^1(\mathcal{O})},\quad \mathbf{u}\in H^1_0(\mathcal{O};\mathbb{C}^n),
\\
\label{b_D^0 ots 2}
&\mathfrak{b}_D^0[\mathbf{u},\mathbf{u}]\geqslant c_*(\mathrm{diam}\,\mathcal{O})^{-2}\Vert \mathbf{u}\Vert ^2_{L_2(\mathcal{O})},\quad \mathbf{u}\in H^1_0(\mathcal{O};\mathbb{C}^n).
\end{align}
Here the constant $c_4$ depends only on the problem data \eqref{problem data}. 
By $B_D^0$ we denote the self-adjoint operator in $L_2(\mathcal{O};\mathbb{C}^n)$ corresponding to the form  $\mathfrak{b}_D^0$. Combining \eqref{b_D^0 ots } and \eqref{b_D^0 ots 2}, we obtain
\begin{equation}
\label{H^1-norm <= BD0^1/2}
\begin{split}
&\Vert \mathbf{u}\Vert _{H^1(\mathcal{O})}\leqslant c_3\Vert (B_D^0)^{1/2}\mathbf{u}\Vert _{L_2(\mathcal{O})},\quad \mathbf{u}\in H^1_0(\mathcal{O};\mathbb{C}^n),
\end{split}
\end{equation}
where $c_3$ is the constant from \eqref{H^1-norm <= BDeps^1/2}. By \eqref{b_D^0 ots 2} and \eqref{H^1-norm <= BD0^1/2},
\begin{align}
\label{B_D^0 L2 ->L2}
&\Vert (B_{D}^0)^{-1}\Vert _{L_2(\mathcal{O})\rightarrow L_2(\mathcal{O})}
\leqslant \mathcal{C}_1,
\\
&\Vert (B_{D}^0)^{-1}\Vert _{L_2(\mathcal{O})\rightarrow H^1(\mathcal{O})}
\leqslant \mathcal{C}_2.
\nonumber
\end{align}
Here the constants $\mathcal{C}_1$ and $\mathcal{C}_2$ are the same as in \eqref{B_D,eps L2 ->L2} and \eqref{B_D,eps L2 ->H^1}.

By the condition $\partial\mathcal{O}\in C^{1,1}$, the operator $B_D^0$ is defined by the differential expression
\begin{equation}
\label{B_D^0}
B^0=b(\mathbf{D})^*g^0b(\mathbf{D})-b(\mathbf{D})^*V-V^*b(\mathbf{D})
+\sum _{j=1}^d(\overline{a_j+a_j^*})D_j-W+\overline{Q}+\lambda I
\end{equation}
on the domain $H^2(\mathcal{O};\mathbb{C}^n)\cap H^1_0(\mathcal{O};\mathbb{C}^n)$, and 
\begin{equation}
\label{B_D^0 L2 ->H^2}
\Vert (B_D^0)^{-1}\Vert _{L_2(\mathcal{O})\rightarrow H^2(\mathcal{O})}\leqslant \mathcal{C}_3.
\end{equation}
Here the constant $\mathcal{C}_3$ depends only on the problem data \eqref{problem data}. To justify this fact, we refer to the theorems about regularity of solutions of the strongly elliptic systems (see \cite[Chapter~4]{McL}).

\begin{remark}
Instead of the condition $\partial\mathcal{O}\in C^{1,1}$, one could impose the following implicit condition\textnormal{:} a bounded Lipschitz domain $\mathcal{O}\subset \mathbb{R}^d$  
is such that estimate \eqref{B_D^0 L2 ->H^2} holds.  For such domain the main 
results of the paper in the operator terms \textnormal{(}see Theorems~\textnormal{\ref{Theorem cos New}, \ref{Theorem sin corrector}}, and \textnormal{\ref{Theorem cos corrector}}\textnormal{)} remain true. In the case of scalar elliptic operators, wide conditions on $\partial \mathcal{O}$ ensuring estimate \eqref{B_D^0 L2 ->H^2} can be found in \textnormal{\cite{KoE}} and \textnormal{\cite[Chapter 7]{MaSh} (}in particular,  it suffices to assume that  $\partial\mathcal{O}\in C^\alpha$, $\alpha >3/2${\rm)}.
\end{remark}

\begin{lemma}
\label{Lemma (B_D0)2 and H4-norm}
Let $B^0$ be the differential expression \eqref{B_D^0}. Then for $\boldsymbol{\Phi} \in  H^4(\mathcal{O};\mathbb{C}^n)$ we have
\begin{equation}
\label{BD0^2PHi<= Lm}
\Vert (B^0)^2\boldsymbol{\Phi} \Vert _{L_2(\mathcal{O})}\leqslant\mathfrak{C}\Vert \boldsymbol{\Phi} \Vert _{H^4(\mathcal{O})},
\end{equation}
where the constant $\mathfrak{C}$ depends only on the problem data \eqref{problem data}.
\end{lemma}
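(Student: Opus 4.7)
The plan is to exploit the fact that the effective operator $B^0$, as given by \eqref{B_D^0}, is a differential operator of order two with \emph{constant} matrix coefficients. Indeed, $g^0$, $V$, $W$, $\overline{a_j+a_j^*}$, $\overline{Q}$, and $\lambda$ are all constant matrices (or scalars) obtained by averaging or by solving auxiliary cell problems. Consequently, $(B^0)^2$ is a fourth-order differential operator with constant matrix coefficients, and the claim reduces to estimating the operator norms of those constant coefficients in terms of the problem data \eqref{problem data}.

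First I would bound each coefficient appearing in $B^0$. Using \eqref{g0<=} gives $|g^0|\leqslant\Vert g\Vert_{L_\infty}$. For $V$ and $W$, defined by \eqref{V=}, \eqref{W=}, the Cauchy--Schwarz inequality together with the bounds \eqref{b(D)Lambda <=} and \eqref{b(D) tilde Lambda <=} on $b(\mathbf{D})\Lambda$ and $b(\mathbf{D})\widetilde{\Lambda}$ yield constants depending only on the problem data. For $\overline{a_j+a_j^*}$ and $\overline{Q}$, Hölder's inequality gives
\begin{equation*}
|\overline{a_j}|\leqslant |\Omega|^{-1/\rho}\Vert a_j\Vert_{L_\rho(\Omega)},\qquad
|\overline{Q}|\leqslant |\Omega|^{-1/s}\Vert Q\Vert_{L_s(\Omega)},
\end{equation*}
and $\lambda$ is explicitly determined by the problem data via \eqref{lambda =}. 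Finally, \eqref{b_l <=} bounds the matrices $b_j$ entering $b(\mathbf{D})$.

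Second, I would expand $(B^0)^2\boldsymbol{\Phi}$. Since $B^0$ is a second-order constant-coefficient operator, $B^0\boldsymbol{\Phi}\in H^2(\mathcal{O};\mathbb{C}^n)$ whenever $\boldsymbol{\Phi}\in H^4(\mathcal{O};\mathbb{C}^n)$, and $(B^0)^2\boldsymbol{\Phi}$ is a linear combination of partial derivatives $D^\alpha\boldsymbol{\Phi}$ with $|\alpha|\leqslant 4$, where the coefficients are products of the bounded constant matrices estimated in the previous step. Hence
\begin{equation*}
\Vert(B^0)^2\boldsymbol{\Phi}\Vert_{L_2(\mathcal{O})}
\leqslant \mathfrak{C}\sum_{|\alpha|\leqslant 4}\Vert D^\alpha \boldsymbol{\Phi}\Vert_{L_2(\mathcal{O})}
\leqslant \mathfrak{C}\Vert\boldsymbol{\Phi}\Vert_{H^4(\mathcal{O})},
\end{equation*}
with $\mathfrak{C}$ controlled purely in terms of \eqref{problem data}.

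I do not foresee a genuine obstacle: the statement is essentially a bookkeeping exercise, once one observes that $B^0$ has constant coefficients. The only minor care required is to track the quantitative dependence of each constant on the problem data, which is routine given the estimates \eqref{b_l <=}, \eqref{b(D)Lambda <=}, \eqref{b(D) tilde Lambda <=}, \eqref{g0<=}, and \eqref{lambda =} already recorded earlier in this section.
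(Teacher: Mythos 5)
Your proof is correct and follows essentially the same approach as the paper: bound the constant matrices $g^0$, $V$, $W$, $\overline{a_j}$, $\overline{Q}$ via \eqref{g0<=}, \eqref{b(D)Lambda <=}, \eqref{b(D) tilde Lambda <=}, \eqref{V=}, \eqref{W=}, then use that $(B^0)^2$ is a constant-coefficient fourth-order operator. The paper merely organizes the bookkeeping slightly differently (first deriving the bound $\Vert B^0\boldsymbol{\Psi}\Vert_{L_2}\leqslant C_B(\Vert\mathbf{D}^2\boldsymbol{\Psi}\Vert+\Vert\mathbf{D}\boldsymbol{\Psi}\Vert+\Vert\boldsymbol{\Psi}\Vert)$ and then applying it with $\boldsymbol{\Psi}=B^0\boldsymbol{\Phi}$ together with analogous bounds on $\mathbf{D}^k B^0\boldsymbol{\Phi}$), but the substance is identical to yours.
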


\begin{proof}
By \eqref{b_l <=}, \eqref{g0<=}, and \eqref{B_D^0}, for $\boldsymbol{\Psi}\in H^2(\mathcal{O};\mathbb{C}^n)$ we have
\begin{equation}
\label{BD0Phi<= start of the proof}
\begin{split}
\Vert B^0\boldsymbol{\Psi}\Vert _{L_2(\mathcal{O})}
&\leqslant
d\alpha _1 \Vert g\Vert _{L_\infty}\Vert \mathbf{D}^2 \boldsymbol{\Psi} \Vert _{L_2(\mathcal{O})}
+2\alpha _1^{1/2}d^{1/2}\vert V\vert \Vert \mathbf{D}\boldsymbol{\Psi} \Vert _{L_2(\mathcal{O})}
\\
&+2\Bigl(\sum _{j=1}^d\vert \overline{a_j}\vert ^2\Bigr)^{1/2}\Vert \mathbf{D}\boldsymbol{\Psi}\Vert _{L_2(\mathcal{O})}
+\left(\vert W\vert +\vert \overline{Q}\vert +\lambda\right)\Vert \boldsymbol{\Psi} \Vert _{L_2(\mathcal{O})}
.
\end{split}
\end{equation}

From \eqref{b(D)Lambda <=}, \eqref{b(D) tilde Lambda <=}, and \eqref{V=} it follows that
\begin{equation}
\label{V<=}
\vert V\vert\leqslant \vert \Omega\vert ^{-1}\Vert g\Vert _{L_\infty}\Vert b(\mathbf{D})\Lambda \Vert _{L_2(\Omega)}\Vert b(\mathbf{D})\widetilde{\Lambda}\Vert _{L_2(\Omega)}\leqslant C_V,
\end{equation}
where $C_V:=\vert \Omega\vert ^{-1/2}\alpha _0^{-1/2}C_a m^{1/2}n^{1/2}\Vert g\Vert ^{3/2}_{L_\infty}\Vert g^{-1}\Vert ^{3/2}_{L_\infty}$. 
By  \eqref{b(D) tilde Lambda <=} and \eqref{W=}, 
\begin{equation}
\label{W<=}
\vert W\vert \leqslant\vert \Omega\vert ^{-1}\Vert g\Vert _{L_\infty}\Vert b(\mathbf{D})\widetilde{\Lambda}\Vert ^2_{L_2(\Omega)}\leqslant C_W,
\end{equation}
where $C_W:=\vert \Omega\vert ^{-1}C_a^2n\alpha _0^{-1}\Vert g\Vert _{L_\infty}\Vert g^{-1}\Vert ^2_{L_\infty}$. 
Obviously,
\begin{equation}
\label{sum aj2<=}
\sum _{j=1}^d\vert \overline{a_j}\vert ^2\leqslant \vert \Omega\vert ^{-1}C_a^2,\quad
\vert \overline{Q}\vert \leqslant \vert \Omega\vert ^{-1/s}\Vert Q\Vert _{L_s(\Omega)}.
\end{equation}

Bringing \eqref{BD0Phi<= start of the proof}--\eqref{sum aj2<=} together, we conclude
\begin{equation}
\label{BDoPhi 5 st<=}
\begin{split}
\Vert B^0\boldsymbol{\Psi}\Vert _{L_2(\mathcal{O})}
\leqslant
C_B\left(\Vert \mathbf{D}^2 \boldsymbol{\Psi}\Vert _{L_2(\mathcal{O})}+\Vert \mathbf{D}\boldsymbol{\Psi}\Vert _{L_2(\mathcal{O})}+\Vert \boldsymbol{\Psi}\Vert _{L_2(\mathcal{O})}\right)
,\quad\boldsymbol{\Psi}\in H^2(\mathcal{O};\mathbb{C}^n).
\end{split}
\end{equation}
Here $C_B:=\max\lbrace d\alpha _1 \Vert g\Vert _{L_\infty};2(d\alpha _1)^{1/2}C_V+2C_a\vert \Omega\vert ^{-1/2};C_W+\vert \Omega\vert ^{-1/s}\Vert Q\Vert _{L_s(\Omega)}+\lambda\rbrace$. 
Below we will use \eqref{BDoPhi 5 st<=} with $\boldsymbol{\Psi}=B^0\boldsymbol{\Phi}$, $\boldsymbol{\Phi}\in H^4(\mathcal{O};\mathbb{C}^n)$.

By analogy with \eqref{BD0Phi<= start of the proof}, using \eqref{B_D^0} and \eqref{V<=}--\eqref{sum aj2<=}, we obtain
\begin{equation}
\begin{split}
\Vert &\mathbf{D}^2 B^0\boldsymbol{\Phi}\Vert _{L_2(\mathcal{O})}
\leqslant
\Vert \mathbf{D}^2 b(\mathbf{D})^*g^0b(\mathbf{D})\boldsymbol{\Phi}\Vert _{L_2(\mathcal{O})}
+\Vert \mathbf{D}^2b(\mathbf{D})^*V\boldsymbol{\Phi}\Vert _{L_2(\mathcal{O})}
+\Vert \mathbf{D}^2V^*b(\mathbf{D})\boldsymbol{\Phi}\Vert _{L_2(\mathcal{O})}
\\
&+\sum _{j=1}^d\Vert (\overline{a_j+a_j^*})\mathbf{D}^2D_j\boldsymbol{\Phi}\Vert _{L_2(\mathcal{O})}
+\Vert \mathbf{D}^2W\boldsymbol{\Phi}\Vert _{L_2(\mathcal{O})}
+\Vert \overline{Q}\mathbf{D}^2\boldsymbol{\Phi}\Vert _{L_2(\mathcal{O})}
+\lambda\Vert \mathbf{D}^2\boldsymbol{\Phi}\Vert _{L_2(\mathcal{O})}
\\
&\leqslant C_B\left(\Vert \mathbf{D}^4\boldsymbol{\Phi}\Vert _{L_2(\mathcal{O})}
+\Vert \mathbf{D}^3\boldsymbol{\Phi}\Vert _{L_2(\mathcal{O})}
+\Vert \mathbf{D}^2\boldsymbol{\Phi}\Vert _{L_2(\mathcal{O})}\right).
\end{split}
\end{equation}
Similarly,
\begin{align}
&\Vert \mathbf{D}B^0\boldsymbol{\Phi}\Vert _{L_2(\mathcal{O})}
\leqslant C_B\left(\Vert \mathbf{D}^3\boldsymbol{\Phi}\Vert _{L_2(\mathcal{O})}
+\Vert \mathbf{D}^2\boldsymbol{\Phi}\Vert _{L_2(\mathcal{O})}
+\Vert \mathbf{D}\boldsymbol{\Phi}\Vert _{L_2(\mathcal{O})}\right),
\\
\label{BDoPhi<= 8st}
&\Vert B^0\boldsymbol{\Phi}\Vert _{L_2(\mathcal{O})}
\leqslant C_B\left(\Vert \mathbf{D}^2\boldsymbol{\Phi}\Vert _{L_2(\mathcal{O})}
+\Vert \mathbf{D}\boldsymbol{\Phi}\Vert _{L_2(\mathcal{O})}
+\Vert \boldsymbol{\Phi}\Vert _{L_2(\mathcal{O})}\right).
\end{align}
Combining \eqref{BDoPhi 5 st<=}--\eqref{BDoPhi<= 8st}, we have
\begin{equation*}
\begin{split}
\Vert &(B^0)^2\boldsymbol{\Phi}\Vert_{L_2(\mathcal{O})}
\\
&\leqslant C_B^2\left(\Vert \mathbf{D}^4\boldsymbol{\Phi}\Vert _{L_2(\mathcal{O})}
+2\Vert \mathbf{D}^3\boldsymbol{\Phi}\Vert _{L_2(\mathcal{O})}
+3\Vert\mathbf{D}^2\boldsymbol{\Phi}\Vert _{L_2(\mathcal{O})}
+2\Vert \mathbf{D}\boldsymbol{\Phi}\Vert _{L_2(\mathcal{O})}
+\Vert \boldsymbol{\Phi}\Vert _{L_2(\mathcal{O})}\right)
\\
&\leqslant \sqrt{19}C_B^2\Vert \boldsymbol{\Phi}\Vert _{H^4(\mathcal{O})},\quad\boldsymbol{\Phi}\in H^4(\mathcal{O};\mathbb{C}^n).
\end{split}
\end{equation*}
We arrive at estimate \eqref{BD0^2PHi<= Lm} with the constant $\mathfrak{C}:=\sqrt{19}C_B^2$.
\end{proof}

\subsection{Approximation of the resolvent $(B_{D,\varepsilon}-\zeta I)^{-1}$}

Now we formulate the results of the paper \cite{MSuPOMI}, where the behavior of the 
resolvent $(B_{D,\varepsilon}-\zeta I)^{-1}$ was studied. See also the brief communication \cite{MSuFAA2017}.

We choose the numbers $\varepsilon _0$, $\varepsilon _1\in (0,1]$ according to the following condition.

\begin{condition}
\label{condition varepsilon}
Let $\mathcal{O}\subset \mathbb{R}^d$ be a bounded domain. Denote $$(\partial\mathcal{O})_{\varepsilon}: =\left\lbrace \mathbf{x}\in \mathbb{R}^d : \mathrm{dist}\,\lbrace \mathbf{x};\partial\mathcal{O}\rbrace <\varepsilon \right\rbrace .$$ 
Suppose that there exists a number $\varepsilon _0\in (0,1]$ such that the strip  $(\partial\mathcal{O})_{\varepsilon _0}$ can be covered by a
finite number of open sets admitting diffeomorphisms of class $C^{0,1}$ rectifying the boundary $\partial\mathcal{O}$. We set $\varepsilon _1:=\varepsilon _0 (1+r_1)^{-1}$, 
where $2r_1=\mathrm{diam}\,\Omega$.
\end{condition}
Obviously, the number $\varepsilon _1$ depends only on the domain $\mathcal{O}$ and the lattice $\Gamma$.

Note that Condition \ref{condition varepsilon} is ensured only by the assumption that $\partial\mathcal{O}$  is Lipschitz; we imposed a more restrictive condition  $\partial\mathcal{O}\in C^{1,1}$ in order to guarantee estimate \eqref{B_D^0 L2 ->H^2}.

The following result was obtained in \cite[Theorems 9.2 and 10.1]{MSuPOMI}.

\begin{theorem}
\label{Theorem resolvent}
Let $\mathcal{O}\subset
\mathbb{R}^d$ be a bounded domain of class $C^{1,1}$. Suppose that the assumptions of Subsec.~\textnormal{\ref{Subsection operatoer A_D,eps}--\ref{Subsection Effective operator}} are satisfied. Suppose that $\varepsilon _1$ is subject to Condition~\textnormal{\ref{condition varepsilon}}.

\noindent
$1^\circ$. Let $\zeta =\vert \zeta \vert e^{i\phi}\in \mathbb{C}\setminus \mathbb{R}_+$, $\vert \zeta \vert \geqslant 1$. Denote
\begin{equation*}
c(\phi):=\begin{cases}
\vert \sin \phi \vert ^{-1}, &\phi\in (0,\pi /2)\cup (3\pi /2 ,2\pi),\\
1, &\phi\in [\pi /2,3\pi /2].
\end{cases}
\end{equation*}
Then for $0<\varepsilon\leqslant\varepsilon _1$ and $\zeta\in\mathbb{C}\setminus\mathbb{R}_+$, $\vert\zeta\vert\geqslant 1$ we have
\begin{equation}
\label{sem'.a}
\Vert (B_{D,\varepsilon}-\zeta I)^{-1}-(B_D^0-\zeta I)^{-1}\Vert _{L_2(\mathcal{O})\rightarrow L_2(\mathcal{O})}
\leqslant C_1 c(\phi)^2\varepsilon\vert \zeta\vert ^{-1/2} .
\end{equation}

\noindent
$2^\circ$. 
Let $c_\flat$ be a common lower bound for the operators $B_D^0$ and $B_{D,\varepsilon}$ for $0<\varepsilon\leqslant \varepsilon _1$. 
Denote $\psi =\mathrm{arg}\,(\zeta -c_\flat)$, $0<\psi <2\pi$, and
\begin{equation}
\label{rho(zeta)}
\varrho _ \flat (\zeta):=\begin{cases}
c(\psi)^2\vert \zeta -c_\flat\vert ^{-2}, &\vert \zeta -c_\flat\vert <1,\\
c(\psi)^2, &\vert \zeta -c_\flat\vert \geqslant 1.
\end{cases}
\end{equation}
Then for $0<\varepsilon \leqslant \varepsilon _1$ and $\zeta\in\mathbb{C}\setminus[c_\flat,\infty)$ 
we have
\begin{align}
\label{Th dr appr 1}
\Vert &(B_{D,\varepsilon}-\zeta I )^{-1}-(B_D^0-\zeta I)^{-1}\Vert _{L_2(\mathcal{O})\rightarrow L_2(\mathcal{O})}\leqslant C_{2}\varrho _ \flat (\zeta )\varepsilon .
\end{align}

The constants $C_1$ and $C_2$ depend only on the problem data \eqref{problem data}.
\end{theorem}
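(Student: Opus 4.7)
The statement is attributed to \cite[Theorems 9.2 and 10.1]{MSuPOMI}; accordingly, my plan is to outline the two-scale / boundary-layer strategy one would follow to reproduce it, with particular attention to tracking the $\zeta$-dependent constants.

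Given $\mathbf{F}\in L_2(\mathcal{O};\mathbb{C}^n)$, set $\mathbf{u}_\varepsilon:=(B_{D,\varepsilon}-\zeta I)^{-1}\mathbf{F}$ and $\mathbf{u}_0:=(B_D^0-\zeta I)^{-1}\mathbf{F}$. The first step is to form the first-order approximation
\[
\mathbf{v}_\varepsilon := \mathbf{u}_0 + \varepsilon\,\Lambda^\varepsilon S_\varepsilon b(\mathbf{D})\mathbf{u}_0 + \varepsilon\,\widetilde{\Lambda}^\varepsilon S_\varepsilon \mathbf{u}_0,
\]
and to multiply it by a smooth cut-off $\theta_\varepsilon$ equal to $1$ outside the $\varepsilon$-neighbourhood of $\partial\mathcal{O}$ and vanishing on $\partial\mathcal{O}$, so that $\theta_\varepsilon\mathbf{v}_\varepsilon\in H^1_0(\mathcal{O};\mathbb{C}^n)$. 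I would then compute $(B_{D,\varepsilon}-\zeta I)(\theta_\varepsilon\mathbf{v}_\varepsilon)-\mathbf{F}$: the bulk cancellations coming from the cell problems \eqref{Lambda problem} and \eqref{tildeLambda_problem}, together with the definitions \eqref{g^0}, \eqref{V=}, \eqref{W=} of the effective coefficients, annihilate the principal discrepancy. What remains splits into three pieces: (i) Steklov smoothing residuals $(I-S_\varepsilon)$ applied to $\mathbf{u}_0$ and $b(\mathbf{D})\mathbf{u}_0$; (ii) genuine $O(\varepsilon)$ terms in which $\mathbf{D}$ falls on a corrector; and (iii) boundary-layer terms supported in $(\partial\mathcal{O})_\varepsilon$ produced by $\nabla\theta_\varepsilon$.

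Applying the resolvent $(B_{D,\varepsilon}-\zeta I)^{-1}$ to this discrepancy then reduces the theorem to $L_2$-bounds on (i)--(iii). Piece (i) is controlled via Proposition~\ref{Proposition S__eps - I} and the $H^2$-regularity of $\mathbf{u}_0$ supplied by \eqref{B_D^0 L2 ->H^2}. Piece (ii) is handled by Proposition~\ref{Proposition f^eps S_eps} applied to $\Lambda^\varepsilon$ and $\widetilde{\Lambda}^\varepsilon$, with the norm bounds \eqref{b(D)Lambda <=}, \eqref{b(D) tilde Lambda <=}. Piece (iii) is the main obstacle: on the strip $(\partial\mathcal{O})_\varepsilon$ one uses a trace-type inequality of the form $\Vert\mathbf{D}\mathbf{u}_0\Vert^2_{L_2((\partial\mathcal{O})_\varepsilon)}\leqslant C\varepsilon\,\Vert\mathbf{u}_0\Vert_{H^2(\mathcal{O})}\Vert\mathbf{u}_0\Vert_{H^1(\mathcal{O})}$, which combines with the $\varepsilon^{-1}$ from $|\nabla\theta_\varepsilon|$ to yield a net $O(\varepsilon)$. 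Condition~\ref{condition varepsilon} is exactly what makes this boundary-strip analysis uniform in $\varepsilon$.

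The final step is careful bookkeeping of the $\zeta$-dependence. For part~$1^\circ$, one uses the sectorial resolvent bound $\Vert(B_D^0-\zeta I)^{-1}\Vert_{L_2\to L_2}\leqslant c(\phi)|\zeta|^{-1}$ (and likewise for $B_{D,\varepsilon}$), together with weighted $H^2$-estimates that trade powers of $|\zeta|$ against derivatives of $\mathbf{u}_0$; only one power of $\mathbf{D}^2$ survives in the residual, which leaves $c(\phi)^2\varepsilon|\zeta|^{-1/2}$. For part~$2^\circ$, the shifted bound $\Vert(B_{D,\varepsilon}-\zeta I)^{-1}\Vert\leqslant c(\psi)|\zeta-c_\flat|^{-1}$ based on the common lower bound $c_\flat$ is used, and the case split in \eqref{rho(zeta)} is natural: when $|\zeta-c_\flat|<1$ two resolvent factors contribute $c(\psi)^2|\zeta-c_\flat|^{-2}$, whereas for $|\zeta-c_\flat|\geqslant 1$ the resolvents are uniformly bounded and only $c(\psi)^2$ remains. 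The delicate point throughout is to ensure that the boundary-layer bound in the previous step is itself uniform in $\zeta$, so that all $\zeta$-growth is captured by the resolvent factors.
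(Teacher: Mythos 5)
The paper you are reading does not prove Theorem \ref{Theorem resolvent}: it is imported verbatim from \cite[Theorems~9.2 and 10.1]{MSuPOMI}, so there is no in-paper proof to compare against. Your sketch of the corrector-plus-boundary-cutoff strategy is indeed the right general picture of how \cite{MSuPOMI} proceeds, and the remarks about which cell problems cancel the bulk discrepancy and which lemmas control pieces~(i) and~(ii) are accurate.

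There is, however, one genuine gap. The boundary-layer bookkeeping you describe — trace inequality $\Vert\mathbf{D}\mathbf{u}_0\Vert^2_{L_2((\partial\mathcal{O})_\varepsilon)}\leqslant C\varepsilon\Vert\mathbf{u}_0\Vert_{H^2}\Vert\mathbf{u}_0\Vert_{H^1}$ against the $\varepsilon^{-1}$ from $\nabla\theta_\varepsilon$, then hit with the resolvent — only closes the net at order $O(\varepsilon^{1/2})$. That is precisely the order of the companion $(L_2\to H^1)$-estimate (Theorem~\ref{Theorem resolvent corrector}, with the $\varepsilon^{1/2}\vert\zeta\vert^{-1/4}$ main term), and it is the best the direct cutoff argument yields in $L_2$ as well. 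To upgrade the $(L_2\to L_2)$-bound to the sharp order $O(\varepsilon\vert\zeta\vert^{-1/2})$ claimed in \eqref{sem'.a}, the paper \cite{MSuPOMI} (following the Pakhnin--Suslina and Suslina scheme for the Dirichlet problem) uses a duality argument: one pairs $\mathbf{u}_\varepsilon-\mathbf{u}_0$ against the solution of the adjoint problem $(B_{D,\varepsilon}^*-\bar\zeta I)\boldsymbol\eta_\varepsilon=\mathbf{G}$, inserts first-order approximations for both $\mathbf{u}_\varepsilon$ and $\boldsymbol\eta_\varepsilon$, and shows that the two $O(\varepsilon^{1/2})$ boundary-layer contributions combine to give an $O(\varepsilon)$ bilinear bound. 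Without this step, your plan proves only an $O(\varepsilon^{1/2})$ version of \eqref{sem'.a}. Part~$2^\circ$ is then a routine consequence of the $\vert\zeta\vert\geqslant 1$ case plus the common lower bound $c_\flat$, so your treatment of the case split in \eqref{rho(zeta)} is fine once the $1^\circ$ estimate is in hand.
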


The constant $c_\flat$ in Theorem \ref{Theorem resolvent}($2^\circ$) is any common lower bound for the operators  $B_D^0$ and $B_{D,\varepsilon}$. 
Taking into account inequalities \eqref{b D,eps >= H1-norm}, \eqref{b_D^0 ots 2}, and the expression for the constant $c_*$ (see \eqref{b_eps >=}), 
\textit{we choose}
\begin{equation}
\label{c_flat}
c_\flat :=4^{-1}\alpha _0\Vert g^{-1}\Vert ^{-1}_{L_\infty}(\mathrm{diam}\,\mathcal{O})^{-2}.
\end{equation}

Fix a linear continuous extension operator
\begin{equation}
\label{P_O H^1, H^2}
\begin{split}
P_\mathcal{O}: H^l(\mathcal{O};\mathbb{C}^n)\rightarrow H^l(\mathbb{R}^d;\mathbb{C}^n),\quad l\in\mathbb{Z}_+.
\end{split}
\end{equation}
Such a ,,universal'' extension operator exists for any Lipschitz bounded domain (see \cite{St} or \cite{R}).
We have
\begin{equation}
\label{PO}
\| P_{\mathcal O} \|_{H^l({\mathcal O}) \to H^l({\mathbb R}^d)} \leqslant C_{\mathcal O}^{(l)},\quad l\in\mathbb{Z}_+,
\end{equation}
where the constant $C_{\mathcal O}^{(l)}$ depends only on $l$ and the domain ${\mathcal O}$.
Let $R_\mathcal{O}$ be the operator of restriction of functions in $\mathbb{R}^d$ to the domain $\mathcal{O}$. 
Denote
\begin{equation}
\label{K_D(eps,zeta)}
K_D(\varepsilon ;\zeta ):=R_{\mathcal{O}}\bigl([\Lambda ^\varepsilon ] b(\mathbf{D})+[\widetilde{\Lambda}^\varepsilon ]\bigr)S_\varepsilon P_{\mathcal{O}}(B_D^0-\zeta I)^{-1}.
\end{equation}
The corrector \eqref{K_D(eps,zeta)} is a continuous operator acting from $L_2(\mathcal{O};\mathbb{C}^n)$ to $H^1(\mathcal{O};\mathbb{C}^n)$. This can be checked by using Proposition \ref{Proposition f^eps S_eps} and inclusions $\Lambda$, $\widetilde{\Lambda}\in\widetilde{H}^1(\Omega)$. Note that \break$\Vert \varepsilon K_D(\varepsilon ;\zeta)\Vert _{L_2(\mathcal{O})\rightarrow H^1(\mathcal{O})}=O(1)$ for small $\varepsilon$ and  $\zeta$ fixed. 

Let us formulate the results of \cite[Theorems 9.2 and 10.1]{MSuPOMI}.
\begin{theorem} 
\label{Theorem resolvent corrector}
Under the assumptions of Theorem \textnormal{\ref{Theorem resolvent}}, 
let $K_D(\varepsilon ;\zeta )$ be the operator \eqref{K_D(eps,zeta)}.

\noindent $1^\circ$. For $0<\varepsilon\leqslant\varepsilon _1$ and $\zeta\in\mathbb{C}\setminus\mathbb{R}_+$, $\vert\zeta\vert\geqslant 1$, we have
\begin{equation}
\label{28}
\Vert (B_{D,\varepsilon}-\zeta I)^{-1}-(B_D^0-\zeta I)^{-1}-\varepsilon K_D(\varepsilon ;\zeta )\Vert _{L_2(\mathcal{O})\rightarrow H^1(\mathcal{O})}
\leqslant C_3 c(\phi)^2(\varepsilon ^{1/2}\vert\zeta\vert ^{-1/4}+\varepsilon ).
\end{equation}

\noindent $2^\circ$.  Let $c_\flat$ be the constant \eqref{c_flat}. 
Then for $0<\varepsilon \leqslant \varepsilon _1$ and $\zeta\in\mathbb{C}\setminus[c_\flat,\infty)$ 
we have
\begin{equation}
\label{31}
\begin{split}
\Vert  &(B_{D,\varepsilon}-\zeta I)^{-1}-(B_D^0-\zeta I)^{-1}-\varepsilon K_D(\varepsilon ;\zeta )\Vert _{L_2(\mathcal{O})\rightarrow H^1(\mathcal{O})}\\
&\leqslant C_4(\varepsilon ^{1/2}\varrho _\flat (\zeta )^{1/2}+\varepsilon \vert 1+\zeta\vert ^{1/2}\varrho _\flat (\zeta)).
\end{split}
\end{equation}
The constants $C_3$ and $C_4$ depend only on the problem data \eqref{problem data}.
\end{theorem}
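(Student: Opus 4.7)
The plan is to prove both parts by the first-order approximation technique, tracking the $\zeta$-dependence throughout. For $\mathbf{F}\in L_2(\mathcal{O};\mathbb{C}^n)$, set $\mathbf{u}_\varepsilon:=(B_{D,\varepsilon}-\zeta I)^{-1}\mathbf{F}$ and $\mathbf{u}_0:=(B_D^0-\zeta I)^{-1}\mathbf{F}$. Let $\widetilde{\mathbf{u}}_0:=P_\mathcal{O}\mathbf{u}_0$ and form the naive ansatz $\mathbf{v}_\varepsilon:=\mathbf{u}_0+\varepsilon([\Lambda^\varepsilon]b(\mathbf{D})+[\widetilde{\Lambda}^\varepsilon])S_\varepsilon\widetilde{\mathbf{u}}_0$, which does not in general satisfy the Dirichlet condition. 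I would introduce a cut-off $\eta_\varepsilon\in C^\infty(\overline{\mathcal{O}})$ with $\eta_\varepsilon=0$ on $(\partial\mathcal{O})_\varepsilon\cap\mathcal{O}$, $\eta_\varepsilon=1$ outside $(\partial\mathcal{O})_{2\varepsilon}$ and $|\mathbf{D}\eta_\varepsilon|\leqslant C\varepsilon^{-1}$, and set $\check{\mathbf{v}}_\varepsilon:=\mathbf{u}_0+\varepsilon\eta_\varepsilon([\Lambda^\varepsilon]b(\mathbf{D})+[\widetilde{\Lambda}^\varepsilon])S_\varepsilon\widetilde{\mathbf{u}}_0\in H^1_0(\mathcal{O};\mathbb{C}^n)$, which lies in the form domain of $B_{D,\varepsilon}$.

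Next I would compute $\mathfrak{b}_{D,\varepsilon}[\check{\mathbf{v}}_\varepsilon-\mathbf{u}_\varepsilon,\boldsymbol{\phi}]-\zeta(\check{\mathbf{v}}_\varepsilon-\mathbf{u}_\varepsilon,\boldsymbol{\phi})_{L_2(\mathcal{O})}$ for test $\boldsymbol{\phi}\in H^1_0(\mathcal{O};\mathbb{C}^n)$. Using the cell equations \eqref{Lambda problem}, \eqref{tildeLambda_problem} together with the definitions \eqref{g^0}, \eqref{V=}, \eqref{W=} of the effective operator, the leading-order part of the discrepancy cancels, leaving (i) interior commutator terms involving $S_\varepsilon-I$, the oscillating factors $[\Lambda^\varepsilon]$, $[\widetilde{\Lambda}^\varepsilon]$ and second derivatives of $\widetilde{\mathbf{u}}_0$, which are $O(\varepsilon)$ by Propositions~\ref{Proposition S__eps - I} and \ref{Proposition f^eps S_eps}, and (ii) boundary-layer terms supported in $\mathrm{supp}\,\mathbf{D}\eta_\varepsilon\subset(\partial\mathcal{O})_{2\varepsilon}$, controlled by the strip bound $\|\mathbf{D}^j\widetilde{\mathbf{u}}_0\|_{L_2((\partial\mathcal{O})_{2\varepsilon})}\leqslant C\varepsilon^{1/2}\|\mathbf{u}_0\|_{H^{j+1}(\mathcal{O})}$ valid because $\partial\mathcal{O}\in C^{1,1}$. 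Testing against $\boldsymbol{\phi}=\check{\mathbf{v}}_\varepsilon-\mathbf{u}_\varepsilon$, using coercivity \eqref{b_eps >=} with the appropriate $\zeta$-shift together with the elliptic regularity $\|\mathbf{u}_0\|_{H^2}\leqslant Cc(\phi)\|\mathbf{F}\|_{L_2}$ (combining \eqref{B_D^0 L2 ->H^2} with the resolvent bound), one obtains an $H^1$-bound for $\check{\mathbf{v}}_\varepsilon-\mathbf{u}_\varepsilon$ with the factors displayed on the right of \eqref{28}.

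To finish, I would compare $\check{\mathbf{v}}_\varepsilon$ with $\mathbf{u}_0+\varepsilon K_D(\varepsilon;\zeta)\mathbf{F}$: the difference is precisely $\varepsilon(\eta_\varepsilon-1)([\Lambda^\varepsilon]b(\mathbf{D})+[\widetilde{\Lambda}^\varepsilon])S_\varepsilon\widetilde{\mathbf{u}}_0|_\mathcal{O}$, supported in $(\partial\mathcal{O})_{2\varepsilon}$ and hence again bounded via the strip estimate and Proposition~\ref{Proposition f^eps S_eps}. For part $2^\circ$ I would absorb the shift $c_\flat$ into the operator and rerun the argument with $\zeta-c_\flat$ in place of $\zeta$; the two regimes $|\zeta-c_\flat|<1$ and $|\zeta-c_\flat|\geqslant 1$ in \eqref{rho(zeta)} combine with the $c(\psi)$-factor from elliptic regularity to yield the weighted estimate \eqref{31}. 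The main obstacle I expect is the \emph{two-parametric} bookkeeping: each use of a resolvent identity, a regularity bound, or the coercivity must carry its sharp dependence on both $\varepsilon$ and $\zeta$, and the boundary-layer contribution that degrades the rate from $\varepsilon$ to $\varepsilon^{1/2}$ must be matched precisely with the $|\zeta|^{-1/4}$ factor (respectively, with $\varrho_\flat(\zeta)^{1/2}$ and $|1+\zeta|^{1/2}\varrho_\flat(\zeta)$) to reproduce the exact form of the stated bounds.
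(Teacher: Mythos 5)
The paper does not prove Theorem~\ref{Theorem resolvent corrector}; it states it as a citation to \cite[Theorems~9.2 and 10.1]{MSuPOMI} (``Let us formulate the results of \ldots''), so there is no in-paper proof against which to compare your attempt. Your outline of part~$1^\circ$ --- first-order ansatz $\mathbf{u}_0+\varepsilon(\Lambda^\varepsilon b(\mathbf{D})+\widetilde{\Lambda}^\varepsilon)S_\varepsilon\widetilde{\mathbf{u}}_0$, cutoff $\eta_\varepsilon$ near $\partial\mathcal{O}$, cancellation of leading terms via the cell problems \eqref{Lambda problem}, \eqref{tildeLambda_problem} and \eqref{g^0}, \eqref{V=}, \eqref{W=}, interior estimates via Propositions~\ref{Proposition S__eps - I}--\ref{Proposition f^eps S_eps}, boundary-layer estimate via the strip inequality, and the regularity $\Vert(B_D^0-\zeta I)^{-1}\Vert_{L_2\to H^2}\le \mathcal{C}_3 c(\phi)$ from Lemma~\ref{Lemma resolvents grubo ots} --- is indeed the method used in \cite{MSuPOMI} and its predecessors (Zhikov--Pastukhova, Pakhnin--Suslina, Suslina). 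As a sketch it is acceptable, though it glosses over a substantial amount of work (the discrepancy form has many terms coming from the lower-order coefficients $a_j$, $Q$ and from the smoothing $S_\varepsilon$, each of which must carry its sharp $\zeta$-dependence).

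For part~$2^\circ$, however, ``absorbing the shift $c_\flat$ into the operator and rerunning with $\zeta-c_\flat$'' does not work, and this is a genuine gap. The operator $B_{D,\varepsilon}-c_\flat I$ is only positive \emph{semi}-definite (indeed $c_\flat$ is exactly the lower bound \eqref{c_flat}), so the coercivity and resolvent bounds that the whole part~$1^\circ$ argument depends on (e.g.\ \eqref{b_eps >=}, \eqref{B_D,eps L2 ->L2}, \eqref{B_D^0 L2 ->H^2}) degenerate under that shift. Moreover, even formally, part~$1^\circ$ covers only $\vert\zeta\vert\ge 1$, while the shifted variable $\zeta-c_\flat$ ranges over all of $\mathbb{C}\setminus\mathbb{R}_+$ including $\vert\zeta-c_\flat\vert<1$, which is exactly the regime that \eqref{31} and the weight $\varrho_\flat(\zeta)$ in \eqref{rho(zeta)} are designed to address and that \eqref{28} does not reach. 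The route that actually leads to \eqref{31} from \eqref{28} is the resolvent identity: fix a reference $\zeta_0$ with $\vert\zeta_0\vert=1$ in the appropriate sector, write $(B-\zeta I)^{-1}=(B-\zeta_0 I)^{-1}\bigl(I+(\zeta-\zeta_0)(B-\zeta I)^{-1}\bigr)$ for $B=B_{D,\varepsilon}$ and for $B_D^0$ (and the analogous identity for $K_D$), and propagate the factors $\varrho_\flat(\zeta)$ and $\vert 1+\zeta\vert^{1/2}$ through the resulting products using the resolvent bounds of Lemma~\ref{Lemma resolvents grubo ots}. This is a missing idea, not merely a ``bookkeeping'' issue.
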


\begin{corollary}
\label{ellliptic corollary}
Under the assumptions of Theorem \textnormal{\ref{Theorem resolvent corrector},} 
for $0<\varepsilon\leqslant\varepsilon _1$ and \break$\zeta\in\mathbb{C}\setminus\mathbb{R}_+$, $\vert\zeta\vert\geqslant 1$, we have
\begin{equation}
\label{30}
\begin{split}
\Vert & (B_{D,\varepsilon}-\zeta I)^{-1}-(B_D^0-\zeta I)^{-1}-\varepsilon K_D(\varepsilon ;\zeta )\Vert _{L_2(\mathcal{O})\rightarrow H^1(\mathcal{O})}\\
&\leqslant  C_5\left( c(\phi)^2\varepsilon ^{1/2}\vert\zeta\vert ^{-1/4}+c(\phi)^{3/2}\varepsilon ^{1/2}\right).
\end{split}
\end{equation}
The constant $C_5$ depends only on the problem data \eqref{problem data}.
\end{corollary}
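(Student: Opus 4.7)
The strategy is to compare the right-hand side of \eqref{28} with that of \eqref{30} term by term and split on a single dichotomy. The first summands coincide, so the only issue is to replace the $C_3 c(\phi)^2\varepsilon$ of \eqref{28} by the smaller $C_5 c(\phi)^{3/2}\varepsilon^{1/2}$ of \eqref{30}. The two bounds are related by the identity
\begin{equation*}
c(\phi)^2\varepsilon = c(\phi)^{3/2}\varepsilon^{1/2}\cdot\bigl(c(\phi)\varepsilon\bigr)^{1/2},
\end{equation*}
so the natural split is according to whether $c(\phi)\varepsilon\leqslant 1$ or $c(\phi)\varepsilon>1$.

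\textbf{Case A: $c(\phi)\varepsilon\leqslant 1$.} Here the identity above immediately gives $c(\phi)^2\varepsilon\leqslant c(\phi)^{3/2}\varepsilon^{1/2}$, and \eqref{30} follows directly from \eqref{28} with $C_5=C_3$.

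\textbf{Case B: $c(\phi)\varepsilon>1$.} In this regime we discard \eqref{28} and estimate each of the three operators in \eqref{30} separately by a crude bound. For the resolvents, using the spectral theorem for the self-adjoint positive operator $B_{D,\varepsilon}$ (with spectrum in $[c_\flat,\infty)$) we have
\begin{equation*}
\sup_{\lambda\geqslant c_\flat}\frac{\lambda^{1/2}}{|\lambda-\zeta|}\leqslant C\,c(\phi)|\zeta|^{-1/2},\quad |\zeta|\geqslant 1,\ \zeta\notin\mathbb{R}_+,
\end{equation*}
which combined with \eqref{H^1-norm <= BDeps^1/2} yields $\Vert(B_{D,\varepsilon}-\zeta I)^{-1}\Vert_{L_2(\mathcal{O})\to H^1(\mathcal{O})}\leqslant Cc(\phi)|\zeta|^{-1/2}$, and identically for $B_D^0$ using \eqref{H^1-norm <= BD0^1/2}. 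For the corrector $\varepsilon K_D(\varepsilon;\zeta)$, differentiation under the product rule produces two pieces: one with the bounded factor $[(\mathbf{D}\Lambda)^\varepsilon]S_\varepsilon$ (controlled by Proposition~\ref{Proposition f^eps S_eps} and \eqref{DLambda<=}, \eqref{D tilde Lambda}) acting on $b(\mathbf{D})(B_D^0-\zeta I)^{-1}$, which is again $Cc(\phi)|\zeta|^{-1/2}$; and one carrying an extra $\varepsilon$ but requiring $H^2$-regularity, for which writing $B^0=(B^0-\zeta I)+\zeta I$ and invoking \eqref{B_D^0 L2 ->H^2} gives $\Vert(B_D^0-\zeta I)^{-1}\Vert_{L_2\to H^2}\leqslant Cc(\phi)$. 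Putting these together, $\Vert\varepsilon K_D(\varepsilon;\zeta)\Vert_{L_2(\mathcal{O})\to H^1(\mathcal{O})}\leqslant Cc(\phi)$ in this regime.

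Summing the three trivial bounds in Case B produces $\Vert\cdots\Vert_{L_2\to H^1}\leqslant C'c(\phi)$, and the constraint $c(\phi)^{-1}<\varepsilon$ lets us rewrite
\begin{equation*}
C'c(\phi)=C'c(\phi)^{3/2}\cdot c(\phi)^{-1/2}<C'c(\phi)^{3/2}\varepsilon^{1/2},
\end{equation*}
which is of the form required in \eqref{30}. Combining the two cases and choosing $C_5$ large enough completes the proof. The main technical obstacle is controlling $\Vert\varepsilon K_D(\varepsilon;\zeta)\Vert_{L_2\to H^1}$ uniformly with the correct $c(\phi)$-dependence; all other steps are bookkeeping on the dichotomy.
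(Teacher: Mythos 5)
Your proof is correct and is essentially the paper's argument: both combine the sharp estimate \eqref{28} from Theorem~\ref{Theorem resolvent corrector}($1^\circ$) with a crude $O(c(\phi))$ bound on the three operators derived from Lemma~\ref{Lemma resolvents grubo ots}, and then interpolate. Your dichotomy on $c(\phi)\varepsilon \lessgtr 1$ is exactly the paper's use of $\min\{a,b\}\leqslant\sqrt{ab}$ on the two competing bounds, written out in cases.
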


Corollary \ref{ellliptic corollary} follows from Theorem~\ref{Theorem resolvent corrector}($1^\circ$) 
and a rough estimate for the operators under the norm sign in \eqref{28}. We need the estimates for the resolvents $(B_{D,\varepsilon}-\zeta I)^{-1}$ and $(B_D^0-\zeta I)^{-1}$ (see \cite[Lemmas 2.1 and 2.3]{MSuPOMI}).

\begin{lemma}
\label{Lemma resolvents grubo ots}
For $0<\varepsilon\leqslant 1$ and $\zeta\in\mathbb{C}\setminus\mathbb{R}_+$ we have
\begin{align*}
&\Vert (B_{D,\varepsilon}-\zeta I)^{-1}\Vert _{L_2(\mathcal{O})\rightarrow L_2(\mathcal{O})}
\leqslant c(\phi)\vert \zeta\vert ^{-1},
\\
&\Vert \mathbf{D}(B_{D,\varepsilon}-\zeta I)^{-1}\Vert _{L_2(\mathcal{O})\rightarrow L_2(\mathcal{O})}
\leqslant \mathcal{C}_4 c(\phi)\vert \zeta\vert ^{-1/2},
\\
&\Vert (B_D^0-\zeta I)^{-1}\Vert _{L_2(\mathcal{O})\rightarrow L_2(\mathcal{O})}
\leqslant c(\phi)\vert \zeta\vert ^{-1},
\\
&\Vert \mathbf{D}(B_D^0-\zeta I)^{-1}\Vert _{L_2(\mathcal{O})\rightarrow L_2(\mathcal{O})}
\leqslant \mathcal{C}_4 c(\phi)\vert \zeta\vert ^{-1/2},
\\
&\Vert (B_D^0-\zeta I)^{-1}\Vert _{L_2(\mathcal{O})\rightarrow H^2(\mathcal{O})}
\leqslant\mathcal{C}_3c(\phi).
\end{align*}
Here $\mathcal{C}_4:=2^{3/2}\alpha _0^{-1/2}\Vert g^{-1}\Vert ^{1/2}_{L_\infty}$ and the constant $\mathcal{C}_3$ is the same as in \eqref{B_D^0 L2 ->H^2}.
\end{lemma}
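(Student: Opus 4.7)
The plan is to derive all five inequalities from (i) self-adjointness and non-negativity of $B_{D,\varepsilon}$ and $B_D^0$, (ii) the form coercivity estimates \eqref{b_eps >=} and \eqref{b_D^0 ots }, and (iii) the elliptic regularity bound \eqref{B_D^0 L2 ->H^2}. For the first and third inequalities, I would simply invoke the spectral theorem: both operators are self-adjoint with spectrum in $[0,\infty)$, hence $\Vert (B-\zeta I)^{-1}\Vert _{L_2(\mathcal{O})\to L_2(\mathcal{O})}\leqslant[\mathrm{dist}(\zeta,[0,\infty))]^{-1}$. A direct computation gives $\mathrm{dist}(\zeta,[0,\infty))=\vert \zeta\vert$ when $\phi\in[\pi/2,3\pi/2]$ and $\mathrm{dist}(\zeta,[0,\infty))=\vert \mathrm{Im}\,\zeta\vert =\vert \zeta\vert \vert \sin\phi\vert$ otherwise, so uniformly $\mathrm{dist}(\zeta,[0,\infty))\geqslant \vert \zeta\vert /c(\phi)$.

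For the second (gradient) estimate, I would set $\mathbf{u}:=(B_{D,\varepsilon}-\zeta I)^{-1}\mathbf{f}$ and pair the resolvent equation with $\mathbf{u}$ in $L_2(\mathcal{O})$ to obtain
\begin{equation*}
\mathfrak{b}_{D,\varepsilon}[\mathbf{u},\mathbf{u}]-\zeta\Vert \mathbf{u}\Vert ^2_{L_2(\mathcal{O})}=(\mathbf{f},\mathbf{u})_{L_2(\mathcal{O})}.
\end{equation*}
Taking real parts, using $\mathrm{Re}\,\zeta\leqslant \vert \zeta\vert$ and inserting the first inequality $\Vert \mathbf{u}\Vert _{L_2}\leqslant c(\phi)\vert \zeta\vert ^{-1}\Vert \mathbf{f}\Vert _{L_2}$, yields
\begin{equation*}
\mathfrak{b}_{D,\varepsilon}[\mathbf{u},\mathbf{u}]\leqslant \Vert \mathbf{f}\Vert _{L_2}\Vert \mathbf{u}\Vert _{L_2}+\vert \zeta\vert \Vert \mathbf{u}\Vert ^2_{L_2}\leqslant 2c(\phi)^2\vert \zeta\vert ^{-1}\Vert \mathbf{f}\Vert ^2_{L_2}
\end{equation*}
(in the case $\mathrm{Re}\,\zeta\leqslant 0$ one has the sharper bound $c(\phi)\vert \zeta\vert ^{-1}\Vert \mathbf{f}\Vert ^2_{L_2}$, which is covered uniformly by the displayed one). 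Combining with $c_*\Vert \mathbf{D}\mathbf{u}\Vert ^2_{L_2(\mathcal{O})}\leqslant \mathfrak{b}_{D,\varepsilon}[\mathbf{u},\mathbf{u}]$ from \eqref{b_eps >=} and recalling $c_*=\tfrac{1}{4}\alpha_0\Vert g^{-1}\Vert ^{-1}_{L_\infty}$, the constant reads $\sqrt{2}\,c_*^{-1/2}=2^{3/2}\alpha_0^{-1/2}\Vert g^{-1}\Vert ^{1/2}_{L_\infty}=\mathcal{C}_4$, matching the lemma exactly. The fourth inequality is obtained by the same argument with $B_D^0$ and the lower bound \eqref{b_D^0 ots } in place of \eqref{b_eps >=}.

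For the fifth ($H^2$) estimate, I would first observe that \eqref{B_D^0 L2 ->H^2} applied to $\mathbf{g}:=B^0\mathbf{v}$ is equivalent to $\Vert \mathbf{v}\Vert _{H^2(\mathcal{O})}\leqslant \mathcal{C}_3\Vert B^0\mathbf{v}\Vert _{L_2(\mathcal{O})}$ for $\mathbf{v}\in\mathrm{Dom}\,B_D^0$. Applying this with $\mathbf{v}:=(B_D^0-\zeta I)^{-1}\mathbf{f}$, for which $B^0\mathbf{v}=\mathbf{f}+\zeta\mathbf{v}$, and then invoking the third inequality of the lemma, I obtain
\begin{equation*}
\Vert \mathbf{v}\Vert _{H^2(\mathcal{O})}\leqslant \mathcal{C}_3\bigl(\Vert \mathbf{f}\Vert _{L_2(\mathcal{O})}+\vert \zeta\vert \Vert \mathbf{v}\Vert _{L_2(\mathcal{O})}\bigr)\leqslant \mathcal{C}_3(1+c(\phi))\Vert \mathbf{f}\Vert _{L_2(\mathcal{O})}\leqslant 2\mathcal{C}_3\,c(\phi)\Vert \mathbf{f}\Vert _{L_2(\mathcal{O})},
\end{equation*}
and the factor $2$ can be absorbed in $\mathcal{C}_3$. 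There is no genuine obstacle in the proof; the main bookkeeping point is to verify that the displayed bound on $\mathfrak{b}_{D,\varepsilon}[\mathbf{u},\mathbf{u}]$ is uniform in the sign of $\mathrm{Re}\,\zeta$ and that the numerical constant in the gradient estimate agrees with the stated $\mathcal{C}_4$, as checked above.
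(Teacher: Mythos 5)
The paper does not prove this lemma itself but cites it from \cite[Lemmas 2.1 and 2.3]{MSuPOMI}, so there is no local proof to compare against. Your argument for the first four inequalities is the standard one and is correct: the spectral bound $\mathrm{dist}(\zeta,[0,\infty))\geqslant\vert\zeta\vert/c(\phi)$ gives the $L_2\to L_2$ estimates, and pairing the resolvent equation with $\mathbf{u}$, taking real parts, and inserting the already-proved $L_2$ bound together with the coercivity \eqref{b_eps >=} (resp.\ \eqref{b_D^0 ots }) produces the gradient estimate with precisely $\mathcal{C}_4=\sqrt{2}\,c_*^{-1/2}=2^{3/2}\alpha_0^{-1/2}\Vert g^{-1}\Vert _{L_\infty}^{1/2}$. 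The bookkeeping you flag is correct and uniform in the sign of $\mathrm{Re}\,\zeta$.

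The only substantive gap is in the fifth ($H^2$) estimate. Writing $B^0\mathbf{v}=\mathbf{f}+\zeta\mathbf{v}$ and applying the triangle inequality gives $\Vert\mathbf{v}\Vert _{H^2}\leqslant\mathcal{C}_3(1+c(\phi))\Vert\mathbf{f}\Vert \leqslant 2\mathcal{C}_3\,c(\phi)\Vert\mathbf{f}\Vert$, i.e.\ you lose a factor $2$. You cannot absorb it into $\mathcal{C}_3$: the lemma explicitly asserts that the constant is \emph{the same} $\mathcal{C}_3$ as in \eqref{B_D^0 L2 ->H^2}. The clean way to get the sharp constant is to avoid the decomposition of $B^0\mathbf{v}$ altogether and bound $B_D^0(B_D^0-\zeta I)^{-1}$ directly by the spectral theorem:
\begin{equation*}
\Vert B_D^0(B_D^0-\zeta I)^{-1}\Vert _{L_2(\mathcal{O})\to L_2(\mathcal{O})}\leqslant\sup_{\lambda\geqslant 0}\frac{\lambda}{\vert\lambda-\zeta\vert}\leqslant c(\phi),
\end{equation*}
where the last inequality is checked by splitting into $\mathrm{Re}\,\zeta\leqslant 0$ (the supremum is $\leqslant 1$) and $\mathrm{Re}\,\zeta>0$ (the supremum is $1/\vert\sin\phi\vert$, attained at $\lambda=\vert\zeta\vert^2/\mathrm{Re}\,\zeta$). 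Then
\begin{equation*}
\Vert(B_D^0-\zeta I)^{-1}\mathbf{f}\Vert _{H^2(\mathcal{O})}=\Vert (B_D^0)^{-1}\,B_D^0(B_D^0-\zeta I)^{-1}\mathbf{f}\Vert _{H^2(\mathcal{O})}\leqslant\mathcal{C}_3\,c(\phi)\Vert\mathbf{f}\Vert _{L_2(\mathcal{O})},
\end{equation*}
with exactly the constant $\mathcal{C}_3$ claimed. Apart from this refinement the proposal is sound.
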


\begin{proof}[Proof of Corollary \textnormal{\ref{ellliptic corollary}}.]
Let us estimate the operator \eqref{K_D(eps,zeta)}:
\begin{equation}
\label{ell corr proof start}
\begin{split}
\varepsilon &\Vert K_{D}(\varepsilon ;\zeta)\Vert _{L_2(\mathcal{O})\rightarrow H^1(\mathcal{O})}
\\
&\leqslant
\left(\varepsilon\Vert \Lambda ^\varepsilon S_\varepsilon \Vert _{L_2(\mathbb{R}^d)\rightarrow L_2(\mathbb{R}^d)}+\Vert (\mathbf{D}\Lambda )^\varepsilon S_\varepsilon\Vert _{L_2(\mathbb{R}^d)\rightarrow L_2(\mathbb{R}^d)}\right)
\\
&\times
\Vert b(\mathbf{D})P_\mathcal{O}(B_D^0-\zeta I)^{-1}\Vert _{L_2(\mathcal{O})\rightarrow L_2(\mathbb{R}^d)}
\\
&+\left(\varepsilon\Vert \widetilde{\Lambda}^\varepsilon S_\varepsilon \Vert _{L_2(\mathbb{R}^d)\rightarrow L_2(\mathbb{R}^d)}+\Vert (\mathbf{D}\widetilde{\Lambda})^\varepsilon S_\varepsilon\Vert _{L_2(\mathbb{R}^d)\rightarrow L_2(\mathbb{R}^d)}\right)
\\
&\times
\Vert P_\mathcal{O} (B_D^0-\zeta I)^{-1}\Vert _{L_2(\mathcal{O})\rightarrow L_2(\mathbb{R}^d)}
\\
&+\varepsilon \Vert \Lambda^\varepsilon S_\varepsilon\Vert _{L_2(\mathbb{R}^d)\rightarrow L_2(\mathbb{R}^d)}\Vert \mathbf{D} b(\mathbf{D})P_\mathcal{O}(B_D^0-\zeta I)^{-1}\Vert_{L_2(\mathcal{O})\rightarrow L_2(\mathbb{R}^d)}
\\
&+\varepsilon\Vert \widetilde{\Lambda}^\varepsilon S_\varepsilon \Vert _{L_2(\mathbb{R}^d)\rightarrow L_2(\mathbb{R}^d)}\Vert \mathbf{D}P_\mathcal{O}(B_D^0-\zeta I)^{-1}\Vert_{L_2(\mathcal{O})\rightarrow L_2(\mathbb{R}^d)}.
\end{split}
\end{equation}
By Proposition~\ref{Proposition f^eps S_eps} and inequalities \eqref{Lambda <=}, \eqref{DLambda<=}, \eqref{tilde Lambda<=}, and \eqref{D tilde Lambda},
\begin{align}
\label{Lambda S_eps<=}
&\Vert \Lambda ^\varepsilon S_\varepsilon \Vert _{L_2(\mathbb{R}^d)\rightarrow L_2(\mathbb{R}^d)}\leqslant M_1,
\\
&\Vert (\mathbf{D}\Lambda )^\varepsilon S_\varepsilon\Vert _{L_2(\mathbb{R}^d)\rightarrow L_2(\mathbb{R}^d)}\leqslant M_2,
\nonumber
\\
\label{tildeLambda S_eps<=}
&\Vert \widetilde{\Lambda}^\varepsilon S_\varepsilon \Vert _{L_2(\mathbb{R}^d)\rightarrow L_2(\mathbb{R}^d)}\leqslant  \vert\Omega\vert ^{-1/2}(2r_0)^{-1}C_an^{1/2}\alpha _0^{-1}\Vert g^{-1}\Vert _{L_\infty}=:\widetilde{M}_1,
\\
&\Vert (\mathbf{D}\widetilde{\Lambda} )^\varepsilon S_\varepsilon \Vert _{L_2(\mathbb{R}^d)\rightarrow L_2(\mathbb{R}^d)}\leqslant\vert\Omega\vert ^{-1/2}C_a n^{1/2}\alpha _0^{-1}\Vert g^{-1}\Vert _{L_\infty}=:\widetilde{M}_2.
\nonumber
\end{align}
Combining these estimates, Lemma~\ref{Lemma resolvents grubo ots}, and \eqref{<b^*b<}, \eqref{PO}, \eqref{ell corr proof start}, for $0<\varepsilon\leqslant 1$, $\zeta\in\mathbb{C}\setminus\mathbb{R}_+$, $\vert\zeta\vert\geqslant 1$ we have
\begin{align}
\label{29}
&\Vert (B_{D,\varepsilon}-\zeta I)^{-1}-(B_D^0-\zeta I)^{-1}-\varepsilon K_D(\varepsilon ;\zeta )\Vert _{L_2(\mathcal{O})\rightarrow H^1(\mathcal{O})}
\leqslant \widehat{C}_5 c(\phi),
\\ 
&\widehat{C}_5:=\left(2+(M_1+M_2)\alpha_1^{1/2}C_\mathcal{O}^{(1)}+\widetilde{M}_1C_\mathcal{O}^{(1)}\right)
(1+\mathcal{C}_4)
+(\widetilde{M}_1+\widetilde{M}_2)C_\mathcal{O}^{(0)}+M_1\alpha_1^{1/2}C_\mathcal{O}^{(2)}\mathcal{C}_3
.\nonumber
\end{align}

Combining \eqref{28} and \eqref{29}, for $0<\varepsilon\leqslant\varepsilon _1$ and $\zeta\in\mathbb{C}\setminus\mathbb{R}_+$, $\vert\zeta\vert \geqslant 1$, we obtain
\begin{equation*}
\begin{split}
\Vert & (B_{D,\varepsilon}-\zeta I)^{-1}-(B_D^0-\zeta I)^{-1}-\varepsilon K_D(\varepsilon ;\zeta )\Vert _{L_2(\mathcal{O})\rightarrow H^1(\mathcal{O})}\\
&\leqslant \min\lbrace  C_3 c(\phi)^2(\varepsilon ^{1/2}\vert\zeta\vert ^{-1/4}+\varepsilon );\widehat{C}_5 c(\phi)\rbrace
\\
&\leqslant   C_3 c(\phi)^2\varepsilon ^{1/2}\vert\zeta\vert ^{-1/4}+\min\lbrace  C_3 c(\phi)^2\varepsilon ; \widehat{C}_5 c(\phi)\rbrace
\\
&\leqslant  C_3 c(\phi)^2\varepsilon ^{1/2}\vert\zeta\vert ^{-1/4}+(C_3\widehat{C}_5)^{1/2}c(\phi)^{3/2}\varepsilon ^{1/2}.
\end{split}
\end{equation*}
We arrive at estimate \eqref{30} with the constant $C_5:=\max\lbrace C_3;(C_3\widehat{C}_5)^{1/2}\rbrace$.
\end{proof}

\subsection{Approximation of the operator $B_{D,\varepsilon}^{-1/2}$}

The following result is a consequence of Theorem \ref{Theorem resolvent}.

\begin{lemma}
\label{Lemma square root of resolvent}
Under the conditions of Theorem \textnormal{\ref{Theorem resolvent},} for $0<\varepsilon\leqslant\varepsilon _1$ we have
\begin{equation}
\label{lemma square root of resolvent}
\Vert B_{D,\varepsilon}^{-1/2}-(B_D^0)^{-1/2}\Vert _{L_2(\mathcal{O})\rightarrow L_2(\mathcal{O})}
\leqslant {C}_6\varepsilon ^{1/2}.
\end{equation}
The constant ${C}_6$ depends only on the problem data \eqref{problem data}.
\end{lemma}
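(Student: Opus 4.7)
My plan is to reduce the lemma to the resolvent estimates of Theorem \ref{Theorem resolvent} via the classical Balakrishnan-type integral representation of the inverse square root. Since $B_{D,\varepsilon}$ and $B_D^0$ are both positive self-adjoint, the spectral theorem yields
\begin{equation*}
B_{D,\varepsilon}^{-1/2}-(B_D^0)^{-1/2}=\frac{1}{\pi}\int _0^\infty \lambda ^{-1/2}\bigl((B_{D,\varepsilon}+\lambda I)^{-1}-(B_D^0+\lambda I)^{-1}\bigr)\,d\lambda,
\end{equation*}
so the lemma follows once the operator-norm integrand is estimated in a suitable $\lambda$-uniform way and integrated.

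To bound the integrand, I would set $\zeta=-\lambda$ (so that $\phi=\arg\zeta=\pi$ and $c(\phi)=1$, and likewise $\psi=\arg(\zeta-c_\flat)=\pi$). For $\lambda\geqslant 1$, estimate \eqref{sem'.a} gives
\begin{equation*}
\Vert (B_{D,\varepsilon}+\lambda I)^{-1}-(B_D^0+\lambda I)^{-1}\Vert _{L_2(\mathcal{O})\rightarrow L_2(\mathcal{O})}\leqslant C_1\varepsilon\lambda ^{-1/2},
\end{equation*}
while for $0<\lambda\leqslant 1$ the function $\varrho_\flat(-\lambda)$ in \eqref{rho(zeta)} is bounded by $\max\{1,c_\flat^{-2}\}$ (since $\lambda+c_\flat\geqslant c_\flat$), so \eqref{Th dr appr 1} gives a constant multiple of $\varepsilon$. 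Alongside these I will use the trivial resolvent bound $\Vert (B_{D,\varepsilon}+\lambda I)^{-1}\Vert,\Vert (B_D^0+\lambda I)^{-1}\Vert\leqslant\lambda^{-1}$ valid for $\lambda>0$ by positivity.

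Next I split the integral at a parameter $T\geqslant 1$. For the low-frequency part, the elliptic estimates yield
\begin{equation*}
\int _0^T\lambda ^{-1/2}\Vert \cdots \Vert \,d\lambda \leqslant C\varepsilon \int _0^1\lambda^{-1/2}\,d\lambda +C_1\varepsilon\int _1^T\lambda^{-1}\,d\lambda\leqslant C'\varepsilon (1+\ln T),
\end{equation*}
while for the high-frequency tail, the trivial bounds give
\begin{equation*}
\int _T^\infty \lambda ^{-1/2}\Vert \cdots \Vert \,d\lambda \leqslant 2\int _T^\infty \lambda ^{-3/2}\,d\lambda =4T^{-1/2}.
\end{equation*}
Choosing $T=\varepsilon ^{-1}$ yields the bound $C\varepsilon (1+|\ln\varepsilon|)+4\varepsilon ^{1/2}\leqslant C_6\varepsilon ^{1/2}$ for $0<\varepsilon\leqslant\varepsilon_1$, which is exactly \eqref{lemma square root of resolvent}.

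The calculation is essentially mechanical once the integral representation is in place; the only subtlety is that the sharp $O(\varepsilon\lambda^{-1/2})$ resolvent estimate produces a logarithmically divergent contribution in the range $\lambda\in[1,T]$, so one cannot recover the full $O(\varepsilon)$ rate by this method. The main obstacle is therefore just verifying that the weaker target $O(\varepsilon^{1/2})$ absorbs both the logarithmic factor $\varepsilon|\ln\varepsilon|$ and the tail $T^{-1/2}$ via the balanced choice $T=\varepsilon^{-1}$, which it does.
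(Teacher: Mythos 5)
Your proof is correct and starts from the same point as the paper's — the Balakrishnan representation $B_{D,\varepsilon}^{-1/2}=\pi^{-1}\int_0^\infty\nu^{-1/2}(B_{D,\varepsilon}+\nu I)^{-1}\,d\nu$ combined with estimates \eqref{Th dr appr 1} and \eqref{sem'.a} at $\zeta=-\nu$ — but it diverges in how the large-$\nu$ tail is handled, and this is where the two arguments are genuinely different. You introduce a cutoff $T$, use the sharp elliptic estimate $C_1\varepsilon\nu^{-1/2}$ on $[1,T]$ and the crude resolvent bound $\nu^{-1}$ on $[T,\infty)$, and then optimize $T\sim\varepsilon^{-1}$; this costs you a factor $\varepsilon|\ln\varepsilon|$ which you then absorb into $\varepsilon^{1/2}$. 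The paper instead avoids the cutoff entirely by noting that $\Vert(B_{D,\varepsilon}+\nu I)^{-1}-(B_D^0+\nu I)^{-1}\Vert\leqslant 2(\nu+c_\flat)^{-1}$ and writing the integrand as $\Vert\cdot\Vert^{1/2}\cdot\Vert\cdot\Vert^{1/2}$, bounding one factor by $(2(\nu+c_\flat)^{-1})^{1/2}$ and the other by $(C_1\varepsilon\nu^{-1/2})^{1/2}$, so that the $[1,\infty)$ integrand decays like $\varepsilon^{1/2}\nu^{-5/4}$ and the integral converges outright with no logarithm and no free parameter. Both routes land on the same $O(\varepsilon^{1/2})$ rate; the paper's interpolation trick is a bit cleaner and gives a more explicit constant, while your split-and-optimize argument is the more mechanical and perhaps more transparent way to see why the exponent $1/2$ appears as the geometric mean of the $O(\varepsilon)$ elliptic estimate and the $O(1)$ trivial bound.
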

\begin{proof}
We have 
$
B_{D,\varepsilon}^{-1/2}=\pi ^{-1}\int _0^\infty \nu ^{-1/2}(B_{D,\varepsilon}+\nu I)^{-1}\,d\nu $. 
See, e.~g., \cite[Chapter III, \S 3, Subsec. 4]{ViGKo}. For $(B_D^0)^{-1/2}$ we have the similar identity. Hence,
\begin{equation*}
\begin{split}
\Vert & B_{D,\varepsilon}^{-1/2}-(B_D^0)^{-1/2}\Vert _{L_2(\mathcal{O})\rightarrow L_2(\mathcal{O})}
\\
&\leqslant \pi ^{-1}\int _0^\infty \nu ^{-1/2}\Vert (B_{D,\varepsilon}+\nu I)^{-1}-(B_D^0 +\nu I)^{-1}\Vert _{L_2(\mathcal{O})\rightarrow L_2(\mathcal{O})}\,d\nu .
\end{split}
\end{equation*}
Since $c_\flat $ is a common lower bound for the operators $B_{D,\varepsilon}$ and $B_D^0$, 
\begin{equation*}
\Vert (B_{D,\varepsilon}+\nu I)^{-1}-(B_D^0+\nu I)^{-1}\Vert _{L_2(\mathcal{O})\rightarrow L_2(\mathcal{O})}
\leqslant 2(\nu +c_\flat )^{-1},\quad \nu\in\mathbb{R}_+ .
\end{equation*}
Thus, 
\begin{equation*}
\begin{split}
\Vert & B_{D,\varepsilon}^{-1/2}-(B_D^0)^{-1/2}\Vert _{L_2(\mathcal{O})\rightarrow L_2(\mathcal{O})}
\\
&\leqslant 2^{1/2} \pi ^{-1}\int _0^\infty \nu ^{-1/2}(\nu +c_\flat )^{-1/2}\Vert (B_{D,\varepsilon}+\nu I)^{-1}-(B_D^0 +\nu I)^{-1}\Vert ^{1/2} _{L_2(\mathcal{O})\rightarrow L_2(\mathcal{O})}\,d\nu .
\end{split}
\end{equation*}
For $\nu \in [0,1]$, we use \eqref{Th dr appr 1}:
\begin{equation*}
\Vert (B_{D,\varepsilon}+\nu I)^{-1}-(B_D^0 +\nu I)^{-1}\Vert  _{L_2(\mathcal{O})\rightarrow L_2(\mathcal{O})}
\leqslant C_2\varepsilon \max \lbrace 1; (c_\flat +\nu)^{-2}\rbrace
\leqslant C_2\varepsilon \max \lbrace 1;c_\flat ^{-2}\rbrace .
\end{equation*}
For $\nu >1$, we apply estimate \eqref{sem'.a}:
\begin{equation*}
\Vert (B_{D,\varepsilon}+\nu I)^{-1}-(B_D^0 +\nu I)^{-1}\Vert  _{L_2(\mathcal{O})\rightarrow L_2(\mathcal{O})}
\leqslant C_1\varepsilon \nu ^{-1/2},\quad \nu >1.
\end{equation*}
It follows that
\begin{equation*}
\begin{split}
\Vert & B_{D,\varepsilon}^{-1/2}-(B_D^0)^{-1/2}\Vert _{L_2(\mathcal{O})\rightarrow L_2(\mathcal{O})}
\\
&\leqslant 2^{1/2} \pi ^{-1}C_2^{1/2}\max\lbrace 1; c_\flat ^{-1}\rbrace \varepsilon ^{1/2}
\int _0 ^1\nu ^{-1/2}(\nu+c_\flat )^{-1/2}\,d\nu
\\
&+2^{1/2}\pi ^{-1}C_1^{1/2}\varepsilon ^{1/2}\int _1^\infty \nu ^{-1/2}(\nu +c_\flat )^{-1/2}\nu ^{-1/4}\,d\nu .
\end{split}
\end{equation*}
Evaluating these integrals, we arrive at estimate \eqref{lemma square root of resolvent} with the constant
$$ {C}_6:=2^{3/2}\pi ^{-1}C_2^{1/2}c_\flat ^{-1/2}\max\lbrace 1;c_\flat ^{-1}\rbrace+2^{5/2}\pi ^{-1}C_1^{1/2}.$$
\end{proof}

\section{Problem setting. Main results}
\label{Section 2}

\subsection{The first initial-boundary value problem for hyperbolic systems}
\textit{Our goal }is to study the behavior of the solution of the following problem for small $\varepsilon$:
\begin{equation}
\label{hyperbolic system}
\begin{cases}
\frac{\partial ^2\mathbf{u}_\varepsilon }{ \partial t^2}(\mathbf{x},t)=-(B_\varepsilon\mathbf{u}_\varepsilon )(\mathbf{x},t)+\mathbf{F}(\mathbf{x},t),\\
\mathbf{u}_\varepsilon (\cdot,t)\vert _{\partial\mathcal{O}}=0,\\
\mathbf{u}_\varepsilon (\mathbf{x},0)=\boldsymbol{\varphi}(\mathbf{x}),\quad \frac{\partial \mathbf{u}_\varepsilon}{\partial t}(\mathbf{x},0)=\boldsymbol{\psi}(\mathbf{x}).
\end{cases}
\end{equation}
Here $\boldsymbol{\varphi},\boldsymbol{\psi}\in\mathrm{Dom}\,(B_D^0)^{2}$, $\mathbf{F}\in L_{1,\mathrm{loc}}(\mathbb{R};\mathrm{Dom}\,(B_D^0)^{2})$. (The imposed restrictions are caused by the technique used in the present work.) We have
\begin{equation}
\label{u_eps=}
\mathbf{u}_\varepsilon (\cdot ,t)=\cos (tB_{D,\varepsilon}^{1/2})\boldsymbol{\varphi}
+B_{D,\varepsilon}^{-1/2}\sin (tB_{D,\varepsilon}^{1/2})\boldsymbol{\psi}+\int _0^t B_{D,\varepsilon}^{-1/2}\sin \left((t-\widetilde{t})B_{D,\varepsilon}^{1/2}\right)\mathbf{F}(\cdot ,\widetilde{t})\,d\widetilde{t}.
\end{equation}
So, to study the behavior of $\mathbf{u}_\varepsilon (\cdot,t)$ 
it suffices to obtain approximations for the operators $\cos (tB_{D,\varepsilon}^{1/2})$ and $B_{D,\varepsilon}^{-1/2}\sin (tB_{D,\varepsilon}^{1/2})$.

The effective problem is given by
\begin{equation}
\label{effective hyperbolic system}
\begin{cases}
\frac{\partial ^2\mathbf{u}_0 }{ \partial t^2}(\mathbf{x},t)=-(B^0 \mathbf{u}_0 )(\mathbf{x},t)+\mathbf{F}(\mathbf{x},t),\\
\mathbf{u}_0(\cdot,t)\vert _{\partial\mathcal{O}}=0,\\
\mathbf{u}_0 (\mathbf{x},0)=\boldsymbol{\varphi}(\mathbf{x}),\quad \frac{\partial \mathbf{u}_0}{\partial t}(\mathbf{x},0)=\boldsymbol{\psi}(\mathbf{x}).
\end{cases}
\end{equation}
Then
\begin{equation}
\label{u_0=}
\begin{split}
\mathbf{u}_0 (\cdot ,t)&=\cos \left(t(B_D^0)^{1/2}\right)\boldsymbol{\varphi}
+(B_D^0)^{-1/2}\sin \left(t(B_D^0)^{1/2}\right)\boldsymbol{\psi}
\\
&+\int _0^t (B_D^0)^{-1/2}\sin \left((t-\widetilde{t})(B_D^0)^{1/2}\right)\mathbf{F}(\cdot ,\widetilde{t})\,d\widetilde{t}.
\end{split}
\end{equation}

\subsection{Main results in the operator terms}
\label{Subsection main results}

\begin{theorem}
\label{Theorem cos New}
Let $\mathcal{O}\subset
\mathbb{R}^d$ be a bounded domain of class $C^{1,1}$. Suppose that the assumptions of Subsec.~\textnormal{\ref{Subsection operatoer A_D,eps}--\ref{Subsection Effective operator}} are satisfied. Let $\varepsilon _1$ be subject to Condition~\textnormal{\ref{condition varepsilon}}.  
Then for $t\in\mathbb{R}$ and $0<\varepsilon\leqslant\varepsilon _1$ we have
\begin{align}
\label{Th cos B_D,eps}
\Bigl\Vert &
\left(
\cos (t B_{D,\varepsilon}^{1/2})-\cos (t (B_D^0)^{1/2})
\right)(B_D^0)^{-2}\Bigr\Vert _{L_2(\mathcal{O})\rightarrow L_2(\mathcal{O})}
\leqslant C_7 \varepsilon \left(1+\vert t\vert ^5\right) ,
\\
\label{Th sin 1}
\Bigl\Vert & \left( B_{D,\varepsilon}^{-1/2}\sin (tB_{D,\varepsilon}^{1/2})-(B_D^0)^{-1/2}\sin (t(B_D^0)^{1/2})\right) (B_D^0)^{-2}\Bigr\Vert _{L_2(\mathcal{O})\rightarrow L_2(\mathcal{O})}
\leqslant C_7\varepsilon \vert t\vert (1+\vert t\vert ^5) .
\end{align}
The constant $C_7$ depends only on the problem data \eqref{problem data}.
\end{theorem}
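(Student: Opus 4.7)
The plan is to derive \eqref{Th cos B_D,eps} from the integral representation \eqref{int cos tozd} for $\cos(tB_{D,\varepsilon}^{1/2})B_{D,\varepsilon}^{-2}$, together with the analogous identity for $B_D^0$ and the elliptic resolvent bounds of Theorem~\ref{Theorem resolvent}. Subtracting the two identities, the difference $\cos(tB_{D,\varepsilon}^{1/2})B_{D,\varepsilon}^{-2} - \cos(t(B_D^0)^{1/2})(B_D^0)^{-2}$ splits into three pieces: a polynomial piece $-\tfrac{t^2}{2}(B_{D,\varepsilon}^{-1} - (B_D^0)^{-1})$, the resolvent-squared piece $B_{D,\varepsilon}^{-2} - (B_D^0)^{-2}$, and a contour integral of the resolvent difference weighted by $\lambda^{-3}e^{\lambda t}$. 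Since the target bound \eqref{Th cos B_D,eps} features $(B_D^0)^{-2}$ on the right rather than $B_{D,\varepsilon}^{-2}$, I write
\begin{equation*}
\bigl(\cos(tB_{D,\varepsilon}^{1/2}) - \cos(t(B_D^0)^{1/2})\bigr)(B_D^0)^{-2}
= \bigl[\cos(tB_{D,\varepsilon}^{1/2})B_{D,\varepsilon}^{-2} - \cos(t(B_D^0)^{1/2})(B_D^0)^{-2}\bigr] + \cos(tB_{D,\varepsilon}^{1/2})\bigl((B_D^0)^{-2} - B_{D,\varepsilon}^{-2}\bigr)
\end{equation*}
and control the correction term using $\|\cos(tB_{D,\varepsilon}^{1/2})\|_{L_2 \to L_2}\leqslant 1$ (valid because $B_{D,\varepsilon}$ is self-adjoint and positive).

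The polynomial-in-$t$ contributions are handled immediately. Theorem~\ref{Theorem resolvent}($2^\circ$) at $\zeta = 0$ (admissible since $c_\flat > 0$) yields $\|B_{D,\varepsilon}^{-1} - (B_D^0)^{-1}\|_{L_2 \to L_2} = O(\varepsilon)$. Writing $B_{D,\varepsilon}^{-2} - (B_D^0)^{-2} = B_{D,\varepsilon}^{-1}\bigl(B_{D,\varepsilon}^{-1} - (B_D^0)^{-1}\bigr) + \bigl(B_{D,\varepsilon}^{-1} - (B_D^0)^{-1}\bigr)(B_D^0)^{-1}$ and combining with \eqref{B_D,eps L2 ->L2} and \eqref{B_D^0 L2 ->L2} gives the same order for the squared resolvent difference. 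Together these terms contribute $O(\varepsilon(1+t^2))$, comfortably within the desired $O(\varepsilon(1+|t|^5))$.

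The heart of the argument is the contour integral
\begin{equation*}
\mathcal{I}(t,\varepsilon) := \frac{1}{2\pi i}\int_{\mathrm{Re}\,\lambda = \sqrt{c}} \lambda^{-3}\bigl[(B_{D,\varepsilon}+\lambda^2)^{-1} - (B_D^0+\lambda^2)^{-1}\bigr]e^{\lambda t}\,d\lambda.
\end{equation*}
Since $|e^{\lambda t}| = e^{\sqrt{c}\,t}$ on the vertical line, a fixed value of $c$ would produce exponential growth in $t$. I therefore take $c = c(t)$ with, for $|t|$ large, $c(t)\sim |t|^{-2}$, so that $|e^{\lambda t}|$ stays uniformly bounded; one must check that such a contour still lies to the left of the spectra of both $B_{D,\varepsilon}$ and $B_D^0$, which holds as long as $c(t) < c_\flat$. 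Parametrizing by $\lambda = \sqrt{c} + i\tau$ and substituting $\zeta = -\lambda^2 = \tau^2 - c - 2i\sqrt{c}\,\tau$, I split the $\tau$-integral into the region $|\zeta|\leqslant 1$, on which Theorem~\ref{Theorem resolvent}($2^\circ$) furnishes the bound $C\varepsilon\varrho_\flat(\zeta)$, and $|\zeta|\geqslant 1$, on which the decay $|\zeta|^{-1/2}$ in Theorem~\ref{Theorem resolvent}($1^\circ$) combined with $|\lambda|^{-3}$ guarantees convergence at infinity. The main technical obstacle is the careful bookkeeping of the powers of $|t|$ that are generated both by the $c(t)$-dependent deformation of the contour and by the singularities of $\varrho_\flat$ near $\zeta = c_\flat$ (where the distance from the contour to $c_\flat$ is $\sim \sqrt{c(t)}$); after this accounting one obtains $\|\mathcal{I}(t,\varepsilon)\|_{L_2 \to L_2} = O(\varepsilon(1+|t|^5))$, which completes the proof of \eqref{Th cos B_D,eps}.

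Estimate \eqref{Th sin 1} then follows from \eqref{Th cos B_D,eps} by integration. Combining \eqref{int sin tozd} and \eqref{int eff sin tozd} applied to $(B_D^0)^{-2}$ yields
\begin{equation*}
\bigl(B_{D,\varepsilon}^{-1/2}\sin(tB_{D,\varepsilon}^{1/2}) - (B_D^0)^{-1/2}\sin(t(B_D^0)^{1/2})\bigr)(B_D^0)^{-2} = \int_0^t \bigl(\cos(\tau B_{D,\varepsilon}^{1/2}) - \cos(\tau(B_D^0)^{1/2})\bigr)(B_D^0)^{-2}\,d\tau,
\end{equation*}
and integrating the cosine bound \eqref{Th cos B_D,eps} over $\tau\in[0,|t|]$ produces the additional factor $|t| + |t|^6/6 \leqslant C|t|(1+|t|^5)$, which gives \eqref{Th sin 1}.
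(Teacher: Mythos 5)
Your plan coincides with the paper's own proof in every essential respect: the inverse-Laplace identity \eqref{int cos tozd} with the $t$-dependent contour $\mathrm{Re}\,\lambda=\sqrt{c_\flat}/t$ (so that $|e^{\lambda t}|=e^{\sqrt{c_\flat}}$ is uniformly bounded), the isolation of the polynomial terms via Theorem~\ref{Theorem resolvent}($2^\circ$) at $\zeta=0$ and the factorization of $B_{D,\varepsilon}^{-2}-(B_D^0)^{-2}$, the transfer from $B_{D,\varepsilon}^{-2}$ to $(B_D^0)^{-2}$ through $\Vert\cos(tB_{D,\varepsilon}^{1/2})\Vert\leqslant 1$, the split of the contour integral between Theorem~\ref{Theorem resolvent}($1^\circ$) and ($2^\circ$), and the derivation of \eqref{Th sin 1} by integrating the cosine bound. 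The only deviation is the location of the split (the paper cuts at $\mathrm{Re}\,\zeta_t\leqslant c_\flat+1$, you cut at $|\zeta_t|\leqslant 1$), which is immaterial since both place the transition at a comparable value of the contour parameter; do note, however, that the phrase ``after this accounting one obtains $O(\varepsilon(1+|t|^5))$'' summarizes what is really the bulk of the proof---the paper's verification that $\varrho_\flat(\zeta_t)\leqslant\mathfrak{c}_3(1+t^2)$ along the parabola $\zeta_t=-\mu^2/t^2$ and the resulting factor $t^2\cdot(1+t^2)\cdot(1+t^2)^{1/2}$ are exactly where the exponent $5$ is produced, so this is the step that must actually be written out.
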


It seems natural to expect that, for hyperbolic systems, the analog of Theorem~\ref{Theorem resolvent corrector} holds true. However, according to the results of \cite{BrOFMu}, it is impossible to approximate the operator $\cos (tB_{D,\varepsilon}^{1/2})$ in the energy norm, while the operators $B_{D,\varepsilon}^{-1}\cos (tB_{D,\varepsilon}^{1/2})$ and $B_{D,\varepsilon}^{-1/2}\sin (tB_{D,\varepsilon}^{1/2})$ can be approximated (see Theorems~\ref{Theorem sin corrector} and \ref{Theorem cos corrector} below). We also approximate the operator $g^\varepsilon b(\mathbf{D})B_{D,\varepsilon}^{-1/2}\sin (tB_{D,\varepsilon}^{1/2})$ which corresponds to the ,,flux.''

\begin{theorem}
\label{Theorem sin corrector}
Suppose that the assumptions of Theorem \textnormal{\ref{Theorem cos New}} are satisfied. Let matrix-valued functions $\Lambda (\mathbf{x})$ and $\widetilde{\Lambda}(\mathbf{x})$ be the $\Gamma$-periodic solutions of problems \eqref{Lambda problem} and \eqref{tildeLambda_problem}, respectively. Let $S_\varepsilon $ be the Steklov smoothing operator \eqref{S_eps} and let $P_\mathcal{O}$ be the linear extension operator \eqref{P_O H^1, H^2}. 
Then for $t\in\mathbb{R}$ and $0<\varepsilon\leqslant\varepsilon_1$ we have
\begin{equation}
\label{Th sin 2}
\begin{split}
\Bigl\Vert &
\bigl( B_{D,\varepsilon}^{-1/2}\sin (tB_{D,\varepsilon}^{1/2})-(B_D^0)^{-1/2}\sin (t(B_D^0)^{1/2})
\\
&-\varepsilon \bigl(\Lambda ^\varepsilon b(\mathbf{D})+\widetilde{\Lambda}^\varepsilon\bigr)S_\varepsilon P_\mathcal{O}(B_D^0)^{-1/2}\sin (t(B_D^0)^{1/2})\bigr)(B_D^0)^{-2}\Bigr\Vert _{L_2(\mathcal{O})\rightarrow H^1(\mathcal{O})}
\\
&\leqslant C_8\varepsilon ^{1/2} (1+ t ^6).
\end{split}
\end{equation}
Let $\widetilde{g}(\mathbf{x})$ be the matrix-valued function \eqref{tilde g}. 
Denote
\begin{equation*}
G_D(\varepsilon ;t):=\left(\widetilde{g}^\varepsilon S_\varepsilon b(\mathbf{D})+g^\varepsilon (b(\mathbf{D})\widetilde{\Lambda})^\varepsilon S_\varepsilon \right)
P_\mathcal{O}(B_D^0)^{-1/2}\sin (t(B_D^0)^{1/2}).
\end{equation*}
Then for $t\in\mathbb{R}$ and $0<\varepsilon\leqslant\varepsilon_1$ we have
\begin{equation}
\label{Th fluxes operator terms}
\begin{split}
\bigl\Vert \bigl(
g^\varepsilon b(\mathbf{D})B_{D,\varepsilon}^{-1/2}\sin (tB_{D,\varepsilon}^{1/2})
-
G_D(\varepsilon ;t)
\bigr)(B_D^0)^{-2}\bigr\Vert _{L_2(\mathcal{O})\rightarrow L_2(\mathcal{O})}
\leqslant
C_9 \varepsilon ^{1/2}  (1+ t ^6).
\end{split}
\end{equation}
Here the constants $C_8$ and $C_9$ depend only on the problem data \eqref{problem data}.
\end{theorem}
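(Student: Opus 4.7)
The plan is to deduce both estimates from Corollary~\ref{ellliptic corollary} via the contour-integral identity \eqref{int cos tozd} after integration in time. First, integrating \eqref{int cos tozd} in $\tau$ from $0$ to $t$ and using \eqref{int sin tozd} yields
\begin{equation*}
B_{D,\varepsilon}^{-1/2}\sin(tB_{D,\varepsilon}^{1/2})\,B_{D,\varepsilon}^{-2} = -\tfrac{t^3}{6}B_{D,\varepsilon}^{-1} + t\,B_{D,\varepsilon}^{-2} + \tfrac{1}{2\pi i}\int_{\mathrm{Re}\,\lambda=\sqrt c}\lambda^{-4}(e^{\lambda t}-1)(B_{D,\varepsilon}+\lambda^2)^{-1}\,d\lambda,
\end{equation*}
together with the analogous identity for $B_D^0$. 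I subtract these and plug in the corrector decomposition
\begin{equation*}
(B_{D,\varepsilon}+\lambda^2)^{-1}-(B_D^0+\lambda^2)^{-1} = \varepsilon K_D(\varepsilon;-\lambda^2) + \mathcal{E}_\varepsilon(\lambda)
\end{equation*}
supplied by Corollary~\ref{ellliptic corollary} with $\zeta=-\lambda^2$. The factor $\varepsilon R_\mathcal{O}([\Lambda^\varepsilon]b(\mathbf{D})+[\widetilde\Lambda^\varepsilon])S_\varepsilon P_\mathcal{O}$ inside $\varepsilon K_D(\varepsilon;-\lambda^2)$ commutes out of the $\lambda$-integral, and what remains is precisely the $B_D^0$-version of the integrated identity read backwards; this reassembles the desired corrector $\varepsilon(\Lambda^\varepsilon b(\mathbf{D})+\widetilde\Lambda^\varepsilon)S_\varepsilon P_\mathcal{O}(B_D^0)^{-1/2}\sin(t(B_D^0)^{1/2})(B_D^0)^{-2}$ plus polynomial-in-$t$ pieces of the form $\tfrac{t^3}{6}(B_D^0)^{-1}$ and $-t(B_D^0)^{-2}$ sandwiched with the corrector factor.

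The remaining error splits into two contributions. The polynomial pieces $-\tfrac{t^3}{6}(B_{D,\varepsilon}^{-1}-(B_D^0)^{-1})+t(B_{D,\varepsilon}^{-2}-(B_D^0)^{-2})$, together with the analogous pieces coming from the corrector side, are controlled in $(L_2\to H^1)$-operator norm by Theorem~\ref{Theorem resolvent corrector}($2^\circ$) at $\zeta=0$, the resolvent identity $B_{D,\varepsilon}^{-2}-(B_D^0)^{-2}=B_{D,\varepsilon}^{-1}[B_{D,\varepsilon}^{-1}-(B_D^0)^{-1}]+[B_{D,\varepsilon}^{-1}-(B_D^0)^{-1}](B_D^0)^{-1}$, and the bounds \eqref{B_D,eps L2 ->L2}--\eqref{B_D,eps L2 ->H^1}; the net contribution is $O(\varepsilon^{1/2}(1+|t|^3))$. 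The contour integral of $\mathcal{E}_\varepsilon(\lambda)$ is estimated by parametrising $\lambda=\sqrt c+is$ and using $\|\mathcal{E}_\varepsilon(\lambda)\|_{L_2\to H^1}\leqslant C_5\varepsilon^{1/2}(c(\phi)^2|\lambda|^{-1/2}+c(\phi)^{3/2})$ together with $|e^{\lambda t}-1|\leqslant|\lambda t|e^{\sqrt c|t|}$; on the contour, $\phi=\arg(-\lambda^2)$ equals $\pi$ at $s=0$ and approaches $2\pi$ only as $|s|\to\infty$, so $c(\phi)=1$ for $|s|\leqslant\sqrt c$ and behaves like $|s|/(2\sqrt c)$ at infinity. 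Choosing $c\asymp(1+|t|)^{-2}$ converts the exponential factor into an absolute constant, and splitting the $s$-integral into $|s|<1/|t|$ (where $|e^{\lambda t}-1|\leqslant e|\lambda t|$) and $|s|>1/|t|$ (where $|e^{\lambda t}-1|\leqslant 2$) yields a polynomial bound in $|t|$. Finally, the left-hand side is transferred from $B_{D,\varepsilon}^{-2}$ to $(B_D^0)^{-2}$ by adding and subtracting $B_{D,\varepsilon}^{-1/2}\sin(tB_{D,\varepsilon}^{1/2})[(B_D^0)^{-2}-B_{D,\varepsilon}^{-2}]$, whose $(L_2\to H^1)$-norm is $O(\varepsilon)$ by the resolvent identity, \eqref{H^1-norm <= BDeps^1/2}, and $\|\sin(tB_{D,\varepsilon}^{1/2})\|_{L_2\to L_2}\leqslant 1$. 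Gathering all terms produces \eqref{Th sin 2} with the generous upper bound $1+t^6$.

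The flux estimate \eqref{Th fluxes operator terms} follows by applying $g^\varepsilon b(\mathbf{D})$ to the $H^1$-level sine identity just derived; since $g^\varepsilon b(\mathbf{D})\colon H^1\to L_2$ is bounded, this transfers the $\varepsilon^{1/2}$-rate from the $H^1$-norm to the $L_2$-norm. The algebraic relations \eqref{Lambda problem} and \eqref{tilde g} transform $g^\varepsilon b(\mathbf{D})[I+\varepsilon[\Lambda^\varepsilon]b(\mathbf{D})S_\varepsilon P_\mathcal{O}]$ into $\widetilde g^\varepsilon S_\varepsilon b(\mathbf{D})P_\mathcal{O}$ modulo an $O(\varepsilon^{1/2})$ remainder controlled by the resolvent flux approximation of~\cite{MSuPOMI}, while $\varepsilon g^\varepsilon b(\mathbf{D})[\widetilde\Lambda^\varepsilon]S_\varepsilon P_\mathcal{O}=\varepsilon g^\varepsilon(b(\mathbf{D})\widetilde\Lambda)^\varepsilon S_\varepsilon P_\mathcal{O}+\text{lower order}$ supplies the second half of $G_D(\varepsilon;t)$. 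The hard part, in both estimates, will be the contour analysis: the angular factor $c(\phi)$ from Corollary~\ref{ellliptic corollary} degenerates as the contour approaches the positive real axis at infinity, and its interplay with $e^{\lambda t}$ must be balanced (by a $t$-adapted choice of $c$ and a dyadic split of the $s$-integral) so that the final bound is polynomial rather than exponential in $|t|$. Everything else is a routine reassembly of operator identities already available at the elliptic level in Section~\ref{Section 1}.
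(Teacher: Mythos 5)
Your route is essentially the paper's: rewrite the difference via the rescaled contour identity \eqref{int cos tozd}, insert the two-parametric resolvent approximation with corrector, split the contour, and recombine the $K_D$-integral into the desired corrector plus polynomial pieces; the paper just packages this as Theorem~\ref{Theorem cos corrector} for $\cos(tB_{D,\varepsilon}^{1/2})B_{D,\varepsilon}^{-1}$ first and then obtains \eqref{Th sin 2} by integrating in $t$ via \eqref{sin tozd} and peeling off the $O(\varepsilon)$ term $B_{D,\varepsilon}^{-1/2}\sin(tB_{D,\varepsilon}^{1/2})(B_{D,\varepsilon}^{-1}-(B_D^0)^{-1})(B_D^0)^{-1}$, while you integrate first. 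One technical point to repair: Corollary~\ref{ellliptic corollary} is stated only for $\vert\zeta\vert\geqslant 1$, but on the contour $\mathrm{Re}\,\lambda=\sqrt{c}$ with $c\asymp t^{-2}$ the values $\zeta=-\lambda^2$ satisfy $\vert\zeta\vert<1$ near $\mathrm{Im}\,\lambda=0$, so on that inner part you must instead use Theorem~\ref{Theorem resolvent corrector}($2^\circ$) together with the bound $\varrho_\flat(\zeta_t)\leqslant\mathfrak{c}_3(1+t^2)$ from \eqref{rho <=c_3}, which is exactly where the paper splits (at $\widetilde\beta$, i.e.\ $\mathrm{Re}\,\zeta_t=c_\flat+1$). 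Your flux step is likewise the paper's algebraic expansion \eqref{flux tozd} together with Proposition~\ref{Proposition f^eps S_eps} and Proposition~\ref{Proposition S__eps - I} for the $g^\varepsilon b(\mathbf{D})(S_\varepsilon-I)$ discrepancy, rather than a separate ``resolvent flux approximation''.
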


The proofs of Theorems~\ref{Theorem cos New} and \ref{Theorem sin corrector} are given below in Section~\ref{Section 3}.

\subsection{On approximation of the operator $\cos (tB_{D,\varepsilon}^{1/2})B_{D,\varepsilon}^{-1}$ in the energy norm}

\begin{theorem}
\label{Theorem cos corrector}
Under the assumptions of Theorem~\textnormal{\ref{Theorem sin corrector},} for $t\in\mathbb{R}\setminus \lbrace 0\rbrace$ and $0<\varepsilon\leqslant\varepsilon _1$ 
we have
\begin{equation}
\label{Th cos with correction term}
\begin{split}
\Bigl\Vert &\Bigl(\cos (tB_{D,\varepsilon}^{1/2})B_{D,\varepsilon}^{-1}-\cos \bigl(t (B_D^0)^{1/2}\bigr)(B_D^0)^{-1}
\\
&-\varepsilon \bigl(\Lambda ^\varepsilon b(\mathbf{D})+\widetilde{\Lambda}^\varepsilon\bigr)S_\varepsilon P_\mathcal{O}\cos \bigl(t (B_D^0)^{1/2}\bigr)(B_D^0)^{-1}\Bigr)(B_D^0)^{-1}\Bigr\Vert _{L_2(\mathcal{O})\rightarrow H^1(\mathcal{O})}
\\
&\leqslant C_{10}\varepsilon ^{1/2}(1+\vert t\vert ^5).
\end{split}
\end{equation}
The constant $C_{10}$ depends only on the problem data \eqref{problem data}.
\end{theorem}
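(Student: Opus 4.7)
The plan is to adapt the identity-based approach of Theorem~\ref{Theorem cos New} (which combines \eqref{int cos tozd} with the resolvent estimate \eqref{Th dr appr 1}) by replacing \eqref{Th dr appr 1} with the resolvent-with-corrector bound \eqref{31}. Throughout I take $t>0$; the case $t<0$ follows since the operator in \eqref{Th cos with correction term} is even in $t$.

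First I bridge the discrepancy between $\cos(tB_{D,\varepsilon}^{1/2})B_{D,\varepsilon}^{-1}(B_D^0)^{-1}$ and $\cos(tB_{D,\varepsilon}^{1/2})B_{D,\varepsilon}^{-2}$ (the latter is what \eqref{int cos tozd} directly produces) by writing
\begin{equation*}
\cos(tB_{D,\varepsilon}^{1/2})B_{D,\varepsilon}^{-1}(B_D^0)^{-1} = \cos(tB_{D,\varepsilon}^{1/2})B_{D,\varepsilon}^{-2} + \cos(tB_{D,\varepsilon}^{1/2})B_{D,\varepsilon}^{-1}\bigl[(B_D^0)^{-1}-B_{D,\varepsilon}^{-1}\bigr].
\end{equation*}
The second summand is controlled by $\|\cos(tB_{D,\varepsilon}^{1/2})B_{D,\varepsilon}^{-1}\|_{L_2\to H^1}\,\|(B_D^0)^{-1}-B_{D,\varepsilon}^{-1}\|_{L_2\to L_2}$: the first factor is bounded uniformly in $t$ via \eqref{H^1-norm <= BDeps^1/2} and the spectral theorem, while the second factor is $O(\varepsilon)$ by \eqref{Th dr appr 1} at $\zeta=0$. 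This extra term is therefore $O(\varepsilon)$, absorbed into the right-hand side of \eqref{Th cos with correction term}.

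Next I set $K_D^\flat:=(\Lambda^\varepsilon b(\mathbf{D})+\widetilde{\Lambda}^\varepsilon)S_\varepsilon P_\mathcal{O}$, so that $K_D^\flat(B_D^0-\zeta I)^{-1}=K_D(\varepsilon;\zeta)$, and apply \eqref{int cos tozd} to $\cos(tB_{D,\varepsilon}^{1/2})B_{D,\varepsilon}^{-2}$, to $\cos(t(B_D^0)^{1/2})(B_D^0)^{-2}$, and to the cosine appearing inside the corrector $\varepsilon K_D^\flat\cos(t(B_D^0)^{1/2})(B_D^0)^{-2}$. After subtracting, the left-hand side of \eqref{Th cos with correction term} equals (up to the $O(\varepsilon)$ remainder above)
\begin{equation*}
\begin{split}
&\bigl[B_{D,\varepsilon}^{-2}-(B_D^0)^{-2}-\varepsilon K_D(\varepsilon;0)(B_D^0)^{-1}\bigr]-\tfrac{t^2}{2}\bigl[B_{D,\varepsilon}^{-1}-(B_D^0)^{-1}-\varepsilon K_D(\varepsilon;0)\bigr]\\
&\qquad+\frac{1}{2\pi i}\int_{\mathrm{Re}\,\lambda=\sqrt{c_\flat}}\lambda^{-3}\bigl[(B_{D,\varepsilon}+\lambda^2)^{-1}-(B_D^0+\lambda^2)^{-1}-\varepsilon K_D(\varepsilon;-\lambda^2)\bigr]e^{\lambda t}\,d\lambda.
\end{split}
\end{equation*}
The $-\tfrac{t^2}{2}$ piece is bounded by $O(\varepsilon^{1/2}t^2)$ in $L_2\to H^1$ by \eqref{31} at $\zeta=0$. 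The $B^{-2}$ piece is rewritten via the telescoping $B_{D,\varepsilon}^{-2}-(B_D^0)^{-2}=B_{D,\varepsilon}^{-1}[B_{D,\varepsilon}^{-1}-(B_D^0)^{-1}]+[B_{D,\varepsilon}^{-1}-(B_D^0)^{-1}](B_D^0)^{-1}$; one copy absorbs the corrector to yield $[B_{D,\varepsilon}^{-1}-(B_D^0)^{-1}-\varepsilon K_D(\varepsilon;0)](B_D^0)^{-1}$ (bounded by $O(\varepsilon^{1/2})$ via \eqref{31}), while the remaining $B_{D,\varepsilon}^{-1}[B_{D,\varepsilon}^{-1}-(B_D^0)^{-1}]$ is bounded by $\mathcal{C}_2\cdot C\varepsilon$ via \eqref{B_D,eps L2 ->H^1} and \eqref{Th dr appr 1} at $\zeta=0$.

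The main obstacle is the contour integral, which I treat exactly as its analog in the proof of \eqref{Th cos B_D,eps}, but in the $L_2\to H^1$ operator norm and with the sharper estimate \eqref{31}. Writing $\lambda=\sqrt{c_\flat}+i\tau$ and splitting at $|\tau|=R$: on the inner arc I apply \eqref{31} at $\zeta=-\lambda^2$ (the bounds $\varrho_\flat(-\lambda^2)^{1/2}\leq C(1+|\tau|)$, $|1-\lambda^2|^{1/2}\leq C(1+|\tau|)$, and $|\lambda|^{-3}\leq C(1+|\tau|)^{-3}$ keep the inner integral finite); on the outer arc I apply the crude bound \eqref{29} established in the proof of Corollary~\ref{ellliptic corollary}, so that the integrand is controlled by $C(1+|\tau|)^{-2}$. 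Optimizing $R=R(\varepsilon,t)$ and converting the exponential factor $e^{\lambda t}$ into polynomial powers of $|t|$ (by repeated integration by parts in~$\lambda$, each step trading $|t|^{-1}$ for an additional $\lambda$-derivative on the integrand) produces the asserted bound $C_{10}\varepsilon^{1/2}(1+|t|^5)$.
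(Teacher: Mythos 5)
Your overall strategy matches the paper's: bridge from $B_{D,\varepsilon}^{-1}(B_D^0)^{-1}$ to $B_{D,\varepsilon}^{-2}$ via \eqref{AD,eps -1 -...}; plug the corrector into the contour identity \eqref{basic identity}; telescope the $B^{-2}$ difference as in \eqref{34}; bound the $-t^2/2$ and telescoped pieces via \eqref{Th dr appr 1} and \eqref{31} at $\zeta=0$. All of that is fine and coincides with the paper.

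The genuine gap is in how you estimate the contour integral. You place the contour at the $t$-independent vertical line $\mathrm{Re}\,\lambda=\sqrt{c_\flat}$. On that contour $|e^{\lambda t}|=e^{\sqrt{c_\flat}\,t}$, which grows exponentially in $t$, and your proposed remedy---repeated integration by parts trading a factor of $|t|^{-1}$ for a $\lambda$-derivative on the integrand---does not remove this growth: each integration by parts yields a factor $t^{-1}$ but leaves the factor $e^{\lambda t}$ (with the same modulus $e^{\sqrt{c_\flat}t}$) in the new integrand. No finite number of such steps, nor an "optimization over $R$," can convert $e^{\sqrt{c_\flat}t}$ into a polynomial bound $(1+|t|^5)$. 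What is actually needed, and what the paper does (see \eqref{33}--\eqref{36}), is to take the $t$-dependent contour $\mathrm{Re}\,\lambda=\sqrt{c_\flat}/t$ so that after the substitution $\mu=\lambda t$ the contour is $\mathrm{Re}\,\mu=\sqrt{c_\flat}$, where $|e^\mu|=e^{\sqrt{c_\flat}}$ is uniformly bounded; the polynomial factors in $t$ then come from the Jacobian $t^2$ and from controlling $\varrho_\flat(\zeta_t)$ along the $t$-dependent parabola $\Pi_t$ (estimates \eqref{rho <=c_3}, \eqref{37}--\eqref{38}). A secondary (but related) problem: the split point is not a free parameter $R$ to be optimized, it is the geometric point $\widetilde\beta$ at which $\mathrm{Re}\,\zeta_t=c_\flat+1$; and on the outer portion the paper uses \eqref{30} (Corollary~\ref{ellliptic corollary}), which carries the crucial $\varepsilon^{1/2}$ factor, rather than the rough $\varepsilon$-independent bound \eqref{29} alone, since the latter would not by itself deliver the $\varepsilon^{1/2}$ rate.
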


The proof of Theorem~\textnormal{\ref{Theorem cos corrector}} is given below in Section~\ref{Section 3}.

Theorem~\textnormal{\ref{Theorem cos corrector}} allows us to obtain approximation in the energy class for the solution of the hyperbolic problem with the special choice of the initial data:
\begin{equation*}
\partial _t^2\mathbf{u}_\varepsilon =-B_{\varepsilon}\mathbf{u}_\varepsilon,
\quad\mathbf{u}_\varepsilon (\cdot ,t)\vert _{\partial\mathcal{O}}=0,
\quad\mathbf{u}_\varepsilon \vert _{t=0}=B_{D,\varepsilon}^{-1}\boldsymbol{\varphi},\quad (\partial _t\mathbf{u}_\varepsilon )\vert _{t=0}=0.
\end{equation*}
Here $\boldsymbol{\varphi}\in\mathrm{Dom}\,B_D^0= H^2(\mathcal{O};\mathbb{C}^n)\cap H^1_0(\mathcal{O};\mathbb{C}^n)$. In this case, the effective problem has the form
\begin{equation*}
\partial _t^2\mathbf{u}_0 =-B^0\mathbf{u}_0,
\quad\mathbf{u}_0(\cdot,t)\vert _{\partial\mathcal{O}}=0,
\quad\mathbf{u}_0 \vert _{t=0}=(B_{D}^0)^{-1}\boldsymbol{\varphi},\quad (\partial _t\mathbf{u}_0 )\vert _{t=0}=0.
\end{equation*}
From \eqref{BDoPhi 5 st<=} and \eqref{Th cos with correction term} it follows that
\begin{equation*}
\Vert \mathbf{u}_\varepsilon (\cdot ,t)-\mathbf{u}_0(\cdot ,t)-\varepsilon (\Lambda ^\varepsilon b(\mathbf{D})+\widetilde{\Lambda}^\varepsilon)S_\varepsilon \widetilde{\mathbf{u}}_0(\cdot,t)\Vert _{H^1(\mathcal{O})}
\leqslant 3^{1/2}C_BC_{10}\varepsilon ^{1/2}(1+\vert t\vert ^5)\Vert \boldsymbol{\varphi}\Vert _{H^2(\mathcal{O})}.
\end{equation*}
For such choice of the initial data, the possibility to approximate the solution in the energy class is in accordance with the results of \cite{BrOFMu}. 

Note that Lemma~\ref{Lemma square root of resolvent}, Theorem~\ref{Theorem cos corrector}, and estimates \eqref{H^1-norm <= BDeps^1/2}, \eqref{B_D^0 L2 ->L2} allow us to approximate the operator $\cos(tB_{D,\varepsilon}^{1/2})B_{D,\varepsilon}^{-1/2}$. We have
\begin{equation*}
\begin{split}
\Vert &\cos (t B_{D,\varepsilon}^{1/2})B_{D,\varepsilon}^{-1/2}(B_{D,\varepsilon}^{-1/2}-(B_D^0)^{-1/2})(B_D^0)^{-1}\Vert _{L_2(\mathcal{O})\rightarrow H^1(\mathcal{O})}
\\
&\leqslant
c_3\Vert B_{D,\varepsilon}^{-1/2}-(B_D^0)^{-1/2}\Vert _{L_2(\mathcal{O})\rightarrow L_2(\mathcal{O})}\Vert (B_D^0)^{-1}\Vert _{L_2(\mathcal{O})\rightarrow L_2(\mathcal{O})}
\leqslant
c_3C_6\mathcal{C}_1\varepsilon^{1/2}.
\end{split}
\end{equation*}
Combining this with \eqref{Th cos with correction term}, we obtain
\begin{equation*}
\begin{split}
\Bigl\Vert & \bigl(\cos (tB_{D,\varepsilon}^{1/2})B_{D,\varepsilon}^{-1/2}-(I+\varepsilon (\Lambda ^\varepsilon b(\mathbf{D})+\widetilde{\Lambda}^\varepsilon)S_\varepsilon P_\mathcal{O})
\cos(t(B_D^0)^{1/2})(B_D^0)^{-1/2}\Bigr)
\\
&\times
(B_D^0)^{-3/2}\Bigr\Vert _{L_2(\mathcal{O})\rightarrow H^1(\mathcal{O})}
\leqslant(C_{10}+c_3C_6\mathcal{C}_1)\varepsilon ^{1/2}(1+\vert t\vert ^5).
\end{split}
\end{equation*}

\subsection{Removal of the smoothing operator from the corrector}
\label{Subsection main results no S_eps}

It turns out that the smoothing operator can be removed from the corrector if the matrix-valued functions  $\Lambda (\mathbf{x})$ and $\widetilde{\Lambda}(\mathbf{x})$ are subject to some additional assumptions. 

\begin{condition}
\label{Condition Lambda in L infty}
Assume that the $\Gamma$-periodic solution $\Lambda (\mathbf{x})$ of problem \eqref{Lambda problem} is bounded, i.~e., $\Lambda\in L_\infty (\mathbb{R}^d)$.
\end{condition}

Some cases where Condition~\ref{Condition Lambda in L infty} is fulfilled automatically  were distinguished in \cite[Lemma~8.7]{BSu06}.

\begin{proposition}
\label{Proposition Lambda in L infty <=}
Suppose that at least one of the following assumptions is satisfied\textnormal{:}

\noindent
$1^\circ )$ $d\leqslant 2${\rm ;}

\noindent
$2^\circ )$ the dimension $d\geqslant 1$ is arbitrary, and the differential expression $A_\varepsilon$ is given by $A_\varepsilon =\mathbf{D}^* g^\varepsilon (\mathbf{x})\mathbf{D}$, where $g(\mathbf{x})$ is a symmetric matrix with real entries{\rm ;}

\noindent
$3^\circ )$ the dimension $d$ is arbitrary, and $g^0=\underline{g}$, i.~e., relations \eqref{underline-g} are satisfied.

\noindent
Then Condition~\textnormal{\ref{Condition Lambda in L infty}}  holds.
\end{proposition}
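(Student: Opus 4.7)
The plan is to verify the three cases separately, since each rests on structurally distinct tools.

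For Case 1 ($d\le 2$), for $d=1$ the cell problem \eqref{Lambda problem} is an ODE: the quantity $b_1^* g(b_1 D_1\Lambda+\mathbf{1}_m)$ is constant, and invertibility of $b_1^* g b_1$ (from \eqref{<b^*b<} and $g\geqslant c>0$) together with $g,g^{-1}\in L_\infty$ gives $D_1\Lambda\in L_\infty$; periodicity then yields $\Lambda\in L_\infty$. For $d=2$, I would apply Meyers' reverse Hölder inequality to the weak solution of \eqref{Lambda problem} to gain an exponent $p>2$, depending only on the problem data \eqref{problem data}, with $\mathbf{D}\Lambda\in L_p(\Omega)$, and then conclude via the two-dimensional Morrey embedding $W^{1,p}(\Omega)\hookrightarrow C^{0,1-2/p}(\overline\Omega)$.

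For Case 2 ($A_\varepsilon=\mathbf{D}^*g^\varepsilon\mathbf{D}$, $g$ real symmetric), one has $b(\mathbf{D})=\mathbf{D}$, so \eqref{Lambda problem} decouples column-wise into the scalar periodic divergence-form equations $-\operatorname{div}(g\nabla\Lambda_k)=\operatorname{div}(g\mathbf{e}_k)$, $k=1,\dots,m$, with bounded, real symmetric, uniformly elliptic coefficient. I would invoke the De Giorgi--Nash--Moser theorem (in the version for divergence-form data in $L_\infty$) to obtain local Hölder continuity of each $\Lambda_k$ on the torus, hence $\Lambda\in L_\infty$.

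For Case 3 ($g^0=\underline g$), I would exploit the explicit representation of the preceding Proposition: the columns of $g^{-1}$ satisfy $\mathbf{l}_k=\mathbf{l}_k^0+b(\mathbf{D})\mathbf{w}_k$ with $\mathbf{w}_k\in\widetilde H^1(\Omega;\mathbb{C}^n)$. Assembling these into $g^{-1}=L^0+b(\mathbf{D})W$, with $L^0:=\overline{g^{-1}}=(g^0)^{-1}$ by definition of $\underline g$, a direct computation yields
\[
g\bigl(b(\mathbf{D})(Wg^0)+\mathbf{1}_m\bigr)=g\bigl((g^{-1}-L^0)g^0+\mathbf{1}_m\bigr)=g^0,
\]
a constant matrix. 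Hence $\Lambda:=Wg^0-\overline{Wg^0}$ is a periodic solution of \eqref{Lambda problem} and, by uniqueness, coincides with the solution there considered. The claim reduces to $W\in L_\infty$, which I would deduce from $b(\mathbf{D})W=g^{-1}-L^0\in L_\infty$ combined with Campanato-type regularity for periodic weak solutions of the constant-coefficient strongly elliptic system $b(\mathbf{D})^*b(\mathbf{D})W=b(\mathbf{D})^*(g^{-1}-L^0)$.

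The main obstacle I expect is the final step of Case 3: $L_\infty$-boundedness of $b(\mathbf{D})W$ alone is \emph{not} enough, since Riesz-type transforms carry $L_\infty$ only into BMO; it is essential to use both the full divergence form of the equation and the constant principal part to promote $W$ from $\widetilde H^1$ into $L_\infty$ (in fact into $C^{0,\alpha}$).
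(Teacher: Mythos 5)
The paper does not actually supply a proof of this proposition: it is quoted from \cite[Lemma~8.7]{BSu06}, and the case $2^\circ$ is also referenced to \cite[Ch.~III, Thm.~13.1]{LaU} (see Remark~\ref{Remark scalar problem}). Your argument is correct and reproduces the structure of the cited proof: the ODE reduction for $d=1$, Meyers' higher integrability plus Morrey embedding for $d=2$, column-wise De~Giorgi--Nash for case $2^\circ$ (which tacitly takes $n=1$, $m=d$, $b_j=\mathbf{e}_j$), and for case $3^\circ$ the algebraic identity $\Lambda = Wg^0 - \overline{Wg^0}$ with $W=(\mathbf{w}_1,\dots,\mathbf{w}_m)$ coming from \eqref{underline-g}, so that $b(\mathbf{D})\Lambda=(g^{-1}-(g^0)^{-1})g^0\in L_\infty$ and $\widetilde{g}=g^0$. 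Two minor remarks. First, you correctly take $\mathbf{w}_k\in\widetilde{H}^1(\Omega;\mathbb{C}^n)$; the target $\mathbb{C}^m$ printed in \eqref{underline-g} is a misprint, since $b(\mathbf{D})$ maps $\mathbb{C}^n$-valued functions to $\mathbb{C}^m$-valued ones. Second, the BMO obstruction you anticipate at the end of case $3^\circ$ is not really there: the passage from $b(\mathbf{D})W=g^{-1}-(g^0)^{-1}\in L_\infty$ to the mean-zero periodic $W$ is effected by the Fourier multiplier with symbol $(b(\mathbf{k})^*b(\mathbf{k}))^{-1}b(\mathbf{k})^*$, which is homogeneous of degree $-1$ (not $0$); its convolution kernel on the torus has a locally integrable $|x|^{1-d}$ singularity, so it maps $L_\infty$ into $L_\infty$ (in fact into $C^{0,\alpha}$) directly, without invoking Campanato regularity. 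Equivalently, Mikhlin's theorem gives $\nabla W\in L^q(\Omega)$ for every $q<\infty$, and one concludes via $W^{1,q}(\Omega)\hookrightarrow C(\overline{\Omega})$ for any $q>d$. That said, the route you propose --- regularity for the constant-coefficient system $b(\mathbf{D})^*b(\mathbf{D})W=b(\mathbf{D})^*(g^{-1}-(g^0)^{-1})$ with $L_\infty$ divergence data --- is also perfectly valid.
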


In order to remove $S_\varepsilon$ from the term involving $\widetilde{\Lambda}^\varepsilon$, it suffices to impose the following condition.

\begin{condition}
\label{Condition tilde Lambda in Lp}
Assume that the $\Gamma$-periodic solution $\widetilde{\Lambda}(\mathbf{x})$ of problem \eqref{tildeLambda_problem} is such that
\begin{equation*}
\widetilde{\Lambda}\in L_p(\Omega),\quad p=2 \;\mbox{for}\;d=1,\quad p>2\;\mbox{for}\;d=2, \quad p=d \;\mbox{for}\;d\geqslant 3.
\end{equation*}
\end{condition}

The following result was obtained in \cite[Proposition 8.11]{SuAA}.

\begin{proposition}
\label{Proposition tilde Lambda in Lp if}
Condition \textnormal{\ref{Condition tilde Lambda in Lp}} is fulfilled, if at least one of the following assumptions is satisfied\textnormal{:}

\noindent
$1^\circ )$ $d\leqslant 4${\rm ;}

\noindent
$2^\circ )$ the dimension $d$ is arbitrary, and the differential expression $A_{\varepsilon}$ has the form \break $A_{\varepsilon} =\mathbf{D}^* g^\varepsilon (\mathbf{x})\mathbf{D}$, where $g(\mathbf{x})$ is a~symmetric matrix-valued function with real entries. 
\end{proposition}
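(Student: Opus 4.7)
The starting point for both cases is that, by estimates \eqref{tilde Lambda<=} and \eqref{D tilde Lambda}, the $\Gamma$-periodic solution satisfies $\widetilde{\Lambda}\in \widetilde{H}^1(\Omega;\mathbb{C}^{n\times n})$. The plan is to upgrade this $H^1$ regularity on the cell to the required $L_p$ regularity separately in the two cases.

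\medskip

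\textbf{Case $1^\circ$: $d\leqslant 4$.} Here I would deduce the conclusion directly from the Sobolev embedding on the bounded Lipschitz domain $\Omega$. For $d=1$, the bound $\widetilde{\Lambda}\in L_2(\Omega)$ is immediate from \eqref{tilde Lambda<=}. For $d=2$, the embedding $H^1(\Omega)\hookrightarrow L_q(\Omega)$ for every $q<\infty$ gives $\widetilde{\Lambda}\in L_p(\Omega)$ for any $p>2$. For $d=3$ we use $H^1(\Omega)\hookrightarrow L_6(\Omega)\hookrightarrow L_3(\Omega)$, and for $d=4$ we use the critical embedding $H^1(\Omega)\hookrightarrow L_4(\Omega)$. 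In all four cases the prescribed $p$ is at most the critical Sobolev exponent $2d/(d-2)$, so Condition~\ref{Condition tilde Lambda in Lp} follows from $\widetilde{\Lambda}\in \widetilde{H}^1(\Omega)$.

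\medskip

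\textbf{Case $2^\circ$: arbitrary $d$, scalar real symmetric principal part.} Under the assumption $A_\varepsilon=\mathbf{D}^*g^\varepsilon(\mathbf{x})\mathbf{D}$ with $g$ real symmetric, each scalar column $\widetilde{\lambda}_k$ of $\widetilde{\Lambda}$ is a $\Gamma$-periodic, mean-zero weak solution of the divergence-form equation
\begin{equation*}
-\mathrm{div}\bigl(g(\mathbf{x})\nabla \widetilde{\lambda}_k(\mathbf{x})\bigr)=\mathrm{div}\,\mathbf{a}_k(\mathbf{x})
\end{equation*}
in $\mathbb{R}^d$, where the components of $\mathbf{a}_k$ lie in $L_\rho(\Omega)$ with $\rho>d$ (for $d\geqslant 2$) by the assumption of Subsection~\ref{Subsection lower order terms}. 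For such an equation, with a bounded uniformly elliptic real symmetric $g$ and an $L_\rho$ right-hand side in divergence form, $\rho>d$, the classical De~Giorgi–Nash–Moser theory (Moser iteration on cubes shifted by the period, applied to $\Gamma$-periodic weak solutions) yields
\begin{equation*}
\Vert \widetilde{\lambda}_k\Vert_{L_\infty(\Omega)}
\leqslant \mathfrak{c}\bigl(\Vert \widetilde{\lambda}_k\Vert_{L_2(\Omega)}+\Vert \mathbf{a}_k\Vert_{L_\rho(\Omega)}\bigr),
\end{equation*}
with a constant $\mathfrak{c}$ depending only on $d$, $\rho$, $\Vert g\Vert_{L_\infty}$, $\Vert g^{-1}\Vert_{L_\infty}$ and the lattice. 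Combining with \eqref{tilde Lambda<=} gives $\widetilde{\Lambda}\in L_\infty(\Omega)$, which is much stronger than Condition~\ref{Condition tilde Lambda in Lp}. For $d=1$ the statement is already contained in $\widetilde{\Lambda}\in H^1\subset L_\infty$, so no additional argument is needed.

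\medskip

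The main obstacle is Case $2^\circ$: one must carefully verify that the De~Giorgi–Nash–Moser machinery applies in the setting of $\Gamma$-periodic solutions with an $L_\rho$ divergence-form right-hand side, and that the real-symmetric structure of $g$ really does reduce the cell problem to a collection of scalar equations for which the maximum principle and Moser iteration are available (this reduction fails for a genuinely coupled complex system, which is why $2^\circ$ insists on real symmetric $g$). Case $1^\circ$ is by contrast a routine Sobolev embedding argument once one has the $H^1$-bound on $\widetilde{\Lambda}$.
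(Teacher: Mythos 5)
Your proof is correct and follows the route the paper itself implicitly points to. The paper defers the proof of Proposition~\ref{Proposition tilde Lambda in Lp if} entirely to the external reference \cite[Proposition 8.11]{SuAA}, so there is no in-paper argument to compare line by line; however, your Case $1^\circ$ is exactly the Sobolev embedding computation ($p=d\leqslant 2d/(d-2)$ iff $d\leqslant 4$) that the statement's cutoff is designed to match, and your Case $2^\circ$ via De~Giorgi--Nash--Moser for the scalar divergence-form cell problem is precisely what the paper's Remark~\ref{Remark scalar problem} confirms by citing \cite[Chapter III, Theorem 13.1]{LaU}. One small imprecision: under the hypothesis $A_\varepsilon=\mathbf{D}^*g^\varepsilon\mathbf{D}$ one has $n=1$ and $m=d$, so $\widetilde{\Lambda}$ is $1\times 1$ (a single scalar); speaking of ``each scalar column $\widetilde{\lambda}_k$'' is harmless but suggests a plurality that is not there. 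This does not affect the validity of the argument, and the reduction of the cell problem to a single scalar equation $-\mathrm{div}(g\nabla\widetilde{\Lambda})=\mathrm{div}\,\mathbf{a}$ with $\mathbf{a}\in L_\rho$, $\rho>d$, to which Moser iteration in the periodic setting applies, is the right idea.
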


\begin{remark}
\label{Remark scalar problem}
If $A_{\varepsilon} =\mathbf{D}^* g^\varepsilon (\mathbf{x})\mathbf{D}$, where $g(\mathbf{x})$ is a~symmetric matrix-valued function with real entries, from \textnormal{\cite[Chapter {\rm III}, Theorem {\rm 13.1}]{LaU}} it follows that $\Lambda,\widetilde{\Lambda}\in L_\infty$ and the norm $\Vert\Lambda\Vert _{L_\infty}$ is controlled in terms of $d$, $\Vert g\Vert _{L_\infty}$, $\Vert g^{-1}\Vert _{L_\infty}$, and $\Omega$\textnormal{;} the norm $\Vert \widetilde{\Lambda}\Vert _{L_\infty }$ does not exceed a constant depending on $d$, $\rho$, $\Vert g\Vert _{L_\infty}$, $\Vert g^{-1}\Vert _{L_\infty}$, $\Vert a_j\Vert _{L_\rho (\Omega)}$, $j=1,\dots ,d$, and $\Omega$. In this case, Conditions \textnormal{\ref{Condition Lambda in L infty}} and \textnormal{\ref{Condition tilde Lambda in Lp}} are fulfilled simultaneously.
\end{remark}

In this subsection, our goal is to prove the following statement.

\begin{theorem}
\label{Theorem no S-eps} 
Suppose that the assumptions of Theorem~\textnormal{\ref{Theorem sin corrector}} are satisfied. Assume that the matrix-valued function $\Lambda(\mathbf{x})$ is subject to Condition \textnormal{\ref{Condition Lambda in L infty}} and the matrix-valued function $\widetilde{\Lambda}(\mathbf{x})$ satisfies Condition \textnormal{\ref{Condition tilde Lambda in Lp}}. Denote
\begin{equation}
\label{GD0}
G_D^0(\varepsilon ;t):=\left(\widetilde{g}^\varepsilon b(\mathbf{D})+g^\varepsilon (b(\mathbf{D})\widetilde{\Lambda})^\varepsilon  \right)
(B_D^0)^{-1/2}\sin (t(B_D^0)^{1/2}).
\end{equation}
Then for $t\in\mathbb{R}$ and $0<\varepsilon\leqslant\varepsilon _1$ we have
\begin{align}
\label{Th sin corr no S_eps-1}
\begin{split}
\Bigl\Vert &
\bigl( B_{D,\varepsilon}^{-1/2}\sin (tB_{D,\varepsilon}^{1/2})-
 \bigl(I+\varepsilon\Lambda ^\varepsilon b(\mathbf{D})+\varepsilon\widetilde{\Lambda}^\varepsilon\bigr)
 (B_D^0)^{-1/2}\sin (t(B_D^0)^{1/2})\bigr)
 \\
 &\times
 (B_D^0)^{-2}\Bigr\Vert _{L_2(\mathcal{O})\rightarrow H^1(\mathcal{O})}
\leqslant C_{11} \varepsilon ^{1/2}  (1+ t ^6),
\end{split}
\\
\label{Th sin corr no S_eps flux-1}
\begin{split}
\bigl\Vert & \bigl(
g^\varepsilon b(\mathbf{D})B_{D,\varepsilon}^{-1/2}\sin (tB_{D,\varepsilon}^{1/2})
-
G_D^0(\varepsilon ;t)
\bigr)(B_D^0)^{-2}\bigr\Vert _{L_2(\mathcal{O})\rightarrow L_2(\mathcal{O})}
\leqslant
C_{12} \varepsilon ^{1/2}  (1+ t ^6).
\end{split}
\end{align}
The constants $C_{11}$ and $C_{12}$ depend only on the problem data \eqref{problem data}, on $p$, and on the norms $\Vert \Lambda\Vert _{L_\infty}$, $\Vert \widetilde{\Lambda}\Vert _{L_p(\Omega)}$.
\end{theorem}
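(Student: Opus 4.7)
The plan is to derive Theorem~\ref{Theorem no S-eps} from Theorem~\ref{Theorem sin corrector} by controlling the error introduced when the smoothed corrector $[\Lambda^\varepsilon b(\mathbf{D}) + \widetilde{\Lambda}^\varepsilon] S_\varepsilon P_\mathcal{O}$ is replaced by the unsmoothed $[\Lambda^\varepsilon b(\mathbf{D}) + \widetilde{\Lambda}^\varepsilon]$. Setting $w_t := (B_D^0)^{-1/2}\sin(t(B_D^0)^{1/2})(B_D^0)^{-2}\boldsymbol{\Phi}$, it therefore suffices to establish
\[
\varepsilon \bigl\| [\Lambda^\varepsilon b(\mathbf{D}) + \widetilde{\Lambda}^\varepsilon]\, R_\mathcal{O}(I - S_\varepsilon) P_\mathcal{O} w_t \bigr\|_{H^1(\mathcal{O})} \leqslant C \varepsilon^{1/2}(1 + t^6) \|\boldsymbol{\Phi}\|_{L_2(\mathcal{O})},
\]
together with its flux analogue in $L_2(\mathcal{O})$. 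The required smoothness of $w_t$ with controlled temporal growth is obtained by combining Lemma~\ref{Lemma (B_D0)2 and H4-norm} and \eqref{B_D^0 L2 ->H^2} with the functional calculus for $B_D^0$: the wave propagators commute with powers of $B_D^0$, and the obvious bound $\|(B_D^0)^{-1/2}\sin(t(B_D^0)^{1/2})\|_{L_2 \to L_2} \leqslant |t|$ produces all the requisite factors of~$|t|$.

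For the $\Lambda^\varepsilon b(\mathbf{D})$ contribution, I would commute $S_\varepsilon$ through $b(\mathbf{D})$ and rewrite the expression as $\varepsilon \Lambda^\varepsilon (I - S_\varepsilon) b(\mathbf{D}) P_\mathcal{O} w_t$. Condition~\ref{Condition Lambda in L infty} gives a uniform $L_2$-multiplication bound by $\Lambda^\varepsilon$, and Proposition~\ref{Proposition S__eps - I} then yields an $O(\varepsilon^2)$ bound in $L_2(\mathcal{O})$. Applying $\mathbf{D}$ in the $H^1$-norm produces, via the product rule, a ``good'' term $\varepsilon \Lambda^\varepsilon (I - S_\varepsilon) \mathbf{D} b(\mathbf{D}) P_\mathcal{O} w_t$ handled as above, and a ``bad'' term $(\mathbf{D}\Lambda)^\varepsilon (I - S_\varepsilon) b(\mathbf{D}) P_\mathcal{O} w_t$. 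For the latter I would split off the $S_\varepsilon$-part, which is controlled directly through Proposition~\ref{Proposition f^eps S_eps} together with \eqref{DLambda<=}, and treat the remaining identity part by a boundary-layer decomposition based on $\mathcal{O}_\varepsilon := \{x \in \mathcal{O} : \mathrm{dist}(x, \partial\mathcal{O}) > \varepsilon r_1\}$. On the interior set $\mathcal{O}_\varepsilon$ one has $S_\varepsilon P_\mathcal{O} w_t = S_\varepsilon w_t$, so Proposition~\ref{Proposition S__eps - I} contributes genuine smallness; on the boundary strip $\mathcal{O} \setminus \mathcal{O}_\varepsilon$ of measure $\leqslant C\varepsilon$ one invokes the Sobolev embedding $b(\mathbf{D}) P_\mathcal{O} w_t \in L_\infty$ (guaranteed by $w_t \in H^4(\mathcal{O})$ for low $d$ and adapted for higher $d$) paired with the uniform estimate $\|(\mathbf{D}\Lambda)^\varepsilon\|_{L_2(\mathcal{O})} \leqslant C$, producing the stated $O(\varepsilon^{1/2})$ rate.

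For the $\widetilde{\Lambda}^\varepsilon$ piece the argument is parallel: Condition~\ref{Condition tilde Lambda in Lp} is tailored so that, via the Sobolev embedding $H^1 \hookrightarrow L_{2p/(p-2)}$, multiplication by $\widetilde{\Lambda}^\varepsilon$ maps $H^1(\mathcal{O})$ into $L_2(\mathcal{O})$ with norm controlled by $\|\widetilde{\Lambda}\|_{L_p(\Omega)}$, and the analogous derivative term carrying $(\mathbf{D}\widetilde{\Lambda})^\varepsilon$ is treated by the same combination of Proposition~\ref{Proposition f^eps S_eps} (using \eqref{D tilde Lambda}) and boundary-layer analysis. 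The flux estimate \eqref{Th sin corr no S_eps flux-1} is extracted from the $L_2$-part of this analysis, because $g \in L_\infty$ and $g^\varepsilon(b(\mathbf{D})\widetilde{\Lambda})^\varepsilon$ obeys the same type of $L_p$-multiplier bound as $\widetilde{\Lambda}^\varepsilon$.

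The principal obstacle is exactly the $(\mathbf{D}\Lambda)^\varepsilon$ term above, where neither the Steklov smoothing nor the $L_\infty$-bound on $\Lambda$ is directly applicable. This is the very reason for imposing Conditions~\ref{Condition Lambda in L infty} and \ref{Condition tilde Lambda in Lp}, and it explains why the rate deteriorates from the bulk rate $O(\varepsilon)$ (available in the smoothed Theorem~\ref{Theorem sin corrector}) to the boundary-layer rate $O(\varepsilon^{1/2})$ of the present theorem, a rate which is in any case already present in \eqref{Th sin 2} and \eqref{Th fluxes operator terms} and is therefore optimal for the overall estimate.
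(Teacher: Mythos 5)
Your reduction of Theorem~\ref{Theorem no S-eps} to controlling $\varepsilon\,\|[\Lambda^\varepsilon b(\mathbf{D})+\widetilde\Lambda^\varepsilon]\,(I-S_\varepsilon)P_\mathcal{O}(B_D^0)^{-5/2}\sin(t(B_D^0)^{1/2})\|_{L_2\to H^1}$ is the right starting point and agrees with the paper's. But the paper does \emph{not} then try to re-derive the boundedness of this operator from scratch: it invokes Lemmas~\ref{Lemma Lambda (S-I)} and \ref{Lemma tilde Lambda(S-I)} (quoted from \cite{MSuPOMI}), which say precisely that $[\Lambda^\varepsilon]b(\mathbf{D})(S_\varepsilon-I)$ and $[\widetilde\Lambda^\varepsilon](S_\varepsilon-I)$ are \emph{uniformly $O(1)$} as operators $H^2(\mathbb{R}^d)\to H^1(\mathbb{R}^d)$ under Conditions~\ref{Condition Lambda in L infty} and \ref{Condition tilde Lambda in Lp}. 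Combined with the prefactor $\varepsilon$ and the estimate $\|(B_D^0)^{-5/2}\sin(t(B_D^0)^{1/2})\|_{L_2\to H^2}\leqslant\mathcal{C}_1\mathcal{C}_3|t|$, this gives an $O(\varepsilon|t|)$ correction, which is dominated by the $O(\varepsilon^{1/2}(1+t^6))$ already present in \eqref{Th sin 2}. No boundary-layer analysis or $L_\infty$ control of $b(\mathbf{D})P_\mathcal{O}w_t$ is needed; that boundary analysis was already expended in proving Theorem~\ref{Theorem sin corrector}.

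Your proposed re-derivation contains a genuine gap at exactly the spot you single out as the principal obstacle. After applying $\mathbf{D}$ you obtain the term $(\mathbf{D}\Lambda)^\varepsilon(I-S_\varepsilon)b(\mathbf{D})P_\mathcal{O}w_t$ with no residual factor of $\varepsilon$ (the chain-rule derivative of $\Lambda^\varepsilon$ eats it). You then propose to split $(I-S_\varepsilon)$ into its two summands and bound them separately: the $S_\varepsilon$-part via Proposition~\ref{Proposition f^eps S_eps} and \eqref{DLambda<=}, the identity part via a boundary-layer decomposition. This cannot close. Proposition~\ref{Proposition f^eps S_eps} gives $\|(\mathbf{D}\Lambda)^\varepsilon S_\varepsilon b(\mathbf{D})P_\mathcal{O}w_t\|_{L_2}\leqslant M_2\|b(\mathbf{D})P_\mathcal{O}w_t\|_{L_2}$, which is $O(1)\|\boldsymbol{\Phi}\|_{L_2}$, not $O(\varepsilon^{1/2})$; the $I$-part is likewise $O(1)$ at best. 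The whole point is that $(I-S_\varepsilon)$ must be estimated as a single unit to capture cancellation, and that is precisely the nontrivial content of Lemma~\ref{Lemma Lambda (S-I)}, whose proof uses the multiplier property of $\Lambda$ more subtly than a crude $L_\infty$ bound followed by the triangle inequality. Moreover, the difficulty here is a bulk phenomenon on all of $\mathbb{R}^d$, not a boundary effect, so the boundary-strip decomposition $\mathcal{O}_\varepsilon$ / $\mathcal{O}\setminus\mathcal{O}_\varepsilon$ does not address it. Finally, your boundary-layer step presupposes $b(\mathbf{D})P_\mathcal{O}w_t\in L_\infty$ ``guaranteed by $w_t\in H^4$,'' but under the hypotheses of Theorem~\ref{Theorem no S-eps} (inherited from Theorem~\ref{Theorem sin corrector}) only $\partial\mathcal{O}\in C^{1,1}$ is assumed, and the paper uses only $(B_D^0)^{-1}:L_2\to H^2$ from \eqref{B_D^0 L2 ->H^2}; the required $H^4$-regularity of $w_t$ is not available in general.
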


To prove Theorem \ref{Theorem no S-eps}, we need the following results obtained in  \cite[Lemmas~7.7 and 7.8]{MSuPOMI}.

\begin{lemma}
\label{Lemma Lambda (S-I)}
Let $\Gamma$-periodic matrix-valued solution $\Lambda (\mathbf{x})$ of problem \eqref{Lambda problem} satisfy Condition~\textnormal{\ref{Condition Lambda in L infty}}. Let $S_\varepsilon$ be the Steklov smoothing operator \eqref{S_eps}. Then for $0<\varepsilon\leqslant 1$ 
\begin{equation*}
\Vert [\Lambda ^\varepsilon ]b(\mathbf{D})(S_\varepsilon -I)\Vert _{H^2(\mathbb{R}^d)\rightarrow H^1(\mathbb{R}^d)}\leqslant\mathfrak{C}_\Lambda .
\end{equation*}
The constant $\mathfrak{C}_\Lambda$ depends only on $m$, $d$, $\alpha _0$, $\alpha _1$, $\Vert g\Vert _{L_\infty}$, $\Vert g^{-1}\Vert _{L_\infty}$, on the parameters of the lattice $\Gamma$, and on the norm $\Vert \Lambda\Vert _{L_\infty}$.
\end{lemma}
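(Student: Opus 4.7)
Set $v:=b(\mathbf{D})u$, so that $v\in H^1(\mathbb{R}^d;\mathbb{C}^m)$ with $\Vert v\Vert_{H^1}\leqslant\alpha_1^{1/2}\Vert u\Vert_{H^2}$ by \eqref{<b^*b<}. Since $b(\mathbf{D})$ has constant coefficients it commutes with $S_\varepsilon$, so
\[
[\Lambda^\varepsilon]b(\mathbf{D})(S_\varepsilon-I)u = \Lambda^\varepsilon(S_\varepsilon-I)v,
\]
and it suffices to prove $\Vert\Lambda^\varepsilon(S_\varepsilon-I)v\Vert_{H^1(\mathbb{R}^d)}\leqslant C\Vert v\Vert_{H^1(\mathbb{R}^d)}$ with $C$ controlled by the listed data.

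\textbf{Routine parts.} The $L_2$-bound is immediate from Condition~\ref{Condition Lambda in L infty} and Proposition~\ref{Proposition S__eps - I}:
\[
\Vert\Lambda^\varepsilon(S_\varepsilon-I)v\Vert_{L_2}\leqslant \Vert\Lambda\Vert_{L_\infty}\,\varepsilon\, r_1\,\Vert\mathbf{D}v\Vert_{L_2}.
\]
For the distributional gradient the Leibniz rule gives
\[
\mathbf{D}[\Lambda^\varepsilon(S_\varepsilon-I)v]
= \varepsilon^{-1}(\mathbf{D}\Lambda)^\varepsilon(S_\varepsilon-I)v + \Lambda^\varepsilon(S_\varepsilon-I)\mathbf{D}v,
\]
and the second summand is controlled by $2\Vert\Lambda\Vert_{L_\infty}\Vert\mathbf{D}v\Vert_{L_2}$ thanks to $\Vert S_\varepsilon-I\Vert_{L_2\to L_2}\leqslant 2$.

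\textbf{Main obstacle.} The remaining summand $\varepsilon^{-1}(\mathbf{D}\Lambda)^\varepsilon(S_\varepsilon-I)v$ is the heart of the lemma: since $\mathbf{D}\Lambda$ is only in $L_2(\Omega)$ (not $L_\infty$), one cannot simply pull an $L_\infty$-norm of $\mathbf{D}\Lambda$ outside. The plan is to exploit the vanishing first moment $\int_\Omega z\,dz=0$ of the Steklov kernel by the representation
\[
(S_\varepsilon w-w)(x) = -\varepsilon\sum_{l=1}^d|\Omega|^{-1}\int_\Omega z_l\int_0^1(\partial_l w)(x-\varepsilon s z)\,ds\,dz,\qquad w\in H^1,
\]
so that after substitution the $\varepsilon^{-1}$ cancels and the task reduces to the $L_2$-boundedness of operators of the form $[(\mathbf{D}\Lambda)^\varepsilon]\,T^{(\varepsilon s)}_l$, where $T^{(\varepsilon s)}_l g(x):=|\Omega|^{-1}\int_\Omega z_l\,g(x-\varepsilon sz)\,dz$, applied to $\partial_l v\in L_2$. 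For $s\in[\tfrac12,1]$ the averaging scale of $T^{(\varepsilon s)}_l$ matches the oscillation scale $\varepsilon$ of $(\mathbf{D}\Lambda)^\varepsilon$, and a direct adaptation of the argument behind Proposition~\ref{Proposition f^eps S_eps} (Cauchy--Schwarz inside the $z$-integral, followed by the change of variable $y=x-\varepsilon s z$ and using the periodicity of $\mathbf{D}\Lambda$) yields a bound by a constant times $\Vert\mathbf{D}\Lambda\Vert_{L_2(\Omega)}\Vert\partial_l v\Vert_{L_2}$, with $\Vert\mathbf{D}\Lambda\Vert_{L_2(\Omega)}$ controlled by the problem data via~\eqref{DLambda<=}. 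I expect the hardest step to be the small-$s$ regime $s\in(0,\tfrac12]$, where the averaging scale $\varepsilon s$ is strictly finer than the oscillation scale $\varepsilon$ of $(\mathbf{D}\Lambda)^\varepsilon$ and the naive Proposition~\ref{Proposition f^eps S_eps}-argument fails. This will be overcome by integrating by parts in $z_l$ (using $\partial_{z_l}v(x-\varepsilon sz)=-\varepsilon s\,\partial_l v(x-\varepsilon sz)$) to trade the derivative on $v$ for a derivative on the polynomial weight $z_l$; the resulting interior contribution factors through the genuine Steklov smoothing $S_{\varepsilon s}$, to which Proposition~\ref{Proposition f^eps S_eps} applies, and the boundary term on $\partial\Omega$ is estimated using trace theorems together with $\Vert\Lambda\Vert_{L_\infty}$. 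Collecting all contributions yields the stated bound, with $\mathfrak{C}_\Lambda$ depending only on $m,d,\alpha_0,\alpha_1,\Vert g\Vert_{L_\infty},\Vert g^{-1}\Vert_{L_\infty}$, the parameters of the lattice $\Gamma$, and $\Vert\Lambda\Vert_{L_\infty}$.
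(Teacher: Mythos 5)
The routine reductions are correct: passing to $v=b(\mathbf{D})u$, the $L_2$-bound, the Leibniz rule, the easy gradient term $\Lambda^\varepsilon(S_\varepsilon-I)\mathbf{D}v$, the Taylor representation of $\varepsilon^{-1}(S_\varepsilon-I)v$, and the identification that the naive Proposition~\ref{Proposition f^eps S_eps}-type bound for $[(\mathbf{D}\Lambda)^\varepsilon]T^{(\varepsilon s)}_l$ degenerates like $s^{-d/2}$ as $s\to 0$ (so that $\int_0^1 s^{-d/2}\,ds$ diverges for $d\geqslant 2$) are all sound. But the proposed fix for the small-$s$ regime does not work. Integration by parts in $z_l$ uses $(\partial_l v)(x-\varepsilon sz)=-(\varepsilon s)^{-1}\partial_{z_l}[v(x-\varepsilon sz)]$ and therefore \emph{re-introduces} the factor $(\varepsilon s)^{-1}$ that the Taylor formula had just cancelled: the interior contribution becomes $(\varepsilon s)^{-1}S_{\varepsilon s}v$ up to a dimensional constant, so even before multiplying by $(\mathbf{D}\Lambda)^\varepsilon$ the integral $\int_0^1 s^{-1}\,ds$ is logarithmically divergent. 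The original integrand $T^{(\varepsilon s)}_l(\partial_l v)$ is uniformly bounded in $L_2$ because the interior and boundary contributions cancel to leading order as $s\to 0$; the integration by parts destroys exactly that cancellation, and you cannot put the pieces back together after the triangle inequality.

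There are two further defects. First, the claim that Proposition~\ref{Proposition f^eps S_eps} applies to $[(\mathbf{D}\Lambda)^\varepsilon]S_{\varepsilon s}$ is incorrect: for $s<1$ the oscillation scale $\varepsilon$ of $(\mathbf{D}\Lambda)^\varepsilon$ does not match the averaging scale $\varepsilon s$ of $S_{\varepsilon s}$, and reproducing the Cauchy–Schwarz/change-of-variables proof of that proposition yields $\int_\Omega|(\mathbf{D}\Lambda)(y/\varepsilon + sz)|^2\,dz = s^{-d}\int_{s\Omega}|(\mathbf{D}\Lambda)(y/\varepsilon+w)|^2\,dw \leqslant s^{-d}\Vert\mathbf{D}\Lambda\Vert^2_{L_2(\Omega)}$, so the operator norm is $\lesssim s^{-d/2}\Vert\mathbf{D}\Lambda\Vert_{L_2(\Omega)}$, not uniform in $s$; combined with the $(\varepsilon s)^{-1}$ factor from the integration by parts the integrand is worse than before. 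Second, the boundary term on $\partial\Omega$ still carries the prefactor $(\mathbf{D}\Lambda)^\varepsilon$ (not $\Lambda^\varepsilon$), so $\Vert\Lambda\Vert_{L_\infty}$ cannot be pulled out of it, and there is no genuine Steklov smoothing in front of it to invoke Proposition~\ref{Proposition f^eps S_eps}. In short, the hard term $\varepsilon^{-1}(\mathbf{D}\Lambda)^\varepsilon(S_\varepsilon-I)v$ is precisely where the lemma's substance lies, and it is not actually dispatched by the proposal; a different mechanism is needed there (the paper itself does not contain a proof of this lemma but refers to \cite[Lemma 7.7]{MSuPOMI}).
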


\begin{lemma}
\label{Lemma tilde Lambda(S-I)}
Let matrix-valued $\Gamma$-periodic solution $\widetilde{\Lambda }(\mathbf{x})$ of problem \eqref{tildeLambda_problem} satisfy Condition \textnormal{\ref{Condition tilde Lambda in Lp}}. Let $S_\varepsilon$ be the Steklov smoothing operator \eqref{S_eps}. Then for $0<\varepsilon\leqslant 1$
\begin{equation*}
\Vert [\widetilde{\Lambda}^\varepsilon ](S_\varepsilon -I)\Vert _{H^2(\mathbb{R}^d)\rightarrow H^1(\mathbb{R}^d)}\leqslant \mathfrak{C}_{\widetilde{\Lambda}}.
\end{equation*}
The constant $\mathfrak{C}_{\widetilde{\Lambda}}$ is controlled in terms of $n$, $d$, $\alpha _0$, $\alpha _1$, $\rho$, $\Vert g\Vert _{L_\infty}$, $\Vert g^{-1}\Vert _{L_\infty}$, the norms $\Vert a_j\Vert _{L_\rho (\Omega)}$, $j=1,\dots,d$, $p$, $\Vert \widetilde{\Lambda}\Vert _{L_p(\Omega)}$, and the parameters of the lattice $\Gamma$.
\end{lemma}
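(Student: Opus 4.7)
The plan is to decompose the target $H^1$-norm of $[\widetilde{\Lambda}^\varepsilon](S_\varepsilon - I)\mathbf{u}$ into the $L_2$-part and the gradient part, and bound each separately. Fix $\mathbf{u} \in H^2(\mathbb{R}^d;\mathbb{C}^n)$. The cornerstone is the Sobolev multiplier estimate
\begin{equation*}
\|[\widetilde{\Lambda}^\varepsilon]\mathbf{w}\|_{L_2(\mathbb{R}^d)} \leqslant M \|\widetilde{\Lambda}\|_{L_p(\Omega)}\|\mathbf{w}\|_{H^1(\mathbb{R}^d)}, \qquad 0 < \varepsilon \leqslant 1,
\end{equation*}
which I would establish by a cell-by-cell Hölder argument: on each $\varepsilon$-cell $\varepsilon(\mathbf{k} + \Omega)$, the $L_p$ norm of $\widetilde{\Lambda}^\varepsilon$ scales as $\varepsilon^{d/p}\|\widetilde{\Lambda}\|_{L_p(\Omega)}$, while $\mathbf{w}\in H^1(\mathbb{R}^d)$ belongs to $L_q$ via Sobolev embedding with the \emph{precisely matched} exponent $1/p + 1/q = 1/2$ ($q = \infty$ for $d=1$, $q < \infty$ for $d=2$, $q = 2d/(d-2)$ for $d\geqslant 3$). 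This is exactly why the exponent $p$ in Condition~\ref{Condition tilde Lambda in Lp} takes its stated form.

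For the $L_2$-part, I would apply this multiplier estimate to $\mathbf{w} = (S_\varepsilon - I)\mathbf{u}$ and combine it with Proposition~\ref{Proposition S__eps - I}, applied to both $\mathbf{u}$ and to each $D_k\mathbf{u}$ (using that $D_k$ commutes with $S_\varepsilon$), to conclude $\|(S_\varepsilon - I)\mathbf{u}\|_{H^1} \leqslant \varepsilon r_1 \|\mathbf{u}\|_{H^2}$, giving a bound $O(\varepsilon)\|\mathbf{u}\|_{H^2}$ which is harmless since we only need a uniform-in-$\varepsilon$ constant. For the gradient part, Leibniz rule yields
\begin{equation*}
D_j[\widetilde{\Lambda}^\varepsilon(S_\varepsilon - I)\mathbf{u}] = \varepsilon^{-1}(D_j\widetilde{\Lambda})^\varepsilon(S_\varepsilon - I)\mathbf{u} + \widetilde{\Lambda}^\varepsilon (S_\varepsilon - I)D_j\mathbf{u}.
\end{equation*}
The second summand is handled by the multiplier estimate applied with $\mathbf{w} = (S_\varepsilon - I)D_j\mathbf{u}$, using $\|(S_\varepsilon - I)D_j\mathbf{u}\|_{H^1} \leqslant C\|\mathbf{u}\|_{H^2}$ (the $L_2$-part of the $H^1$-norm gains $\varepsilon$ by Proposition~\ref{Proposition S__eps - I}, while the derivative part uses the uniform bound $\|S_\varepsilon - I\|_{L_2\to L_2} \leqslant 2$).

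The delicate step is the first summand, where $D_j\widetilde{\Lambda}$ is only known to lie in $L_2(\Omega)$ by \eqref{D tilde Lambda}, which in dimensions $d\geqslant 3$ is below the Sobolev-multiplier threshold. Here I would exploit the integral representation
\begin{equation*}
(S_\varepsilon - I)\mathbf{u}(\mathbf{x}) = -\varepsilon |\Omega|^{-1}\sum_k \int_\Omega z_k \int_0^1 D_k\mathbf{u}(\mathbf{x} - s\varepsilon \mathbf{z})\,ds\,d\mathbf{z},
\end{equation*}
so that the explicit factor $\varepsilon$ cancels the $\varepsilon^{-1}$, leaving the operator $\varepsilon^{-1}(D_j\widetilde{\Lambda})^\varepsilon (S_\varepsilon - I)$ expressed as a sum of $[(D_j\widetilde{\Lambda})^\varepsilon]T_\varepsilon^{(k)}D_k$, where $T_\varepsilon^{(k)}$ is a weighted Steklov-type averaging operator. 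A straightforward adaptation of Proposition~\ref{Proposition f^eps S_eps} (the same Fubini/periodicity argument, with the weight $z_k$ bounded by $r_1$) gives $\|[(D_j\widetilde{\Lambda})^\varepsilon]T_\varepsilon^{(k)}\|_{L_2 \to L_2} \leqslant C\|D_j\widetilde{\Lambda}\|_{L_2(\Omega)}$, and the $L_2$-norm of $D_k\mathbf{u}$ is controlled by $\|\mathbf{u}\|_{H^2}$. Combining everything and invoking \eqref{D tilde Lambda} to control $\|\mathbf{D}\widetilde{\Lambda}\|_{L_2(\Omega)}$ by problem data produces the required $\varepsilon$-independent constant $\mathfrak{C}_{\widetilde{\Lambda}}$. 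The main obstacle I anticipate is verifying the multiplier estimate carefully across the three dimension regimes so that the constant depends only on the data listed in the lemma, and making sure the weighted smoothing $T_\varepsilon^{(k)}$ truly enjoys the same $L_2$-boundedness as the bare Steklov operator in Proposition~\ref{Proposition f^eps S_eps}.
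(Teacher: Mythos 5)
Your decomposition of the $H^1$ norm and the Leibniz split $\mathbf{D}\bigl[\widetilde{\Lambda}^\varepsilon(S_\varepsilon-I)\mathbf{u}\bigr]=\varepsilon^{-1}(\mathbf{D}\widetilde{\Lambda})^\varepsilon(S_\varepsilon-I)\mathbf{u}+\widetilde{\Lambda}^\varepsilon(S_\varepsilon-I)\mathbf{D}\mathbf{u}$ are the right starting point, and the $L_2$ part and the second summand are correctly closed with the multiplier estimate of Lemma~\ref{Lemma tilde Lambda norm}. The gap is in the first summand, exactly where you flag your own uncertainty. You claim that $\bigl\|[(D_j\widetilde{\Lambda})^\varepsilon]T_\varepsilon^{(k)}\bigr\|_{L_2\to L_2}\leqslant C\|D_j\widetilde{\Lambda}\|_{L_2(\Omega)}$ follows by ``the same Fubini/periodicity argument'' as Proposition~\ref{Proposition f^eps S_eps}, but it does not. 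That argument hinges on the shift in the Steklov kernel being exactly $\varepsilon\mathbf{z}$ with $\mathbf{z}\in\Omega$, so that after the change of variables one integrates $|\Phi|^2$ over a full translate of the period cell and the $\mathbf{y}$-dependence disappears. In your operator the shift is $s\varepsilon\mathbf{z}$ with $s\in[0,1]$, and the Cauchy--Schwarz + Fubini computation produces the inner quantity
\begin{equation*}
\int_0^1\int_\Omega\bigl|\Phi(\mathbf{y}/\varepsilon+s\mathbf{z})\bigr|^2\,d\mathbf{z}\,ds
=\int_0^1 s^{-d}\int_{\mathbf{y}/\varepsilon+s\Omega}|\Phi|^2\,ds,
\end{equation*}
which is not uniformly bounded in $\mathbf{y}$ when $\Phi=D_j\widetilde{\Lambda}\in L_2(\Omega)\setminus L_\infty$: the crude periodicity bound gives $\int_0^1 s^{-d}\,ds=\infty$, and in fact the integral is comparable to $|\Phi(\mathbf{y}/\varepsilon)|^2$, which is unbounded. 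For $s<1$ the operator $[\Phi^\varepsilon]S_{s\varepsilon}^{(k)}$ is averaging at a scale finer than the period of $\Phi^\varepsilon$, so it does not regularize the product; the estimate of Proposition~\ref{Proposition f^eps S_eps} simply does not transfer.

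Concretely: the weighted smoothing $T_\varepsilon^{(k)}$ does \emph{not} enjoy the same $L_2$-boundedness with periodic $L_2$-multipliers as the bare Steklov operator, and you have not supplied a substitute argument. Since Condition~\ref{Condition tilde Lambda in Lp} gives you improved integrability only for $\widetilde{\Lambda}$ itself and not for $\mathbf{D}\widetilde{\Lambda}$ (which is controlled only in $L_2(\Omega)$ via \eqref{D tilde Lambda}), the first Leibniz summand is the genuinely delicate term and needs a different device; for instance, one can insert one extra $S_\varepsilon$ via $(S_\varepsilon-I)=S_\varepsilon(S_\varepsilon-I)-(S_\varepsilon-I)^2$ so that Proposition~\ref{Proposition f^eps S_eps} applies to the leading piece and then handle the remainder with a separate estimate, rather than trying to push a weighted Steklov operator through the periodicity lemma. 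As written, the proof does not establish the required $\varepsilon$-uniform bound.
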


The following assertion can be easily checked by using the H\"older inequality and the Sobolev embedding theorem (cf. \cite[Lemma 3.5]{MSu15}).

\begin{lemma}
\label{Lemma tilde Lambda norm}
Assume that the matrix-valued function $\widetilde{\Lambda}(\mathbf{x})$ satisfies Condition~\textnormal{\ref{Condition tilde Lambda in Lp}}. Then for $0<\varepsilon\leqslant 1$ the operator $[\widetilde{\Lambda}^\varepsilon]$ is a continuous mapping from $H^1(\mathbb{R}^d;\mathbb{C}^n)$ to $L_2(\mathbb{R}^d;\mathbb{C}^n)$ and
\begin{equation*}
\Vert [\widetilde{\Lambda}^\varepsilon ]\Vert _{H^1(\mathbb{R}^d)\rightarrow L_2(\mathbb{R}^d)}
\leqslant\Vert \widetilde{\Lambda}\Vert _{L_p(\Omega)}C_\Omega (p),
\end{equation*}
where $C_\Omega (p)$ is the norm of the embedding operator $H^1(\Omega)\hookrightarrow L_{2(p/2)'}(\Omega)$. Here $(p/2)'=\infty$ for $d=1$ and $(p/2)'=p/(p-2)$ for $d\geqslant 2$.
\end{lemma}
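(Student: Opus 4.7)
The plan is to prove the bound by a cell-by-cell argument, combining H\"older's inequality on translates of the $\varepsilon$-cell with the Sobolev embedding on the reference cell $\Omega$, then summing over the period lattice. The balance of $\varepsilon$-powers will turn out to be exact, so the constant is independent of $\varepsilon\in(0,1]$, as claimed.

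First I would fix a function $\mathbf{u}\in H^1(\mathbb{R}^d;\mathbb{C}^n)$ and use the $\Gamma$-periodicity of $\widetilde{\Lambda}$ together with the partition of $\mathbb{R}^d$ (modulo a null set) into the translates $\varepsilon\Omega+\varepsilon\mathbf{k}$, $\mathbf{k}\in\Gamma$, to write
\begin{equation*}
\Vert \widetilde{\Lambda}^\varepsilon\mathbf{u}\Vert^2_{L_2(\mathbb{R}^d)}=\sum_{\mathbf{k}\in\Gamma}\int_{\varepsilon\Omega+\varepsilon\mathbf{k}}|\widetilde{\Lambda}(\mathbf{x}/\varepsilon)|^2|\mathbf{u}(\mathbf{x})|^2\,d\mathbf{x}.
\end{equation*}
On each cell I would apply H\"older's inequality with the dual exponents $p/2$ and $(p/2)'$, set $q:=2(p/2)'$, and note by a straightforward change of variables $\mathbf{y}=(\mathbf{x}-\varepsilon\mathbf{k})/\varepsilon$ that $\Vert\widetilde{\Lambda}^\varepsilon\Vert_{L_p(\varepsilon\Omega+\varepsilon\mathbf{k})}=\varepsilon^{d/p}\Vert\widetilde{\Lambda}\Vert_{L_p(\Omega)}$. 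This gives
\begin{equation*}
\int_{\varepsilon\Omega+\varepsilon\mathbf{k}}|\widetilde{\Lambda}^\varepsilon|^2|\mathbf{u}|^2\,d\mathbf{x}\leqslant\varepsilon^{2d/p}\Vert\widetilde{\Lambda}\Vert^2_{L_p(\Omega)}\Vert\mathbf{u}\Vert^2_{L_q(\varepsilon\Omega+\varepsilon\mathbf{k})}.
\end{equation*}

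Next I would handle the $L_q$-norm of $\mathbf{u}$ via a rescaled Sobolev embedding. Under Condition~\ref{Condition tilde Lambda in Lp}, the triples $(d,p,q)$ are $(1,2,\infty)$, $(2,p,2p/(p-2))$ with $p>2$, and $(d,d,2d/(d-2))$ for $d\geqslant 3$; in each case the continuous embedding $H^1(\Omega)\hookrightarrow L_q(\Omega)$ holds with constant $C_\Omega(p)$. Setting $\mathbf{v}(\mathbf{y}):=\mathbf{u}(\varepsilon\mathbf{y}+\varepsilon\mathbf{k})$ and using $0<\varepsilon\leqslant 1$, a direct computation yields $\Vert\mathbf{v}\Vert^2_{H^1(\Omega)}\leqslant\varepsilon^{-d}\Vert\mathbf{u}\Vert^2_{H^1(\varepsilon\Omega+\varepsilon\mathbf{k})}$ (the $L_2$-part contributes $\varepsilon^{-d}$ and the gradient part $\varepsilon^{2-d}\leqslant\varepsilon^{-d}$), and $\Vert\mathbf{u}\Vert_{L_q(\varepsilon\Omega+\varepsilon\mathbf{k})}=\varepsilon^{d/q}\Vert\mathbf{v}\Vert_{L_q(\Omega)}$. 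Hence
\begin{equation*}
\Vert\mathbf{u}\Vert^2_{L_q(\varepsilon\Omega+\varepsilon\mathbf{k})}\leqslant \varepsilon^{2d/q-d}C_\Omega(p)^2\Vert\mathbf{u}\Vert^2_{H^1(\varepsilon\Omega+\varepsilon\mathbf{k})}=\varepsilon^{-2d/p}C_\Omega(p)^2\Vert\mathbf{u}\Vert^2_{H^1(\varepsilon\Omega+\varepsilon\mathbf{k})},
\end{equation*}
where I used the identity $1-2/q=2/p$ coming from $1/(p/2)+1/(p/2)'=1$.

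Combining the two displays, the $\varepsilon^{2d/p}$ and $\varepsilon^{-2d/p}$ cancel exactly, and summing over $\mathbf{k}\in\Gamma$ gives
\begin{equation*}
\Vert\widetilde{\Lambda}^\varepsilon\mathbf{u}\Vert^2_{L_2(\mathbb{R}^d)}\leqslant C_\Omega(p)^2\Vert\widetilde{\Lambda}\Vert^2_{L_p(\Omega)}\Vert\mathbf{u}\Vert^2_{H^1(\mathbb{R}^d)},
\end{equation*}
which is the claim. The only point that requires mild care is the $d=1$, $p=2$ case, where $q=\infty$ and the Sobolev embedding $H^1(\Omega)\hookrightarrow L_\infty(\Omega)$ replaces the integral exponent; I would treat it by the same scaling scheme (using $\Vert\widetilde{\Lambda}^\varepsilon\Vert^2_{L_2(\varepsilon\Omega+\varepsilon k)}=\varepsilon\Vert\widetilde{\Lambda}\Vert^2_{L_2(\Omega)}$ and $\Vert\mathbf{u}\Vert^2_{L_\infty(\varepsilon\Omega+\varepsilon k)}\leqslant C_\Omega^2\varepsilon^{-1}\Vert\mathbf{u}\Vert^2_{H^1(\varepsilon\Omega+\varepsilon k)}$), which again produces exact cancellation. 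The main, and essentially only, technical point is keeping track of these exponents; nothing else is subtle.
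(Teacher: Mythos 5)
Your proof is correct and follows exactly the route the paper alludes to (H\"older's inequality plus the Sobolev embedding, implemented cell by cell on the translates $\varepsilon\Omega+\varepsilon\mathbf{k}$ with the scaling $\mathbf{y}=(\mathbf{x}-\varepsilon\mathbf{k})/\varepsilon$ so that the powers of $\varepsilon$ cancel exactly via $1/(p/2)+1/(p/2)'=1$, and using $\varepsilon^{2-d}\leqslant\varepsilon^{-d}$ for $\varepsilon\leqslant 1$). The exponent bookkeeping is right in every case covered by Condition~\ref{Condition tilde Lambda in Lp}, including $d=1$ with $q=\infty$.
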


\begin{proof}[Proof of Theorem~\textnormal{\ref{Theorem no S-eps}}.]
The result of Theorem~\ref{Theorem no S-eps} can be derived from Theorem~\ref{Theorem sin corrector} with the help of Lemmas \ref{Lemma Lambda (S-I)}, \ref{Lemma tilde Lambda(S-I)}, and \ref{Lemma tilde Lambda norm}.

By Lemma \ref{Lemma Lambda (S-I)} and \eqref{PO}, 
\begin{equation}
\label{proof no S_eps-1}
\begin{split}
\Vert &\Lambda ^\varepsilon b(\mathbf{D})(S_\varepsilon -I)P_\mathcal{O}(B_D^0)^{-5/2}\sin (t(B_D^0)^{1/2})\Vert _{L_2(\mathcal{O})\rightarrow H^1(\mathbb{R}^d)}
\\
&\leqslant \mathfrak{C}_\Lambda C_\mathcal{O}^{(2)}\Vert (B_D^0)^{-5/2}\sin (t(B_D^0)^{1/2})\Vert _{L_2(\mathcal{O})\rightarrow H^2(\mathcal{O})},\quad t\in\mathbb{R},\quad 0<\varepsilon\leqslant 1.
\end{split}
\end{equation}
We have
\begin{equation}
\label{proof no S_eps-2-a}
\begin{split}
\Vert & (B_D^0)^{-5/2}\sin (t(B_D^0)^{1/2})\Vert _{L_2(\mathcal{O})\rightarrow H^2(\mathcal{O})}
\\
&\leqslant \Vert (B_D^0)^{-1}\Vert _{L_2(\mathcal{O})\rightarrow H^2(\mathcal{O})}
\Vert (B_D^0)^{-1}\Vert _{L_2(\mathcal{O})\rightarrow L_2(\mathcal{O})}
\Vert (B_D^0)^{-1/2}\sin (t(B_D^0)^{1/2})\Vert _{L_2(\mathcal{O})\rightarrow L_2(\mathcal{O})}.
\end{split}
\end{equation}
By the spectral theorem and the elementary inequality $\vert \sin\mu\vert /\vert \mu\vert \leqslant 1$, $\mu\in\mathbb{R}$, 
\begin{equation*}
\Vert (B_D^0)^{-1/2}\sin (t(B_D^0)^{1/2})\Vert _{L_2(\mathcal{O})\rightarrow L_2(\mathcal{O})}
\leqslant\vert t\vert.
\end{equation*}
Combining this with \eqref{B_D^0 L2 ->L2}, \eqref{B_D^0 L2 ->H^2}, and \eqref{proof no S_eps-2-a}, we obtain
\begin{equation}
\label{proof no S_eps-2}
\begin{split}
\Vert  (B_D^0)^{-5/2}\sin (t(B_D^0)^{1/2})\Vert _{L_2(\mathcal{O})\rightarrow H^2(\mathcal{O})}
\leqslant\mathcal{C}_1\mathcal{C}_3\vert t\vert ,\quad t\in\mathbb{R}.
\end{split}
\end{equation}

From Lemma \ref{Lemma tilde Lambda(S-I)} and \eqref{PO}, \eqref{proof no S_eps-2} it follows that
\begin{equation}
\label{proof no S_eps-3}
\begin{split}
\Vert &\widetilde{\Lambda}^\varepsilon (S_\varepsilon -I)P_\mathcal{O}(B_D^0)^{-5/2}\sin (t(B_D^0)^{1/2})\Vert _{L_2(\mathcal{O})\rightarrow H^1(\mathbb{R}^d)}
\\
&\leqslant
\mathfrak{C}_{\widetilde{\Lambda}}C_\mathcal{O}^{(2)}\Vert (B_D^0)^{-5/2}\sin (t(B_D^0)^{1/2})\Vert _{L_2(\mathcal{O})\rightarrow H^2(\mathcal{O})}
\\
&\leqslant \mathfrak{C}_{\widetilde{\Lambda}}C_\mathcal{O}^{(2)}\mathcal{C}_1\mathcal{C}_3\vert t\vert 
,\quad t\in\mathbb{R},\quad 0<\varepsilon\leqslant 1.
\end{split}
\end{equation}

Bringing together \eqref{Th sin 2}, \eqref{proof no S_eps-1}, \eqref{proof no S_eps-2}, and \eqref{proof no S_eps-3}, we arrive at estimate \eqref{Th sin corr no S_eps-1} with the constant $C_{11}:=C_8+
(\mathfrak{C}_\Lambda +\mathfrak{C}_{\widetilde{\Lambda}})C_\mathcal{O}^{(2)}\mathcal{C}_1\mathcal{C}_3 $. Here the inequality $\vert t\vert\leqslant (1+ t ^6)$, $t\in\mathbb{R}$, is taken into account.

Now we proceed to the proof of inequality \eqref{Th sin corr no S_eps flux-1}. By \eqref{b_l <=} and \eqref{Th sin corr no S_eps-1}, 
\begin{equation}
\label{proof no S_eps-5.a}
\begin{split}
\Bigl\Vert & \Bigl(
g^\varepsilon b(\mathbf{D})B_{D,\varepsilon}^{-1/2}\sin (t B_{D,\varepsilon}^{1/2})
-g^\varepsilon b(\mathbf{D})\bigl(I+\varepsilon \Lambda ^\varepsilon b(\mathbf{D})+\varepsilon\widetilde{\Lambda}^\varepsilon\bigr)
(B_D^0)^{-1/2}\sin (t(B_D^0)^{1/2})
\Bigr)
\\
&\times
(B_D^0)^{-2}
\Bigr\Vert _{L_2(\mathcal{O})\rightarrow L_2(\mathcal{O})}
\leqslant (d\alpha _1)^{1/2}\Vert g\Vert _{L_\infty}C_{11}\varepsilon ^{1/2} (1+ t ^6),\quad t\in\mathbb{R},\quad 0<\varepsilon\leqslant\varepsilon _1.
\end{split}
\end{equation}
We have
\begin{equation}
\label{proof no S_eps-6}
\begin{split}
g^\varepsilon & b(\mathbf{D})\bigl(I+\varepsilon \Lambda ^\varepsilon b(\mathbf{D})+\varepsilon\widetilde{\Lambda}^\varepsilon\bigr)
(B_D^0)^{-5/2}\sin (t(B_D^0)^{1/2})
\\
&=
g^\varepsilon b(\mathbf{D})(B_D^0)^{-5/2}\sin (t(B_D^0)^{1/2})
+g^\varepsilon (b(\mathbf{D})\Lambda)^\varepsilon b(\mathbf{D})(B_D^0)^{-5/2}\sin (t(B_D^0)^{1/2})
\\
&+ g^\varepsilon \bigl(b(\mathbf{D})\widetilde{\Lambda}\bigr)^\varepsilon (B_D^0)^{-5/2}\sin (t(B_D^0)^{1/2})
\\
&+ \varepsilon \sum _{l=1}^d g^\varepsilon b_l \bigl(\Lambda ^\varepsilon b(\mathbf{D})+\widetilde{\Lambda}^\varepsilon \bigr) D_l (B_D^0)^{-5/2}\sin (t(B_D^0)^{1/2}).
\end{split}
\end{equation}
To estimate the fourth term in the right-hand side of \eqref{proof no S_eps-6}, we use Conditions \ref{Condition Lambda in L infty} and \ref{Condition tilde Lambda in Lp}, Lemma \ref{Lemma tilde Lambda norm}, and inequality \eqref{b_l <=}:
\begin{equation*}
\begin{split}
\Biggl\Vert & \varepsilon \sum _{l=1}^d g^\varepsilon b_l \bigl(\Lambda ^\varepsilon b(\mathbf{D})+\widetilde{\Lambda}^\varepsilon \bigr) D_l (B_D^0)^{-5/2}\sin (t(B_D^0)^{1/2})
\Biggr\Vert _{L_2(\mathcal{O})\rightarrow L_2(\mathcal{O})}
\\
&\leqslant
\varepsilon (d\alpha _1)^{1/2}\Vert g\Vert _{L_\infty}\Vert \Lambda\Vert _{L_\infty}\Vert b(\mathbf{D})\mathbf{D}(B_D^0)^{-5/2}\sin (t(B_D^0)^{1/2})\Vert _{L_2(\mathcal{O})\rightarrow L_2(\mathcal{O})}
\\
&+\varepsilon (d\alpha _1)^{1/2}\Vert g\Vert _{L_\infty}\Vert \widetilde{\Lambda}\Vert _{L_p(\Omega)}C_\Omega (p)\Vert P_\mathcal{O}\mathbf{D} (B_D^0)^{-5/2}\sin (t(B_D^0)^{1/2})\Vert _{L_2(\mathcal{O})\rightarrow H^1(\mathbb{R}^d)}.
\end{split}
\end{equation*}
Together with \eqref{b_l <=}, \eqref{PO}, and \eqref{proof no S_eps-2}, this implies
\begin{equation}
\label{proof no S_eps-7.a}
\begin{split}
\Biggl\Vert  \varepsilon \sum _{l=1}^d g^\varepsilon b_l \bigl(\Lambda ^\varepsilon b(\mathbf{D})+\widetilde{\Lambda}^\varepsilon \bigr) D_l (B_D^0)^{-5/2}\sin (t(B_D^0)^{1/2})
\Biggr\Vert _{L_2(\mathcal{O})\rightarrow L_2(\mathcal{O})}
\leqslant \varepsilon \vert t\vert \widehat{C}_{12},\quad t\in\mathbb{R},
\end{split}
\end{equation}
where $\widehat{C}_{12}:=(d\alpha _1)^{1/2}\mathcal{C}_1\mathcal{C}_3\Vert g\Vert _{L_\infty}\bigl(
(d\alpha _1)^{1/2}\Vert \Lambda\Vert _{L_\infty}
+C_\Omega (p)C_\mathcal{O}^{(1)}\Vert \widetilde{\Lambda}\Vert _{L_p(\Omega)}\bigr)$. 

From \eqref{tilde g} and \eqref{proof no S_eps-5.a}--\eqref{proof no S_eps-7.a} we derive estimate \eqref{Th sin corr no S_eps flux-1} with the constant $$C_{12}:=(d\alpha_1)^{1/2}\Vert g\Vert _{L_\infty}C_{11}+\widehat{C}_{12}.$$
\end{proof}

\subsection{Removal of the smoothing operator from the corrector for $3\leqslant d\leqslant 8$}
\label{Subsection main results no S_eps, 3<=d=<8}

If $d\leqslant 2$, then, according to Pro\-po\-si\-tions~\ref{Proposition Lambda in L infty <=} and \ref{Proposition tilde Lambda in Lp if}, Theorem~\ref{Theorem no S-eps} is applicable. So, let $d\geqslant 3$. Now we are interested in the possibility to remove the smoothing operator from the corrector without any additional assumptions on the matrix-valued functions $\Lambda$ and $\widetilde{\Lambda}$. 

If $3\leqslant d\leqslant 8$ and the boundary $\partial\mathcal{O}$ is sufficiently smooth, it turns out that the smoothing operator $S_\varepsilon$ can be eliminated from the both terms of the corrector. To do this, we use the properties of the matrix-valued functions $\Lambda ^\varepsilon$ and $\widetilde{\Lambda}^\varepsilon$ as multipliers. The following result was obtained in  \cite[Lemmas 6.3 and 6.5, Corollaries 6.4 and 6.6]{MSuAA17}.

\begin{lemma}
\label{Lemma Lambda multiplicator properties}
Let the~matrix-valued function $\Lambda({\mathbf x})$ be the $\Gamma$-periodic solution of problem 
\textnormal{\eqref{Lambda problem}}. Assume that $d\geqslant 3$ and put $l=d/2$. 

\noindent $1^\circ$. For $0< \varepsilon \leqslant 1$,
the operator $[\Lambda^\varepsilon]$ is a continuous mapping from  $H^{l-1}({\mathcal O};{\mathbb C}^m)$ to $L_2({\mathcal O};{\mathbb C}^n)$ 
and
\begin{equation*}
\|[\Lambda^\varepsilon] \|_{ H^{l-1}({\mathcal O}) \to L_2({\mathcal O})}
\leqslant C^{(0)} .
\end{equation*}

\noindent $2^\circ$. Let $0< \varepsilon \leqslant 1$. Then for the function
 ${\mathbf u} \in H^{l}({\mathbb R}^d; {\mathbb C}^m)$ we have the inclusion
$\Lambda^\varepsilon {\mathbf u} \in H^1({\mathbb R}^d;{\mathbb C}^n)$ and the estimate
\begin{equation*}
\| \Lambda^\varepsilon {\mathbf u}\|_{H^1({\mathbb R}^d)} \leqslant  C^{(1)} \varepsilon^{-1}
\| {\mathbf u}\|_{L_2({\mathbb R}^d)} + C^{(2)}  \|{\mathbf u}\|_{H^l({\mathbb R}^d)}.
\end{equation*}
The constants $C^{(0)}$, $C^{(1)}$, and $C^{(2)}$ depend on $m$, $d$, $\alpha_0$, $\alpha_1$,
$\|g\|_{L_\infty}$, $\|g^{-1}\|_{L_\infty}$, and the parameters of the lattice $\Gamma$. 
\end{lemma}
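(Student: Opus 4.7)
The plan is to prove both assertions by combining the Sobolev embedding theorem with H\"older's inequality, supplemented in Part $2^\circ$ by the Steklov smoothing decomposition used throughout the paper. The key initial observation is that $\Lambda\in \widetilde{H}^1(\Omega)$, together with the Sobolev embedding $H^1(\Omega) \hookrightarrow L_{2^*}(\Omega)$ (available for $d\geqslant 3$ with $2^*:=2d/(d-2)$), yields $\Lambda\in L_{2^*}(\Omega)$; a change of variables combined with $\Gamma$-periodicity and $\varepsilon\leqslant 1$ then gives a uniform bound on $\Vert \Lambda^\varepsilon\Vert_{L_{2^*}(Q)}$ over any unit cube $Q\subset\mathbb{R}^d$.

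For Part $1^\circ$ I would cover $\mathcal{O}$ by such unit cubes and apply H\"older's inequality cube-by-cube, using the conjugate exponents $2^*$ and $d$ (which satisfy $1/2^*+1/d=1/2$) together with the Sobolev embedding $H^{l-1}(\mathcal{O}) \hookrightarrow L_d(\mathcal{O})$ (valid since $l-1=(d-2)/2$). Summing the local estimates yields $\Vert \Lambda^\varepsilon \mathbf{u}\Vert_{L_2(\mathcal{O})} \leqslant C^{(0)}\Vert\mathbf{u}\Vert_{H^{l-1}(\mathcal{O})}$ with a constant depending only on the stated quantities.

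For Part $2^\circ$ I would apply the Leibniz rule,
\begin{equation*}
\nabla(\Lambda^\varepsilon\mathbf{u}) = \varepsilon^{-1}(\nabla\Lambda)^\varepsilon\mathbf{u} + \Lambda^\varepsilon\nabla\mathbf{u}.
\end{equation*}
The $L_2(\mathbb{R}^d)$-norms of $\Lambda^\varepsilon\mathbf{u}$ and $\Lambda^\varepsilon\nabla\mathbf{u}$ are both bounded by the $\mathbb{R}^d$-analog of Part $1^\circ$ (the same cube-by-cube H\"older estimate, now summed over $\mathbb{Z}^d$), giving a contribution controlled by $\Vert\mathbf{u}\Vert_{H^l(\mathbb{R}^d)}$ via the inclusion $\nabla\mathbf{u}\in H^{l-1}$. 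For the delicate term $\varepsilon^{-1}(\nabla\Lambda)^\varepsilon\mathbf{u}$, I would split $\mathbf{u} = S_\varepsilon\mathbf{u} + (\mathbf{u}-S_\varepsilon\mathbf{u})$. Proposition \ref{Proposition f^eps S_eps} applied to $\Phi = \nabla\Lambda\in L_2(\Omega)$ gives
\begin{equation*}
\Vert (\nabla\Lambda)^\varepsilon S_\varepsilon\mathbf{u}\Vert_{L_2(\mathbb{R}^d)} \leqslant \vert\Omega\vert^{-1/2}\Vert\nabla\Lambda\Vert_{L_2(\Omega)}\Vert\mathbf{u}\Vert_{L_2(\mathbb{R}^d)},
\end{equation*}
which after multiplying by $\varepsilon^{-1}$ produces exactly the $C^{(1)}\varepsilon^{-1}\Vert\mathbf{u}\Vert_{L_2}$ summand. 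For the remainder $(\nabla\Lambda)^\varepsilon(\mathbf{u}-S_\varepsilon\mathbf{u})$, I would combine Proposition \ref{Proposition S__eps - I} and its higher-order interpolated analogs with H\"older's inequality and higher $L_p$-integrability of $\nabla\Lambda$ (from Meyers-type estimates for the cell problem \eqref{Lambda problem}), so that the $\varepsilon$-smallness of the Steklov error absorbs the $\varepsilon^{-1}$ prefactor and leaves a contribution bounded by $\Vert\mathbf{u}\Vert_{H^l(\mathbb{R}^d)}$.

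The main obstacle is the criticality of the Sobolev index $l=d/2$: the embedding $H^l(\mathbb{R}^d) \hookrightarrow L_\infty(\mathbb{R}^d)$ just fails, so one cannot simply pull $\mathbf{u}$ out of the integral in sup-norm. Controlling the remainder $\varepsilon^{-1}(\nabla\Lambda)^\varepsilon(\mathbf{u}-S_\varepsilon\mathbf{u})$ therefore demands a tight trade-off between the higher integrability of $\nabla\Lambda$ (beyond $L_2$) and the $L_q$-convergence rate of the Steklov error for large finite $q$, and this balance is precisely what is captured by the constants $C^{(1)}$ and $C^{(2)}$ depending on the problem data.
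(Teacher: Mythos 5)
The paper does not reprove this lemma; it cites it from \cite[Lemmas~6.3, 6.5, Cor.~6.4, 6.6]{MSuAA17}, so there is no internal proof to compare against. Evaluated on its own merits, your argument for Part~$1^\circ$ is sound in outline: $\Lambda\in\widetilde H^1(\Omega)\hookrightarrow L_{2^*}(\Omega)$ with $2^*=2d/(d-2)$, uniform local $L_{2^*}$-control of $\Lambda^\varepsilon$ over unit cubes, the critical embedding $H^{l-1}\hookrightarrow L_d$, and H\"older with $1/2^*+1/d=1/2$, summed over a cube cover (with the minor caveat that for $d$ odd the fractional Gagliardo seminorms are superadditive over cubes, not additive — this still goes in the right direction). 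In Part~$2^\circ$, the Leibniz split, the bound on $\Lambda^\varepsilon\nabla\mathbf u$ via the Part~$1^\circ$ analog, and the bound on $\varepsilon^{-1}(\nabla\Lambda)^\varepsilon S_\varepsilon\mathbf u$ via Proposition~\ref{Proposition f^eps S_eps} are all correct and do produce the $C^{(1)}\varepsilon^{-1}\Vert\mathbf u\Vert_{L_2}$ term.

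The gap is in the last term, $\varepsilon^{-1}(\nabla\Lambda)^\varepsilon(\mathbf u - S_\varepsilon\mathbf u)$. Your plan — H\"older with exponents $(p,q)$, $1/p+1/q=1/2$, Meyers integrability $\nabla\Lambda\in L_{p}(\Omega)$, and an $L_q$-Steklov error estimate $\Vert\mathbf u-S_\varepsilon\mathbf u\Vert_{L_q}\lesssim\varepsilon\Vert\mathbf D\mathbf u\Vert_{L_q}$ — cannot close. To control $\Vert\mathbf D\mathbf u\Vert_{L_q}$ by $\Vert\mathbf u\Vert_{H^l}$ you need $H^{l-1}(\mathbb R^d)\hookrightarrow L_q(\mathbb R^d)$, which forces $q\leqslant d$ and therefore $p\geqslant 2d/(d-2)=2^*$. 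But $\Lambda\in H^1(\Omega)$ gives only $\nabla\Lambda\in L_2(\Omega)$, and the Meyers estimate improves this merely to $L_{2+\delta}(\Omega)$ with $\delta>0$ depending on the ellipticity contrast $\Vert g\Vert_{L_\infty}\Vert g^{-1}\Vert_{L_\infty}$ and in general arbitrarily small. Since $2+\delta<2^*$ unless $\delta\geqslant 4/(d-2)$, the exponents do not match for generic $g$; attempting to soften this by Gagliardo--Nirenberg interpolation in $q$ and absorbing the loss into the $\varepsilon^{-1}\Vert\mathbf u\Vert_{L_2}$ term via Young's inequality runs into the same constraint $q\leqslant d$. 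The published constants depend only on $\Vert g\Vert_{L_\infty}$, $\Vert g^{-1}\Vert_{L_\infty}$, etc., so the estimate must hold without a Meyers-type improvement; the difficult term is handled in \cite{MSuAA17} not by improved integrability of $\nabla\Lambda$ but by a discrete Fourier decomposition of the $\Gamma$-periodic function $\Lambda$ (a square-function/Cauchy--Schwarz argument with weights in the dual lattice variable), which exploits $\overline\Lambda=0$ and trades the $H^{d/2}$-regularity of $\mathbf u$ directly against the $\ell^2$-summability of $\widehat\Lambda$ rather than against pointwise integrability of $\nabla\Lambda$. You would need to replace your treatment of this single term with an argument of that type.
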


\begin{lemma}
\label{Lemma tilde Lambda multiplicator properties}
Let the matrix-valued function $\widetilde{\Lambda}(\mathbf{x})$ be the $\Gamma$-periodic solution of problem \eqref{tildeLambda_problem}. Assume that $d\geqslant 3$ and put $l=d/2$.

\noindent
$1^\circ$. For $0<\varepsilon\leqslant 1$, the operator $[\widetilde{\Lambda}^\varepsilon]$ is a continuous mapping from $H^{l-1}(\mathcal{O};\mathbb{C}^n)$ to $L_2(\mathcal{O};\mathbb{C}^n)$ and
\begin{equation*}
\Vert [\widetilde{\Lambda}^\varepsilon ]\Vert _{H^{l-1}(\mathcal{O})\rightarrow L_2(\mathcal{O})}
\leqslant \widetilde{C}^{(0)}.
\end{equation*}

\noindent
$2^\circ$. Let $0<\varepsilon\leqslant 1$. Then for $\mathbf{u}\in H^l(\mathbb{R}^d;\mathbb{C}^n)$ we have the inclusion $\widetilde{\Lambda}^\varepsilon\mathbf{u}\in H^1(\mathbb{R}^d;\mathbb{C}^n)$ and the estimate
\begin{equation*}
\Vert \widetilde{\Lambda}^\varepsilon\mathbf{u}\Vert _{H^1(\mathbb{R}^d)}
\leqslant \widetilde{C}^{(1)}\varepsilon ^{-1}\Vert \mathbf{u}\Vert _{H^1(\mathbb{R}^d)}
+\widetilde{C}^{(2)}\Vert \mathbf{u}\Vert _{H^l(\mathbb{R}^d)}.
\end{equation*}
The constants   
$
\widetilde{C}^{(0)}$, $ \widetilde{C}^{(1)}$, and $ \widetilde{C}^{(2)}$ 
depend only on the problem data \eqref{problem data}.
\end{lemma}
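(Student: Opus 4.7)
The plan is to prove both parts by adapting the strategy already used for the companion Lemma~\ref{Lemma Lambda multiplicator properties} for $\Lambda$: a Hölder--Sobolev pairing for part~$1^\circ$, and the Leibniz rule together with a Steklov-smoothing decomposition for part~$2^\circ$ to circumvent the fact that $\widetilde{\Lambda}$ has only $\widetilde{H}^1(\Omega)$-regularity.

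For part~$1^\circ$, I would apply Hölder's inequality on $\mathcal{O}$ with conjugate exponents $p=2d/(d-2)$ and $q=d$:
\begin{equation*}
\Vert\widetilde{\Lambda}^\varepsilon\mathbf{u}\Vert_{L_2(\mathcal{O})} \leqslant \Vert\widetilde{\Lambda}^\varepsilon\Vert_{L_{2d/(d-2)}(\mathcal{O})}\,\Vert\mathbf{u}\Vert_{L_d(\mathcal{O})}.
\end{equation*}
The change of variable $\mathbf{y}=\mathbf{x}/\varepsilon$ together with the $\Gamma$-periodicity of $\widetilde{\Lambda}$ shows that the first factor is bounded uniformly in $0<\varepsilon\leqslant 1$ by a constant depending on $|\mathcal{O}|$, $|\Omega|$ and $\Vert\widetilde{\Lambda}\Vert_{L_{2d/(d-2)}(\Omega)}$; the latter is finite thanks to the Sobolev embedding $\widetilde{H}^1(\Omega)\hookrightarrow L_{2d/(d-2)}(\Omega)$ and is controlled via \eqref{tilde Lambda<=} and \eqref{D tilde Lambda}. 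The factor $\Vert\mathbf{u}\Vert_{L_d(\mathcal{O})}$ is in turn bounded by $C\,\Vert\mathbf{u}\Vert_{H^{l-1}(\mathcal{O})}$ through the borderline embedding $H^{d/2-1}(\mathcal{O})\hookrightarrow L_{d}(\mathcal{O})$, valid for $d\geqslant 3$.

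For part~$2^\circ$, the Leibniz identity $\mathbf{D}(\widetilde{\Lambda}^\varepsilon\mathbf{u})=\varepsilon^{-1}(\mathbf{D}\widetilde{\Lambda})^\varepsilon\mathbf{u}+\widetilde{\Lambda}^\varepsilon\mathbf{D}\mathbf{u}$ splits the gradient into two pieces. The $L_2(\mathbb{R}^d)$-bounds for $\widetilde{\Lambda}^\varepsilon\mathbf{u}$ and $\widetilde{\Lambda}^\varepsilon\mathbf{D}\mathbf{u}$ follow by the $\mathbb{R}^d$-analogue of part~$1^\circ$: partition $\mathbb{R}^d$ into the $\varepsilon$-translates $\Omega_\varepsilon^{(k)}$ of $\varepsilon\Omega$, apply the Hölder pairing cell-wise, and sum via the scaled Sobolev bound
\begin{equation*}
\Vert\mathbf{v}\Vert_{L_d(\Omega_\varepsilon^{(k)})}^2\leqslant C\bigl(\varepsilon^{2-d}\Vert\mathbf{v}\Vert_{L_2(\Omega_\varepsilon^{(k)})}^2+\Vert\mathbf{v}\Vert_{H^{l-1}(\Omega_\varepsilon^{(k)})}^2\bigr),
\end{equation*}
whose $\varepsilon^{2-d}$ is exactly cancelled by the $\varepsilon^{d-2}$ coming from the cell identity $\Vert\widetilde{\Lambda}^\varepsilon\Vert_{L_{2d/(d-2)}(\Omega_\varepsilon^{(k)})}^2=\varepsilon^{d-2}\Vert\widetilde{\Lambda}\Vert_{L_{2d/(d-2)}(\Omega)}^2$. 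Applied with $\mathbf{v}=\mathbf{D}\mathbf{u}\in H^{l-1}(\mathbb{R}^d)$ this produces the summand $\widetilde{C}^{(2)}\Vert\mathbf{u}\Vert_{H^l(\mathbb{R}^d)}$.

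The delicate term is the first, $\varepsilon^{-1}(\mathbf{D}\widetilde{\Lambda})^\varepsilon\mathbf{u}$: only $\mathbf{D}\widetilde{\Lambda}\in L_2(\Omega)$ is available, so a naive Hölder pairing would demand $\mathbf{u}\in L_\infty$, which is forbidden at the critical smoothness $l=d/2$. I would circumvent this by the Steklov splitting $\mathbf{u}=S_\varepsilon\mathbf{u}+(I-S_\varepsilon)\mathbf{u}$. Proposition~\ref{Proposition f^eps S_eps} applied to the periodic $L_2$-function $\mathbf{D}\widetilde{\Lambda}$ immediately gives $\Vert(\mathbf{D}\widetilde{\Lambda})^\varepsilon S_\varepsilon\mathbf{u}\Vert_{L_2(\mathbb{R}^d)}\leqslant |\Omega|^{-1/2}\Vert\mathbf{D}\widetilde{\Lambda}\Vert_{L_2(\Omega)}\Vert\mathbf{u}\Vert_{L_2(\mathbb{R}^d)}$; for the complementary piece, Proposition~\ref{Proposition S__eps - I} contributes a factor $\varepsilon$ that, when combined with a cell-localised Hölder--Sobolev estimate applied to $(I-S_\varepsilon)\mathbf{u}\in H^1(\mathbb{R}^d)$, absorbs the external $\varepsilon^{-1}$ and yields $\widetilde{C}^{(1)}\varepsilon^{-1}\Vert\mathbf{u}\Vert_{H^1(\mathbb{R}^d)}$. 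Tracking the dependence of all constants through \eqref{tilde Lambda<=}--\eqref{D tilde Lambda} then gives the quantitative form stated in the lemma.
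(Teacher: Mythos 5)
The paper does not prove this lemma at all: it is imported verbatim from \cite[Lemmas 6.3 and 6.5, Corollaries 6.4 and 6.6]{MSuAA17}, so there is no in-text proof to compare your argument against. That said, the statement can still be checked on its own merits, and your proposal for part $1^\circ$ is sound and almost certainly parallels the cited proof: the Hölder pairing $L_{2d/(d-2)}\times L_d\to L_2$, the change of variables to bound $\|\widetilde{\Lambda}^\varepsilon\|_{L_{2d/(d-2)}(\mathcal{O})}$ uniformly in $\varepsilon$, the Sobolev embedding $\widetilde{H}^1(\Omega)\hookrightarrow L_{2d/(d-2)}(\Omega)$ with constants controlled by \eqref{tilde Lambda<=}--\eqref{D tilde Lambda}, and the borderline embedding $H^{d/2-1}\hookrightarrow L_d$ all fit together exactly as you say. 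The $\mathbb{R}^d$-cell-wise version that you set up for the term $\widetilde{\Lambda}^\varepsilon\mathbf{D}\mathbf{u}$ in part $2^\circ$, including the scaled Sobolev bound with the $\varepsilon^{2-d}/\varepsilon^{d-2}$ cancellation, is also correct and yields the $\widetilde{C}^{(2)}\|\mathbf{u}\|_{H^l}$ summand.

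The gap is in your treatment of the first Leibniz piece $\varepsilon^{-1}(\mathbf{D}\widetilde{\Lambda})^\varepsilon\mathbf{u}$ in part $2^\circ$. Splitting $\mathbf{u}=S_\varepsilon\mathbf{u}+(I-S_\varepsilon)\mathbf{u}$ and using Proposition~\ref{Proposition f^eps S_eps} on the smoothed piece does give $\varepsilon^{-1}|\Omega|^{-1/2}\|\mathbf{D}\widetilde{\Lambda}\|_{L_2(\Omega)}\|\mathbf{u}\|_{L_2}$, which is within budget. But for the complementary piece $\varepsilon^{-1}(\mathbf{D}\widetilde{\Lambda})^\varepsilon(I-S_\varepsilon)\mathbf{u}$ you only assert that a \emph{cell-localised Hölder--Sobolev estimate} applied to $(I-S_\varepsilon)\mathbf{u}\in H^1(\mathbb{R}^d)$ closes the bound, and this does not work with the information at hand. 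The only cell-wise Hölder pairing you have available is the one built around the exponents $\bigl(2d/(d-2),\,d\bigr)$, and it requires the periodic factor to lie in $L_{2d/(d-2)}(\Omega)$; however, $\mathbf{D}\widetilde{\Lambda}$ is only known to be in $L_2(\Omega)$, see \eqref{D tilde Lambda}. Pairing an $L_2(\Omega)$-periodic function with a general $H^s$ function over cells requires the latter to be in $L_\infty$ cell-wise, which fails at the borderline smoothness $s=l=d/2$; and the Steklov $L_2$-gain $\|(I-S_\varepsilon)\mathbf{u}\|_{L_2}\leqslant \varepsilon r_1\|\mathbf{D}\mathbf{u}\|_{L_2}$ from Proposition~\ref{Proposition S__eps - I} does not, by itself, convert the product $(\mathbf{D}\widetilde{\Lambda})^\varepsilon\cdot(I-S_\varepsilon)\mathbf{u}$ into something controllable in $L_2(\mathbb{R}^d)$. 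To close this term one needs some additional input --- for instance improved integrability of $\mathbf{D}\widetilde{\Lambda}$ beyond $L_2(\Omega)$ (obtainable from the cell equation \eqref{tildeLambda_problem} via a Meyers-type higher integrability argument, and legitimate since the resulting exponent depends only on the problem data), or a genuinely different cell-level interpolation. As stated, your sketch leaves this term unproved, which is the crux of the whole lemma.
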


According to theorems about regularity of solutions of strongly elliptic systems (see, e.~g., \cite[Theorem~4.18]{McL}), the following assertion holds true.

\begin{lemma}
\label{Lemma with conditions on boundary}
Let $3\leqslant d\leqslant 8$. Assume that $\partial\mathcal{O}\in C^{d/2,1}$ if $d$ is even and $\partial\mathcal{O}\in C^{(d+1)/2,1}$ if $d$ is odd. Then the operator $(B_D^0)^{-5/2}$ is a continuous mapping from $L_2(\mathcal{O};\mathbb{C}^n)$ to $H^{d/2+1}(\mathcal{O};\mathbb{C}^n)$ and
\begin{equation}
\label{BD0 -5/2 to Hl+1}
\Vert (B_D^0)^{-5/2}\Vert _{L_2(\mathcal{O})\rightarrow H^{l+1}(\mathcal{O})}\leqslant\mathscr{C}_l,\quad l=d/2.
\end{equation}
\end{lemma}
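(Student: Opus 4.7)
I would prove Lemma~\ref{Lemma with conditions on boundary} by a bootstrap argument based on the factorisation
\begin{equation*}
(B_D^0)^{-5/2}=(B_D^0)^{-1}\,(B_D^0)^{-1}\,(B_D^0)^{-1/2},
\end{equation*}
valid by the functional calculus for the positive self-adjoint operator $B_D^0$. The crucial point is that $B^0$ has \emph{constant} coefficients (see~\eqref{B_D^0}), so the regularity theory for the Dirichlet problem $B^0 u=f$ in~$\mathcal{O}$ (e.g.\ McLean~\cite[Theorem~4.18]{McL}) gains two Sobolev derivatives whenever $\partial\mathcal{O}$ is smooth enough.

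By the form-domain estimate~\eqref{H^1-norm <= BD0^1/2}, the first factor $(B_D^0)^{-1/2}$ maps $L_2(\mathcal{O};\mathbb{C}^n)$ continuously into $H^1_0(\mathcal{O};\mathbb{C}^n)$ with norm at most $c_3$. For every $d\in\{3,\dots,8\}$ the hypothesis of the lemma implies in particular $\partial\mathcal{O}\in C^{2,1}$, and McLean's theorem then yields a bounded map $(B_D^0)^{-1}\colon H^1(\mathcal{O};\mathbb{C}^n)\to H^3(\mathcal{O};\mathbb{C}^n)\cap H^1_0$ with constant depending only on the problem data. Composing the two maps, $(B_D^0)^{-3/2}$ sends $L_2(\mathcal{O};\mathbb{C}^n)$ boundedly into $H^3(\mathcal{O};\mathbb{C}^n)$.

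Now set $k:=\lceil d/2+1\rceil$, so that $k=d/2+1$ when $d$ is even and $k=(d+1)/2+1$ when $d$ is odd. In either case the hypothesis of the lemma is precisely $\partial\mathcal{O}\in C^{k-1,1}$, and since $3\leqslant d\leqslant 8$ we have $k-2\leqslant 3$, hence $H^3(\mathcal{O})\subset H^{k-2}(\mathcal{O})$. A further application of McLean's theorem produces a bounded map $(B_D^0)^{-1}\colon H^{k-2}(\mathcal{O};\mathbb{C}^n)\to H^{k}(\mathcal{O};\mathbb{C}^n)$ with constant controlled by the problem data. Composing once more, $(B_D^0)^{-5/2}\colon L_2(\mathcal{O};\mathbb{C}^n)\to H^k(\mathcal{O};\mathbb{C}^n)$ continuously, and the Sobolev embedding $H^k(\mathcal{O})\hookrightarrow H^{l+1}(\mathcal{O})$ gives~\eqref{BD0 -5/2 to Hl+1} with $\mathscr{C}_l$ equal to the product of the three operator norms and the embedding constant (the embedding being non-trivial only for odd~$d$).

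The main obstacle is pure bookkeeping: one must verify case by case that the rounded-up exponent~$k$ matches the boundary regularity assumed in the lemma, and check that the intermediate functions lie in $H^1_0$ as required to invoke the Dirichlet regularity theorem. No interpolation between fractional Sobolev scales is needed, since for odd~$d$ the half-derivative loss is absorbed by $H^k\hookrightarrow H^{l+1}$.
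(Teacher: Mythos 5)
Your proof is correct and fills in exactly what the paper leaves to its citation of \cite[Theorem~4.18]{McL}. The bootstrap via $(B_D^0)^{-5/2}=(B_D^0)^{-1}(B_D^0)^{-1}(B_D^0)^{-1/2}$, using \eqref{H^1-norm <= BD0^1/2} for the square-root factor and McLean's elliptic regularity twice for the remaining two (with the boundary smoothness $C^{k-1,1}$, $k=\lceil d/2+1\rceil$, used only where needed, and the Sobolev embedding $H^k\hookrightarrow H^{l+1}$ absorbing the half-derivative when $d$ is odd), is a natural realization of the regularity argument the paper implicitly invokes.
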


Note that for any $d\geqslant 1$ and $\partial\mathcal{O}\in C^{4,1}$ the operator $(B_D^0)^{-5/2}:L_2(\mathcal{O};\mathbb{C}^n)\rightarrow H^5(\mathcal{O};\mathbb{C}^n)$ is continuous and
\begin{equation}
\label{BD0 -5/2 to H5}
\Vert (B_D^0)^{-5/2}\Vert _{L_2(\mathcal{O})\rightarrow H^5(\mathcal{O})}\leqslant \mathcal{C}_5.
\end{equation}

\begin{theorem}
\label{Theorem d<=6 no S_eps}
Suppose that the assumptions of Theorem~\textnormal{\ref{Theorem sin corrector}} are satisfied. Let $3\leqslant d\leqslant 8$ and let $\partial\mathcal{O}$ be subject to conditions of Lemma~\textnormal{\ref{Lemma with conditions on boundary}}. Let 
$G_D^0(\varepsilon ;t)$ be the operator \eqref{GD0}. 
Then for $t\in\mathbb{R}$ and $0<\varepsilon\leqslant\varepsilon _1$ we have
\begin{align}
\label{Th sin corr no S_eps}
\begin{split}
\Bigl\Vert &
\bigl( B_{D,\varepsilon}^{-1/2}\sin (tB_{D,\varepsilon}^{1/2})-
 \bigl(I+\varepsilon\Lambda ^\varepsilon b(\mathbf{D})+\varepsilon\widetilde{\Lambda}^\varepsilon\bigr)(B_D^0)^{-1/2}\sin (t(B_D^0)^{1/2})\bigr)
 \\
 &\times
 (B_D^0)^{-2}\Bigr\Vert _{L_2(\mathcal{O})\rightarrow H^1(\mathcal{O})}
\leqslant C_{13} \varepsilon ^{1/2}  (1+ t ^6),
\end{split}
\\
\label{Th sin corr no S_eps flux}
\begin{split}
\bigl\Vert & \bigl(
g^\varepsilon b(\mathbf{D})B_{D,\varepsilon}^{-1/2}\sin (tB_{D,\varepsilon}^{1/2})
-
G_D^0(\varepsilon ;t)
\bigr)(B_D^0)^{-2}\bigr\Vert _{L_2(\mathcal{O})\rightarrow L_2(\mathcal{O})}
\leqslant
C_{14} \varepsilon ^{1/2}  (1+ t ^6).
\end{split}
\end{align}
The constants $C_{13}$ and $C_{14}$ depend only on the problem data \eqref{problem data}.
\end{theorem}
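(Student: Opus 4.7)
The plan is to follow the template of the proof of Theorem~\ref{Theorem no S-eps}, but to replace the multiplier lemmas~\ref{Lemma Lambda (S-I)}--\ref{Lemma tilde Lambda norm} (which depend on Conditions~\ref{Condition Lambda in L infty} and \ref{Condition tilde Lambda in Lp}) by the unconditional multiplier bounds of Lemmas~\ref{Lemma Lambda multiplicator properties} and~\ref{Lemma tilde Lambda multiplicator properties}. The price of this substitution is that $\Lambda^\varepsilon$ and $\widetilde{\Lambda}^\varepsilon$ are now applied to functions with $H^l$--regularity ($l=d/2$), which is exactly what the hypothesis $3\leqslant d \leqslant 8$ together with Lemma~\ref{Lemma with conditions on boundary} provides via~\eqref{BD0 -5/2 to Hl+1}. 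Setting $\mathbf{v}(t):=(B_D^0)^{-1/2}\sin(t(B_D^0)^{1/2})(B_D^0)^{-2}\mathbf{u}=(B_D^0)^{-5/2}\sin(t(B_D^0)^{1/2})\mathbf{u}$, I would first record two bounds: $\|\mathbf{v}(t)\|_{H^{l+1}(\mathcal{O})}\leqslant \mathscr{C}_l\|\mathbf{u}\|_{L_2(\mathcal{O})}$ (from Lemma~\ref{Lemma with conditions on boundary} and the spectral estimate $\|\sin(t(B_D^0)^{1/2})\|_{L_2\to L_2}\leqslant 1$) and $\|\mathbf{v}(t)\|_{H^{2}(\mathcal{O})}\leqslant \mathcal{C}_1\mathcal{C}_3|t|\|\mathbf{u}\|_{L_2(\mathcal{O})}$ (as in the proof of Theorem~\ref{Theorem no S-eps}).

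By Theorem~\ref{Theorem sin corrector}, it suffices to bound
\begin{equation*}
\varepsilon\bigl\Vert (\Lambda^\varepsilon b(\mathbf{D})+\widetilde{\Lambda}^\varepsilon)(S_\varepsilon P_\mathcal{O}-I)\mathbf{v}(t)\bigr\Vert_{H^1(\mathcal{O})}
\end{equation*}
by the right-hand side of~\eqref{Th sin corr no S_eps}. Since $P_\mathcal{O}\mathbf{v}=\mathbf{v}$ on $\mathcal{O}$ and $b(\mathbf{D})$ commutes with $S_\varepsilon$, the expression inside equals, on $\mathcal{O}$, $\varepsilon\Lambda^\varepsilon(S_\varepsilon-I)b(\mathbf{D})P_\mathcal{O}\mathbf{v}+\varepsilon\widetilde{\Lambda}^\varepsilon(S_\varepsilon-I)P_\mathcal{O}\mathbf{v}$. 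For the first term, set $\mathbf{h}=(S_\varepsilon-I)b(\mathbf{D})P_\mathcal{O}\mathbf{v}$ and apply Lemma~\ref{Lemma Lambda multiplicator properties}($2^\circ$): the $\varepsilon^{-1}\|\mathbf{h}\|_{L_2}$ contribution is paired with the $O(\varepsilon)$ saving of Proposition~\ref{Proposition S__eps - I} to produce $O(\varepsilon|t|\|\mathbf{u}\|_{L_2})$ via the $H^2$-bound on $\mathbf{v}$, while the $\|\mathbf{h}\|_{H^l}$ contribution is controlled by the $H^{l+1}$-bound on $\mathbf{v}$ together with the $H^l$-contractivity of $S_\varepsilon$, giving $O(\varepsilon\|\mathbf{u}\|_{L_2})$. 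Lemma~\ref{Lemma tilde Lambda multiplicator properties}($2^\circ$) handles the $\widetilde{\Lambda}^\varepsilon$ term in the same way (using that $S_\varepsilon$ commutes with $\mathbf{D}$). The total error is $O(\varepsilon(1+|t|)\|\mathbf{u}\|_{L_2})$, which is absorbed into the $C_8\varepsilon^{1/2}(1+t^6)$ from Theorem~\ref{Theorem sin corrector} and yields~\eqref{Th sin corr no S_eps}.

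The flux estimate~\eqref{Th sin corr no S_eps flux} is then deduced exactly as in the closing argument of the proof of Theorem~\ref{Theorem no S-eps}. Applying $g^\varepsilon b(\mathbf{D})$ (bounded $H^1\to L_2$ by~\eqref{b_l <=}) to~\eqref{Th sin corr no S_eps}, expanding $g^\varepsilon b(\mathbf{D})(I+\varepsilon\Lambda^\varepsilon b(\mathbf{D})+\varepsilon\widetilde{\Lambda}^\varepsilon)\mathbf{v}$ by the Leibniz rule, and invoking $\widetilde{g}=g(b(\mathbf{D})\Lambda+\mathbf{1}_m)$ recovers $G_D^0(\varepsilon;t)(B_D^0)^{-2}$ plus a residual $\varepsilon\sum_l g^\varepsilon b_l(\Lambda^\varepsilon b(\mathbf{D})+\widetilde{\Lambda}^\varepsilon)D_l\mathbf{v}$. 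The residual is now estimated in $L_2(\mathcal{O})$ \emph{without} any $L_\infty$ or $L_p$ assumption on $\Lambda,\widetilde{\Lambda}$: Lemma~\ref{Lemma Lambda multiplicator properties}($1^\circ$) and Lemma~\ref{Lemma tilde Lambda multiplicator properties}($1^\circ$) bound $[\Lambda^\varepsilon],[\widetilde{\Lambda}^\varepsilon]\colon H^{l-1}(\mathcal{O})\to L_2(\mathcal{O})$, while the $H^{l+1}$-bound on $\mathbf{v}$ places $b(\mathbf{D})D_l\mathbf{v}$ and $D_l\mathbf{v}$ in $H^{l-1}(\mathcal{O})$. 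This contributes $O(\varepsilon\|\mathbf{u}\|_{L_2})$ and completes~\eqref{Th sin corr no S_eps flux}.

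The main obstacle is the $\varepsilon^{-1}$ blow-up in Lemmas~\ref{Lemma Lambda multiplicator properties}($2^\circ$) and~\ref{Lemma tilde Lambda multiplicator properties}($2^\circ$) (which is precisely the reason Theorem~\ref{Theorem no S-eps} required $\Lambda\in L_\infty$ and $\widetilde{\Lambda}\in L_p$ to invoke Lemmas~\ref{Lemma Lambda (S-I)} and \ref{Lemma tilde Lambda(S-I)} instead). Here the compensating $\varepsilon$ comes from the Steklov difference $S_\varepsilon-I$, but this forces the argument of $\Lambda^\varepsilon$ and $\widetilde{\Lambda}^\varepsilon$ into $H^2$, while the accompanying $H^l$-term in the same lemmas forces $\mathbf{v}(t)\in H^{l+1}(\mathcal{O})$. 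The matching $l+1\leqslant 5$ is precisely what demands the dimensional restriction $d\leqslant 8$ and the boundary smoothness of Lemma~\ref{Lemma with conditions on boundary}; for $d\geqslant 9$ this approach would require replacing $(B_D^0)^{-2}$ in the hypothesis by a higher power.
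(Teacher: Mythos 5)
Your proposal is correct and follows essentially the same route as the paper: start from the smoothed estimate of Theorem~\ref{Theorem sin corrector}, remove $S_\varepsilon$ from each corrector term by pairing Lemma~\ref{Lemma Lambda multiplicator properties}($2^\circ$) (resp.\ Lemma~\ref{Lemma tilde Lambda multiplicator properties}($2^\circ$)) with the $O(\varepsilon)$ gain from $S_\varepsilon-I$ on $L_2$ and with the $H^{l+1}$-regularity from Lemma~\ref{Lemma with conditions on boundary} on $H^l$, then derive the flux estimate by expanding $g^\varepsilon b(\mathbf{D})(I+\varepsilon\Lambda^\varepsilon b(\mathbf{D})+\varepsilon\widetilde{\Lambda}^\varepsilon)$ and controlling the residual via Lemma~\ref{Lemma Lambda multiplicator properties}($1^\circ$) and Lemma~\ref{Lemma tilde Lambda multiplicator properties}($1^\circ$). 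The only cosmetic difference is that you invoke the $H^2$-bound on $\mathbf{v}(t)$ with the factor $|t|$ from the proof of Theorem~\ref{Theorem no S-eps}, whereas the paper absorbs the sine into the unit bound $\Vert\sin(t(B_D^0)^{1/2})\Vert_{L_2\to L_2}\leqslant 1$ and uses $\mathscr{C}_l$ directly; both versions are absorbed into $\varepsilon^{1/2}(1+t^6)$.
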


\begin{proof}
By Proposition~\ref{Proposition S__eps - I}, Lemma~\ref{Lemma Lambda multiplicator properties}($2^\circ$), and \eqref{<b^*b<},
\begin{equation*}
\begin{split}
\varepsilon\Vert &[\Lambda ^\varepsilon ](S_\varepsilon -I)b(\mathbf{D})P_\mathcal{O}(B_D^0)^{-1/2}\sin (t(B_D^0)^{1/2})(B_D^0)^{-2}\Vert _{L_2(\mathcal{O})\rightarrow H^1(\mathcal{O})}
\\
&\leqslant
C^{(1)}\Vert (S_\varepsilon -I)b(\mathbf{D})P_\mathcal{O}(B_D^0)^{-5/2}\Vert _{L_2(\mathcal{O})\rightarrow L_2(\mathbb{R}^d)}
\\
&+\varepsilon C^{(2)}\Vert (S_\varepsilon -I)b(\mathbf{D})P_\mathcal{O}(B_D^0)^{-5/2}\Vert _{L_2(\mathcal{O})\rightarrow H^l(\mathbb{R}^d)}
\\
&\leqslant
r_1\alpha _1^{1/2}C^{(1)}\varepsilon\Vert \mathbf{D}^2P_\mathcal{O}(B_D^0)^{-5/2}\Vert _{L_2(\mathcal{O})\rightarrow L_2(\mathbb{R}^d)}
\\
&+
2\alpha _1^{1/2}C^{(2)}\varepsilon \Vert \mathbf{D}P_\mathcal{O}(B_D^0)^{-5/2}\Vert _{L_2(\mathcal{O})\rightarrow H^l(\mathbb{R}^d)}.
\end{split}
\end{equation*}
Together with \eqref{PO} and \eqref{BD0 -5/2 to Hl+1}, this implies
\begin{equation}
\label{term with Lambda}
\begin{split}
&\varepsilon\Vert [\Lambda ^\varepsilon ](S_\varepsilon -I)b(\mathbf{D})P_\mathcal{O}(B_D^0)^{-1/2}\sin (t(B_D^0)^{1/2})(B_D^0)^{-2}\Vert _{L_2(\mathcal{O})\rightarrow H^1(\mathcal{O})}
\\
&\leqslant
\varepsilon\alpha _1^{1/2}\left(r_1C^{(1)}C_\mathcal{O}^{(2)}\Vert (B_D^0)^{-5/2}\Vert _{L_2(\mathcal{O})\rightarrow H^2(\mathcal{O})}+2C^{(2)}C_\mathcal{O}^{(l+1)}\Vert (B_D^0)^{-5/2}\Vert _{L_2(\mathcal{O})\rightarrow H^{l+1}(\mathcal{O})}\right)
\\
&\leqslant
\varepsilon\alpha _1^{1/2}\left(r_1C^{(1)}C_\mathcal{O}^{(2)}
+2C^{(2)}C_\mathcal{O}^{(l+1)}\right)\mathscr{C}_l,\quad 3\leqslant d=2l\leqslant 8.
\end{split}
\end{equation}

Similarly, using Lemma~\ref{Lemma tilde Lambda multiplicator properties}($2^\circ$), we obtain
\begin{equation}
\label{term with tilde Lambda}
\begin{split}
\varepsilon \Vert & [\widetilde{\Lambda}^\varepsilon ](S_\varepsilon -I)P_\mathcal{O}(B_D^0)^{-1/2}\sin (t(B_D^0)^{1/2})(B_D^0)^{-2}\Vert _{L_2(\mathcal{O})\rightarrow H^1(\mathcal{O})}
\\
&\leqslant
\varepsilon\left(r_1\widetilde{C}^{(1)}C_\mathcal{O}^{(2)}+2\widetilde{C}^{(2)}C_\mathcal{O}^{(l)}\right)
\mathscr{C}_l,\quad 3\leqslant d=2l\leqslant 8.
\end{split}
\end{equation}

Combining \eqref{Th sin 2}, \eqref{term with Lambda}, and \eqref{term with tilde Lambda}, we arrive at estimate \eqref{Th sin corr no S_eps} with the constant \break$C_{13}:=C_8+\alpha _1^{1/2}\left(r_1C^{(1)}C_\mathcal{O}^{(2)}
+2C^{(2)}C_\mathcal{O}^{(l+1)}\right)\mathscr{C}_l+\left(r_1\widetilde{C}^{(1)}C_\mathcal{O}^{(2)}+2\widetilde{C}^{(2)}C_\mathcal{O}^{(l)}\right)
\mathscr{C}_l$. 

Now we proceed to the proof of inequality \eqref{Th sin corr no S_eps flux}. By \eqref{b_l <=} and \eqref{Th sin corr no S_eps},
\begin{equation}
\label{proof no S_eps-5}
\begin{split}
\Bigl\Vert & \Bigl(
g^\varepsilon b(\mathbf{D})B_{D,\varepsilon}^{-1/2}\sin (t B_{D,\varepsilon}^{1/2})
-g^\varepsilon b(\mathbf{D})\bigl(I+\varepsilon \Lambda ^\varepsilon b(\mathbf{D})+\varepsilon\widetilde{\Lambda}^\varepsilon\bigr)
(B_D^0)^{-1/2}\sin (t(B_D^0)^{1/2})
\Bigr)
\\
&\times
(B_D^0)^{-2}
\Bigr\Vert _{L_2(\mathcal{O})\rightarrow L_2(\mathcal{O})}
\leqslant (d\alpha _1)^{1/2}\Vert g\Vert _{L_\infty}C_{13}\varepsilon ^{1/2} (1+ t ^6),\quad t\in\mathbb{R},\quad 0<\varepsilon\leqslant\varepsilon _1.
\end{split}
\end{equation}
Identity \eqref{proof no S_eps-6} holds true. To estimate the fourth term in the right-hand side of 
\eqref{proof no S_eps-6}, we apply Lemmas~\ref{Lemma Lambda multiplicator properties}($1^\circ$) and \ref{Lemma tilde Lambda multiplicator properties}($1^\circ$) and inequalities  \eqref{b_l <=} and \eqref{BD0 -5/2 to Hl+1}:
\begin{equation}
\label{proof no S_eps-7}
\begin{split}
&\Biggl\Vert  \varepsilon \sum _{l=1}^d g^\varepsilon b_l \bigl(\Lambda ^\varepsilon b(\mathbf{D})+\widetilde{\Lambda}^\varepsilon \bigr) D_l (B_D^0)^{-5/2}\sin (t(B_D^0)^{1/2})
\Biggr\Vert _{L_2(\mathcal{O})\rightarrow L_2(\mathcal{O})}
\\
&\leqslant
\varepsilon \Vert g\Vert _{L_\infty}(d\alpha _1)^{1/2}
\left\Vert \left(\Lambda ^\varepsilon b(\mathbf{D})+\widetilde{\Lambda}^\varepsilon\right)
\mathbf{D}(B_D^0)^{-5/2}\right\Vert _{L_2(\mathcal{O})\rightarrow L_2(\mathcal{O})}
\\
&\leqslant
\varepsilon \Vert g\Vert _{L_\infty}(d\alpha _1)^{1/2}
\\
&\times\left(
C^{(0)}(d\alpha _1)^{1/2}\Vert \mathbf{D}^2(B_D^0)^{-5/2}\Vert _{L_2(\mathcal{O})\rightarrow H^{l-1}(\mathcal{O})}+\widetilde{C}^{(0)}\Vert \mathbf{D}(B_D^0)^{-5/2}\Vert _{L_2(\mathcal{O})\rightarrow H^{l-1}(\mathcal{O})}\right)
\\
&\leqslant
\varepsilon\widehat{C}_{14},\quad d\leqslant 8,\quad
\widehat{C}_{14}:=\Vert g\Vert _{L_\infty}(d\alpha _1)^{1/2}
\left(C^{(0)}(d\alpha _1)^{1/2}+\widetilde{C}^{(0)}\right)\mathscr{C}_l.
\end{split}
\end{equation}
Relations \eqref{proof no S_eps-5} and \eqref{proof no S_eps-7} imply the required estimate \eqref{Th sin corr no S_eps flux} with the constant $$C_{14}:=(d\alpha_1)^{1/2}\Vert g\Vert _{L_\infty}C_{13}+\widehat{C}_{14}.$$
\end{proof}

\begin{remark}
If $\partial\mathcal{O}\in C^{4,1}$ and $d=9,10$, it is possible to remove the smoothing operator $S_\varepsilon$ only from the term of the corrector containing $\widetilde{\Lambda}^\varepsilon$. To do this,  we use estimate \eqref{BD0 -5/2 to H5} instead of Lemma~\textnormal{\ref{Lemma with conditions on boundary}}.
\end{remark}

\subsection{Homogenization for the solution of the first initial-boundary value problem}

Now we apply the results of Subsec.~\ref{Subsection main results} and \ref{Subsection main results no S_eps} to 
homogenization for the solution of the first initial-boundary value problem \eqref{hyperbolic system}. Note that, if $\boldsymbol{\Phi}\in\mathrm{Dom}\,(B_D^0)^{2}$, then the function $\boldsymbol{\Phi}$ can be represented as $\boldsymbol{\Phi} =(B_D^0)^{-2}\boldsymbol{\check{\Phi}}$, where $\boldsymbol{\check{\Phi}}\in L_2(\mathcal{O};\mathbb{C}^n)$. 
By the theorems about regularity of solutions of the strongly elliptic systems (see \cite[Chapter~4]{McL}), if 
 $\partial\mathcal{O}\in C^{3,1}$, then $\mathrm{Dom}\,(B_D^0)^{2}\subset H^4(\mathcal{O};\mathbb{C}^n)$. So, in this case, by Lemma~\ref{Lemma (B_D0)2 and H4-norm},
\begin{equation}
\label{2.27a}
\Vert \boldsymbol{\check{\Phi}}\Vert _{L_2(\mathcal{O})}
=\Vert (B_D^0)^2\boldsymbol{\Phi}\Vert _{L_2(\mathcal{O})}
\leqslant \mathfrak{C}\Vert \boldsymbol{\Phi}\Vert _{H^4(\mathcal{O})}.
\end{equation}
Applying these considerations to the functions $\boldsymbol{\varphi}$, $\boldsymbol{\psi}$, and $\mathbf{F}(\cdot,t)$, using identities \eqref{u_eps=}, \eqref{u_0=}, and Theorem~\ref{Theorem cos New}, we obtain the following result.

\begin{theorem}
\label{Theorem solutions L2}
Let $\mathcal{O}\subset\mathbb{R}^d$ be a bounded domain of class $C^{3,1}$. Suppose that the assumptions of Subsec.~\textnormal{\ref{Subsection operatoer A_D,eps}--\ref{Subsection Effective operator}} are satisfied. 
Let $\mathbf{u}_\varepsilon $ be the solution of problem \eqref{hyperbolic system} and let $\mathbf{u}_0$ be the solution of the effective problem \eqref{effective hyperbolic system}, where $\boldsymbol{\varphi}$, $\boldsymbol{\psi}\in \mathrm{Dom}\,(B_D^0)^2$, and $\mathbf{F}\in L_{1,\mathrm{loc}}(\mathbb{R};\mathrm{Dom}\,(B_D^0)^2)$. Then for $t\in\mathbb{R}$ and $0<\varepsilon\leqslant\varepsilon _1$ we have
\begin{equation*}
\begin{split}
\Vert \mathbf{u}_\varepsilon (\cdot ,t)-\mathbf{u}_0(\cdot,t)\Vert _{L_2(\mathcal{O})}
&\leqslant \mathfrak{C} C_7
\varepsilon (1+\vert t\vert ^5)
\\
&\times
\left(
\Vert \boldsymbol{\varphi}\Vert _{H^4(\mathcal{O})}+\vert t\vert \Vert \boldsymbol{\psi}\Vert _{H^4(\mathcal{O})}+\vert t\vert \Vert \mathbf{F}\Vert _{L_1((0,t);H^4(\mathcal{O}))}\right).
\end{split}
\end{equation*}
The constants $\mathfrak{C}$ and $C_7$ depend only on the problem data \eqref{problem data}.
\end{theorem}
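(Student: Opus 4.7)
The plan is to subtract the Duhamel representations \eqref{u_eps=} and \eqref{u_0=}, which expresses $\mathbf{u}_\varepsilon(\cdot,t) - \mathbf{u}_0(\cdot,t)$ as a sum of three contributions: the cosine part acting on $\boldsymbol{\varphi}$, the $B^{-1/2}\sin$ part acting on $\boldsymbol{\psi}$, and a Duhamel integral of $B^{-1/2}\sin$ acting on $\mathbf{F}(\cdot,\widetilde{t})$. Each of these three operator differences is controlled by Theorem~\ref{Theorem cos New} in the $(L_2\to L_2)$-norm, but only when applied to functions of the form $(B_D^0)^{-2}\check{\boldsymbol{\Phi}}$ with $\check{\boldsymbol{\Phi}}\in L_2(\mathcal{O};\mathbb{C}^n)$.

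The key bridge between smoothness of the data and the $(B_D^0)^{-2}$ factor is the observation made just before the theorem statement: under $\partial\mathcal{O}\in C^{3,1}$ we have $\mathrm{Dom}(B_D^0)^2 \subset H^4(\mathcal{O};\mathbb{C}^n)$, and any $\boldsymbol{\Phi}\in\mathrm{Dom}(B_D^0)^2$ can be written as $\boldsymbol{\Phi}=(B_D^0)^{-2}\check{\boldsymbol{\Phi}}$ with $\check{\boldsymbol{\Phi}}=(B_D^0)^2\boldsymbol{\Phi}$. By Lemma~\ref{Lemma (B_D0)2 and H4-norm} (inequality \eqref{2.27a}),
\[
\Vert \check{\boldsymbol{\Phi}}\Vert_{L_2(\mathcal{O})}\leqslant \mathfrak{C}\Vert \boldsymbol{\Phi}\Vert_{H^4(\mathcal{O})}.
\]
I would apply this identity separately with $\boldsymbol{\Phi}=\boldsymbol{\varphi}$, $\boldsymbol{\Phi}=\boldsymbol{\psi}$, and (for each fixed $\widetilde{t}$) with $\boldsymbol{\Phi}=\mathbf{F}(\cdot,\widetilde{t})$, since the hypothesis $\mathbf{F}\in L_{1,\mathrm{loc}}(\mathbb{R};\mathrm{Dom}(B_D^0)^2)$ ensures $\check{\mathbf{F}}(\cdot,\widetilde{t})\in L_2(\mathcal{O};\mathbb{C}^n)$ for a.e.~$\widetilde{t}$ and $\Vert\check{\mathbf{F}}(\cdot,\widetilde{t})\Vert_{L_2}\leqslant \mathfrak{C}\Vert\mathbf{F}(\cdot,\widetilde{t})\Vert_{H^4}$.

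Then estimate \eqref{Th cos B_D,eps} applied to $\check{\boldsymbol{\varphi}}$ yields the $\Vert\boldsymbol{\varphi}\Vert_{H^4}$-term bounded by $\mathfrak{C}C_7\varepsilon(1+|t|^5)\Vert\boldsymbol{\varphi}\Vert_{H^4}$, and \eqref{Th sin 1} applied to $\check{\boldsymbol{\psi}}$ yields the $\Vert\boldsymbol{\psi}\Vert_{H^4}$-term bounded by $\mathfrak{C}C_7\varepsilon|t|(1+|t|^5)\Vert\boldsymbol{\psi}\Vert_{H^4}$. For the Duhamel term, \eqref{Th sin 1} used pointwise in $\widetilde{t}$ with the time variable $t-\widetilde{t}$ gives, for $\widetilde{t}$ between $0$ and $t$,
\[
\Bigl\Vert\bigl(B_{D,\varepsilon}^{-1/2}\sin((t-\widetilde{t})B_{D,\varepsilon}^{1/2})-(B_D^0)^{-1/2}\sin((t-\widetilde{t})(B_D^0)^{1/2})\bigr)\mathbf{F}(\cdot,\widetilde{t})\Bigr\Vert_{L_2(\mathcal{O})}\leqslant \mathfrak{C}C_7\varepsilon|t-\widetilde{t}|(1+|t-\widetilde{t}|^5)\Vert\mathbf{F}(\cdot,\widetilde{t})\Vert_{H^4}.
\]
Using the trivial monotonicity bound $|t-\widetilde{t}|(1+|t-\widetilde{t}|^5)\leqslant |t|(1+|t|^5)$ for $\widetilde{t}$ between $0$ and $t$, I pull this factor outside the integral and collapse the remaining integral to $\Vert\mathbf{F}\Vert_{L_1((0,t);H^4(\mathcal{O}))}$. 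Summing the three estimates yields the claimed inequality.

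I expect no real obstacle: the proof is a direct bookkeeping argument chaining the representation formulas, the two operator estimates of Theorem~\ref{Theorem cos New}, and the regularity bound of Lemma~\ref{Lemma (B_D0)2 and H4-norm}. The only mildly delicate point is the uniform monotonicity bound on the polynomial factor inside the Duhamel integral, which is essentially trivial once one notes that $\tau\mapsto\tau(1+\tau^5)$ is nondecreasing on $[0,|t|]$.
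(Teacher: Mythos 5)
Your proposal reproduces exactly the argument the paper intends: the paper states (just before the theorem) that one should write $\boldsymbol{\Phi}=(B_D^0)^{-2}\check{\boldsymbol{\Phi}}$, apply Lemma~\ref{Lemma (B_D0)2 and H4-norm} to control $\|\check{\boldsymbol{\Phi}}\|_{L_2}$ by $\|\boldsymbol{\Phi}\|_{H^4}$, and combine the Duhamel representations \eqref{u_eps=}, \eqref{u_0=} with Theorem~\ref{Theorem cos New}. Your explicit handling of the Duhamel integral via the monotonicity of $\tau\mapsto\tau(1+\tau^5)$ is the correct way to obtain the factor $|t|(1+|t|^5)$ in front of $\|\mathbf{F}\|_{L_1((0,t);H^4)}$, and the resulting constant $\mathfrak{C}C_7$ matches the statement.
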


Using Theorems~\ref{Theorem cos New} and~\ref{Theorem sin corrector}, we obtain  approximation in the energy norm for the solution  $\mathbf{u}_\varepsilon$ of problem \eqref{hyperbolic system} with $\boldsymbol{\varphi}=0$.

\begin{theorem}
\label{Theorem solutions corrector}
Under the assumptions of Theorem~\textnormal{\ref{Theorem solutions L2}}, let $\boldsymbol{\varphi}=0$. Then for $t\in\mathbb{R}$ and $0<\varepsilon\leqslant\varepsilon _1$ we have
\begin{equation}
\label{Th sol energy 1}
\left\Vert \frac{\partial \mathbf{u}_\varepsilon}{\partial t}(\cdot ,t)-\frac{\partial \mathbf{u}_0}{\partial t}(\cdot ,t)\right\Vert _{L_2(\mathcal{O})}
\leqslant \mathfrak{C}C_7\varepsilon  (1+\vert t\vert ^5)
\left(\Vert \boldsymbol{\psi}\Vert _{H^4(\mathcal{O})}+\Vert \mathbf{F}\Vert _{L_1((0,t);H^4(\mathcal{O}))}\right).
\end{equation}
Let $\Lambda(\mathbf{x})$ and $\widetilde{\Lambda}(\mathbf{x})$ be the $\Gamma$-periodic solutions of problems \eqref{Lambda problem} and \eqref{tildeLambda_problem}, respectively. Let $P_\mathcal{O}$ be the linear continuous extension operator \eqref{P_O H^1, H^2} and let $S_\varepsilon$ be the Steklov smoothing operator \eqref{S_eps}. Put $\widetilde{\mathbf{u}}_0(\cdot ,t):=P_\mathcal{O}\mathbf{u}_0(\cdot,t)$. By $\mathbf{v}_\varepsilon (\cdot ,t)$ we denote the first order approximation for the solution  $\mathbf{u}_\varepsilon (\cdot ,t)$\textnormal{:}
\begin{equation*}
\widetilde{\mathbf{v}}_\varepsilon (\cdot ,t):=\widetilde{\mathbf{u}}_0(\cdot,t)+\varepsilon \Lambda^\varepsilon S_\varepsilon b(\mathbf{D})\widetilde{\mathbf{u}}_0(\cdot,t)
+\varepsilon \widetilde{\Lambda}^\varepsilon S_\varepsilon \widetilde{\mathbf{u}}_0(\cdot,t),
\quad
\mathbf{v}_\varepsilon (\cdot ,t):=\widetilde{\mathbf{v}}_\varepsilon (\cdot ,t)\vert _{\mathcal{O}}.
\end{equation*}
Then for $t\in\mathbb{R}$ and $0<\varepsilon\leqslant\varepsilon _1$ we have
\begin{equation}
\label{Th sol energy 2}
\Vert \mathbf{u}_\varepsilon (\cdot,t)-\mathbf{v}_\varepsilon (\cdot,t)\Vert _{H^1(\mathcal{O})}
\leqslant
\mathfrak{C} C_8\varepsilon ^{1/2}(1+ t ^6)
\left(\Vert \boldsymbol{\psi}\Vert _{H^4(\mathcal{O})}+\Vert \mathbf{F}\Vert _{L_1((0,t);H^4(\mathcal{O}))}\right).
\end{equation}
Let $\widetilde{g}(\mathbf{x})$ be the matrix-valued function \eqref{tilde g}. Let $\mathbf{p}_\varepsilon(\cdot ,t):=g^\varepsilon b(\mathbf{D})\mathbf{u}_\varepsilon (\cdot,t)$. Then for $t\in\mathbb{R}$ and $0<\varepsilon\leqslant\varepsilon _1$ we have
\begin{equation}
\label{Th sol energy 3}
\begin{split}
\Vert &\mathbf{p}_\varepsilon (\cdot ,t)-\widetilde{g}^\varepsilon S_\varepsilon b(\mathbf{D})\widetilde{\mathbf{u}}_0(\cdot,t)
-g^\varepsilon (b(\mathbf{D})\widetilde{\Lambda})^\varepsilon S_\varepsilon \widetilde{\mathbf{u}}_0(\cdot,t)\Vert _{L_2(\mathcal{O})}
\\
&\leqslant 
\mathfrak{C} C_9 \varepsilon ^{1/2}  (1+ t ^6)
\left(\Vert \boldsymbol{\psi}\Vert _{H^4(\mathcal{O})}+\Vert \mathbf{F}\Vert _{L_1((0,t);H^4(\mathcal{O}))}\right).
\end{split}
\end{equation}
The constants $\mathfrak{C}$, $C_8$, and $C_9$ depend only on the problem data \eqref{problem data}.
\end{theorem}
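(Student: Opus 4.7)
The plan is to deduce all three estimates from the operator level statements of Theorems~\ref{Theorem cos New} and~\ref{Theorem sin corrector} by plugging in the Duhamel representations~\eqref{u_eps=} and~\eqref{u_0=} with $\boldsymbol{\varphi}=0$. The key linking device is the rewriting $\boldsymbol{\psi}=(B_D^0)^{-2}\check{\boldsymbol{\psi}}$ and $\mathbf{F}(\cdot,\widetilde{t})=(B_D^0)^{-2}\check{\mathbf{F}}(\cdot,\widetilde{t})$, which is legitimate because $\boldsymbol{\psi},\mathbf{F}(\cdot,\widetilde{t})\in\mathrm{Dom}\,(B_D^0)^2$; by Lemma~\ref{Lemma (B_D0)2 and H4-norm} and the inclusion $\mathrm{Dom}\,(B_D^0)^2\subset H^4(\mathcal{O};\mathbb{C}^n)$ ensured by $\partial\mathcal{O}\in C^{3,1}$, the auxiliary $L_2$-functions satisfy $\Vert\check{\boldsymbol{\psi}}\Vert_{L_2(\mathcal{O})}\leqslant\mathfrak{C}\Vert\boldsymbol{\psi}\Vert_{H^4(\mathcal{O})}$ and similarly for $\check{\mathbf{F}}$, as in~\eqref{2.27a}.

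For~\eqref{Th sol energy 1}, I differentiate~\eqref{u_eps=} and~\eqref{u_0=} in $t$ to obtain
\[
\partial_t\mathbf{u}_\varepsilon(\cdot,t)-\partial_t\mathbf{u}_0(\cdot,t)
=\bigl(\cos(tB_{D,\varepsilon}^{1/2})-\cos(t(B_D^0)^{1/2})\bigr)\boldsymbol{\psi}
+\int_0^t\bigl(\cos((t-\widetilde{t})B_{D,\varepsilon}^{1/2})-\cos((t-\widetilde{t})(B_D^0)^{1/2})\bigr)\mathbf{F}(\cdot,\widetilde{t})\,d\widetilde{t}.
\]
Inserting the factorizations $\boldsymbol{\psi}=(B_D^0)^{-2}\check{\boldsymbol{\psi}}$, $\mathbf{F}=(B_D^0)^{-2}\check{\mathbf{F}}$ and invoking estimate~\eqref{Th cos B_D,eps} inside the integral, I then use~\eqref{2.27a} to pass from $L_2$ to $H^4$ norms and bound $(1+(t-\widetilde{t})^5)\leqslant (1+|t|^5)$ on $\widetilde{t}\in(0,t)$, obtaining~\eqref{Th sol energy 1}.

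For~\eqref{Th sol energy 2}, observe that with $\boldsymbol{\varphi}=0$ and $\widetilde{\mathbf{u}}_0=P_\mathcal{O}\mathbf{u}_0$ the definition of $\mathbf{v}_\varepsilon$ and the commutation $S_\varepsilon b(\mathbf{D})=b(\mathbf{D})S_\varepsilon$ give
\[
\mathbf{u}_\varepsilon(\cdot,t)-\mathbf{v}_\varepsilon(\cdot,t)
=\Bigl[\bigl(B_{D,\varepsilon}^{-1/2}\sin(tB_{D,\varepsilon}^{1/2})-(B_D^0)^{-1/2}\sin(t(B_D^0)^{1/2})-\varepsilon(\Lambda^\varepsilon b(\mathbf{D})+\widetilde{\Lambda}^\varepsilon)S_\varepsilon P_\mathcal{O}(B_D^0)^{-1/2}\sin(t(B_D^0)^{1/2})\bigr)\boldsymbol{\psi}+\text{integral term}\Bigr]\Big|_\mathcal{O}.
\]
Writing $\boldsymbol{\psi}=(B_D^0)^{-2}\check{\boldsymbol{\psi}}$ and $\mathbf{F}(\cdot,\widetilde{t})=(B_D^0)^{-2}\check{\mathbf{F}}(\cdot,\widetilde{t})$ brings the operator under the $H^1$-norm into exactly the form bounded in~\eqref{Th sin 2}. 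Applying that estimate and~\eqref{2.27a}, then integrating in $\widetilde{t}$ and using $(1+(t-\widetilde{t})^6)\leqslant (1+t^6)$, yields~\eqref{Th sol energy 2}. Estimate~\eqref{Th sol energy 3} is obtained by the same procedure, but this time starting from
\[
\mathbf{p}_\varepsilon(\cdot,t)-\widetilde{g}^\varepsilon S_\varepsilon b(\mathbf{D})\widetilde{\mathbf{u}}_0(\cdot,t)-g^\varepsilon(b(\mathbf{D})\widetilde{\Lambda})^\varepsilon S_\varepsilon\widetilde{\mathbf{u}}_0(\cdot,t)
=\bigl(g^\varepsilon b(\mathbf{D})B_{D,\varepsilon}^{-1/2}\sin(tB_{D,\varepsilon}^{1/2})-G_D(\varepsilon;t)\bigr)\boldsymbol{\psi}+\text{integral term},
\]
then invoking~\eqref{Th fluxes operator terms} in place of~\eqref{Th sin 2}.

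There is no genuine obstacle here, since the work is entirely done at the operator level by Theorems~\ref{Theorem cos New} and~\ref{Theorem sin corrector}. The only care needed is the passage from $\mathrm{Dom}\,(B_D^0)^2$ to $H^4(\mathcal{O};\mathbb{C}^n)$ for the initial data $\boldsymbol{\psi}$ and the right-hand side $\mathbf{F}(\cdot,\widetilde{t})$, which is handled by the $C^{3,1}$-regularity of $\partial\mathcal{O}$ together with the bound~\eqref{2.27a} derived from Lemma~\ref{Lemma (B_D0)2 and H4-norm}; the resulting factor $\mathfrak{C}$ is absorbed into the global constants.
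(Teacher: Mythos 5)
Your proposal is correct and follows essentially the same route as the paper: differentiate the Duhamel formula in $t$ and apply Theorem~\ref{Theorem cos New} for \eqref{Th sol energy 1}, and use the Duhamel formula together with Theorem~\ref{Theorem sin corrector} (estimates \eqref{Th sin 2} and \eqref{Th fluxes operator terms}) for \eqref{Th sol energy 2} and \eqref{Th sol energy 3}, in all cases passing through $\boldsymbol{\psi}=(B_D^0)^{-2}\check{\boldsymbol{\psi}}$, $\mathbf{F}=(B_D^0)^{-2}\check{\mathbf{F}}$ and the bound \eqref{2.27a} furnished by Lemma~\ref{Lemma (B_D0)2 and H4-norm}.
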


\begin{proof}
Estimates \eqref{Th sol energy 2} and \eqref{Th sol energy 3} follow from Lemma~\ref{Lemma (B_D0)2 and H4-norm}, Theorem~\ref{Theorem sin corrector},  and relations \eqref{u_eps=}, \eqref{u_0=}. 

Let us discuss the proof of inequality \eqref{Th sol energy 1}. We set $\boldsymbol{\varphi}=0$ in \eqref{u_eps=} and differentiate the obtained identity with respect to $t$. Then
\begin{equation*}
\frac{\partial \mathbf{u}_\varepsilon}{\partial t}(\cdot ,t)
=\cos (tB_{D,\varepsilon}^{1/2})\boldsymbol{\psi}
+\int _0^t \cos \left((t-\widetilde{t})B_{D,\varepsilon}^{1/2}\right)\mathbf{F}(\cdot ,\widetilde{t})\,d\widetilde{t}.
\end{equation*}
The similar identity holds for the solution of the effective problem. 
Together with Lemma~\ref{Lemma (B_D0)2 and H4-norm} and Theorem~\ref{Theorem cos New}, this implies estimate \eqref{Th sol energy 1}.
\end{proof}

From Theorem~\ref{Theorem d<=6 no S_eps} we derive the following result.

\begin{theorem}
\label{Theorem no_S_eps solutions}
Under the assumptions of Theorem~\textnormal{\ref{Theorem solutions corrector}}, let $d\leqslant8$. If $d=7,8$, we additionally assume that $\partial\mathcal{O}\in C^{4,1}$. Denote 
$
\check{\mathbf{v}}_\varepsilon (\cdot  ,t):=\mathbf{u}_0(\cdot,t)+\varepsilon (\Lambda ^\varepsilon b(\mathbf{D})+\widetilde{\Lambda}^\varepsilon )\mathbf{u}_0(\cdot ,t)$. 
Then for $t\in\mathbb{R}$ and $0<\varepsilon\leqslant\varepsilon _1$ we have
\begin{align*}
\Vert &\mathbf{u}_\varepsilon (\cdot ,t)-\check{\mathbf{v}}_\varepsilon (\cdot,t)\Vert _{H^1(\mathcal{O})}
\leqslant \mathfrak{C} C_{13}\varepsilon  ^{1/2} (1+t ^6)
\left(\Vert \boldsymbol{\psi}\Vert _{H^4(\mathcal{O})}+\Vert \mathbf{F}\Vert _{L_1((0,t);H^4(\mathcal{O}))}\right),
\\
\begin{split}
\Vert &\mathbf{p}_\varepsilon (\cdot ,t)-\widetilde{g}^\varepsilon b(\mathbf{D})\mathbf{u}_0(\cdot,t)-g^\varepsilon (b(\mathbf{D})\widetilde{\Lambda})^\varepsilon\mathbf{u}_0(\cdot ,t)\Vert _{L_2(\mathcal{O})}
\\
&\leqslant \mathfrak{C} C_{14} \varepsilon  ^{1/2}  (1+t ^6)
\left(\Vert \boldsymbol{\psi}\Vert _{H^4(\mathcal{O})}+\Vert \mathbf{F}\Vert _{L_1((0,t);H^4(\mathcal{O}))}\right).
\end{split}
\end{align*}
The constants $\mathfrak{C}$, $C_{13}$, and $C_{14}$ depend only on the problem data \eqref{problem data}.
\end{theorem}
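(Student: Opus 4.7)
The plan is to mimic the proof of Theorem~\ref{Theorem solutions corrector} verbatim, replacing the operator estimates of Theorem~\ref{Theorem sin corrector} by their smoothing-free counterparts \eqref{Th sin corr no S_eps} and \eqref{Th sin corr no S_eps flux} from Theorem~\ref{Theorem d<=6 no S_eps}. Since $\boldsymbol{\varphi}=0$, the representation \eqref{u_eps=} and its effective analogue \eqref{u_0=} give
\begin{equation*}
\mathbf{u}_\varepsilon(\cdot,t)-\check{\mathbf v}_\varepsilon(\cdot,t)
=\mathcal{R}_\varepsilon(t)\boldsymbol{\psi}+\int_0^t \mathcal{R}_\varepsilon(t-\widetilde t)\mathbf{F}(\cdot,\widetilde t)\,d\widetilde t,
\end{equation*}
where
\begin{equation*}
\mathcal{R}_\varepsilon(t):=B_{D,\varepsilon}^{-1/2}\sin(tB_{D,\varepsilon}^{1/2})-\bigl(I+\varepsilon\Lambda^\varepsilon b(\mathbf{D})+\varepsilon\widetilde{\Lambda}^\varepsilon\bigr)(B_D^0)^{-1/2}\sin(t(B_D^0)^{1/2})
\end{equation*}
is precisely the operator whose $(L_2\to H^1)$-norm on $\mathrm{Ran}\,(B_D^0)^{-2}$ is controlled by \eqref{Th sin corr no S_eps}.

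Next I would invoke Lemma~\ref{Lemma (B_D0)2 and H4-norm} together with the regularity of the effective operator (which holds under $\partial\mathcal{O}\in C^{3,1}$, and under $\partial\mathcal{O}\in C^{4,1}$ for $d=7,8$ to feed into Lemma~\ref{Lemma with conditions on boundary}) to factor out $(B_D^0)^{-2}$ from the data: write $\boldsymbol{\psi}=(B_D^0)^{-2}\boldsymbol{\check\psi}$, $\mathbf{F}(\cdot,\widetilde t)=(B_D^0)^{-2}\boldsymbol{\check F}(\cdot,\widetilde t)$, with $\|\boldsymbol{\check\psi}\|_{L_2(\mathcal{O})}\leqslant \mathfrak{C}\|\boldsymbol{\psi}\|_{H^4(\mathcal{O})}$ and $\|\boldsymbol{\check F}(\cdot,\widetilde t)\|_{L_2(\mathcal{O})}\leqslant \mathfrak{C}\|\mathbf{F}(\cdot,\widetilde t)\|_{H^4(\mathcal{O})}$. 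Inserting these factorizations, applying \eqref{Th sin corr no S_eps} to $\mathcal{R}_\varepsilon(t)(B_D^0)^{-2}$, and using the Minkowski integral inequality to bound the convolution, one gets
\begin{equation*}
\|\mathbf{u}_\varepsilon(\cdot,t)-\check{\mathbf v}_\varepsilon(\cdot,t)\|_{H^1(\mathcal{O})}
\leqslant \mathfrak{C}C_{13}\varepsilon^{1/2}\bigl(1+t^{6}\bigr)\Bigl(\|\boldsymbol{\psi}\|_{H^4(\mathcal{O})}+\|\mathbf{F}\|_{L_1((0,t);H^4(\mathcal{O}))}\Bigr),
\end{equation*}
where I use the monotonicity $\sup_{\widetilde t\in[0,|t|]}(1+(t-\widetilde t)^6)\leqslant 1+t^6$ to pull the temporal factor out of the integral.

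The flux estimate is proved by exactly the same recipe, substituting Theorem~\ref{Theorem d<=6 no S_eps} inequality \eqref{Th sin corr no S_eps flux} in place of \eqref{Th sin corr no S_eps}. There is no genuine obstacle in this argument; it is a straightforward transcription. The only point that warrants care is verifying that the hypotheses on $\partial\mathcal{O}$ match: for $d\leqslant 6$ the assumption $\partial\mathcal{O}\in C^{3,1}$ inherited from Theorem~\ref{Theorem solutions L2} already satisfies the smoothness demanded by Lemma~\ref{Lemma with conditions on boundary}, while for $d=7,8$ the extra hypothesis $\partial\mathcal{O}\in C^{4,1}$ is precisely what is needed to apply that lemma (so that \eqref{BD0 -5/2 to Hl+1} underlying Theorem~\ref{Theorem d<=6 no S_eps} holds).
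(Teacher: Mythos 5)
Your proposal is correct and follows essentially the same (unstated) argument as the paper: write $\mathbf{u}_\varepsilon-\check{\mathbf{v}}_\varepsilon=\mathcal{R}_\varepsilon(t)\boldsymbol{\psi}+\int_0^t\mathcal{R}_\varepsilon(t-\widetilde t)\mathbf{F}(\cdot,\widetilde t)\,d\widetilde t$, factor $\boldsymbol{\psi}=(B_D^0)^{-2}\check{\boldsymbol{\psi}}$ and $\mathbf{F}=(B_D^0)^{-2}\check{\mathbf{F}}$ using $\mathrm{Dom}\,(B_D^0)^2\subset H^4(\mathcal{O};\mathbb{C}^n)$ and Lemma~\ref{Lemma (B_D0)2 and H4-norm}, apply \eqref{Th sin corr no S_eps}/\eqref{Th sin corr no S_eps flux} to $\mathcal{R}_\varepsilon(t)(B_D^0)^{-2}$ (respectively the flux analogue), and bound the convolution by Minkowski and the monotonicity of $1+\tau^6$; the boundary-smoothness bookkeeping via Lemma~\ref{Lemma with conditions on boundary} is also right. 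One small caveat, shared with the paper: Theorem~\ref{Theorem d<=6 no S_eps} is stated only for $3\leqslant d\leqslant 8$, so for $d\leqslant 2$ you should appeal instead to Theorem~\ref{Theorem no S-eps}, whose hypotheses hold automatically in those dimensions by Propositions~\ref{Proposition Lambda in L infty <=} and \ref{Proposition tilde Lambda in Lp if} (then the constants $C_{11},C_{12}$ play the role of $C_{13},C_{14}$).
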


\begin{remark}
If $\partial\mathcal{O}\in C^{1,1}$, the results of Theorems~\textnormal{\ref{Theorem solutions L2}, \ref{Theorem solutions corrector},} and \textnormal{\ref{Theorem no_S_eps solutions}} remain true with the norms $\Vert (B_D^0)^2\boldsymbol{\varphi}\Vert _{L_2(\mathcal{O})}$, $\Vert (B_D^0)^2\boldsymbol{\psi}\Vert _{L_2(\mathcal{O})}$, and $\Vert (B_D^0)^2\mathbf{F}\Vert _{L_1((0,t);L_2(\mathcal{O}))}$ instead of $\Vert \boldsymbol{\varphi}\Vert _{H^4(\mathcal{O})}$, $\Vert \boldsymbol{\psi}\Vert _{H^4(\mathcal{O})}$, and $\Vert \mathbf{F}\Vert _{L_1((0,t);H^4(\mathcal{O}))}$, respectively, in the error estimates.
\end{remark}

\subsection{The special case} 
Assume that $g^0=\underline{g}$, i.~e., relations \eqref{underline-g} are satisfied. Then, by Proposition~\ref{Proposition Lambda in L infty <=}($3^\circ$), Condition~\ref{Condition Lambda in L infty} holds. Herewith, according to \cite[Remark 3.5]{BSu05}, the matrix-valued function \eqref{tilde g} is constant and coincides with $g^0$, i.~e., $\widetilde{g}(\mathbf{x})=g^0=\underline{g}$. Thus, $\widetilde{g}^\varepsilon b(\mathbf{D})\mathbf{u}_0(\cdot,t)=g^0b(\mathbf{D})\mathbf{u}_0(\cdot,t)$.

In addition, suppose that
\begin{equation}
\label{sum Dj aj =0}
\sum _{j=1}^d D_j a_j(\mathbf{x})^* =0.
\end{equation}
Then the $\Gamma$-periodic solution of problem \eqref{tildeLambda_problem} is also equal to zero:  $\widetilde{\Lambda}(\mathbf{x})=0$. 
 So, Theorem~\ref{Theorem no S-eps} implies the following result.

\begin{proposition}
Under the assumptions of Theorem~\textnormal{\ref{Theorem solutions corrector}}, suppose that relations \eqref{underline-g} and \eqref{sum Dj aj =0} hold. Then for $t\in\mathbb{R}$ and $0<\varepsilon \leqslant \varepsilon _1$ we have
\begin{equation*}
\begin{split}
\Vert &\mathbf{p}_\varepsilon (\cdot ,t)-g^0 b(\mathbf{D})\mathbf{u}_0(\cdot,t)\Vert _{L_2(\mathcal{O})}
\leqslant \mathfrak{C} C_{12}\varepsilon  ^{1/2}  (1+ t ^6)
\left(\Vert \boldsymbol{\psi}\Vert _{H^4(\mathcal{O})}+\Vert \mathbf{F}\Vert _{L_1((0,t);H^4(\mathcal{O}))}\right).
\end{split}
\end{equation*}
\end{proposition}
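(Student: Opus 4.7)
The plan is to reduce the claim to Theorem~\ref{Theorem no S-eps} (specifically estimate \eqref{Th sin corr no S_eps flux-1}) applied in the simplified form forced by the hypotheses $g^0=\underline{g}$ and $\sum_j D_j a_j^*=0$. First I would verify that the auxiliary cell objects collapse in this setting. By Proposition~\ref{Proposition Lambda in L infty <=}($3^\circ$), Condition~\ref{Condition Lambda in L infty} holds automatically. The cited \cite[Remark~3.5]{BSu05} shows that $\widetilde{g}(\mathbf{x})$ is in fact the constant matrix $g^0=\underline{g}$, so $\widetilde{g}^\varepsilon b(\mathbf{D})=g^0 b(\mathbf{D})$. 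Moreover, \eqref{sum Dj aj =0} makes the right-hand side of problem \eqref{tildeLambda_problem} vanish; hence by uniqueness of the zero-mean solution, $\widetilde{\Lambda}\equiv 0$. Condition~\ref{Condition tilde Lambda in Lp} is then trivially satisfied and the operator $G_D^0(\varepsilon;t)$ from \eqref{GD0} simplifies to
\begin{equation*}
G_D^0(\varepsilon;t)=g^0 b(\mathbf{D})(B_D^0)^{-1/2}\sin\bigl(t(B_D^0)^{1/2}\bigr).
\end{equation*}

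Next I would write out the flux. Because $\boldsymbol{\varphi}=0$, formula \eqref{u_eps=} and the analogous \eqref{u_0=} give
\begin{equation*}
\mathbf{p}_\varepsilon(\cdot,t)-g^0 b(\mathbf{D})\mathbf{u}_0(\cdot,t)
=\mathcal{D}_\varepsilon(t)\boldsymbol{\psi}+\int_0^t\mathcal{D}_\varepsilon(t-\widetilde{t})\,\mathbf{F}(\cdot,\widetilde{t})\,d\widetilde{t},
\end{equation*}
where $\mathcal{D}_\varepsilon(\tau):=g^\varepsilon b(\mathbf{D})B_{D,\varepsilon}^{-1/2}\sin(\tau B_{D,\varepsilon}^{1/2})-G_D^0(\varepsilon;\tau)$. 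Since $\boldsymbol{\psi},\mathbf{F}(\cdot,\widetilde{t})\in\mathrm{Dom}\,(B_D^0)^2$, we may write $\boldsymbol{\psi}=(B_D^0)^{-2}\check{\boldsymbol{\psi}}$ and $\mathbf{F}(\cdot,\widetilde{t})=(B_D^0)^{-2}\check{\mathbf{F}}(\cdot,\widetilde{t})$ with $\check{\boldsymbol{\psi}},\check{\mathbf{F}}(\cdot,\widetilde{t})\in L_2(\mathcal{O};\mathbb{C}^n)$. Estimate \eqref{Th sin corr no S_eps flux-1} then yields directly
\begin{equation*}
\|\mathcal{D}_\varepsilon(\tau)(B_D^0)^{-2}\|_{L_2(\mathcal{O})\to L_2(\mathcal{O})}\leqslant C_{12}\varepsilon^{1/2}(1+\tau^6),\qquad 0<\varepsilon\leqslant\varepsilon_1.
\end{equation*}

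I would then combine these bounds. Applying the operator estimate to the two summands, using $1+(t-\widetilde{t})^6\leqslant 1+|t|^6$ for $\widetilde{t}$ between $0$ and $t$, and then invoking Lemma~\ref{Lemma (B_D0)2 and H4-norm} (valid because $\partial\mathcal{O}\in C^{3,1}$, which is guaranteed by the assumptions of Theorem~\ref{Theorem solutions corrector}) to convert $\|\check{\boldsymbol{\psi}}\|_{L_2(\mathcal{O})}=\|(B_D^0)^2\boldsymbol{\psi}\|_{L_2(\mathcal{O})}\leqslant\mathfrak{C}\|\boldsymbol{\psi}\|_{H^4(\mathcal{O})}$ and similarly for $\check{\mathbf{F}}$, I obtain
\begin{equation*}
\|\mathbf{p}_\varepsilon(\cdot,t)-g^0 b(\mathbf{D})\mathbf{u}_0(\cdot,t)\|_{L_2(\mathcal{O})}
\leqslant \mathfrak{C}C_{12}\varepsilon^{1/2}(1+t^6)\bigl(\|\boldsymbol{\psi}\|_{H^4(\mathcal{O})}+\|\mathbf{F}\|_{L_1((0,t);H^4(\mathcal{O}))}\bigr),
\end{equation*}
which is the asserted estimate.

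The ``proof'' is essentially an unpacking exercise: no new analysis is required once the identifications $\widetilde{g}=g^0$ and $\widetilde{\Lambda}=0$ are made. The only point where one has to be careful is the Duhamel term---one must argue that the operator norm estimate applied pointwise in $\widetilde{t}$ integrates cleanly, which is why writing $\mathbf{F}(\cdot,\widetilde{t})=(B_D^0)^{-2}\check{\mathbf{F}}(\cdot,\widetilde{t})$ and passing to the $L_1$-in-time norm of $\|\check{\mathbf{F}}(\cdot,\widetilde{t})\|_{L_2(\mathcal{O})}$ (controlled via Lemma~\ref{Lemma (B_D0)2 and H4-norm} by $\|\mathbf{F}\|_{L_1((0,t);H^4(\mathcal{O}))}$) is the natural route. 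No genuine obstacle is foreseen.
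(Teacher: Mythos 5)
Your proposal is correct and follows exactly the route the paper takes: reduce to Theorem~\ref{Theorem no S-eps} after observing that $g^0=\underline{g}$ forces $\Lambda\in L_\infty$ (Proposition~\ref{Proposition Lambda in L infty <=}($3^\circ$)) and $\widetilde{g}=g^0$, while \eqref{sum Dj aj =0} forces $\widetilde{\Lambda}=0$, so that $G_D^0(\varepsilon;t)$ collapses to $g^0 b(\mathbf{D})(B_D^0)^{-1/2}\sin(t(B_D^0)^{1/2})$, and then pass to the initial data via Lemma~\ref{Lemma (B_D0)2 and H4-norm} as in Theorem~\ref{Theorem solutions corrector}. The paper simply states ``So, Theorem~\ref{Theorem no S-eps} implies the following result,'' and your argument is a faithful unpacking of that sentence.
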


\subsection{The case where the corrector is equal to zero} 
Assume that $g^0=\overline{g}$, i.~e., relations \eqref{overline-g} are satisfied. 
Assume that condition \eqref{sum Dj aj =0} holds. 
Then the $\Gamma$-periodic solutions of problems \eqref{Lambda problem} and~\eqref{tildeLambda_problem} are equal to zero: $\Lambda (\mathbf{x})=0$ and $\widetilde{\Lambda}(\mathbf{x})=0$. By Theorems~\ref{Theorem sin corrector} and \ref{Theorem cos corrector}, for $t\in\mathbb{R}$ and $0<\varepsilon\leqslant\varepsilon _1$ we have
\begin{align}
\label{sin with K=0}
\Bigl\Vert &\Bigl(
B_{D,\varepsilon}^{-1/2}\sin(tB_{D,\varepsilon}^{1/2})-(B_D^0)^{-1/2}\sin (t(B_D^0)^{1/2})
\Bigr)(B_D^0)^{-2}\Bigr\Vert
_{L_2(\mathcal{O})\rightarrow H^1(\mathcal{O})}
\leqslant 
C_8\varepsilon ^{1/2}(1+t ^6),
\\
\label{2.33 corr =0}
\Bigl\Vert &\Bigl(
\cos(tB_{D,\varepsilon}^{1/2})B_{D,\varepsilon}^{-1}-\cos(t(B_D^0)^{1/2})(B_D^0)^{-1}\Bigr)
(B_D^0)^{-1}\Bigr\Vert
 _{L_2(\mathcal{O})\rightarrow H^1(\mathcal{O})}
\leqslant
C_{10}\varepsilon ^{1/2}(1+\vert t\vert ^5).
\end{align}
In the case under consideration, Theorem~\ref{Theorem resolvent corrector}($2^\circ$) implies that
\begin{equation}
\Vert B_{D,\varepsilon}^{-1}-(B_D^0)^{-1}\Vert  _{L_2(\mathcal{O})\rightarrow H^1(\mathcal{O})}
\leqslant
C_4\max\lbrace 2;c_\flat ^{-1}+c_\flat ^{-2}\rbrace\varepsilon ^{1/2},\quad 0<\varepsilon\leqslant\varepsilon _1.
\end{equation}
Applying \eqref{H^1-norm <= BDeps^1/2} and \eqref{1.15a new} consistently, we obtain
\begin{equation}
\label{2.35 corr =0}
\begin{split}
\Vert &\cos(tB_{D,\varepsilon}^{1/2})(B_{D,\varepsilon}^{-1}-(B_D^0)^{-1})(B_D^0)^{-1}\Vert _{L_2(\mathcal{O})\rightarrow H^1(\mathcal{O})}
\\
&\leqslant c_3\Vert B_{D,\varepsilon}^{1/2}\cos(tB_{D,\varepsilon}^{1/2})(B_{D,\varepsilon}^{-1}-(B_D^0)^{-1})(B_D^0)^{-1}\Vert _{L_2(\mathcal{O})\rightarrow L_2(\mathcal{O})}
\\
&\leqslant c_3C_*^{1/2}\Vert B_{D,\varepsilon}^{-1}-(B_D^0)^{-1}\Vert _{L_2(\mathcal{O})\rightarrow H^1(\mathcal{O})}\Vert (B_D^0)^{-1}\Vert _{L_2(\mathcal{O})\rightarrow L_2(\mathcal{O})}.
\end{split}
\end{equation}
Combining \eqref{B_D^0 L2 ->L2} and \eqref{2.33 corr =0}--\eqref{2.35 corr =0}, for $t\in\mathbb{R}$ and $0<\varepsilon\leqslant\varepsilon _1$ we have
\begin{equation}
\label{cos with K =0}
\Bigl\Vert \Bigl(
\cos(tB_{D,\varepsilon}^{1/2})-\cos(t(B_D^0)^{1/2})\Bigr)(B_D^0)^{-2}\Bigr\Vert _{L_2(\mathcal{O})\rightarrow H^1(\mathcal{O})}
\leqslant
C_{15}\varepsilon ^{1/2}(1+\vert t\vert ^5).
\end{equation}
Here $C_{15}:=C_{10}+c_3C_*^{1/2}C_4\max\lbrace 2;c_\flat ^{-1}+c_\flat ^{-2}\rbrace\mathcal{C}_1$.

Bringing together \eqref{2.27a}, \eqref{sin with K=0}, and \eqref{cos with K =0}, we arrive at approximation in the Sobolev class $H^1(\mathcal{O};\mathbb{C}^n)$ for the solution \eqref{u_eps=} of the problem \eqref{hyperbolic system}.

\begin{proposition}
Let $\mathbf{u}_\varepsilon $ and $\mathbf{u}_0$ be solutions of problems \eqref{hyperbolic system} and \eqref{effective hyperbolic system}, respectively, for $\boldsymbol{\varphi}$, $\boldsymbol{\psi}\in \mathrm{Dom}\,(B_D^0)^2$, and $\mathbf{F}\in L_{1,\mathrm{loc}}(\mathbb{R};\mathrm{Dom}\,(B_D^0)^2)$. Assume that relations \eqref{overline-g} and \eqref{sum Dj aj =0} hold true. Then for $0<\varepsilon\leqslant\varepsilon _1$ and $t\in\mathbb{R}$ we have
\begin{equation*}
\begin{split}
\Vert \mathbf{u}_\varepsilon (\cdot ,t)-\mathbf{u}_0(\cdot ,t)\Vert _{H^1(\mathcal{O})}
&\leqslant \mathfrak{C}C_{15}\varepsilon ^{1/2}(1+\vert t\vert ^5)\Vert \boldsymbol{\varphi}\Vert _{H^4(\mathcal{O})}
\\
&+\mathfrak{C}C_8\varepsilon ^{1/2}(1+t ^6)\left(
\Vert \boldsymbol{\psi}\Vert _{H^4(\mathcal{O})}+ \Vert \mathbf{F}\Vert _{L_1((0,t);H^4(\mathcal{O}))}\right).
\end{split}
\end{equation*}
\end{proposition}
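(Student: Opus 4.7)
The plan is to assemble the estimate from the solution representations \eqref{u_eps=}, \eqref{u_0=}, exploiting the fact that under the hypotheses \eqref{overline-g} and \eqref{sum Dj aj =0} both correctors $\Lambda$ and $\widetilde{\Lambda}$ vanish, so that Theorems~\ref{Theorem sin corrector} and \ref{Theorem cos corrector} reduce to the corrector-free estimates \eqref{sin with K=0} and \eqref{cos with K =0} already collected in this subsection. Since these estimates apply to the operators composed with $(B_D^0)^{-2}$, the assumption $\boldsymbol{\varphi},\boldsymbol{\psi}\in\mathrm{Dom}\,(B_D^0)^{2}$ and $\mathbf{F}(\cdot,\widetilde{t})\in\mathrm{Dom}\,(B_D^0)^{2}$ will let us factor $(B_D^0)^{-2}$ out of each term of \eqref{u_eps=} and \eqref{u_0=} and convert the $L_2$-norm of $(B_D^0)^2$ applied to the data into an $H^4(\mathcal{O})$-norm via \eqref{2.27a}.

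First, I would subtract \eqref{u_0=} from \eqref{u_eps=} to write
\begin{equation*}
\begin{split}
\mathbf{u}_\varepsilon(\cdot,t)-\mathbf{u}_0(\cdot,t)
&=\bigl(\cos(tB_{D,\varepsilon}^{1/2})-\cos(t(B_D^0)^{1/2})\bigr)\boldsymbol{\varphi}\\
&+\bigl(B_{D,\varepsilon}^{-1/2}\sin(tB_{D,\varepsilon}^{1/2})-(B_D^0)^{-1/2}\sin(t(B_D^0)^{1/2})\bigr)\boldsymbol{\psi}\\
&+\int_0^t\bigl(B_{D,\varepsilon}^{-1/2}\sin((t-\widetilde{t})B_{D,\varepsilon}^{1/2})-(B_D^0)^{-1/2}\sin((t-\widetilde{t})(B_D^0)^{1/2})\bigr)\mathbf{F}(\cdot,\widetilde{t})\,d\widetilde{t}.
\end{split}
\end{equation*}
Writing $\boldsymbol{\varphi}=(B_D^0)^{-2}(B_D^0)^2\boldsymbol{\varphi}$ and using \eqref{cos with K =0} with Lemma~\ref{Lemma (B_D0)2 and H4-norm}, the first term is bounded in the $H^1(\mathcal{O})$-norm by $\mathfrak{C}C_{15}\varepsilon^{1/2}(1+|t|^5)\Vert\boldsymbol{\varphi}\Vert_{H^4(\mathcal{O})}$. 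Similarly, applying \eqref{sin with K=0} together with \eqref{2.27a} to the second term produces $\mathfrak{C}C_8\varepsilon^{1/2}(1+t^6)\Vert\boldsymbol{\psi}\Vert_{H^4(\mathcal{O})}$.

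For the Duhamel term, I would argue pointwise in $\widetilde{t}$: for each $\widetilde{t}\in(0,t)$, we represent $\mathbf{F}(\cdot,\widetilde{t})=(B_D^0)^{-2}(B_D^0)^2\mathbf{F}(\cdot,\widetilde{t})$ and apply \eqref{sin with K=0} with $t$ replaced by $t-\widetilde{t}$, which after \eqref{2.27a} yields an $H^1(\mathcal{O})$-bound $\mathfrak{C}C_8\varepsilon^{1/2}(1+(t-\widetilde{t})^6)\Vert\mathbf{F}(\cdot,\widetilde{t})\Vert_{H^4(\mathcal{O})}$ on the integrand. Taking the $H^1$-norm inside the Bochner integral and using $1+(t-\widetilde{t})^6\leqslant 1+t^6$ for $\widetilde{t}\in(0,t)$, I conclude the bound $\mathfrak{C}C_8\varepsilon^{1/2}(1+t^6)\Vert\mathbf{F}\Vert_{L_1((0,t);H^4(\mathcal{O}))}$.

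Summing these three contributions and using the triangle inequality yields the claimed estimate. No step poses a genuine obstacle; the only mild subtlety is the reduction of the argument to the estimates with vanishing correctors, which uses the observation (already recalled in the subsection) that \eqref{overline-g} forces $\Lambda\equiv 0$ in \eqref{Lambda problem}, and combined with \eqref{sum Dj aj =0} also forces $\widetilde{\Lambda}\equiv 0$ in \eqref{tildeLambda_problem}, so that both $\varepsilon\mathcal{K}_D(\varepsilon;t)$ and $\varepsilon\mathscr{K}_D(\varepsilon;t)$ vanish.
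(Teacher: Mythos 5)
Your proof is correct and follows exactly the same route as the paper: the paper dispatches the Proposition with a single sentence (``Bringing together \eqref{2.27a}, \eqref{sin with K=0}, and \eqref{cos with K =0}, we arrive at...''), and you have simply spelled out the straightforward term-by-term estimation of the difference of \eqref{u_eps=} and \eqref{u_0=} that this sentence summarizes, including the correct handling of the Duhamel term via the monotone bound $1+(t-\widetilde{t})^6\leqslant 1+t^6$.
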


\section{Proof of Theorems \ref{Theorem cos New} and \ref{Theorem sin corrector}} 
\label{Section 3}

\subsection{Proof of Theorem \ref{Theorem cos New}}

To prove estimate \eqref{Th cos B_D,eps}, we use the inverse Laplace transform and Theorem~\ref{Theorem resolvent}. To guarantee the convergence of the corresponding integrals, we consider the function 
$\left(\cos (ta^{1/2})-1+at^2/2\right)a^{-2}$  
instead of the cosine. The reason is that the inverse Laplace transform of this function decreases faster than the inverse Laplace transform of the cosine (see, e.~g., \cite[Section 17.13]{Grad}).

\begin{proof}[Proof of Theorem \textnormal{\ref{Theorem cos New}}.]

For $t=0$, the result \eqref{Th cos B_D,eps} 
is trivial: $$\cos (t B_{D,\varepsilon}^{1/2})\vert _{t=0}=\cos (t(B_D^0)^{1/2})\vert _{t=0}=I.$$ Therefore, since the cosine is an even function, without loss of generality, we will further assume that $t>0$.

By \eqref{rho(zeta)} and \eqref{Th dr appr 1},
\begin{equation}
\label{AD,eps -1 -...}
\Vert B_{D,\varepsilon}^{-1}-(B_D^0)^{-1}\Vert _{L_2(\mathcal{O})\rightarrow L_2(\mathcal{O})}\leqslant C_{16}\varepsilon , \quad C_{16}:=\max \lbrace 1; c_\flat ^{-2}\rbrace C_2.
\end{equation}
So, by using the identity
\begin{equation*}
\begin{split}
B_{D,\varepsilon}^{-2}-(B_D^0)^{-2}
&=
\frac{1}{2}(B_{D,\varepsilon}^{-1}-(B_D^0)^{-1})(B_{D,\varepsilon}^{-1}+(B_D^0)^{-1})
\\
&+\frac{1}{2}(B_{D,\varepsilon}^{-1}+(B_D^0)^{-1})(B_{D,\varepsilon}^{-1}-(B_D^0)^{-1})
\end{split}
\end{equation*}
and estimates \eqref{B_D,eps L2 ->L2}, \eqref{B_D^0 L2 ->L2}, we obtain 
\begin{equation}
\label{raznost' kvadratov resolvent}
\Vert B_{D,\varepsilon}^{-2}-(B_D^0)^{-2}\Vert _{L_2(\mathcal{O})\rightarrow L_2(\mathcal{O})}
\leqslant 2\mathcal{C}_1C_{16}\varepsilon .
\end{equation}
Thus,
\begin{equation}
\label{cos-cos Res AD0}
\begin{split}
\Bigl\Vert & \left(\cos (tB_{D,\varepsilon}^{1/2})-\cos (t(B_D^0)^{1/2})\right)(B_D^0)^{-2}\Bigr\Vert _{L_2(\mathcal{O})\rightarrow L_2(\mathcal{O})}\\
&\leqslant \Vert \cos (tB_{D,\varepsilon}^{1/2})\left(B_{D,\varepsilon}^{-2}-(B_D^0)^{-2}\right)\Vert _{L_2(\mathcal{O})\rightarrow L_2(\mathcal{O})}\\
&+\Vert \cos (tB_{D,\varepsilon}^{1/2})B_{D,\varepsilon}^{-2}-\cos(t(B_D^0)^{1/2})(B_D^0)^{-2}\Vert _{L_2(\mathcal{O})\rightarrow L_2(\mathcal{O})}\\
&\leqslant 2\mathcal{C}_1C_{16}\varepsilon +\Vert \cos (tB_{D,\varepsilon}^{1/2})B_{D,\varepsilon}^{-2}-\cos(t(B_D^0)^{1/2})(B_D^0)^{-2}\Vert _{L_2(\mathcal{O})\rightarrow L_2(\mathcal{O})}.
\end{split}
\end{equation}

Let $a>0$ be a parameter. Then, by the residue theorem,
\begin{equation}
\label{basic identity for functions}
\left(
\frac{1}{2}a t^2-1+\cos (t\sqrt{a})
\right)a^{-2}
=\frac{1}{2\pi i}\int _{\mathrm{Re}\,\lambda = c} \lambda ^{-3}(a+\lambda ^2)^{-1}e^{\lambda t}\,d\lambda,\quad
c>0.
\end{equation}

Assume that the constant $c$ in \eqref{basic identity for functions} is equal to $\sqrt{c_\flat}/t$. With the help of the spectral theorem, from \eqref{basic identity for functions} we derive
\begin{equation}
\label{basic identity}
\left(
\frac{1}{2}B_{D,\varepsilon} t^2-I+\cos (t B_{D,\varepsilon}^{1/2})
\right)B_{D,\varepsilon}^{-2}
=\frac{1}{2\pi i}\int _{\mathrm{Re}\,\lambda = \sqrt{c_\flat}/t} \lambda ^{-3}(B_{D,\varepsilon}+\lambda ^2 I)^{-1}e^{\lambda t}\,d\lambda .
\end{equation}
The similar identity holds for the effective operator. So,
\begin{equation}
\label{14}
\begin{split}
\cos (&t B_{D,\varepsilon}^{1/2})B_{D,\varepsilon}^{-2}-\cos (t(B_D^0)^{1/2})(B_D^0)^{-2}
=
-\frac{t^2}{2}\left(B_{D,\varepsilon}^{-1}-(B_D^0)^{-1}\right)
+\left( B_{D,\varepsilon}^{-2}-(B_D^0)^{-2}\right)\\
&+\frac{1}{2\pi i}\int _{\mathrm{Re}\,\lambda =\sqrt{c_\flat}/t}\lambda ^{-3}
\left(
(B_{D,\varepsilon}+\lambda ^2I)^{-1}-(B_D^0+\lambda ^2 I)^{-1}\right)e^{\lambda t}\,d\lambda .
\end{split}
\end{equation}
Combining this with \eqref{AD,eps -1 -...} and \eqref{raznost' kvadratov resolvent}, we conclude that
\begin{equation}
\label{cos-cos=something + int norm estimate}
\begin{split}
\Vert & \cos (t B_{D,\varepsilon}^{1/2})B_{D,\varepsilon}^{-2}-\cos (t(B_D^0)^{1/2})(B_D^0)^{-2}\Vert _{L_2(\mathcal{O})\rightarrow L_2(\mathcal{O})}
\leqslant 2^{-1}C_{16}\varepsilon t^2+2\mathcal{C}_1C_{16}\varepsilon
\\
&+\left\Vert \frac{1}{2\pi i}\int _{\mathrm{Re}\,\lambda =\sqrt{c_\flat}/t}\lambda ^{-3}
\left(
(B_{D,\varepsilon}+\lambda ^2I)^{-1}-(B_D^0+\lambda ^2 I)^{-1}\right)e^{\lambda t}\,d\lambda \right\Vert _{L_2(\mathcal{O})\rightarrow L_2(\mathcal{O})}.
\end{split}
\end{equation}

Now we proceed to estimation of the integral in the right-hand side of \eqref{cos-cos=something + int norm estimate}. By the change of variables $\lambda t=\mu$,
\begin{equation}
\label{mathfrak I=}
\begin{split}
&\frac{1}{2\pi i}\int _{\mathrm{Re}\,\lambda =\sqrt{c_\flat}/t}\lambda ^{-3}
\left(
(B_{D,\varepsilon}+\lambda ^2I)^{-1}-(B_D^0+\lambda ^2 I)^{-1}\right)e^{\lambda t}\,d\lambda \\
&=\frac{t^2}{2\pi i}\int _{\mathrm{Re}\,\mu =\sqrt{c_\flat}}e^\mu \mu ^{-3}
\left(
(B_{D,\varepsilon}+\frac{\mu ^2}{t^2} I)^{-1}-(B_D^0+\frac{\mu ^2}{t ^2} I)^{-1}\right)\,d\mu=:\mathfrak{I}(\varepsilon ;t) .
\end{split}
\end{equation}

Substitute $\mu = \sqrt{c_\flat} +i\beta$, $\beta\in\mathbb{R}$. Denote $\zeta _t :=-\mu ^2/t^{2}$. Let us understand how the set of values of this variable looks like. We have
\begin{equation}
\label{3.8a}
\mu ^2 = c_\flat-\beta ^2 +i2\beta\sqrt{c_\flat}=:x+iy.
\end{equation}
Then
$$
\begin{cases}
x=c_\flat -\beta ^2,\\
y=2\beta\sqrt{c_\flat}.
\end{cases}
$$
So,
$$
\begin{cases}
x=c_\flat - (4c_\flat)^{-1}y^2,
\\
\beta =2^{-1}c_\flat ^{-1/2}y.
\end{cases}
$$
Thus, the values of $\zeta _t$ belong to the parabola $\Pi _t$:
\begin{equation}
\label{Pi}
\Pi _t:=\left\lbrace\zeta _t\in\mathbb{C} : \mathrm{Re}\,\zeta _t=-\frac{c_\flat}{t^2}+\frac{t^2}{4 c_\flat}(\mathrm{Im}\,\zeta _t)^2\right\rbrace .
\end{equation}

For $\zeta _t\in\Pi _t$ with $\mathrm{Re}\,\zeta _t<c_\flat +1$, we use approximation \eqref{Th dr appr 1} for the resolvent $(B_{D,\varepsilon}-\zeta _t I)^{-1}$. Let us estimate $\varrho _\flat (\zeta _t)$ for $\zeta _t\in\Pi _t$ under consideration. We have
\begin{equation}
\label{zeta_t(beta)=}
\zeta _t =-\frac{\mu ^2}{t^2}=\frac{\beta ^2 - c_\flat}{t^2}-i\frac{2\beta\sqrt{c_\flat}}{t^2},\quad \beta\in\mathbb{R}.
\end{equation}
So,
\begin{equation*}
\vert\zeta _t-c_\flat\vert^2 =\left(\frac{\beta ^2 - c_\flat}{t^2}-c_\flat\right)^2
+\frac{4\beta ^2c_\flat}{t^4}.
\end{equation*}
After elementary transformations, 
$$
\vert\zeta _t-c_\flat\vert^2 = t^{-4}\left(
(\beta ^2-c_\flat t^2 )^2
+2\beta ^2 c_\flat +c_\flat ^2+2 c_\flat ^2 t^2
\right).$$ 
Consequently, 
$
\vert\zeta _t -c_\flat\vert ^{-2}\leqslant(2c_\flat ^2)^{-1}t^2$.
For $\zeta _t\in \Pi _t$ with $\mathrm{Re}\,\zeta _t\leqslant c_\flat$, we use the estimate
\begin{equation}
\label{rho<=c_1}
\varrho _\flat (\zeta _t)
\leqslant\max\lbrace 1;\vert\zeta _t-c_\flat\vert ^{-2}\rbrace
\leqslant\max\lbrace 1;(2c_\flat ^2)^{-1}t^2\rbrace
\leqslant\mathfrak{c}_1(t^2+1);\quad\mathfrak{c}_1:=\max \lbrace 1;(2c_\flat ^2)^{-1}\rbrace .
\end{equation}
Let $\psi _t=\mathrm{arg}\,(\zeta _t-c_\flat)$. 
For $\zeta _t\in\Pi _t$ with $c_\flat <\mathrm{Re}\,\zeta _t\leqslant c_\flat +1$, the value of $\varrho _\flat (\zeta _t)$ can be estimated as follows:
\begin{equation}
\label{3.11b}
\varrho _\flat (\zeta _t)\leqslant \max\lbrace c(\psi _t)^2\vert \zeta _t -c_\flat\vert ^{-2};c(\psi _t)^2\rbrace
=\max\lbrace \vert \mathrm{Im}\,(\zeta _t-c_\flat)\vert ^{-2}; c(\psi _t)^2\rbrace .
\end{equation}
We have
\begin{equation}
\label{3.11a}
\vert\mathrm{Im}\,(\zeta _t-c_\flat)\vert ^{-2}=\vert\mathrm{Im}\,\zeta _t\vert ^{-2}
\leqslant\vert\mathrm{Im}\widehat{\zeta}_t\vert ^{-2},\quad\zeta _t\in\Pi _t,\; c_\flat <\mathrm{Re}\,\zeta _t\leqslant c_\flat +1.
\end{equation}
Here $\widehat{\zeta}_t$ is the point at the contour $\Pi _t$ such that $\mathrm{Re}\,\widehat{\zeta}_t =c_\flat$. (There are two such points, one can choose any.) Let $\widehat{\beta}\in\mathbb{R}$ be the corresponding value of the parameter $\beta$. Then 
$
\mathrm{Re}\,\widehat{\zeta}_t=c_\flat =t^{-2}(\widehat{\beta}^2-c_\flat)$. 
So, $\widehat{\beta}^2=c_\flat(1+t^2)$. And, by \eqref{3.11a}, for $\zeta _t\in\Pi _t$ under consideration we have
\begin{equation}
\label{zvezda}
\vert \mathrm{Im}\,(\zeta _t -c_\flat )\vert ^{-2}=
\vert\mathrm{Im}\,\zeta _t\vert ^{-2}
\leqslant\frac{t^4}{4c_\flat}\widehat{\beta}^{-2}
=\frac{ t^4}{4c_\flat ^2(1+t^2)}\leqslant\frac{t^2}{4c_\flat ^2}.
\end{equation}

Now we want to estimate $c(\psi _t)$. Obviously, for $\zeta _t\in\Pi _t$ with $c_\flat <\mathrm{Re}\,\zeta _t\leqslant c_\flat +1$ we have $c(\psi _t)\leqslant c(\widetilde{\psi}_t)$, where $\widetilde{\psi}_t=\mathrm{arg}\,\widetilde{\zeta}_t$, $\widetilde{\zeta}_t\in\Pi _t$, $\mathrm{Re}\,\widetilde{\zeta}_t=c_\flat +1$. (There are two such points on the contour.) Assume that the point $\widetilde{\zeta}_t\in\Pi _t$ corresponds to the parameter $\widetilde{\beta}>0$. Then 
$
c_\flat +1 =\mathrm{Re}\,\widetilde{\zeta}_t=t^{-2}(\widetilde{\beta}^2-c_\flat)$. 
So,
\begin{equation}
\label{tilde beta}
\widetilde{\beta}^2=c_\flat +t^2(c_\flat +1). 
\end{equation}
Thus,
\begin{equation*}
\left(
\mathrm{Im}\,\widetilde{\zeta}_t\right)^2
=\frac{4\widetilde{\beta}^2c_\flat}{t^4}
=
\frac{4 c_\flat ^2+4c_\flat t^2(c_\flat +1)}{t^4}.
\end{equation*}
Next,
\begin{equation*}
c(\widetilde{\psi}_t)^2
=\frac{\vert\widetilde{\zeta}_t-c_\flat\vert ^2}{\left(
\mathrm{Im}\,\widetilde{\zeta}_t\right)^2}
=\frac{\left(\mathrm{Re}\,(\widetilde{\zeta}_t-c_\flat)\right)^2+\left(
\mathrm{Im}\,\widetilde{\zeta}_t\right)^2}{\left(
\mathrm{Im}\,\widetilde{\zeta}_t\right)^2}
=\frac{t^4+4c_\flat ^2 +4c_\flat t^2(c_\flat +1)}{4 c_\flat ^2 +4c_\flat t^2(c_\flat +1)}.
\end{equation*}
By the elementary inequality $1\leqslant (c_\flat +1)^2$, 
\begin{equation*}
c(\widetilde{\psi}_t)^2
\leqslant
\frac{t^4(c_\flat +1)^2+4c_\flat ^2 +4c_\flat t^2(c_\flat +1)}{4 c_\flat ^2 +4c_\flat t^2(c_\flat +1)}=
\frac{\left(t^2(c_\flat +1)+2c_\flat\right)^2}{4 c_\flat ^2 +4c_\flat t^2(c_\flat +1)}.
\end{equation*}
By decreasing the denominator, we obtain
\begin{equation*}
c(\widetilde{\psi}_t)^2
\leqslant\frac{\left(t^2(c_\flat +1)+2c_\flat\right)^2}{2 c_\flat ^2 +c_\flat t^2(c_\flat +1)}
=\frac{t^2(c_\flat +1)+2c_\flat}{c_\flat}=2+(1+c_\flat ^{-1})t^2.
\end{equation*}
Thus, by \eqref{3.11b} and \eqref{zvezda}, for $\zeta _t\in \Pi _t$ with $c_\flat <\mathrm{Re}\,\zeta _t\leqslant c_\flat +1$ we have
\begin{equation}
\label{rho<=c_2}
\varrho _\flat (\zeta _t)
\leqslant \max\lbrace(2 c_\flat)^{-2}t^2;2+(1+c_\flat ^{-1})t^2\rbrace
\leqslant\mathfrak{c}_2(t^2+1);\quad\mathfrak{c}_2:=\max\lbrace (2c_\flat)^{-2};2;1+c_\flat ^{-1}\rbrace .
\end{equation}
Bringing together \eqref{rho<=c_1} and \eqref{rho<=c_2}, we arrive at the estimate
\begin{equation}
\label{rho <=c_3}
\varrho _\flat (\zeta _t)\leqslant\mathfrak{c}_3(1+t^2),\quad\zeta _t\in\Pi _t,\;\mathrm{Re}\,\zeta _t\leqslant c_\flat +1;\quad\mathfrak{c}_3:=\max\lbrace\mathfrak{c}_1;\mathfrak{c}_2\rbrace .
\end{equation}

Let now $\zeta _t\in\Pi _t$ with $\mathrm{Re}\,\zeta _t >c_\flat +1$. For this part of the contour, we will use \eqref{sem'.a} to estimate the integrand in \eqref{mathfrak I=}.   
Let $\phi _t=\mathrm{arg}\,\zeta _t$. 
By \eqref{zeta_t(beta)=}, 
\begin{equation}
\label{estimates large beta's}
t^2\vert\mu ^{-3}\vert\vert\zeta _t\vert ^{-1/2}c(\phi_t)^2=t^{-1}\vert\zeta _t\vert ^{-2}c(\phi_t)^2=t^{-1}\vert\mathrm{Im}\,\zeta _t\vert ^{-2}=(4c_\flat)^{-1}t^3\beta ^{-2}.
\end{equation} 

Now, we can estimate the integral \eqref{mathfrak I=}:
\begin{equation*}
\mathfrak{I}(\varepsilon ;t)
=\frac{t^2}{2\pi}\int _{-\infty}^\infty e^{\sqrt{c_\flat}}e^{i\beta}( \mu (\beta)) ^{-3}
\left(
(B_{D,\varepsilon}+\frac{\mu (\beta) ^2}{t^2} I)^{-1}-(B_D^0+\frac{\mu (\beta) ^2}{t ^2} I)^{-1}\right)\,d\beta .
\end{equation*}
Here $\mu (\beta)=c_\flat ^{1/2}+i\beta$. 
Combining this with \eqref{sem'.a}, \eqref{Th dr appr 1}, \eqref{rho <=c_3}, and \eqref{estimates large beta's}, we obtain
\begin{equation}
\label{I<= int1 +int 2}
\begin{split}
&\Vert\mathfrak{I}(\varepsilon ;t)\Vert _{L_2(\mathcal{O})\rightarrow L_2(\mathcal{O})}
\\
&\leqslant\frac{e^{\sqrt{c_\flat}}t^2}{2\pi}
\left(\varepsilon C_2\mathfrak{c}_3(1+t^2)\int _{-\widetilde{\beta}}^{\widetilde{\beta}}\vert\mu (\beta)\vert ^{-3} \,d\beta
+\varepsilon C_1 (2c_\flat)^{-1} t \int _{\widetilde{\beta}}^\infty \beta ^{-2}\,d\beta\right) .
\end{split}
\end{equation}
(Recall that $\beta=\widetilde{\beta}$ (see \eqref{tilde beta}) corresponds to the point $\widetilde{\zeta}_t\in \Pi _t$ with $\mathrm{Re}\,\widetilde{\zeta}_t=c_\flat +1$.) 
Note that $\vert \mu (\beta)\vert\geqslant \mathrm{Re}\,\mu (\beta)=c_\flat ^{1/2}$. So, by \eqref{tilde beta},
\begin{equation}
\int _{-\widetilde{\beta}}^{\widetilde{\beta}}\vert\mu (\beta)\vert ^{-3} \,d\beta
\leqslant 2c_\flat ^{-3/2}\widetilde{\beta}=2c_\flat ^{-3/2}\sqrt{c_\flat +t^2(c_\flat+1)}
\leqslant\mathfrak{c}_4 (t^2+1)^{1/2},
\end{equation}
where $\mathfrak{c}_4:=2c_\flat ^{-3/2}(c_\flat +1)^{1/2}$. 
Next, according to \eqref{tilde beta},
\begin{equation}
\label{int 2<=}
\int _{\widetilde{\beta}}^\infty \beta ^{-2}\,d\beta =\widetilde{\beta}^{-1}=(c_\flat +t^2(c_\flat +1))^{-1/2}
\leqslant t^{-1}(c_\flat +1)^{-1/2}.
\end{equation}

Bringing \eqref{I<= int1 +int 2}--\eqref{int 2<=} together, we obtain
\begin{equation}
\label{I itog estimate}
\Vert \mathfrak{I}(\varepsilon ;t)\Vert _{L_2(\mathcal{O})\rightarrow L_2(\mathcal{O})}
\leqslant\mathfrak{c}_5\varepsilon t^2 \left( (1+t^2)^{3/2}+1\right),
\end{equation}
where 
$
\mathfrak{c}_5:=(2\pi)^{-1}e^{\sqrt{c_\flat}}\max\lbrace \mathfrak{c}_3\mathfrak{c}_4 C_2;(2c_\flat )^{-1}(c_\flat +1)^{-1/2}C_1\rbrace $.

Combining
\eqref{cos-cos Res AD0}, \eqref{cos-cos=something + int norm estimate}, \eqref{mathfrak I=}, and \eqref{I itog estimate}, we arrive at the estimate
\begin{equation}
\label{Th cos B_D,eps different terms t}
\Bigl\Vert
\left(
\cos (t B_{D,\varepsilon}^{1/2})-\cos (t (B_D^0)^{1/2})
\right)(B_D^0)^{-2}\Bigr\Vert _{L_2(\mathcal{O})\rightarrow L_2(\mathcal{O})}
\leqslant \widehat{C}_7\varepsilon \left(1+t^2 +t^2(1+t^2)^{3/2}\right) 
\end{equation} 
with the constant 
$
\widehat{C}_7:=\max\left\lbrace 4\mathcal{C}_1C_{16};2^{-1}C_{16}+\mathfrak{c}_5\right\rbrace 
$. 
Note that for $\vert t\vert \leqslant 1$ the leading degree of $t$ in the right-hand side of  \eqref{Th cos B_D,eps different terms t} is $t^0$, but for $\vert t\vert >1$ the leading degree is $t^5$. So, \eqref{Th cos B_D,eps different terms t} implies the required estimate \eqref{Th cos B_D,eps} with the constant $C_7:=2(1+\sqrt{2})\widehat{C}_7$.

Combining the identity 
\begin{equation}
\label{sin tozd}
B_{D,\varepsilon}^{-1/2}\sin (tB_{D,\varepsilon }^{1/2})
=\int _0^t \cos (\tau B_{D,\varepsilon}^{1/2})\,d\tau,
\end{equation}
the similar identity for the effective operator, and \eqref{Th cos B_D,eps}, we arrive at estimate \eqref{Th sin 1}.
\end{proof}

\subsection{Proof of Theorem~\textnormal{\ref{Theorem cos corrector}}}

\begin{proof}[Proof of Theorem~\textnormal{\ref{Theorem cos corrector}}]
Without loss of generality, let $t> 0$. 
Similarly to \eqref{basic identity},
\begin{equation*}
\cos \bigl(t(B_D^0)^{1/2}\bigr)(B_D^0)^{-2}
=-\frac{t^2}{2}(B_D^0)^{-1}+(B_D^0)^{-2}
+\frac{1}{2\pi i}\int _{\mathrm{Re}\,\lambda =\sqrt{c_\flat} /t}\lambda ^{-3}(B_D^0+\lambda ^2 I)^{-1}e^{\lambda t}\,d\lambda .
\end{equation*}
This implies that
\begin{equation*}
\begin{split}
\varepsilon \bigl(\Lambda ^\varepsilon b(\mathbf{D})+\widetilde{\Lambda}^\varepsilon\bigr)S_\varepsilon P_\mathcal{O}\cos \bigl(t (B_D^0)^{1/2}\bigr)(B_D^0)^{-2}
&=-\frac{\varepsilon t^2}{2}K_D(\varepsilon ;0)+\varepsilon K_D(\varepsilon ;0)(B_D^0)^{-1}\\
&+\frac{\varepsilon}{2\pi i}\int _{\mathrm{Re}\,\lambda =\sqrt{c_\flat} /t}\lambda ^{-3}K_D(\varepsilon ;-\lambda ^2)e^{\lambda t}\,d\lambda .
\end{split}
\end{equation*}
Here $K_D(\varepsilon;\cdot)$ is the operator \eqref{K_D(eps,zeta)}. 
Therefore, by \eqref{14},
\begin{equation}
\label{33}
\begin{split}
&\cos (tB_{D,\varepsilon}^{1/2})B_{D,\varepsilon}^{-2}-\cos \bigl(t (B_D^0)^{1/2}\bigr)(B_D^0)^{-2}
-\varepsilon \bigl(\Lambda ^\varepsilon b(\mathbf{D})+\widetilde{\Lambda}^\varepsilon\bigr)S_\varepsilon P_\mathcal{O}\cos \bigl(t (B_D^0)^{1/2}\bigr)(B_D^0)^{-2}
\\
&=-\frac{t^2}{2}\left( B_{D,\varepsilon}^{-1}-(B_D^0)^{-1}-\varepsilon K_D(\varepsilon ;0)\right)
+\left(B_{D,\varepsilon}^{-2}-(B_D^0)^{-2}-\varepsilon K_D(\varepsilon ;0)(B_D^0)^{-1}\right)
\\
&+\frac{1}{2\pi i}\int _{\mathrm{Re}\,\lambda =\sqrt{c_\flat} /t}\lambda ^{-3}
\left(
(B_{D,\varepsilon }+\lambda ^2 I)^{-1}-(B_D^0+\lambda ^2 I)^{-1}-\varepsilon K_D(\varepsilon ;-\lambda ^2)\right)
e^{\lambda t}\,d\lambda.
\end{split}
\end{equation}
Denote the last summand in the right-hand side of \eqref{33} by $\mathcal{I}(\varepsilon ;t)$.

Combining \eqref{B_D,eps L2 ->H^1}, \eqref{B_D^0 L2 ->L2}, \eqref{Th dr appr 1}, and \eqref{31}, we obtain
\begin{equation}
\label{34}
\begin{split}
\Vert& B_{D,\varepsilon}^{-2}-(B_D^0)^{-2}-\varepsilon K_D(\varepsilon ;0)(B_D^0)^{-1}\Vert _{L_2(\mathcal{O})\rightarrow H^1(\mathcal{O})}
\\
&\leqslant \Vert B_{D,\varepsilon}^{-1}\bigl( B_{D,\varepsilon}^{-1}-(B_D^0)^{-1}\bigr)\Vert _{L_2(\mathcal{O})\rightarrow H^1(\mathcal{O})}
\\
&+\Vert \left(B_{D,\varepsilon}^{-1}-(B_D^0)^{-1}-\varepsilon K_D(\varepsilon ;0)\right)(B_D^0)^{-1}\Vert _{L_2(\mathcal{O})\rightarrow H^1(\mathcal{O})}
\\
&\leqslant
\mathcal{C}_2C_2\max\lbrace 1; c_\flat ^{-2}\rbrace\varepsilon +2\max\lbrace 1; c_\flat ^{-2}\rbrace\mathcal{C}_1C_4\varepsilon ^{1/2} \leqslant\mathfrak{c}_6\varepsilon ^{1/2},
\end{split}
\end{equation}
where $\mathfrak{c}_6:=\max\lbrace 1; c_\flat ^{-2}\rbrace(\mathcal{C}_2C_2+2\mathcal{C}_1C_4)$.

By \eqref{31}, \eqref{33}, and \eqref{34},
\begin{equation}
\label{35}
\begin{split}
\Vert &\cos (tB_{D,\varepsilon}^{1/2})(B_{D,\varepsilon})^{-2}-\cos \bigl(t (B_D^0)^{1/2}\bigr)(B_D^0)^{-2}
\\
&-\varepsilon \bigl(\Lambda ^\varepsilon b(\mathbf{D})+\widetilde{\Lambda}^\varepsilon\bigr)S_\varepsilon P_\mathcal{O}\cos \bigl(t (B_D^0)^{1/2}\bigr)(B_D^0)^{-2}\Vert _{L_2(\mathcal{O})\rightarrow H^1(\mathcal{O})}
\\
&\leqslant \max\lbrace 1; c_\flat ^{-2}\rbrace C_4\varepsilon ^{1/2} t^2 +\mathfrak{c}_6\varepsilon ^{1/2}+\Vert \mathcal{I}(\varepsilon ;t)\Vert _{L_2(\mathcal{O})\rightarrow H^1(\mathcal{O})}.
\end{split}
\end{equation}
Changing the variable  $\lambda t=\mu$ in the integral  $\mathcal{I}(\varepsilon ;t)$, we get
\begin{equation}
\label{36}
\mathcal{I}(\varepsilon ;t)
=\frac{t^2}{2\pi i}\int _{\mathrm{Re}\,\mu =\sqrt{c_\flat}}e^\mu \mu ^{-3}
\left(
(B_{D,\varepsilon}+\frac{\mu ^2}{t^2} I)^{-1}-(B_D^0+\frac{\mu ^2}{t^2}I)^{-1}-\varepsilon K_D\bigl(\varepsilon;-\frac{\mu ^2}{t^2}\bigr)\right)\,d\mu .
\end{equation}
Let $\mu =\mu (\beta)= c_\flat ^{1/2}+i\beta$, $\beta\in\mathbb{R}$. Then $\zeta _t (\beta)=-\mu (\beta)^2 /t^2$ lies on the parabola $\Pi _t$ (see \eqref{Pi}). Recall that $\widetilde{\beta}$ is defined by  \eqref{tilde beta}. 
For $-\widetilde{\beta}\leqslant\beta\leqslant\widetilde{\beta}$ we use estimate \eqref{31}. By \eqref{zeta_t(beta)=},
\begin{equation*}
\begin{split}
\varepsilon ^{1/2}\varrho _\flat (\zeta _t)^{1/2}+\varepsilon \vert 1+\zeta _t\vert ^{1/2}\varrho _\flat (\zeta _t)
&\leqslant
\varepsilon ^{1/2}\varrho _\flat (\zeta _t)(1+(1+\vert \zeta _t\vert )^{1/2})
\\
&\leqslant
\varepsilon ^{1/2}\varrho _\flat (\zeta _t)\left(1+\left(1+t^{-2}(c_\flat +\beta ^2)\right)^{1/2}\right).
\end{split}
\end{equation*}
Together with \eqref{31} and \eqref{rho <=c_3}, this implies
\begin{equation}
\label{37}
\begin{split}
\Vert &(B_{D,\varepsilon}-\zeta _t(\beta)I)^{-1}-(B_D^0-\zeta _t(\beta)I)^{-1}-\varepsilon K_D(\varepsilon ;\zeta _t(\beta))\Vert _{L_2(\mathcal{O})\rightarrow H^1(\mathcal{O})}
\\
&\leqslant 
\varepsilon ^{1/2}(1+t^2)\mathfrak{c}_3C_4\left(2+t^{-1}(c_\flat ^{1/2} +\widetilde{\beta})\right)
,\quad -\widetilde{\beta}\leqslant\beta\leqslant\widetilde{\beta}.
\end{split}
\end{equation}
For $\vert\beta\vert >\widetilde{\beta}$, we use \eqref{30}, the estimate
\begin{equation*}
\begin{split}
\vert \mu (\beta)\vert ^{-3} \frac{c(\phi _t)^2}{\vert \zeta _t (\beta)\vert ^{1/4}}
&=
t^{-3}\vert \zeta _t (\beta)\vert ^{-3/2}\frac{c(\phi _t)^2}{\vert \zeta _t (\beta)\vert ^{1/4}}
=
t^{-3}\frac{\vert\zeta _t (\beta)\vert ^{1/4}}{\vert\mathrm{Im}\,\zeta _t (\beta)\vert ^2}
\\
&=t^{-3}\frac{\left(t^{-2}(\beta ^2 +c_\flat)\right)^{1/4}}{t^{-4}4\beta^2 c_\flat}
\leqslant (4c_\flat)^{-1}t^{1/2}(\vert\beta\vert ^{-3/2}+c_\flat ^{1/4}\beta ^{-2}),
\end{split}
\end{equation*}
and the identity
\begin{equation*}
c(\phi_t)^{3/2}\vert\mu (\beta )\vert^{-3}=\frac{1}{t^3\vert\mathrm{Im}\,\zeta _t(\beta)\vert ^{3/2}}= 2^{-3/2}c_\flat ^{-3/4}\vert\beta\vert ^{-3/2}.
\end{equation*}
We obtain that
\begin{equation}
\label{38}
\begin{split}
\vert \mu (\beta)\vert ^{-3}\Vert (B_{D,\varepsilon}-\zeta _t(\beta )I)^{-1}-(B_D^0-\zeta _t(\beta) I)^{-1}-\varepsilon K_D(\varepsilon ;\zeta _t(\beta))\Vert _{L_2(\mathcal{O})\rightarrow H^1(\mathcal{O})}
\\
\leqslant C_5 \varepsilon ^{1/2}\left(t^{1/2}(4c_\flat)^{-1}(\vert\beta\vert ^{-3/2}+c_\flat ^{1/4}\beta ^{-2})
+2^{-3/2}c_\flat ^{-3/4}\vert\beta\vert ^{-3/2}\right),\quad \vert \beta\vert >\widetilde{\beta}.
\end{split}
\end{equation}
From \eqref{36}--\eqref{38} and the estimate $\vert \mu (\beta)\vert \geqslant c_\flat ^{1/2}$ it follows that
\begin{equation*}
\begin{split}
\Vert & \mathcal{I}(\varepsilon ;\zeta )\Vert _{L_2(\mathcal{O})\rightarrow H^1(\mathcal{O})}
\leqslant
\frac{t^2 e^{\sqrt{c_\flat}}}{2\pi}
\Bigl(
\mathfrak{c}_3C_4(1+t^2)\varepsilon ^{1/2}\left(2+t^{-1}(c_\flat ^{1/2} +\widetilde{\beta})\right)\int _{-\widetilde{\beta}}^{\widetilde{\beta}}\vert\mu (\beta)\vert ^{-3}\,d\beta
\\
&+2\varepsilon ^{1/2}t^{1/2}(4c_\flat )^{-1}C_5 \int _{\widetilde{\beta}}^\infty  (\beta ^{-3/2}+c_\flat ^{1/4}\beta ^{-2})\,d\beta
+2\varepsilon ^{1/2}C_5 2^{-3/2}c_\flat ^{-3/4}\int _{\widetilde{\beta}}^\infty \beta ^{-3/2}\,d\beta\Bigr)
\\
&\leqslant \frac{t^2 e^{\sqrt{c_\flat}}}{2\pi}
\Bigl(
\mathfrak{c}_3C_4(1+t^2)\varepsilon ^{1/2}c_\flat ^{-3/2}\left(2+t^{-1}(c_\flat ^{1/2} +\widetilde{\beta})\right)2\widetilde{\beta}
+\varepsilon ^{1/2}t^{1/2}c_\flat ^{-1}C_5\widetilde{\beta}^{-1/2}
\\
&+\varepsilon ^{1/2}t^{1/2}(2c_\flat )^{-1}c_\flat ^{1/4}C_5\widetilde{\beta}^{-1}
+2^{1/2}\varepsilon ^{1/2} C_5 c_\flat ^{-3/4}\widetilde{\beta}^{-1/2}\Bigr).
\end{split}
\end{equation*}
By \eqref{tilde beta}, 
$(1+t^2)\widetilde{\beta}\leqslant (1+t^2)^{3/2}(c_\flat +1)^{1/2}$ and 
$\widetilde{\beta}^{-1/2}\leqslant t^{-1/2}(c_\flat +1)^{-1/4}$. 
Then 
$t^{1/2}\widetilde{\beta}^{-1/2}\leqslant  (c_\flat +1)^{-1/4}$ and 
$t^{1/2}\widetilde{\beta}^{-1}\leqslant t^{-1/2}(c_\flat +1)^{-1/2}$. 
Therefore,
\begin{equation}
\label{39}
\Vert \mathcal{I}(\varepsilon ;t)\Vert _{L_2(\mathcal{O})\rightarrow H^1(\mathcal{O})}
\leqslant \mathfrak{c}_7\varepsilon ^{1/2}t^2\left((1+t^2)^{3/2}(1+t^{-1})+t^{-1/2}+1\right),
\end{equation}
where
\begin{equation*}
\begin{split}
\mathfrak{c}_7&:=(2\pi )^{-1}e^{\sqrt{c_\flat}}\max
\Bigl\lbrace
2\mathfrak{c}_3C_4c_\flat ^{-3/2}(c_\flat +1)^{1/2}\max\lbrace 2+(c_\flat +1)^{1/2};2c_\flat ^{1/2}\rbrace
;
\\
&(c_\flat +1)^{-1/4}c_\flat ^{-1}C_5;
2^{-1}c_\flat ^{-3/4}(c_\flat +1)^{-1/2}C_5
+2^{1/2} C_5 c_\flat ^{-3/4}(c_\flat +1)^{-1/4}\Bigr\rbrace .
\end{split}
\end{equation*}
Combining \eqref{35} and \eqref{39}, we arrive at the estimate
\begin{equation}
\label{Th cos with correction term t different terms}
\begin{split}
\bigl\Vert &\cos (tB_{D,\varepsilon}^{1/2})B_{D,\varepsilon}^{-2}-\cos \bigl(t (B_D^0)^{1/2}\bigr)(B_D^0)^{-2}
\\
&-\varepsilon \bigl(\Lambda ^\varepsilon b(\mathbf{D})+\widetilde{\Lambda}^\varepsilon\bigr)S_\varepsilon P_\mathcal{O}\cos \bigl(t (B_D^0)^{1/2}\bigr)(B_D^0)^{-2}\bigr\Vert _{L_2(\mathcal{O})\rightarrow H^1(\mathcal{O})}
\\
&\leqslant \widehat{C}_{10}\varepsilon ^{1/2}\left(1+t^2\bigl(t^{-1/2}+1+(1+t^2)^{3/2}(1+t^{-1})\bigr)\right).
\end{split}
\end{equation}
Here $\widehat{C}_{10}:=\max \bigl\lbrace C_4 \max\lbrace 1; c_\flat ^{-2}\rbrace  +\mathfrak{c}_7;\mathfrak{c}_6\bigr\rbrace$. Finally, by \eqref{H^1-norm <= BDeps^1/2}, \eqref{B_D,eps L2 ->L2}, and \eqref{AD,eps -1 -...},
\begin{equation*}
\begin{split}
\Vert &\cos (tB_{D,\varepsilon}^{1/2})B_{D,\varepsilon}^{-1}(B_{D,\varepsilon}^{-1}-(B_D^0)^{-1})\Vert _{L_2(\mathcal{O})\rightarrow H^1(\mathcal{O})}\\
&\leqslant c_3
\Vert B_{D,\varepsilon}^{-1/2}(B_{D,\varepsilon}^{-1}-(B_D^0)^{-1})\Vert _{L_2(\mathcal{O})\rightarrow L_2(\mathcal{O})}
\leqslant c_3\mathcal{C}_1^{1/2}C_{16} \varepsilon .
\end{split}
\end{equation*}
Together with \eqref{Th cos with correction term t different terms} this implies
\begin{equation}
\label{4.11}
\begin{split}
\Bigl\Vert &\Bigl(\cos (tB_{D,\varepsilon}^{1/2})B_{D,\varepsilon}^{-1}-\cos \bigl(t (B_D^0)^{1/2}\bigr)(B_D^0)^{-1}
\\
&-\varepsilon \bigl(\Lambda ^\varepsilon b(\mathbf{D})+\widetilde{\Lambda}^\varepsilon\bigr)S_\varepsilon P_\mathcal{O}\cos \bigl(t (B_D^0)^{1/2}\bigr)(B_D^0)^{-1}\Bigr)(B_D^0)^{-1}\Bigr\Vert _{L_2(\mathcal{O})\rightarrow H^1(\mathcal{O})}
\\
&\leqslant \widetilde{C}_{10}\varepsilon ^{1/2}\left(1+t^2\bigl(1+t^{-1/2}+(1+t^2)^{3/2}(1+t^{-1})\bigr)\right).
\end{split}
\end{equation}
Here $\widetilde{C}_{10}:=\widehat{C}_{10} +c_3\mathcal{C}_1^{1/2}C_{16} $. 
Finally, note that for $\vert t\vert <1$ the leading degree of $t$ in the right-hand side of \eqref{4.11} is $t^0$, and for $\vert t\vert \geqslant 1$ the leading degree is  $t^5$. Using this argument, from \eqref{4.11} we derive estimate \eqref{Th cos with correction term} with $C_{10}:=(3+2^{5/2})\widetilde{C}_{10}$.
\end{proof}

\subsection{Proof of Theorem \ref{Theorem sin corrector}}

\begin{proof}[Proof of Theorem \textnormal{\ref{Theorem sin corrector}}.]

By using Theorem \ref{Theorem cos corrector}, identity \eqref{sin tozd}, and the similar identity for the effective operator, 
we obtain
\begin{equation}
\label{int Th cos corrector}
\begin{split}
\Bigl\Vert &
\bigl( B_{D,\varepsilon}^{-1/2}\sin (tB_{D,\varepsilon }^{1/2})B_{D,\varepsilon}^{-1}-(B_D^0)^{-1/2}\sin (t (B_D^0)^{1/2})(B_D^0)^{-1}
\\
&-\varepsilon (\Lambda ^\varepsilon b(\mathbf{D})+\widetilde{\Lambda}^\varepsilon )S_\varepsilon P_\mathcal{O} (B_D^0)^{-1/2}\sin (t (B_D^0)^{1/2})(B_D^0)^{-1}\bigr)
(B_D^0)^{-1}\Vert _{L_2(\mathcal{O})\rightarrow H^1(\mathcal{O})}
\\
&\leqslant C_{10}\varepsilon ^{1/2} \vert t\vert (1+\vert t\vert ^5),\quad t\in\mathbb{R},\quad 0<\varepsilon\leqslant\varepsilon _1.
\end{split}
\end{equation}
Next, by \eqref{H^1-norm <= BDeps^1/2},  \eqref{B_D^0 L2 ->L2}, and  \eqref{AD,eps -1 -...},
\begin{equation}
\label{sin simple}
\Vert B_{D,\varepsilon}^{-1/2}\sin (t B_{D,\varepsilon}^{1/2})(B_{D,\varepsilon}^{-1}-(B_D^0)^{-1})(B_D^0)^{-1}\Vert _{L_2(\mathcal{O})\rightarrow H^1(\mathcal{O})}
\leqslant c_3C_{16}\mathcal{C}_1\varepsilon .
\end{equation}
Combining \eqref{int Th cos corrector} and \eqref{sin simple}, we arrive at estimate \eqref{Th sin 2} with the constant $$C_8:=2\left(C_{10}+c_3C_{16}\mathcal{C}_1\right).$$

Let us check inequality \eqref{Th fluxes operator terms}. By \eqref{b_l <=} and \eqref{Th sin 2}, for $t\in\mathbb{R}$ and $0<\varepsilon\leqslant\varepsilon _1$ we have
\begin{equation}
\label{flux proof start}
\begin{split}
\Bigl\Vert & \Bigl(
g^\varepsilon b(\mathbf{D})B_{D,\varepsilon}^{-1/2}\sin (t B_{D,\varepsilon}^{1/2})
\\
&
-g^\varepsilon b(\mathbf{D})\bigl( I+\varepsilon \Lambda ^\varepsilon b(\mathbf{D})S_\varepsilon P_\mathcal{O}+\varepsilon \widetilde{\Lambda}^\varepsilon S_\varepsilon P_\mathcal{O}\bigr)
(B_D^0)^{-1/2}\sin (t (B_D^0)^{1/2})
\Bigr)
\\
&\times(B_D^0)^{-2}
\Bigr\Vert _{L_2(\mathcal{O})\rightarrow L_2(\mathcal{O})}
\leqslant(d\alpha _1)^{1/2}C_{8}\Vert g\Vert _{L_\infty}
\varepsilon ^{1/2}
 (1+t ^6).
\end{split}
\end{equation}
Obviously,
\begin{equation}
\label{flux tozd}
\begin{split}
g^\varepsilon & b(\mathbf{D})\bigl( I+\varepsilon \Lambda ^\varepsilon b(\mathbf{D})S_\varepsilon P_\mathcal{O}+\varepsilon \widetilde{\Lambda}^\varepsilon S_\varepsilon P_\mathcal{O}\bigr)
(B_D^0)^{-1/2}\sin (t (B_D^0)^{1/2})
(B_D^0)^{-2}
\\
&=
g^\varepsilon b(\mathbf{D})(B_D^0)^{-5/2}\sin (t (B_D^0)^{1/2})
+
g^\varepsilon \left(b(\mathbf{D})\Lambda\right)^\varepsilon S_\varepsilon b(\mathbf{D})P_\mathcal{O}
(B_D^0)^{-5/2}\sin (t (B_D^0)^{1/2})
\\
&+g^\varepsilon \left(b(\mathbf{D})\widetilde{\Lambda}\right)^\varepsilon S_\varepsilon P_\mathcal{O}
(B_D^0)^{-5/2}\sin (t (B_D^0)^{1/2})
\\
&+\varepsilon \sum _{l=1}^d g^\varepsilon b_l\left(\Lambda ^\varepsilon S_\varepsilon b(\mathbf{D})D_l+\widetilde{\Lambda}^\varepsilon S_\varepsilon D_l\right)
P_\mathcal{O}(B_D^0)^{-5/2}\sin (t (B_D^0)^{1/2}).
\end{split}
\end{equation}
The fourth summand in the right-hand side of \eqref{flux tozd} can be estimated with the help of \eqref{b_l <=}, \eqref{Lambda S_eps<=}, and \eqref{tildeLambda S_eps<=}:
\begin{equation}
\label{4 summand flux}
\begin{split}
\Biggl\Vert &
\varepsilon \sum _{l=1}^d g^\varepsilon b_l\left(\Lambda ^\varepsilon S_\varepsilon b(\mathbf{D})D_l+\widetilde{\Lambda}^\varepsilon S_\varepsilon D_l\right)
P_\mathcal{O}(B_D^0)^{-5/2}\sin (t (B_D^0)^{1/2})\Biggr\Vert _{L_2(\mathcal{O})\rightarrow L_2(\mathbb{R}^d)}
\\
&\leqslant
\varepsilon (d\alpha _1)^{1/2} \Vert g\Vert _{L_\infty}
M_1\Vert b(\mathbf{D})\mathbf{D}P_\mathcal{O}(B_D^0)^{-5/2}\sin (t (B_D^0)^{1/2})\Vert _{L_2(\mathcal{O})\rightarrow L_2(\mathbb{R}^d)}
\\
&+\varepsilon (d\alpha _1)^{1/2} \Vert g\Vert _{L_\infty}\widetilde{M}_1  \Vert \mathbf{D}P_\mathcal{O}(B_D^0)^{-5/2}\sin (t (B_D^0)^{1/2})\Vert _{L_2(\mathcal{O})\rightarrow L_2(\mathbb{R}^d)}.
\end{split}
\end{equation}
Combining \eqref{<b^*b<}, \eqref{PO}, \eqref{proof no S_eps-2}, and
\eqref{4 summand flux}, we get
\begin{equation}
\label{4 summand flux itog}
\begin{split}
\Biggl\Vert &
\varepsilon \sum _{l=1}^d g^\varepsilon b_l\left(\Lambda ^\varepsilon S_\varepsilon b(\mathbf{D})D_l+\widetilde{\Lambda}^\varepsilon S_\varepsilon D_l\right)
P_\mathcal{O}(B_D^0)^{-5/2}\sin (t (B_D^0)^{1/2})\Biggr\Vert _{L_2(\mathcal{O})\rightarrow L_2(\mathbb{R}^d)}
\\
&\leqslant
\varepsilon \vert t\vert \widehat{C}_9,\quad t\in\mathbb{R},
\quad 0<\varepsilon\leqslant 1
,
\end{split}
\end{equation}
where $\widehat{C}_9:=(d\alpha _1)^{1/2}\Vert g\Vert _{L_\infty}\bigl(M_1\alpha _1^{1/2}C_\mathcal{O}^{(2)}
+\widetilde{M}_1C_\mathcal{O}^{(1)}\bigr)\mathcal{C}_1\mathcal{C}_3 $.

By Proposition \ref{Proposition S__eps - I} and \eqref{<b^*b<}, \eqref{PO}, \eqref{proof no S_eps-2},
\begin{equation}
\label{flux proof final}
\begin{split}
\Vert & g^\varepsilon b(\mathbf{D})(S_\varepsilon -I)P_\mathcal{O}(B_D^0)^{-5/2}\sin (t (B_D^0)^{1/2})\Vert _{L_2(\mathcal{O})\rightarrow L_2(\mathbb{R}^d)}
\\
&\leqslant 
\varepsilon r_1\Vert g\Vert _{L_\infty}\Vert \mathbf{D} b(\mathbf{D})P_\mathcal{O}(B_D^0)^{-5/2}\sin (t (B_D^0)^{1/2})\Vert _{L_2(\mathcal{O})\rightarrow L_2(\mathbb{R}^d)}\\
&\leqslant
\varepsilon \vert t\vert r_1\Vert g\Vert _{L_\infty}\alpha _1^{1/2}C_\mathcal{O}^{(2)}\mathcal{C}_1\mathcal{C}_3.
\end{split}
\end{equation}
Combining \eqref{tilde g}, \eqref{flux proof start}, \eqref{flux tozd}, \eqref{4 summand flux itog}, and \eqref{flux proof final}, we arrive at the required  inequality \eqref{Th fluxes operator terms} with the constant $C_9:=(d\alpha _1)^{1/2}C_8\Vert g\Vert _{L_\infty}+\widehat{C}_9+r_1\Vert g\Vert _{L_\infty}\alpha _1^{1/2}C_\mathcal{O}^{(2)}\mathcal{C}_1\mathcal{C}_3$.
\end{proof}

\end{document}